\newtheorem{thm}{Theorem}[section]
\newtheorem{prop}[thm]{Proposition}
\newtheorem{lem}[thm]{Lemma}
\newtheorem{cor}[thm]{Corollary}
\theoremstyle{definition}
\newtheorem{defn}[thm]{Definition}
\theoremstyle{remark}
\newtheorem{rem}[thm]{Remark}
\newcommand*{\Z}{\mathbb{Z}}
\newcommand*{\R}{\mathbb{R}}
\newcommand*{\Coef}{\mathbb{K}}
\newcommand*{\finite}{\mathbf{f}}
\newcommand*{\aff}{\mathrm{aff}}
\newcommand*{\ext}{\mathrm{ext}}
\newcommand*{\triv}{\mathrm{triv}}
\newcommand*{\sgn}{\mathrm{sgn}}
\newcommand*{\bbopen}{\mathcal{O}_{\mathrm{bb}}}
\newcommand*{\BM}{\mathcal{BM}}
\newcommand*{\EB}{\mathcal{EB}}
\newcommand*{\ch}{\mathrm{ch}}
\newcommand*{\id}{\mathrm{id}}
\DeclareMathOperator{\grk}{grk}
\DeclareMathOperator{\Aut}{Aut}
\DeclareMathOperator{\Hom}{Hom}
\DeclareMathOperator{\End}{End}
\DeclareMathOperator{\Ker}{Ker}
\DeclareMathOperator{\Coker}{Coker}
\DeclareMathOperator{\supp}{supp}
\DeclareMathOperator{\Ima}{Im}
\DeclareMathOperator{\Stab}{Stab}
\DeclareMathOperator{\Sh}{Sh}
\numberwithin{equation}{section}
\title{Braden-MacPherson sheaves on alcoves}
\author{Noriyuki Abe}
\address{Graduate School of Mathematical Sciences, the University of Tokyo, 3-8-1 Komaba, Meguro-ku, Tokyo 153-8914, Japan.}
\email{abenori@ms.u-tokyo.ac.jp}
\subjclass[2020]{20G05,05E10}
\begin{document}
\begin{abstract}
We study Braden-MacPherson sheaves on the moment graph associated to the set of of alcoves.
We define an action of Soergel bimodules on the category of Braden-MacPherson sheaves.
We also prove a certain stability of morphisms between Braden-MacPherson sheaves.
\end{abstract}

\maketitle
\section{Introduction}\label{sec:Introduction}
Braden-MacPherson \cite{MR1871967} attached a moment graph to an algebraic variety with a nice torus action.
They also describe the torus equivariant intersection complexes on the variety via sheaves on this moment graph.
This is called the Braden-MacPherson algorithm and the sheaf given by this algorithm is called the Braden-MacPherson sheaf.
We note that the theory can apply to characteristic zero coefficients.
In the case of positive characteristic, this algorithm calculates an indecomposable parity sheaf \cite{MR3330913}.

Fiebig conducted a systematic study of Braden–MacPherson sheaves on moment graphs. Motivated by applications to representation theory, he further investigated sheaves on moment graphs arising from affine Weyl groups.
These moment graphs are associated with the affine flag varieties and are related to the representation theory of algebraic groups in positive characteristic.
As an application of his theory, he obtained lower bounds for the primes for which Lusztig conjecture holds~\cite{MR2726602,MR2999126}.

In this paper, we study moment graphs defined on the set of alcoves and sheaves on them.
This moment graph also appears in Lanini’s work~\cite{MR3324922}.
It corresponds geometrically to the semi-infinite flag variety and representation-theoretically to the representation theory of Lie algebras in positive characteristic.

This paper has two main objectives.

First, we define an action of Braden–MacPherson sheaves on the affine Weyl group.
The category of Braden–MacPherson sheaves on the affine Weyl group is equivalent to the category of Soergel bimodules, and hence carries a monoidal structure.
We define an action of this monoidal category.
Note that the action of the Soergel bimodule corresponding to a simple reflection is the same as wall-crossing functors in~\cite{MR2395170, MR3324922}.
This action gives a categorification of the periodic Hecke module.

Second, we consider homomorphisms between Braden–MacPherson sheaves and investigate a certain stability property.
Fixing a vertex $x$, we consider the subset $O_x = \{ y \mid y \ge x \}$.
For two Braden–MacPherson sheaves $\mathcal{F}_1$ and $\mathcal{F}_2$, we show that the space $\Hom(\mathcal{F}_1|_{O_x}, \mathcal{F}_2|_{O_x})$ becomes stable when $x$ is sufficiently small.
Technically, a major difficulty to study sheaves on alcoves arises from the fact that the underlying poset is not ideal finite; that is, for a vertex $x$, the set $\{ y \mid y \le x \}$ is not finite.
Since much of Fiebig’s theory is established for ideal finite moment graphs, it cannot be directly applied in this setting.
Although the set of all alcoves is not ideal finite, it is interval finite; that is, for any two vertices $x_1, x_2$, the set $\{ y \mid x_1 \le y \le x_2 \}$ is finite.
Therefore, $O_x$ is ideal finite, and Fiebig's theory can be applied. In this way, we regard the entire set of alcoves as a ``limit'' of the subsets $O_x$, and our main theorem may become effective in this perspective.
We note that, in the above, we consider only degree-zero homomorphisms.
We do not claim stability when degrees are allowed to vary.

We now explain the details of the contents of this paper.
We fix a coefficient ring $\widehat{\mathbb{K}}$ which is a complete Noetherian ring of mixed characteristic $(0,p)$ where $p$ is a prime number and let $\mathbb{K}$ be the residue field.
Since we need the GKM condition for the theory of moment graphs, that is satisfied only with characteristic $0$ ring, we work with $\widehat{\mathbb{K}}$ instead of $\mathbb{K}$.
Let $(\mathbb{X},\Phi,\mathbb{X}^{\vee},\Phi^{\vee})$ be a finite root datum.
Then we have affine hyperplanes in $\mathbb{X}^{\vee}_{\R} = \mathbb{X}^{\vee}\otimes_{\Z}\R$.
A connected component of the complement of the union on these hyperplanes is called an alcove.
Let $\mathcal{A}$ be the set of all alcoves.
We always assume that $p\ne 2$ and $p$ is not a torsion prime for this datum.

A moment graph is a graph such that the set of vertices is partially ordered, and each edge has a label which belongs to a certain abelian group.
We can define a structure of a moment graph on $\mathcal{A}$ as follows:
The set of vertices is $\mathcal{A}$.
The alcoves $A,A'$ are connected by an edge when $A'$ is the image of a reflection of $A$ across an affine hyperplane.
The label of the edge is the affine root which defines the hyperplane.
In our setting, the label is in $\widehat{\mathbb{X}}_{\aff}^{\vee} = \mathbb{X}\otimes_{\Z}\widehat{\mathbb{K}}\oplus \widehat{\mathbb{K}}\oplus \widehat{\mathbb{K}}$.
The affine Weyl group $W_{\aff} = W \rtimes \Z \Phi^{\vee}$ acts on this moment graph.
We upgrade this action on the level of sheaves.

Let $\BM(\mathcal{A})$ be the category of Braden-MacPherson sheaves on $\mathcal{A}$.
We note that $W_{\aff}$ is a Coxeter system, here the simple reflections are reflections with respect to the walls of the fundamental positive alcove $A_{0}^{+}$.
Let $\mathcal{S}$ be the category of Soergel bimodules.
We define an action of $\mathcal{S}$ on $\BM(\mathcal{A})$.
Namely for $\mathcal{F}\in \BM(\mathcal{A})$ and $B\in \mathcal{S}$, we construct $\mathcal{F}\star B$.
The construction is not so direct.
We first define an action on the level of global sections, namely we first define the space of global sections of $\mathcal{F}\star B$.
Using the localization functor \cite{MR2370278}, we get the sheaf $\mathcal{F}\star B$.

Let $\widehat{S}^{\vee}_{\aff}$ be a symmetric algebra of $\widehat{\mathbb{X}}_{\aff}$.
To say the construction more precisely, we recall that a sheaf $\mathcal{F}$ on the moment graph $\mathcal{A}$ is a collection of graded $\widehat{S}^{\vee}_{\aff}$-modules $((\mathcal{F}^{A})_{A},(\mathcal{F}^{E})_{E})$ where $A$ (resp.\ $E$) run through vertices (resp.\ edges) in $\mathcal{A}$ and maps $(\rho^{\mathcal{F}}_{E,A} \colon \mathcal{F}^{A}\to \mathcal{F}^{E})$ for all edges $E$ with an endpoint $A$.
The space of global sections $\Gamma(\mathcal{F})$ is defined by the set of $(a_{A})\in \prod_{A}\mathcal{F}^{A}$ such that $\rho_{E,A}^{\mathcal{F}}(a_{A}) = \rho_{E,A'}^{\mathcal{F}}(a_{A'})$ for any edge $E$ connecting $A$ with $A'$.
This is an $\widehat{S}^{\vee}_{\aff}$-module.
In fact we can also define a $\widehat{S}^{\vee}_{\aff}$-bimodule structure as follows.
The group $W_{\aff}$ acts on $\mathcal{A}$, hence we have a map $W_{\aff}\to \mathcal{A}$ defined by $w\mapsto w(A_{0}^{+})$.
This is a bijection and let $A\mapsto w_{A}$ be the inverse of this map.
Then the right action of $f\in \widehat{S}_{\aff}^{\vee}$ on $(a_{A})\in \Gamma(\mathcal{F})$ is defined by $(a_{A})f = (w_{A}(f)a_{A})$.
Moreover, $\Gamma(\mathcal{F})$ is an object in the following category $\EB(\mathcal{A})$. (To simplify the explanation, we give a slightly different definition from the one used in the main text.)
Let $Q^{\vee}_{\aff}$ be the field of fractions of $\widehat{S}^{\vee}_{\aff}$.
Then an object of $\mathcal{EB}(\mathcal{A})$ is $M = (M,(M_{Q}^{A})_{A\in \mathcal{A}})$ where $M$ is a graded $\widehat{S}_{\aff}^{\vee}$-bimodule and $M_{Q}^{A}$ is a $Q^{\vee}_{\aff}$-bimodule such that $m f = w_{A}(f)m$ for $m \in M_{Q}^{A}$ and $f\in Q_{\aff}^{\vee}$ and $M\subset \prod_{A}M_{Q}^{A}$.
(We will also impose some more conditions. We ignore them in the introduction.)
A morphism $(M,(M_{Q}^{A}))\to (N,(N_{Q}^{A}))$ is an $\widehat{S}_{\aff}^{\vee}$-bimodule homomorphism $M\to N$ of degree zero and a collection of $Q_{A}^{\vee}$-bimodule homomorphisms $(M_{Q}^{A}\to N_{Q}^{A})_{A}$ which are compatible with the embeddings $M\subset \prod_{A}M_{Q}^{A},N\subset \prod_{A}N_{Q}^{A}$.
It is easy to see that if $\mathcal{F}\in \BM(\mathcal{A})$, then $\Gamma(\mathcal{F})\in \EB(\mathcal{A})$.
On the other hand, we have a localization functor $\mathcal{L}$ from $\EB(\mathcal{A})$ to the category of sheaves on $\mathcal{A}$~\cite{MR2370278}.
Note that Fiebig regarded $\Gamma(\mathcal{F})$ as a module over a structure sheaf and define $\mathcal{L}$ on such modules.
His construction works also with our setting.

Let $M\in \EB(\mathcal{A})$ and $B \in \mathcal{S}$.
Here, for a realization of $\mathcal{S}$, we use the one in \cite{MR4321542}.
In particular, $B\in \EB(W_{\aff})$ where $\EB(W_{\aff})$ is defined as a similar way.
We define $M\otimes B\in \mathcal{E}(\mathcal{A})$ by $M\otimes B = M\otimes_{\widehat{S}_{\aff}^{\vee}}B$ as an $\widehat{S}_{\aff}^{\vee}$-bimodule and $(M\otimes B)_{Q}^{A} = \bigoplus_{x\in W_{\aff}}M_{Q}^{Ax^{-1}}\otimes B_{Q}^{x}$.
Now we define $\mathcal{F}\star B = \mathcal{L}(\Gamma(\mathcal{F})\otimes B)$.

\begin{thm}[Theorem~\ref{thm:action of Soergel bimodule preserves BM sheaves}]
If $\mathcal{F}$ is a Braden-MacPherson sheaf on $\mathcal{A}$, then $\mathcal{F}\star B$ is also a Braden-MacPherson sheaf.
The operation $(\mathcal{F},B)\mapsto \mathcal{F}\star B$ defines an action of $\mathcal{S}$ on $\BM(\mathcal{A})$.
\end{thm}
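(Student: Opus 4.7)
The plan is to reduce the theorem to the case of a single Bott--Samelson bimodule $B_s$ for a simple reflection $s$, and then to a direct computation. By the realization of $\mathcal{S}$ used in \cite{MR4321542}, every Soergel bimodule $B$ is a direct summand of an iterated tensor product $B_{s_1}\otimes_{\widehat{S}_{\aff}^{\vee}}\cdots\otimes_{\widehat{S}_{\aff}^{\vee}}B_{s_n}$. So if I establish (i) that $\mathcal{F}\star B_s$ is a direct sum of BM sheaves for each simple reflection $s$, (ii) that there is a natural isomorphism $(\mathcal{F}\star B)\star B' \cong \mathcal{F}\star(B\otimes_{\widehat{S}_{\aff}^{\vee}}B')$, and (iii) that $\mathcal{F}\star \widehat{S}_{\aff}^{\vee}\cong \mathcal{F}$, then the theorem follows, using that $\BM(\mathcal{A})$ is closed under taking direct summands (the Krull--Schmidt property being available because $\widehat{\mathbb{K}}$ is complete local).

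For (ii) and (iii) I expect the proof to be formal. The localization functor $\mathcal{L}$ of \cite{MR2370278} is a section of $\Gamma$ on the relevant essential subcategory of $\EB(\mathcal{A})$, so $\Gamma(\mathcal{F}\star B)\cong \Gamma(\mathcal{F})\otimes B$. Combining this with the associativity of $\otimes_{\widehat{S}_{\aff}^{\vee}}$ and the identity
\[
\bigoplus_{z}M_Q^{Az^{-1}}\otimes (B\otimes B')_Q^z \;=\; \bigoplus_{x,y}M_Q^{A(xy)^{-1}}\otimes B_Q^x\otimes (B')_Q^y,
\]
which is built into the definition of $\otimes$ in $\EB$, one obtains the associator after applying $\mathcal{L}$. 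The unit isomorphism is immediate, and the pentagon and unit coherence diagrams then reduce to the corresponding diagrams in the monoidal category $\mathcal{S}$, which are already known.

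The substantive work is (i). Here I would use the standard presentation $B_s = \widehat{S}_{\aff}^{\vee}\otimes_{(\widehat{S}_{\aff}^{\vee})^s}\widehat{S}_{\aff}^{\vee}$ to compute, alcove by alcove, the stalk $(\mathcal{F}\star B_s)^A$ and the restriction maps along the outgoing edges explicitly in terms of the data of $\mathcal{F}$ at $A$ and at its wall-crossing partner across the wall stabilized by the image of $s$. The expected local formula mirrors the wall-crossing functor of \cite{MR2395170, MR3324922}, and once it is in hand, the defining properties of a BM sheaf---graded freeness of each stalk, and surjectivity of the global-sections-to-costalks map at each vertex---should follow by adapting their verification. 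The main obstacle is that $\mathcal{A}$ is not ideal finite, so the Braden--MacPherson algorithm cannot be invoked globally to identify the summands. To circumvent this I would restrict everything to an ideal-finite up-set $O_x$, where Fiebig's theory applies and $(\mathcal{F}\star B_s)|_{O_x}$ decomposes into BM summands. Because the defining conditions of a BM sheaf are local at each vertex and its outgoing edges, these decompositions are compatible as $x$ varies, and patching them over $\mathcal{A}=\bigcup_x O_x$ yields the required global decomposition of $\mathcal{F}\star B_s$ into BM sheaves.
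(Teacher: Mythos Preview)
Your overall architecture---reduce to $B_s$, identify $\mathcal{F}\star B_s$ with the wall-crossing functor $\theta_s\mathcal{F}$, verify the BM axioms for $\theta_s\mathcal{F}$, then induct over a Bott--Samelson word---is exactly the paper's. The paper makes the identification precise as Proposition~\ref{prop:action of BS module is the translation functor} and checks the BM axioms in Proposition~\ref{prop:translation preserves BM}, citing Lanini for flabbiness.

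There is, however, a real gap in your step~(ii). You assert that $\Gamma(\mathcal{F}\star B)\cong\Gamma(\mathcal{F})\otimes B$ is ``formal'' because $\mathcal{L}$ is a section of $\Gamma$ on the relevant subcategory. That is circular: the issue is precisely whether $\Gamma(\mathcal{F})\otimes B_s$ already lies in the subcategory on which $\Gamma\circ\mathcal{L}\simeq\id$. In the paper this is a genuine computation (Lemma~\ref{lem:s-stable global section of translation functor}), proved by analyzing the short exact sequence coming from $B_s\hookrightarrow(\widehat{R}_{\aff}^{\vee})^2$ and showing directly that $\Gamma(\theta_s\mathcal{F})=\{(a,b)\in\Gamma(\mathcal{F})^2\mid a-b\in\Gamma(\mathcal{F})\widetilde{\alpha}_s^{\vee}\}$. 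Only after this and Proposition~\ref{prop:action of BS module is the translation functor} does associativity follow (Proposition~\ref{prop:associativity of the action}); it is a consequence of the $B_s$ case, not an a priori input. Without this lemma your induction never gets off the ground.

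A smaller point: your final paragraph, about decomposing $(\mathcal{F}\star B_s)|_{O_x}$ into indecomposable BM summands and then patching the decompositions, is neither necessary nor clearly correct. You do not need a decomposition---$\theta_s\mathcal{F}$ is itself a BM sheaf---and the BM conditions are \emph{not} ``local at each vertex and its outgoing edges'': (BM3) and (BM4) involve global sections. What is true (and what the paper uses, Lemmas~\ref{lem:equivalence on flabby}--\ref{lem:BM is a limt of BM}) is that being BM on $\mathcal{A}$ is equivalent to being BM on every $O\in\bbopen(\mathcal{A})$; so working on bounded-below opens is the right move, but there is no patching of Krull--Schmidt decompositions involved.
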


Recall that the split Grothendieck group of $\mathcal{S}$ is isomorphic to the Hecke algebra $\mathcal{H}$ attached to $W_{\aff}$.
We prove that there exists an $\mathcal{H}$-module homomorphism from the split Grothendieck group of $\BM(\mathcal{A})$ to the completed periodic module (Theorem~\ref{thm:BM sheaves and periodic module}).

Next result is a stability of homomorphisms.
Let $\mathcal{F},\mathcal{G}\in \BM(\mathcal{A})$ be indecomposable sheaves.
A morphism $\varphi\colon \mathcal{F} \to \mathcal{G}$ is a collection of degree zero $\widehat{S}_{\aff}^{\vee}$-module homomorphisms $\varphi_{A}\colon \mathcal{F}^{A}\to \mathcal{G}^{A}$ for each $A\in \mathcal{A}$ and $\varphi_{E}\colon \mathcal{F}^{E}\to \mathcal{G}^{E}$ for each edge $E$ which are compatible with the homomorphisms $\rho^{\mathcal{F}}_{E,A},\rho^{\mathcal{G}}_{E,A}$ for all edges $E$ with an endpoint $A$.
Let $\Hom(\mathcal{F},\mathcal{G})$ be the space of morphisms from $\mathcal{F}$ to $\mathcal{G}$.
For $A\in \mathcal{A}$, set $O_{\ge A} = \{A'\in \mathcal{A}\mid A'\ge A\}$.
Let $\mathcal{F}|_{O_{\ge A}}$ be the restriction of $\mathcal{F}$ to $O_{\ge A}$,
This is defined by ignoring all vertices and edges outside $O_{\ge A}$.

\begin{thm}[Theorem~\ref{thm:finiteness}]
The restriction map $\Hom(\mathcal{F},\mathcal{G})\to \Hom(\mathcal{F}|_{O_{\ge A}},\mathcal{G}|_{O_{\ge A}})$ is an isomorphism if $A$ is sufficiently small.
\end{thm}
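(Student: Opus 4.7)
The plan is to reduce to indecomposable summands and then exploit the fact that an indecomposable Braden--MacPherson sheaf is generated, via the BM algorithm, from a rank-one free stalk at a single distinguished vertex; consequently morphisms out of such a sheaf are controlled by the image of a single generator in the target.

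By Krull--Schmidt in $\BM(\mathcal{A})$ and additivity of both $\Hom$ and the restriction functor $(-)|_{O_{\ge A}}$, I would first reduce to the case where $\mathcal{F}$ and $\mathcal{G}$ are indecomposable, with distinguished vertices $A_1, A_2 \in \mathcal{A}$. I then take $A$ small enough that $A \le A_1$ and $A \le A_2$, which is possible since the alcove poset is downward directed. Then $A_1,A_2 \in O_{\ge A}$, and interval-finiteness of $\mathcal{A}$ makes $O_{\ge A}$ ideal finite, so Fiebig's machinery applies and $\mathcal{F}|_{O_{\ge A}}$, $\mathcal{G}|_{O_{\ge A}}$ are themselves the indecomposable BM sheaves on $O_{\ge A}$ attached to $A_1$ and $A_2$, each generated by the canonical element of its stalk at the distinguished vertex.

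For injectivity, I would use that any morphism $\varphi \colon \mathcal{F} \to \mathcal{G}$ is determined by $\varphi^{A_1}(1) \in \mathcal{G}^{A_1}$: the BM algorithm produces $\mathcal{F}^C$ inductively from data at strictly higher vertices of $\supp\mathcal{F}$, so $\varphi^C$ is determined once $\varphi^{A_1}$ is. If $\varphi|_{O_{\ge A}} = 0$ then $\varphi^{A_1}(1) = 0$, hence $\varphi = 0$. For surjectivity, given $\psi \colon \mathcal{F}|_{O_{\ge A}} \to \mathcal{G}|_{O_{\ge A}}$, I would extract $m := \psi^{A_1}(1)$ and argue that it extends to a morphism $\tilde\varphi \colon \mathcal{F} \to \mathcal{G}$. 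This reduces to a system of local compatibility (admissibility) conditions on $m$ at each vertex of $\supp\mathcal{F}$; at vertices $C \ge A$ they hold because $\psi$ already exists, and at vertices $C \not\ge A$ they are produced automatically by propagating $\tilde\varphi$-values downward along the BM inductive construction from data already specified above.

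The main obstacle I anticipate is verifying that the BM construction of $\mathcal{F}$ above $A$ produces the same data whether performed on $\mathcal{A}$ or on the subposet $O_{\ge A}$, so that the admissibility conditions on $m$ coincide in both settings. Concretely, for each $C$ with $A \le C$ in $\supp\mathcal{F}$, one must check that the image of $\Gamma(\mathcal{F}|_{\{C' > C\}}) \to \prod_{E \ni C} \mathcal{F}^E$ defining $\mathcal{F}^C$ via the BM algorithm is insensitive to whether $\{C' > C\}$ is interpreted inside $\mathcal{A}$ or inside $O_{\ge A}$. I would handle this by descending induction on the finite interval $[A, A_1]$, using the flabbiness properties of BM sheaves and the compatibility of their restrictions established earlier in the paper. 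Controlling this propagation of admissibility conditions from $A_1$ down to $A$ is where the bulk of the technical work is expected to lie.
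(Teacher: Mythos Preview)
Your argument has the difficulty located in the wrong place. Surjectivity of the restriction map holds for \emph{any} open $O$ containing $A_1$: this is exactly Lemma~\ref{lem:surjective between hom} (which in turn rests on Proposition~\ref{prop:classification of BM sheaves}(1)). The entire content of the theorem is injectivity, and that is where your proposal breaks.

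Your injectivity claim---that a morphism $\varphi\colon \mathcal{F}\to\mathcal{G}$ is determined by $\varphi^{A_1}(1)$ because the BM algorithm propagates data downward---is false. In the BM construction, $\mathcal{F}^{C}$ is a projective cover of $\mathcal{F}^{\delta C}$, and a lift of a morphism $\mathcal{F}^{\delta C}\to\mathcal{G}^{\delta C}$ through a projective cover is \emph{not unique}. The ambiguity in choosing $\varphi^{C}$, given all $\varphi^{C'}$ for $C'>C$, is exactly $\Hom(\mathcal{F}^{C},\mathcal{G}^{[C]})$; this is the content of the filtration in the proof of Lemma~\ref{lem:graded rank of hom of BM sheaves}. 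Hence the kernel of the restriction to $O_{\ge A}$ is built from the spaces $\Hom(\mathcal{F}^{C},\mathcal{G}^{[C]})$ for $C\not\ge A$, and there is no formal reason these should vanish.

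The paper's proof shows that, in degree zero, $\Hom(\mathcal{F}^{C},\mathcal{G}^{[C]})=0$ once $\ell(C)$ is sufficiently small. This requires the two graded-rank estimates of Lemma~\ref{lem:for finiteness}: a lower bound on the degrees appearing in $\mathcal{F}^{C}$ and an upper bound on those in $\mathcal{G}^{[C]}$, so that for small $\ell(C)$ no degree-zero pairing survives. Establishing these bounds is the hard part and uses substantial input: Lanini's comparison with $W_{\aff,0}\backslash W_{\aff}$ (Corollary~\ref{cor:Corollary of Lanini}), Kazhdan--Lusztig combinatorics, and even representation theory in characteristic $p$ (the Steinberg module, via Lemma~\ref{lem:N does not depend on p}). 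None of this can be replaced by a formal argument with the BM algorithm.
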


\subsection*{Acknowledgment}
The author was supported by JSPS KAKENHI Grant Number 23H01065.

\section{Moment graphs of alcoves}

\subsection{Notation}
Let $M = \bigoplus_{i\in\Z}M^i$ be a graded object, e.g., a graded left module over a graded algebra, a graded vector space, etc.
Then the grading shift $M(n)$ is defined as $M(n)^i = M^{i + n}$.
Let $R$ be a commutative graded non-zero ring and $M$ a left or right graded $R$-module.
We say $M$ is graded free if it is isomorphic to $\bigoplus_i R(n_i)$ for some $n_1,\ldots,n_r\in \Z$.
(Hence, ``graded free'' means graded free of finite rank.)
In this case we put $\grk(M) = \sum v^{n_i}\in \Z[v,v^{-1}]$ and we call $\grk(M)$ the graded rank of $M$, here $v$ is an indeterminate.

Let $\widehat{\Coef}$ be a complete local noetherian ring of mixed characteristic $(0,p)$ with $p > 0$ and $\Coef$ the residue field of $\widehat{\Coef}$.
We always assume that $p\ne 2$.

Let $A$ be a commutative ring, $B$ a commutative $A$-algebra and $\mathcal{C}$ a category.
If $\mathcal{C}$ is an $A$-category, then we define a new category $B\otimes_{A}\mathcal{C}$ as follows.
Objects in $B\otimes_{A}\mathcal{C}$ are the same as those of $\mathcal{C}$ and we put $\Hom_{B\otimes_{A}\mathcal{C}}(X,Y) = B\otimes_{A}\Hom_{\mathcal{C}}(X,Y)$.
If it is natural to think $\Hom$ as a right $A$-module, we write $\mathcal{C}\otimes_{A}B$.
Assume that $A$ and $B$ are graded, $\mathcal{C}$ is graded and $\bigoplus_{n\in\Z}\Hom_{\mathcal{C}}(X,Y(n))$ is a graded $A$-module for any objects $X,Y$ of $\mathcal{C}$.
Then we define $\Hom_{B\otimes_{A}\mathcal{C}}(X,Y)$ be the $0$-degree part of $B\otimes_{A}\bigoplus_{n\in\Z}\Hom(X,Y(n))$.
We also use a notation $\mathcal{C}\otimes_{A}B$.

\subsection{Moment graphs and sheaves}
We introduce notation about moment graphs.
See \cite{MR2370278} for the details.
Let $\mathbb{Y}$ be a free $\widehat{\Coef}$-module of finite rank, $S(\mathbb{Y})$ the symmetric algebra.
The algebra $S(\mathbb{Y})$ is a graded $\widehat{\Coef}$-algebra via $\deg(\mathbb{Y}) = 2$.
Let $\mathcal{V}$ be a $\mathbb{Y}$-labeled moment graph~\cite[2.1]{MR2370278}.
We often write the set of vertices of $\mathcal{V}$ by the same letter $\mathcal{V}$.
For each edge $E$ in $\mathcal{V}$, let $\alpha_{E}\in \mathbb{Y}$ be the label on $E$.
A sheaf $\mathcal{F}$ on $\mathcal{V}$ is $\mathcal{F} = ((\mathcal{F}^{x}),(\mathcal{F}^{E}),(\rho_{E,x}^{\mathcal{F}}))$ where $\mathcal{F}^{x}$ is a graded $S(\mathbb{Y})$-module for each vertex $x$ of $\mathcal{V}$, $\mathcal{F}^{E}$ is a graded $S(\mathbb{Y})/\alpha_{E}S(\mathbb{Y})$-module for each edge $E$ of $\mathcal{V}$ and $\rho_{E,x}^{\mathcal{F}}\colon \mathcal{F}^{x}\to \mathcal{F}^{E}$ is an $S(\mathbb{Y})$-module homomorphism of degree zero where an edge $E$ connects $x$ with another vertex.
For sheaves $\mathcal{F},\mathcal{G}$ on $\mathcal{V}$, a morphism $\varphi\colon \mathcal{F}\to \mathcal{G}$ is a collection of $S(\mathbb{Y})$-module homomorphisms $\varphi^{x}\colon \mathcal{M}^{x}\to \mathcal{N}^{x}$ for each vertex $x$ and $\varphi^{E}\colon \mathcal{F}^{E}\to \mathcal{G}^{E}$ for each edge $E$ such that $\rho_{E,x}^{\mathcal{G}}\circ f^{x} = f^{E}\circ\rho_{E,x}^{\mathcal{F}}$ if $x$ is an endpoint of $E$.
We put $\supp \mathcal{F} = \{x\in \mathcal{V}\mid \mathcal{F}^{x}\ne 0\}$ and call it the support of $\mathcal{F}$.
A subset $X\subset \mathcal{V}$ is called bounded above (resp.\ bounded below) if there exists $x$ such that $X\subset \{y\in \mathcal{V}\mid y\le x\}$ (resp.\ $X\subset \{y\in \mathcal{V}\mid y\ge x\}$).
In this paper, for a sheaf on a moment graph, we always assume that $\supp \mathcal{F}$ is bounded above.
Let $\Sh(\mathcal{V})$ be the category of sheaves on $\mathcal{V}$.
For a vertex $x$ of $\mathcal{V}$, set $\mathcal{F}^{[x]} = \Ker(\mathcal{F}^{x}\to \bigoplus_{E}\mathcal{F}^{E})$ where $E$ runs through all edges of $\mathcal{V}$ connecting $x$ and a certain $y > x$.
The space of global sections $\Gamma(\mathcal{F})$ is the set of $(a_{x})\in \prod_{x\in \mathcal{V}}\mathcal{F}^{x}$ such that $\rho_{E,x}^{\mathcal{F}}(a_{x}) = \rho_{E,y}^{\mathcal{F}}(a_{y})$ for all $E,x,y$ such that $E$ connects $x$ with $y$.

Let $X$ be a subset of the set of vertices of $\mathcal{V}$. (We simply write $X\subset \mathcal{V}$ and say that $X$ is a subset of $\mathcal{V}$.)
Then $X$ has a induced structure of a moment graph: an edge $E$ of $X$ is an edge of $\mathcal{V}$ such that both endpoints are in $X$.
For a sheaf $\mathcal{F}$ on $\mathcal{V}$, by ignoring vertices and edges outside $X$, we have a sheaf $\mathcal{F}|_{X}$ on $\mathcal{V}$.
The sheaf $\mathcal{F}|_{X}$ is called the restriction of $\mathcal{F}$ to $X$.
We put $\Gamma(X,\mathcal{F}) = \Gamma(\mathcal{F}|_{X})$.
For each $a = (a_{x})_{x\in \mathcal{V}}\in \Gamma(\mathcal{F})$, we put $a|_{X} = (a_{x})_{x\in X}\in \Gamma(X,\mathcal{F})$.

We define a topology on $\mathcal{V}$ by declaring that $X\subset \mathcal{V}$ is open if $x\in X$, $y\in \mathcal{V}$, $y\ge x$ implies $y\in X$.
Let $\bbopen(\mathcal{V})$ be the set of bounded below open subsets.
We have $\bigcup_{O\in\bbopen(\mathcal{V})}O = \mathcal{V}$.

Assume that for any $x,y\in \mathcal{V}$ the set $\{z\in \mathcal{V}\mid x\le z\le y\}$ is finite.
We also assume that for any $x,y\in \mathcal{V}$ there exist $z,z'\in \mathcal{V}$ such that $z\le x,y\le z'$.
In particular, for any sheaf $\mathcal{F}$ on $\mathcal{V}$ and $O\in \bbopen$, $O\cap \supp \mathcal{F}$ is finite.
\begin{defn}
A sheaf $\mathcal{F}$ on $\mathcal{V}$ is called a Braden-MacPherson sheaf if it satisfies the following.
\begin{enumerate}
\item[(BM1)] For any $x\in \mathcal{V}$, $\mathcal{F}^{x}$ is a graded free $S(\mathbb{Y})$-module.
\item[(BM2)] Let $E$ be an edge connecting $x$ with $y < x$. Then $\rho_{E,x}^{\mathcal{F}}$ is surjective and the kernel is $\mathcal{F}^{x}\alpha_{E}$.
\item[(BM3)] $\mathcal{F}$ is flabby, namely $\Gamma(\mathcal{F})\to \Gamma(U,\mathcal{F})$ is surjective for any open subset $U\subset \mathcal{V}$.
\item[(BM4)] For any $x\in \mathcal{F}$, $\Gamma(\mathcal{F})\to \mathcal{F}^{x}$ is surjective.
\end{enumerate}
\end{defn}

\begin{lem}\label{lem:equivalence on flabby}
Let $\mathcal{F}$ be a sheaf on $\mathcal{V}$.
Then $\mathcal{F}$ is flabby if and only if $\mathcal{F}|_{O}$ is flabby for any $O\in \bbopen(\mathcal{V})$.
\end{lem}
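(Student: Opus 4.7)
The plan is to prove the two directions separately. The ``only if'' direction is essentially formal: since $O$ is itself open in $\mathcal{V}$ (i.e., upward closed), a subset $U' \subset O$ is open in the subspace topology on $O$ if and only if it is open in $\mathcal{V}$, and in particular $\Gamma(U', \mathcal{F}|_O) = \Gamma(U', \mathcal{F})$. Given $a \in \Gamma(U', \mathcal{F}|_O)$, flabbiness of $\mathcal{F}$ provides a global lift $\tilde{a} \in \Gamma(\mathcal{F})$, and its restriction $\tilde{a}|_O$ lies in $\Gamma(O, \mathcal{F}) = \Gamma(\mathcal{F}|_O)$ and extends $a$.

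For the converse, I plan to use a Zorn's lemma argument. Fix an open $U \subset \mathcal{V}$ and $a \in \Gamma(U, \mathcal{F})$, and consider the poset $\mathcal{P}$ of pairs $(V, b)$ with $V \supseteq U$ open in $\mathcal{V}$ and $b \in \Gamma(V, \mathcal{F})$ extending $a$, ordered by $(V_1, b_1) \le (V_2, b_2)$ iff $V_1 \subset V_2$ and $b_2|_{V_1} = b_1$. Every chain admits an obvious upper bound by taking the union of the $V$'s and assembling the $b$'s component-wise, so Zorn produces a maximal $(V_{\max}, b_{\max})$. The goal is then to force $V_{\max} = \mathcal{V}$. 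Otherwise, pick $x \in \mathcal{V} \setminus V_{\max}$, and use the standing assumption that any two vertices share a common lower bound to choose $z \le x$, so that $O := \{y \in \mathcal{V} \mid y \ge z\} \in \bbopen(\mathcal{V})$ contains $x$. Since $V_{\max} \cap O$ is open in $O$ and $\mathcal{F}|_O$ is flabby, $b_{\max}|_{V_{\max} \cap O}$ extends to some $c \in \Gamma(O, \mathcal{F})$; gluing $b_{\max}$ and $c$ along their common domain yields a section over $V_{\max} \cup O \supsetneq V_{\max}$, contradicting maximality.

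The one step that deserves care, and which I expect to be the main obstacle, is the verification that the glued tuple really defines a section of $\mathcal{F}$, i.e., that the edge compatibilities hold across the seam between $V_{\max}$ and $O$. The key observation is that both $V_{\max}$ and $O$ are upward closed while edges in a moment graph join comparable vertices: for any edge of $V_{\max} \cup O$, if its lower endpoint $x$ lies in $V_{\max}$ then the upper endpoint $y \ge x$ does as well, and similarly for $O$. Hence every edge of $V_{\max} \cup O$ lies entirely in $V_{\max}$ or entirely in $O$, and the required compatibility is inherited from $b_{\max}$ or from $c$ as appropriate; no genuinely mixed edges occur, which makes the gluing straightforward.
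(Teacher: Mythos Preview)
Your proposal is correct and follows essentially the same approach as the paper: the easy direction via restriction of a global lift, and the converse via a Zorn's lemma argument with gluing over $V_{\max} \cup O$, using that both $V_{\max}$ and $O$ are upward closed so every edge lies entirely in one of them. One harmless redundancy: you invoke the common-lower-bound hypothesis to pick $z \le x$, but the paper simply takes $O = \{y \mid y \ge x\}$, which is already in $\bbopen(\mathcal{V})$ since it is bounded below by $x$ itself.
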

\begin{proof}
For any $O\in \bbopen(\mathcal{V})$ and an open subset $U\subset O$, we have a sequence $\Gamma(\mathcal{F})\to \Gamma(O,\mathcal{F})\to \Gamma(U,\mathcal{F})$.
If $\mathcal{F}$ is flabby, then the composition is surjective.
Hence $\Gamma(O,\mathcal{F})\to \Gamma(U,\mathcal{F})$ is surjective.
Therefore $\mathcal{F}|_{O}$ is flabby.

Assume that $\mathcal{F}|_{O}$ is flabby for any $O\in\bbopen(\mathcal{V})$ and prove that $\mathcal{F}$ is flabby.
Let $U\subset \mathcal{V}$ be an open subset and $f\in \Gamma(U,\mathcal{F})$.
We extend $f$ to $\mathcal{V}$.
Consider a pair $(g,V)$ such that $V\subset \mathcal{V}$ is an open subset containing $U$ and $g\in \Gamma(V,\mathcal{F})$ such that $g|_{U} = f$.
We write $(g_{1},V_{1})\ge (g_{2},V_{2})$ if $V_{1}\supset V_{2}$ and $g_{1}|_{V_{2}} = g_{2}$.
By Zorn's lemma, there exists a maximal pair $(g,V)$.
We prove $V = \mathcal{V}$.
Assume that $V\ne \mathcal{V}$ and take $x\in \mathcal{V}\setminus V$.
Set $O = \{y\in \mathcal{V}\mid y\ge x\}$.
Since $\mathcal{F}|_{O}$ is flabby, there exists $g' = (g'_{x})_{x\in O}\in \Gamma(O,\mathcal{F})$ such that $g'|_{V\cap O} = g|_{V\cap O}$.
For each $y\in O\cup V$, we define $h_{y}\in \mathcal{F}^{y}$ as follows.
If $y\in V$, then $h_{y} = g_{y}$, otherwise $h_{y} = g'_{y}$.
Let $E$ be an edge connecting $y$ with $y' > y$.
If $y\in V$, then, since $V$ is open, $y'\in V$.
Hence $E$ is an edge in $V$.
Since $g$ is a section in $V$, we have $\rho^{\mathcal{F}}_{E,y}(h_{y}) = \rho^{\mathcal{F}}_{E,y'}(h_{y'})$ since $g_{y} = h_{y},g_{y'} = h_{y'}$.
Similarly, if $y\in O$, then $E$ is an edge in $O$, and hence $\rho^{\mathcal{F}}_{E,y}(h_{y}) = \rho^{\mathcal{F}}_{E,y'}(h_{y'})$.
Therefore $h\in \Gamma(V\cup O,\mathcal{F})$.
This contradicts the maximality of $(g,V)$.
Therefore $V = \mathcal{V}$.
Hence $f$ extends to $\mathcal{V}$.
\end{proof}

\begin{lem}\label{lem:BM is a limt of BM}
$\mathcal{F}\in \Sh(\mathcal{V})$ is a Braden-MacPherson sheaf if and only if $\mathcal{F}|_{O}$ is a Braden-MacPherson sheaf for any $O\in \bbopen(\mathcal{V})$.
\end{lem}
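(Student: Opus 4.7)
The plan is to verify each of the four axioms (BM1)--(BM4) separately, observing first that for every vertex $x\in\mathcal{V}$ the upward-closed set $O_{\ge x}=\{y\in\mathcal{V}\mid y\ge x\}$ lies in $\bbopen(\mathcal{V})$ (it is open and bounded below by $x$), and that if $E$ is an edge connecting $x$ with $y<x$, then both endpoints are contained in $O_{\ge y}\in\bbopen(\mathcal{V})$. Consequently every vertex, and every edge, is contained in some bounded-below open set.

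Conditions (BM1) and (BM2) are purely local: the stalk $\mathcal{F}^x$, the edge module $\mathcal{F}^E$, and the structure map $\rho^{\mathcal{F}}_{E,x}$ coincide with their counterparts for $\mathcal{F}|_O$ whenever $O\in\bbopen(\mathcal{V})$ contains the relevant vertex or both endpoints of the edge. Hence (BM1) and (BM2) pass freely between $\mathcal{F}$ and the family $(\mathcal{F}|_O)_{O\in\bbopen(\mathcal{V})}$, using the open sets $O_{\ge x}$ and $O_{\ge y}$ identified above. Condition (BM3) is exactly the content of Lemma~\ref{lem:equivalence on flabby}.

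The only axiom that requires genuine argument is (BM4). For the ``only if'' direction, fix $O\in\bbopen(\mathcal{V})$ and $x\in O$. The restriction map $\Gamma(\mathcal{F})\to\mathcal{F}^x$ factors as $\Gamma(\mathcal{F})\to\Gamma(O,\mathcal{F})\to\mathcal{F}^x$, and the composition is surjective by (BM4) for $\mathcal{F}$, so the second map $\Gamma(O,\mathcal{F})\to\mathcal{F}^x$ is surjective, proving (BM4) for $\mathcal{F}|_O$. For the converse, suppose each $\mathcal{F}|_O$ is Braden-MacPherson. Given $x\in\mathcal{V}$ and $a\in\mathcal{F}^x$, apply (BM4) for the sheaf $\mathcal{F}|_{O_{\ge x}}$ to obtain a section $s\in\Gamma(O_{\ge x},\mathcal{F})$ with $s_x=a$. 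Since each $\mathcal{F}|_{O'}$ satisfies (BM3) by hypothesis, Lemma~\ref{lem:equivalence on flabby} implies that $\mathcal{F}$ itself is flabby, so $s$ extends to a global section $\widetilde{s}\in\Gamma(\mathcal{F})$ with $\widetilde{s}_x=a$, yielding (BM4) for $\mathcal{F}$.

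The main subtlety lies in this converse direction of (BM4): one cannot get surjectivity onto the stalk $\mathcal{F}^x$ by working with a single open set, since in general no $O\in\bbopen(\mathcal{V})$ equals $\mathcal{V}$. The resolution is to combine the local surjectivity provided by (BM4) on $O_{\ge x}$ with the previously established global flabbiness of $\mathcal{F}$; once Lemma~\ref{lem:equivalence on flabby} is in hand, this combination is immediate.
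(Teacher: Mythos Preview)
Your proposal is correct and follows essentially the same approach as the paper's proof: both handle (BM1), (BM2) as local conditions, invoke Lemma~\ref{lem:equivalence on flabby} for (BM3), and for (BM4) combine the surjectivity $\Gamma(O,\mathcal{F})\to\mathcal{F}^x$ coming from the hypothesis on $\mathcal{F}|_O$ with the flabbiness of $\mathcal{F}$ (established via Lemma~\ref{lem:equivalence on flabby}) to obtain surjectivity of $\Gamma(\mathcal{F})\to\mathcal{F}^x$. Your version is slightly more explicit in naming the particular open set $O_{\ge x}$ and tracking a specific element $a$, but the argument is the same.
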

\begin{proof}
The sheaf $\mathcal{F}$ satisfies (BM1) (resp.\ (BM2), (BM3)) if and only if $\mathcal{F}|_{O}$ satisfies (BM1) (resp.\ (BM2), (BM3)) for any $O\in \bbopen(\mathcal{V})$.
For (BM1) and (BM2), this is easy and (BM3) follows from Lemma~\ref{lem:equivalence on flabby}.
Assume that $\mathcal{F}$ satisfies (BM4).
Let $O\in \bbopen(\mathcal{V})$ and $x\in O$.
Then we have $\Gamma(\mathcal{F})\to \Gamma(O,\mathcal{F})\to \mathcal{F}^{x}$.
The composition is surjective, hence $\Gamma(O,\mathcal{F})\to \mathcal{F}^{x}$ is also surjective.
Therefore $\mathcal{F}|_{O}$ satisfies (BM4).

On the other hand, assume that $\mathcal{F}|_{O}$ is a Braden-MacPherson sheaf for any $O\in \bbopen(\mathcal{V})$.
Let $x\in \mathcal{V}$ and take $O\in\bbopen(\mathcal{V})$ such that $x\in O$.
Then we have $\Gamma(\mathcal{F})\to \Gamma(\mathcal{F}|_{O}) \to \mathcal{F}^{x}$.
The first map is surjective since $\mathcal{F}$ is flabby and the second is surjective since $\mathcal{F}|_{O}$ satisfies (BM4).
We get (BM4).
\end{proof}

\begin{prop}\label{prop:classification of BM sheaves}
For each vertex $x$, there exists an indecomposable Braden-MacPherson sheaf $\mathcal{B}^{\mathcal{V}}(x)$ such that $\supp\mathcal{B}^{\mathcal{V}}(x)\subset \{y\in \mathcal{V}\mid y\le x\}$ and $\mathcal{B}^{\mathcal{V}}(x)^{x}\simeq S(\mathbb{Y})$.
Moreover, this sheaf is unique up to isomorphism.
These sheaves satisfy the following.
\begin{enumerate}
\item For a sheaf $\mathcal{F}\in \Sh(\mathcal{V})$ satisfying (BM3), (BM4) and an open subset $U\subset \mathcal{V}$, the map $\Hom_{\Sh(\mathcal{V})}(\mathcal{B}^{\mathcal{V}}(x),\mathcal{F}) \to \Hom_{\Sh(U)}(\mathcal{B}^{\mathcal{V}}(x)|_{U},\mathcal{F}|_{U})$ is surjective.
\item Let $\mathcal{F}\in \Sh(\mathcal{V})$ be a Braden-MacPherson sheaf. Assume that $x$ is a maximal element in $\supp \mathcal{F}$.
Then $\mathcal{B}^{\mathcal{V}}(x)(n)$ is a direct summand of $\mathcal{F}$ for some $n\in \Z$.
\item If $\{y\in \mathcal{V}\mid y\le x\}$ is finite for all $x\in \mathcal{V}$, then any Braden-MacPherson sheaf is a direct sum of $\mathcal{B}^{\mathcal{V}}(x)(n)$ where $x \in \mathcal{V}$ and $n\in\Z$.
\end{enumerate}
\end{prop}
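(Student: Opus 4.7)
The plan is to reduce everything to Fiebig's ideal finite theory~\cite{MR2370278} by combining it with the local criterion of Lemma~\ref{lem:BM is a limt of BM}. The key observation is that although $\{y\le x\}$ may be infinite, for every $O\in\bbopen(\mathcal{V})$ containing $x$ there is some $z_{0}$ with $O\subset\{z\ge z_{0}\}$, so $\{y\le x\}\cap O\subset\{z\mid z_{0}\le z\le x\}$ is finite by interval finiteness. Hence the standard Braden-MacPherson algorithm on $O$ yields an indecomposable Braden-MacPherson sheaf $\mathcal{B}^{O}(x)$ on $O$, supported in $\{y\le x\}\cap O$, with stalk $S(\mathbb{Y})$ at $x$, and unique up to isomorphism.

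Next I would glue these into a sheaf on $\mathcal{V}$. For $O_{1}\supset O_{2}$ in $\bbopen(\mathcal{V})$ both containing $x$, the restriction $\mathcal{B}^{O_{1}}(x)|_{O_{2}}$ satisfies the defining properties of $\mathcal{B}^{O_{2}}(x)$, so uniqueness provides an isomorphism; normalizing to fix a chosen generator of the stalk at $x$ makes the system coherent. Pasting yields $\mathcal{B}^{\mathcal{V}}(x)$ on $\mathcal{V}=\bigcup_{O}O$; Lemma~\ref{lem:BM is a limt of BM} certifies that it is Braden-MacPherson, indecomposability transfers from any $\mathcal{B}^{O}(x)$ with $x\in O$, and the support and stalk conditions are immediate.

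Assertion (1) I would prove by Zorn's lemma in parallel with the proof of Lemma~\ref{lem:equivalence on flabby}. Consider pairs $(V,\psi)$ with $V\supset U$ open in $\mathcal{V}$ and $\psi\colon \mathcal{B}^{\mathcal{V}}(x)|_{V}\to \mathcal{F}|_{V}$ extending the given morphism, ordered by restriction. For a maximal pair, if some $z\notin V$ exists, put $O=\{y\ge z\}$ and invoke the ideal finite version of (1)---itself proved by downward induction on the finite set $\{y\le x\}\cap O$ using (BM1)-(BM4) of $\mathcal{F}|_{O}$ at each step---to extend $\psi|_{V\cap O}$ to a morphism on $O$; pasting contradicts maximality. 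Uniqueness of $\mathcal{B}^{\mathcal{V}}(x)$ then follows by applying (1) in both directions between two candidates: the composite is an endomorphism fixing the stalk at $x$, hence a unit in the local ring $\End(\mathcal{B}^{\mathcal{V}}(x))$.

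For (2), maximality of $x$ in $\supp\mathcal{F}$ and (BM1) make $\mathcal{F}^{x}$ graded free; a generator of a summand of smallest degree $-n$ lifts to a global section by (BM4), and (1) produces a morphism $\mathcal{B}^{\mathcal{V}}(x)(n)\to \mathcal{F}$ that is split on the stalk at $x$. A standard Nakayama propagation through the edges using (BM2) shows that such a map is itself split, giving the summand. Statement (3) then follows by induction on $|\supp\mathcal{F}|$, which is finite under the hypothesis since $\supp\mathcal{F}$ is bounded above and all lower sets $\{y\le x\}$ are finite: iterate (2) to decompose $\mathcal{F}$ into shifts of the $\mathcal{B}^{\mathcal{V}}(y)$'s. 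The main technical obstacles I foresee are the coherent normalization in the gluing step and the splitting propagation in (2); both are routine within Fiebig's theory for ideal finite graphs, and the point of the argument is that the non-ideal-finiteness of $\mathcal{V}$ is circumvented by the local-to-global Lemma~\ref{lem:BM is a limt of BM}.
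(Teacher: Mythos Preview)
Your overall strategy---reduce to the ideal-finite case via Lemma~\ref{lem:BM is a limt of BM}, then run a Zorn-style extension argument for (1) and a splitting argument for (2)---is the same as the paper's. The differences are in execution, and one of them is a genuine gap.

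The gap is in your gluing step. You construct $\mathcal{B}^{O}(x)$ for each $O\in\bbopen(\mathcal{V})$ and assert that ``normalizing to fix a chosen generator of the stalk at $x$ makes the system coherent.'' This is not justified: an automorphism of $\mathcal{B}^{O}(x)$ that is the identity on the stalk at $x$ need not be the identity globally (once some lower stalk $\mathcal{B}^{O}(x)^{y}$ has rank $\ge 2$, there can be unipotent automorphisms fixing $\mathcal{B}^{O}(x)^{\delta y}$), so the transition isomorphisms $\phi_{O_{1},O_{2}}$ are not uniquely determined by the normalization, and the cocycle condition for $O_{1}\supset O_{2}\supset O_{3}$ does not follow. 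The paper avoids this entirely by running the Braden--MacPherson algorithm \emph{directly on $\mathcal{V}$}: the inductive step at each $y\le x$ (defining $\mathcal{B}^{\mathcal{V}}(x)^{y}$ as a projective cover of $\mathcal{B}^{\mathcal{V}}(x)^{\delta y}$) only uses data from the finite interval $\{z\mid y\le z\le x\}$, so the global construction is well-posed without any gluing. Lemma~\ref{lem:BM is a limt of BM} is then used only to \emph{verify} that the result is Braden--MacPherson, not to build it.

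Two smaller points. For (2), your ``standard Nakayama propagation'' is doing real work and should be unpacked: one needs that $\mathcal{B}^{\mathcal{V}}(x)^{y}\to\mathcal{B}^{\mathcal{V}}(x)^{\delta y}$ is a projective cover so that any lift of an isomorphism on $\mathcal{B}^{\mathcal{V}}(x)^{\delta y}$ is again an isomorphism; the paper makes this explicit by constructing $i$ and $p$ simultaneously via a Zorn argument parallel to (1). Finally, you deduce uniqueness from (1), whereas the paper deduces it from (2); both routes work, but (2) gives it immediately once you know the indecomposables are the $\mathcal{B}^{\mathcal{V}}(x)$.
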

\begin{proof}
Recall the construction in the proof of \cite[Theorem~6.4]{MR3330913} (Braden-MacPherson algorithm).
Let $y\in \mathcal{V}$ and let $\mathcal{E}_{\delta y}$ be the set of edges $E$ which connects $y$ with $z > y$ and $\mathcal{V}_{\delta y}$ the set of endpoints of $E\in \mathcal{E}_{\delta y}$ which is not $y$.
For a sheaf $\mathcal{F}$, we define $\mathcal{F}^{\delta y}$ as the image of
\[
\Gamma(\{z\in\mathcal{V} \mid z > y\},\mathcal{F})\subset \bigoplus_{z > y}\mathcal{F}^{z}\to \bigoplus_{z\in \mathcal{V}_{\delta y}}\mathcal{F}^{z}\xrightarrow{(\rho^{\mathcal{F}}_{E,z})}\bigoplus_{E\in \mathcal{E}_{\delta y}}\mathcal{F}^{E}.
\]
Then $\mathcal{B}^{\mathcal{V}}(x)$ is constructed inductively as follows:
\begin{enumerate}\renewcommand{\labelenumi}{(\alph{enumi})}
\item $\mathcal{B}^{\mathcal{V}}(x)^{x} = S(\mathbb{Y})$.
\item $\mathcal{B}^{\mathcal{V}}(x)^{E} = \mathcal{B}^{\mathcal{V}}(x)^{z} / \mathcal{B}^{\mathcal{V}}(x)^{z}\alpha_{E}$ where $E$ is an edge connecting $z$ with $y < z$.
\item $\mathcal{B}^{\mathcal{V}}(x)^{y}$ is a projective cover of $\mathcal{B}^{\mathcal{V}}(x)^{\delta y}$.
\end{enumerate}
By \cite[Theorem~6.4]{MR3330913}, $\mathcal{B}^{\mathcal{V}}(x)|_{O}$ is a Braden-MacPherson sheaf for any $O\in\bbopen(\mathcal{V})$.
By Lemma~\ref{lem:BM is a limt of BM}, $\mathcal{B}^{\mathcal{V}}(x)$ is a Braden-MacPherson sheaf.
It is easy to see that $\supp \mathcal{B}^{\mathcal{V}}(x)\subset\{y\in \mathcal{V}\mid y\le x\}$.

We remark that, for an edge $E$ connecting $y$ with $z > y$, if $\mathcal{B}^{\mathcal{V}}(x)^{E} \ne 0$, then $y,z\le x$.
Indeed, by (b), we have $\mathcal{B}^{\mathcal{V}}(x)^{z} \ne 0$, hence $z\le x$.
This implies $y\le x$.

Assume that $\mathcal{B}^{\mathcal{V}}(x)$ is decomposed as $\mathcal{F} \oplus \mathcal{G}$.
We may assume $\mathcal{F}^{x} \ne 0$.
Then for any $O\in\bbopen(\mathcal{V})$, we have $\mathcal{B}^{\mathcal{V}}(x)|_{O} = \mathcal{F}|_{O} \oplus \mathcal{G}|_{O}$.
The left hand side is indecomposable by \cite[Theorem~6.4]{MR3330913}.
Hence $\mathcal{G}|_{O} = 0$ for any $O \in \bbopen(\mathcal{V})$.
This implies $\mathcal{G} = 0$.
Hence $\mathcal{B}^{\mathcal{V}}(x)$ is indecomposable.

We prove (1).
Let $f\colon \mathcal{B}^{\mathcal{V}}(x)|_{U}\to \mathcal{F}|_{U}$.
First we prove that $f$ can be extended to $U\cup \{y\in \mathcal{V}\mid y\not\le x\}$ simply defining as $f^{y} = 0$ for $y\not\le x$ and $f^{E} = 0$ if one of endpoints of $E$ is not less than or equal to $x$.
(Note that $y\not\le x$ implies $y\notin \supp \mathcal{B}^{\mathcal{V}}(x)$.)
Let $y\in \mathcal{V}$ such that $y\not\le x$ and $E$ be an edge connecting $y$ with another vertex.
Then by the construction of $\mathcal{B}^{\mathcal{V}}(x)$, $\mathcal{B}^{\mathcal{V}}(x)^{E} = 0$.
Hence, the compatibility with $\rho_{E,x}^{\mathcal{B}^{\mathcal{V}}(x)^{E}}$ and $\rho_{E,x}^{\mathcal{F}}$ obviously holds.
Therefore, we may assume $(\mathcal{V}\setminus\supp\mathcal{B}^{\mathcal{V}}(x))\subset U$.

As in the proof of Lemma~\ref{lem:equivalence on flabby}, we can take a maximal pair $(V,g)$ where $g\colon \mathcal{B}^{\mathcal{V}}(x)|_{V}\to \mathcal{F}|_{V}$ extends $f$.
Assume $V\ne \mathcal{V}$.
Since $\mathcal{V}\setminus V\subset \mathcal{V}\setminus U\subset \supp\mathcal{B}^{\mathcal{V}}(x) \subset \{y\mid y\le x\}$, we can take a maximal element $y\in \mathcal{V}\setminus V$.
Set $V' = V\cup \{y\}$.
Then this is open.
We prove that $g$ extends to $V'$ to get a contradiction.
The map $g$ on $V$ induces $\mathcal{B}^{\mathcal{V}}(x)^{\delta y}\to \mathcal{F}^{\delta y}$.
Since $\mathcal{B}^{\mathcal{V}}(y)\to \mathcal{B}^{\mathcal{V}}(x)^{\delta y}$ is a projective cover, to extend $g$ to $V'$, it is sufficient to prove that $\mathcal{F}^{y}\to \mathcal{F}^{\delta y}$ is surjective.
This is \cite[Lemma~6.1, Lemma~6.2]{MR3330913}.

The proof of (2) is similar.
Since $\mathcal{F}^{x}$ is free, there exists $n\in \Z$ such that $\mathcal{B}^{\mathcal{V}}(x)^{x}(n) = S(\mathbb{Y})(n)$ is a direct summand of $\mathcal{F}^{x}$.
We may assume $n = 0$ and we prove that $\mathcal{B}^{\mathcal{V}}(x)$ is a direct summand of $\mathcal{F}$.
Let $(V,i,p)$ be the maximal triple where $V$ is an open subset, $i\colon \mathcal{B}^{\mathcal{V}}(x)|_{V}\to \mathcal{F}|_{V}$ and $p\colon\mathcal{F}|_{V}\to \mathcal{B}^{\mathcal{V}}(x)|_{V}$ are homomorphism such that $p\circ i = \id$.
Again we may assume $\{y\in \mathcal{V}\mid y\not\le x\}\subset V$ and take a maximal element $y\in \mathcal{V}\setminus V$.
Let $E$ be an edge connecting $y$ with $z > y$.
Then, $\mathcal{B}^{\mathcal{V}}(x)^{E} \simeq \mathcal{B}^{\mathcal{V}}(x)^{z}/ \mathcal{B}^{\mathcal{V}}(x)^{E}\alpha_{E} $ and $\mathcal{F}^{E} \simeq \mathcal{F}^{z}/ \mathcal{F}^{E}\alpha_{E} $ by (BM2).
Hence, $i^{z},p^{z}$ induces $i^{E}\colon \mathcal{B}^{\mathcal{V}}(x)^{E}\to \mathcal{F}^{E}$ and $p^{E}\colon \mathcal{F}^{E}\to \mathcal{B}^{\mathcal{V}}(x)^{E}$.
They induce $\mathcal{B}^{\mathcal{V}}(x)^{\delta y}\to \mathcal{F}^{\delta y}\to \mathcal{B}^{\mathcal{V}}(x)^{\delta y}$.
We take a lift $i^{y} \colon \mathcal{B}^{\mathcal{V}}(x)^{y}\to \mathcal{F}^{y}$ and $p^{y}\colon \mathcal{F}^{y}\to \mathcal{B}^{\mathcal{V}}(x)^{y}$.
Then $p^{y}\circ i^{y}$ induces identity $\mathcal{B}^{\mathcal{B}}(x)^{\delta x}$.
By the property of the projective cover, $p^{y}\circ i^{y}$ is an isomorphism.
By replacing $p^{y}$ with $(p^{y}\circ i^{y})^{-1}\circ p^{y}$, we can take $p^{y}\circ i^{y} = \id$.
This gives an extension of $p,i$.

The uniqueness of $\mathcal{B}^{\mathcal{V}}(x)$ and (3) follows from (2).
\end{proof}

Let $\BM(\mathcal{V})$ be the category of Braden-MacPherson sheaves on $\mathcal{V}$.
We slightly extend (1) of Proposition~\ref{prop:classification of BM sheaves}.
\begin{lem}\label{lem:surjective between hom}
Let $U\subset \mathcal{V}$ be an open subset, $\mathcal{F}\in \BM(\mathcal{V})$ and $\mathcal{G}\in\Sh(\mathcal{V})$ satisfying (BM3) and (BM4).
Then the map $\Hom(\mathcal{F},\mathcal{G})\to \Hom(\mathcal{F}|_{U},\mathcal{G}|_{U})$ is surjective.
\end{lem}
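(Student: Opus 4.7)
The plan is to mimic the extension argument in the proof of Proposition~\ref{prop:classification of BM sheaves}(1), substituting the plain projectivity of $\mathcal{F}^{y}$ (granted by (BM1)) for the projective cover property used there. Given $f\colon\mathcal{F}|_{U}\to\mathcal{G}|_{U}$, one wants to extend $f$ to a global morphism by a Zorn's lemma + one-vertex-at-a-time argument.

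First I would fix an upper bound $w$ for $\supp\mathcal{F}$, which exists by our standing assumption that sheaves have bounded-above support. The set $\{y\mid y\not\le w\}$ is open and disjoint from $\supp\mathcal{F}$, and (BM2) forces $\mathcal{F}^{E}=0$ on any edge $E$ whose larger endpoint lies outside $\supp\mathcal{F}$, so extending $f$ by zero on this open set is compatible with the $\rho$-maps. After this reduction we may assume $\{y\mid y\not\le w\}\subset U$. Applying Zorn's lemma to pairs $(V,g)$, with $V\supset U$ open and $g\colon\mathcal{F}|_{V}\to\mathcal{G}|_{V}$ extending $f$, ordered by restriction, yields a maximal pair $(V,g)$. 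Assume for contradiction $V\ne\mathcal{V}$. For any $y_{0}\in\mathcal{V}\setminus V$ one has $y_{0}\le w$, so $\{y\in\mathcal{V}\setminus V\mid y\ge y_{0}\}$ sits inside the finite interval $\{z\mid y_{0}\le z\le w\}$; a maximal element $y$ of this finite subset is then maximal in $\mathcal{V}\setminus V$, and hence $V\cup\{y\}$ is open.

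The extension step at $y$ is the technical core. For each edge $E$ connecting $y$ with some $z>y$, necessarily $z\in V$ by the maximality of $y$, (BM2) gives $\ker(\rho_{E,z}^{\mathcal{F}})=\mathcal{F}^{z}\alpha_{E}$, and since $\alpha_{E}$ annihilates $\mathcal{G}^{E}$ there is a unique $g^{E}\colon\mathcal{F}^{E}\to\mathcal{G}^{E}$ with $g^{E}\circ\rho_{E,z}^{\mathcal{F}}=\rho_{E,z}^{\mathcal{G}}\circ g^{z}$. Assembling these, and using that $(g^{z})_{z>y}$ is a morphism on $\{z>y\}\subset V$, one obtains a well-defined degree-zero map $\mathcal{F}^{\delta y}\to\mathcal{G}^{\delta y}$. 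By (BM3) and (BM4) for $\mathcal{G}$, combined with Lemmas~6.1--6.2 of \cite{MR3330913}, the map $\mathcal{G}^{y}\to\mathcal{G}^{\delta y}$ is surjective; and by (BM1) for $\mathcal{F}$, the module $\mathcal{F}^{y}$ is graded free, hence graded projective. So the composite $\mathcal{F}^{y}\to\mathcal{F}^{\delta y}\to\mathcal{G}^{\delta y}$ lifts to a degree-zero $g^{y}\colon\mathcal{F}^{y}\to\mathcal{G}^{y}$, giving the desired extension to $V\cup\{y\}$ and contradicting maximality.

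The main subtlety I anticipate is precisely the replacement of the projective cover property used in Proposition~\ref{prop:classification of BM sheaves}(1) by bare projectivity: the original argument relied on minimality built into a projective cover, but for a general Braden--MacPherson sheaf $\mathcal{F}$ one has only that $\mathcal{F}^{y}$ is free. Fortunately the step requires only the existence of a lift, not its canonicity, so freeness suffices. The other point requiring care is the existence of a maximal element in $\mathcal{V}\setminus V$, for which the combination of interval-finiteness with the upper bound $w$ is exactly the hypothesis one needs.
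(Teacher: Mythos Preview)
Your argument is correct and follows the same skeleton as the paper's proof of Proposition~\ref{prop:classification of BM sheaves}(1): reduce to the case $\{y\mid y\not\le w\}\subset U$, apply Zorn's lemma, and extend across one maximal vertex of $\mathcal{V}\setminus V$ at a time. The paper's own proof of Lemma~\ref{lem:surjective between hom} is a two-line sketch: for finite (or ideal-finite) $\mathcal{V}$ it decomposes $\mathcal{F}$ into indecomposables via Proposition~\ref{prop:classification of BM sheaves}(3) and then applies part~(1) to each summand; for general $\mathcal{V}$ it reduces to bounded-below open subsets via the Zorn's lemma argument of Lemma~\ref{lem:equivalence on flabby}. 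Your route differs in that you bypass the decomposition into indecomposables entirely, observing that the lifting step at $y$ needs only graded projectivity of $\mathcal{F}^{y}$ (guaranteed by (BM1)), not the projective-cover property special to $\mathcal{B}^{\mathcal{V}}(x)^{y}$. This is a small but genuine simplification: it avoids appealing to Proposition~\ref{prop:classification of BM sheaves}(3) and hence also avoids the intermediate passage through $O\in\bbopen(\mathcal{V})$, at the cost of re-running the extension machinery rather than citing it.
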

\begin{proof}
When $\mathcal{V}$ is finite, this follows from Proposition~\ref{prop:classification of BM sheaves}.
In general, one can apply the argument in the proof of Lemma~\ref{lem:equivalence on flabby}.
\end{proof}

We put 
\[
\Hom^{\bullet}(\mathcal{F},\mathcal{G}) = \bigoplus_{n\in\Z}\Hom(\mathcal{F},\mathcal{G}(n)).
\]
This is an $S(\mathbb{Y})$-module.
Recall that a sheaf $\mathcal{G}$ on $\mathcal{V}$ is said to have a Verma flag if $\mathcal{G}^{[x]}$ is a graded free $S(\mathbb{Y})$-module for any vertex $x$ of $\mathcal{V}$.
\begin{lem}\label{lem:graded rank of hom of BM sheaves}
If $\{y\in \mathcal{V}\mid y\le x\}$ is finite for all $x\in \mathcal{V}$, $\mathcal{F}\in \BM(\mathcal{V})$ and $\mathcal{G}$ has a Verma flag and satisfies (BM3) and (BM4), then $\Hom^{\bullet}(\mathcal{F},\mathcal{G})$ is graded free and we have
\[
\grk\Hom^{\bullet}(\mathcal{F},\mathcal{G}) = \sum_{x\in \mathcal{V}}\overline{\grk(\mathcal{F})^{x}}\grk\mathcal{G}^{[x]}.
\]
\end{lem}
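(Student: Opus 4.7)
The plan is induction on $n=|\supp\mathcal{F}|$, which is a finite number because $\supp\mathcal{F}$ is bounded above by some $x_{0}$ and $\{y\mid y\le x_{0}\}$ is finite by hypothesis. The base case $n=0$ forces $\mathcal{F}=0$, and both sides of the claimed formula vanish.

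For the inductive step, I would pick $y$ minimal in $\supp\mathcal{F}$ and consider the open set $U_{y} = \{z \in \mathcal{V} \mid z\not\le y\}$. Its complement is the downward closed set $\{z\mid z\le y\}$, so $U_{y}$ is open, and by minimality $\supp\mathcal{F}\cap U_{y} = \supp\mathcal{F}\setminus\{y\}$. The central step is to establish the short exact sequence
\[
0 \to \Hom^{\bullet}_{S(\mathbb{Y})}(\mathcal{F}^{y},\mathcal{G}^{[y]}) \to \Hom^{\bullet}(\mathcal{F},\mathcal{G}) \to \Hom^{\bullet}(\mathcal{F}|_{U_{y}},\mathcal{G}|_{U_{y}}) \to 0.
\]
Surjectivity of the restriction map is Lemma~\ref{lem:surjective between hom}. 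For the kernel I would take $\varphi$ with $\varphi^{z}=0$ for all $z\in U_{y}$ and $\varphi^{E}=0$ for all edges inside $U_{y}$, and then check: any vertex $z\le y$ distinct from $y$ lies outside $\supp\mathcal{F}$ by minimality, so $\varphi^{z}=0$ automatically; each edge $E$ from $y$ to some $u>y$ has $u\in U_{y}$, so $\varphi^{u}=0$, and by (BM2) applied to $\mathcal{F}$ this forces $\varphi^{E}=0$, which via the compatibility at $y$ says exactly that $\varphi^{y}$ lands in $\mathcal{G}^{[y]}$; the remaining edges $E$ from $y$ to $v<y$ have $\mathcal{F}^{v}=0$ and $\varphi^{v}=0$, so no further constraint arises. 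Hence the kernel is $\Hom^{\bullet}_{S(\mathbb{Y})}(\mathcal{F}^{y},\mathcal{G}^{[y]})$.

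By (BM1), $\mathcal{F}^{y}$ is graded free, and by the Verma flag hypothesis so is $\mathcal{G}^{[y]}$, hence the kernel is graded free with graded rank $\overline{\grk\mathcal{F}^{y}}\cdot\grk\mathcal{G}^{[y]}$. The restrictions $\mathcal{F}|_{U_{y}}$ and $\mathcal{G}|_{U_{y}}$ inherit all needed hypotheses on $U_{y}$ — Braden-MacPherson for the former (via Lemma~\ref{lem:BM is a limt of BM} and direct verification of (BM3), (BM4)), Verma flag and (BM3), (BM4) for the latter, and the ideal finiteness — while $|\supp\mathcal{F}|_{U_{y}}|=n-1$. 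Applying the induction hypothesis on $U_{y}$ yields that the rightmost term is graded free with graded rank $\sum_{x\ne y}\overline{\grk\mathcal{F}^{x}}\grk\mathcal{G}^{[x]}$; the short exact sequence then splits, and adding graded ranks gives the claimed formula.

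The main obstacle I anticipate is the careful kernel analysis, in particular the verification that all compatibility conditions for $\varphi^{E}$ at edges incident to $y$ (both upward and downward) reduce to the single condition $\varphi^{y}(\mathcal{F}^{y})\subset\mathcal{G}^{[y]}$. One must also check that $\mathcal{G}|_{U_{y}}$ retains (BM3) and (BM4), which follows routinely by factoring the flabbiness and (BM4) surjections for $\mathcal{G}$ through $\Gamma(U_{y},\mathcal{G})$.
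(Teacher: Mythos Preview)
Your proposal is correct and follows essentially the same approach as the paper: both arguments peel off one vertex at a time via restriction to an open subset, invoke Lemma~\ref{lem:surjective between hom} for surjectivity, and identify the kernel with $\Hom^{\bullet}(\mathcal{F}^{y},\mathcal{G}^{[y]})$. The only cosmetic differences are that the paper first replaces $\mathcal{V}$ by the finite set $\mathcal{V}\cap(\supp\mathcal{F}\cup\supp\mathcal{G})$ and then linearly enumerates its vertices (removing a minimal vertex of $\mathcal{V}$ at each step), whereas you induct directly on $|\supp\mathcal{F}|$ and remove an entire principal downward set $\{z\le y\}$ at once; since the extra vertices you discard lie outside $\supp\mathcal{F}$, this amounts to the same computation.
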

Here, for $a = \sum_{i}a_{i}v^{i}\in \Z[v,v^{-1}]$, we write $\overline{a} = \sum_{i}a_{i}v^{-i}$.
\begin{proof}
First note that a morphism $f\colon \mathcal{F}\to \mathcal{G}$ is determined by $(f^{x})_{x\in \mathcal{V}}$ since $\mathcal{F}$ satisfies (BM2).
By replacing $\mathcal{V}$ with $\mathcal{V}\cap (\supp \mathcal{F}\cup \supp\mathcal{G})$, we may assume that $\mathcal{V}$ is finite.
Let $\mathcal{V} = \{x_{1},\ldots,x_{n}\}$ such that $O_{i} = \{x_{i},\ldots,x_{n}\}$ is open for $i = 1,\ldots,n$.
Let $\Phi_{i}\colon \Hom_{\Sh(O_{i})}^{\bullet}(\mathcal{F}|_{O_{i}},\mathcal{G}|_{O_{i}})\to \Hom_{\Sh(O_{i + 1})}^{\bullet}(\mathcal{F}|_{O_{i + 1}},\mathcal{G}|_{O_{i + 1}})$ be the restriction map.
By Lemma~\ref{lem:surjective between hom}, $\Phi_{i}$ is surjective.
By $f\mapsto f^{x_{i}}$, we have an embedding $\Ker\Phi_{i}\hookrightarrow \Hom^{\bullet}(\mathcal{F}^{x_{i}},\mathcal{G}^{x_{i}})$.
We prove that the image is $\Hom^{\bullet}(\mathcal{F}^{x_{i}},\mathcal{G}^{[x_{i}]})$.
Let $f\in \Ker\Phi_{i}$.
Then $f^{x_{j}} = 0$ for $j > i$.
Since $f$ is a morphism between sheaves, for $j > i$, if $x_{j}$ is connected to $x_{i}$ by an edge $E$, then the image of $f^{x_{i}}$ is contained in $\Ker\rho^{\mathcal{G}}_{E,x_{i}}$.
Since $O_{i}$ is open, any $y  > x_{i}$ which is connected to $x_{i}$ is in $\{x_{i + 1},\ldots,x_{n}\}$.
Hence $\Ima(f^{x_{i}})\subset \mathcal{G}^{[x_{i}]}$.
Conversely, if $\Ima(f^{x_{i}})\subset \mathcal{G}^{[x_{i}]}$, then $f\in \Ker\Phi_{i}$.
Therefore $\Ker\Phi_{i}\simeq \Hom^{\bullet}(\mathcal{F}^{x_{i}},\mathcal{G}^{[x_{i}]})$.
By the assumption, this is graded free.
Therefore, by induction on $n$, $\Hom^{\bullet}(\mathcal{F},\mathcal{G})$ is graded free and the graded rank is given as in the lemma.
\end{proof}

We say that $\mathcal{V}$ satisfies GKM condition if for any edges $E,E'$ connecting $x$ with $y\ne y'$, respectively, $\alpha_{E}$ and $\alpha_{E'}$ are linearly independent.
The following easy lemma will be used often.
\begin{lem}\label{lem:GKM condition}
Assume that $\mathcal{V}$ satisfies GKM condition and $\mathcal{F}$ satisfies (BM1) and (BM2).
Let $E$ be an edge connecting $x$ with $y$ and $E'$ an edge connecting $x$ with $y'\ne y$.
If $m\in \mathcal{F}^{x}$ and $n\in \mathcal{F}^{y}$ satisfies $\rho_{E,x}(m\alpha_{E'}) = \rho_{E,y}(n\alpha_{E'})$, then $\rho_{E,x}(m) = \rho_{E,y}(n)$
\end{lem}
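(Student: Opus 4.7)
The plan is to reduce the statement to the claim that multiplication by $\alpha_{E'}$ acts injectively on $\mathcal{F}^E$. Indeed, the hypothesis $\rho_{E,x}(m\alpha_{E'}) = \rho_{E,y}(n\alpha_{E'})$ can be rewritten, using that $\rho_{E,x}$ and $\rho_{E,y}$ are $S(\mathbb{Y})$-linear, as
\[
\alpha_{E'}\cdot\bigl(\rho_{E,x}(m) - \rho_{E,y}(n)\bigr) = 0 \quad \text{in } \mathcal{F}^E.
\]
If we know that $\alpha_{E'}$ is a non-zero-divisor on $\mathcal{F}^E$, then $\rho_{E,x}(m) = \rho_{E,y}(n)$, which is exactly what we want.

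To show this, I would exploit (BM1) and (BM2) to give a concrete description of $\mathcal{F}^E$ as a module over $S(\mathbb{Y})/\alpha_E S(\mathbb{Y})$. Let $z\in\{x,y\}$ be the larger endpoint of $E$ (so the other one is less than $z$). By (BM2), the map $\rho_{E,z}$ induces an isomorphism $\mathcal{F}^E \simeq \mathcal{F}^z/\mathcal{F}^z\alpha_E$, and by (BM1), $\mathcal{F}^z$ is a graded free $S(\mathbb{Y})$-module. Hence $\mathcal{F}^E$ is a graded free $S(\mathbb{Y})/\alpha_E S(\mathbb{Y})$-module. Consequently, the action of $\alpha_{E'}$ on $\mathcal{F}^E$ is injective if and only if the image of $\alpha_{E'}$ in the ring $S(\mathbb{Y})/\alpha_E S(\mathbb{Y})$ is a non-zero-divisor.

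The main obstacle, then, is the commutative-algebra statement that $\alpha_{E'}$ is a non-zero-divisor in $S(\mathbb{Y})/\alpha_E S(\mathbb{Y})$. This is where the GKM hypothesis is used: since $\alpha_E$ and $\alpha_{E'}$ lie in the free $\widehat{\Coef}$-module $\mathbb{Y}$ and are linearly independent, and $S(\mathbb{Y})$ is a polynomial ring over the local ring $\widehat{\Coef}$, the element $\alpha_E$ extends to part of a regular system of linear parameters, so the quotient $S(\mathbb{Y})/\alpha_E S(\mathbb{Y})$ is again a polynomial ring over $\widehat{\Coef}$ (hence a domain), and $\alpha_{E'}$ descends to a nonzero linear form there. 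A nonzero linear form in a polynomial ring over a domain is a non-zero-divisor, which closes the argument.
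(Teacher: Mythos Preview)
Your proof is correct and follows exactly the same approach as the paper: the paper's proof simply observes that by (BM1) and (BM2) the module $\mathcal{F}^{E}$ is free over $S(\mathbb{Y})/\alpha_{E}$, and that the GKM condition makes it $\alpha_{E'}$-torsion free. You have written out precisely this argument, supplying the reduction $\alpha_{E'}\cdot(\rho_{E,x}(m)-\rho_{E,y}(n))=0$ and the commutative-algebra justification for why $\alpha_{E'}$ is a non-zero-divisor modulo $\alpha_E$, which the paper leaves implicit.
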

\begin{proof}
By the assumption, $\mathcal{F}^{E}$ is a free $S(\mathbb{Y})/\alpha_{E}$-module.
By GKM condition, this is $\alpha_{E'}$-torsion free.
\end{proof}

\subsection{Root datum}
In this subsection, we introduce notation about root datum.
These notation will be used throughout this paper.
The notation introduced in this subsection is slightly different from the one in Section~\ref{sec:Introduction}.
The main difference is that, following \cite{MR591724}, we use two affine Weyl groups.
Then one is the semidirect product of the finite Weyl group and the root lattice which acts on the set of alcoves from the left.
The other one is defined as a subgroup of the permutations of alcoves and it acts on the set of alcoves from the right.
The latter has a canonical structure of the Coxeter system.

Let $(\mathbb{X},\Phi',\mathbb{X}^\vee,(\Phi')^\vee)$ be a root datum.
We assume that for a (equivalently any) positive system, the half sum of positive coroots is in $\mathbb{X}^{\vee}$.
We also assume that $p$ is not a torsion prime for this datum.

The pairing on $\mathbb{X}\times \mathbb{X}^{\vee}$ is denoted by $\langle\cdot,\cdot\rangle$.
Let $W_{\finite}$ be the (finite) Weyl group of $(\mathbb{X},\Phi',\mathbb{X}^{\vee},(\Phi')^{\vee})$.
Set $W'_{\aff} = W_{\finite}\ltimes \Z(\Phi')^{\vee}$.
We also put $W_{\aff}^{\ext\prime} = W_{\finite}\ltimes \mathbb{X}^{\vee}$.
An element corresponding to $\lambda\in \mathbb{X}^{\vee}$ is denoted by $t_{\lambda}\in W_{\aff}^{\ext\prime}$.

Set $\mathbb{X}^{\vee}_{\R} = \mathbb{X}^{\vee}\otimes_{\Z}\R$.
We define an action of $W^{\ext\prime}_{\aff}$ on $\mathbb{X}^{\vee}$ by $wt_{\lambda}(\nu) = w(\nu + \lambda)$ where $w\in W_{\finite}$ and $\lambda\in \mathbb{X}^{\vee}$.
Let $\mathcal{A}$ be the set of alcoves in $\mathbb{X}_{\R}^{\vee}$, namely the set of connected components of $\mathbb{X}^{\vee}_{\R}\setminus\bigcup_{(\alpha,n)\in\Phi'\times\Z}\{v\in \mathbb{X}^{\vee}_{\R}\mid \langle \alpha,v\rangle = n\}$.
As in \cite{MR591724}, let $S_\aff$ be the set of $W'_\aff$-orbits on the set of faces.
Then, for each $s\in S_\aff$ and $A\in \mathcal{A}$, we set $As$ as the alcove $\ne A$, which has a common face of type $s$ with $A$.
The subgroup of $\Aut(\mathcal{A})$ (permutations of elements in $\mathcal{A}$) generated by $S_\aff$ is denoted by $W_\aff$.
Then $(W_{\aff}, S_{\aff})$ is a Coxeter system.
The Bruhat order on $W_{\aff}$ is denoted by $\le$.
The group $W_{\aff}$ acts on $\mathcal{A}$ from the right.
It commutes with the action of $W_{\aff}'$.

If we fix $A\in \mathcal{A}$, then $W'_{\aff}\to \mathcal{A}$ and $W_{\aff}\to \mathcal{A}$ defined by $w\mapsto wA$, $x\mapsto Ax$ are both bijective.
Hence we get a bijective map $W'_{\aff}\simeq W_{\aff}$.
This bijection is a group isomorphism.
We write $w\mapsto w^{A}$ for the isomorphism $W'_{\aff}\xrightarrow{\sim}W_{\aff}$ and $x\mapsto x_{A}$ for the inverse.
In this subsection, we introduce some objects and state properties of these objects concerning $W_{\aff}$ without proofs.
Every properties will be reduced to well-known properties of $W'_{\aff}$ using this isomorphism.

Let $\Lambda^{\vee}$ be the set of maps $\mathcal{A}\to \mathbb{X}^{\vee}$ such that $\lambda(xA) = \overline{x}\lambda(A)$ for any $x\in W'_\aff$ and $A\in \mathcal{A}$ where $\overline{x}\in W_{\finite}$ is the image of $x$.
We write $\lambda_A = \lambda(A)$ for $\lambda\in \Lambda^{\vee}$ and $A\in \mathcal{A}$.
For each $A\in \mathcal{A}$, $\lambda\mapsto \lambda_A$ gives an isomorphism $\Lambda^{\vee} \xrightarrow{\sim}\mathbb{X}^{\vee}$ and the inverse of this isomorphism is denoted by $\nu\mapsto \nu^A$.
The group $W_\aff$ acts on $\Lambda^{\vee}$ by $(x(\lambda))(A) = \lambda(Ax)$.
This action is compatible with the isomorphism induced by $A\in\mathcal{A}$, namely, for $w\in W_{\aff}$ and $\lambda\in \Lambda^{\vee}$, we have $(w\lambda)_{A} = \overline{w_{A}}\lambda_{A}$.
By replacing $\mathbb{X}^{\vee}$ with $\mathbb{X}$, we can also define a $\Z$-module $\Lambda$.
Fix $A\in \mathcal{A}$ and put $\Phi^{\vee} = \{(\alpha^{\vee})^A\mid \alpha\in\Phi'\}$ (resp.\ $\Phi = \{\alpha^A\mid \alpha\in\Phi'\}$).
Then, this does not depend on $A$ and $(\Phi,\Lambda,\Phi^{\vee},\Lambda^{\vee})$ is a root datum isomorphic to $(\mathbb{X},\Phi',\mathbb{X}^{\vee},(\Phi')^{\vee})$.
For each $\lambda\in \Z\Phi^{\vee}$, we set $A\lambda = A + \lambda_A$.
If $\lambda,\mu\in\Z\Phi^{\vee}$, then $(A\lambda)\mu = A + \lambda_A + \mu_{A + \lambda_A}$.
From the definition of $\Lambda^{\vee}$, we have $\mu_{A + \lambda_A} = \mu_A$.
Hence this defines an action of $\Z\Phi^{\vee}$ on $\mathcal{A}$ and it induces an injective homomorphism $\Z\Phi^{\vee}\hookrightarrow W_{\aff}$.
We regard $\Z\Phi^{\vee}$ as a subgroup of $W_{\aff}$.
This is a normal subgroup of $W_{\aff}$ and we have $w\lambda w^{-1} = w(\lambda)$ in $W_{\aff}$.
The action of $W_{\aff}$ on $\Lambda^{\vee}$ is trivial on $\Z\Phi^{\vee}$.

In a semi-direct product $W_{\aff}\ltimes \Lambda^{\vee}$, the subgroup $\{(\lambda,-\lambda)\mid \lambda\in\Z\Phi^{\vee}\}$ is a normal subgroup.
We put $W_{\aff}^{\ext} = (W_{\aff}\ltimes \Lambda^{\vee})/\{(\lambda,-\lambda)\mid \lambda\in\Z\Phi^{\vee}\}$.
A natural map $W_{\aff}\to W_{\aff}^{\ext}$ is injective and we regard $W_{\aff}$ as a subgroup of $W_{\aff}^{\ext}$.
This is a normal subgroup.
For $(w,\lambda)\in W_{\aff}\ltimes \Lambda^{\vee}$ and $A\in \mathcal{A}$, set $A[(w,\lambda)] = Aw + \lambda_{Aw}$.
Then this is well-defined on $W_{\aff}^{\ext}$, namely it only depends on the coset in $W_{\aff}^{\ext}$.
Note that this is not an action of $W_{\aff}^{\ext}$.
\begin{lem}\label{lem:about root datum etc, length zero element}
\begin{enumerate}
\item The subset $\Omega = \{[(w,\lambda)]\in W_{\aff}^{\ext}\mid Aw + \lambda_{Aw} = A\}$ of $W_{\aff}^{\ext}$ does not depend on $A\in \mathcal{A}$ and it is a subgroup of $W_{\aff}^{\ext}$.
\item We have $W_{\aff}^{\ext}\simeq \Omega\ltimes W_{\aff}$.
\item Let $A\in \mathcal{A}$, $x\in W_{\aff}$ and $y\in W_{\aff}^{\ext}$.
Then $(Ax)y = A(xy)$.
In particular, if $x\in W_{\aff}$ and $y\in \Omega$, then $A(xy) = Ax$.
\end{enumerate}
\end{lem}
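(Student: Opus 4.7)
My strategy is to reduce all three parts to the classical structure of $W_\aff^{\ext\prime} = W_\finite \ltimes \mathbb{X}^\vee$, via the family of group isomorphisms $\Psi_A\colon W_\aff^\ext \xrightarrow{\sim} W_\aff^{\ext\prime}$ defined by $[(w,\lambda)] \mapsto w_A t_{\lambda_A}$, one for each $A \in \mathcal{A}$.  A routine check will show each $\Psi_A$ is well defined on the quotient (the generator $(\mu,-\mu)$ with $\mu \in \Z\Phi^\vee$ maps to $t_{\mu_A}t_{-\mu_A} = e$, since $\mu_A$ is a pure translation in $W_\aff'$) and intertwines $A[-]$ with the standard action: $A[y] = \Psi_A(y)\cdot A$ for every $y \in W_\aff^\ext$, as one sees from the computation $w_A t_{\lambda_A}\cdot A = w_A(A) + \overline{w_A}\lambda_A = Aw + \lambda_{Aw}$.

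I handle (3) first, by direct computation.  Using the semidirect product convention $(w_1,\lambda_1)(w_2,\lambda_2) = (w_1w_2,\,w_2^{-1}\lambda_1 + \lambda_2)$, which is the one compatible with the definition of $A[-]$, I get $(x,0)\cdot(w,\lambda) = (xw,\lambda)$, and so
\[
A(xy) = A[(xw,\lambda)] = A(xw) + \lambda_{A(xw)} = (Ax)w + \lambda_{(Ax)w} = (Ax)y.
\]
The central technical step will then be the conjugation identity $\Psi_{A'} = \mathrm{Ad}(x_A)\circ \Psi_A$ for $A' = Ax$ with $x \in W_\aff$.  This rests on the formulas $w_{A'} = (xw)_A\, x_A^{-1} = x_A w_A x_A^{-1}$ (from $A'\cdot w = A(xw) = (xw)_A(A)$ together with the group isomorphism $w \mapsto w_A$) and $\lambda_{A'} = \overline{x_A}\lambda_A$ (from $W_\aff'$-equivariance of $\lambda$), combined with the fact that translations in $W_\aff'$ act trivially on $\mathbb{X}^\vee$, so that $t_{\lambda_A}x_A^{-1} = x_A^{-1}t_{\overline{x_A}\lambda_A}$.

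From this, (1) and (2) will follow formally.  For (1), $\Omega_A = \Psi_A^{-1}(\Stab_{W_\aff^{\ext\prime}}(A))$ is automatically a subgroup, and since $\Stab(A') = x_A\Stab(A)x_A^{-1}$ while $\Psi_{A'} = \mathrm{Ad}(x_A)\circ \Psi_A$, I conclude $\Omega_{A'} = \Omega_A$.  For (2), $\Psi_A$ sends $W_\aff$ isomorphically onto the normal subgroup $W_\aff'$ and $\Omega$ onto $\Stab_{W_\aff^{\ext\prime}}(A)$; since $W_\aff'$ acts simply transitively on $\mathcal{A}$, the intersection $\Stab(A)\cap W_\aff'$ is trivial, and the classical decomposition $W_\aff^{\ext\prime} = \Stab(A)\ltimes W_\aff'$ pulls back along $\Psi_A$ to $W_\aff^\ext = \Omega \ltimes W_\aff$.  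The hard part will be the conjugation identity, which requires carefully tracking how the two $A$-dependent identifications $w \mapsto w_A$ on $W_\aff$ and $\lambda \mapsto \lambda_A$ on $\Lambda^\vee$ interact inside the semidirect product, the key input being that translations act trivially on cocharacters through $W_\finite$.
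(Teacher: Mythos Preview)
Your proposal is correct and takes a genuinely different route from the paper.  The paper argues each part by direct computation in $W_\aff^\ext$: for (1) it verifies that $A'[(w,\lambda)] = x_A\bigl(A[(w,\lambda)]\bigr)$ when $A' = Ax$, then checks closure under product and inverse by multiplying out in the semidirect product; for (2) it produces, for each $[(w,\lambda)]$, an $x \in W_\aff$ with $x^{-1}[(w,\lambda)] \in \Omega$ and observes that simple transitivity of $W_\aff$ on $\mathcal{A}$ forces $W_\aff \cap \Omega = \{1\}$; (3) is a one-line check as in your write-up.  Your approach instead packages these computations into the family of isomorphisms $\Psi_A\colon W_\aff^\ext \to W_\aff^{\ext\prime}$ and the single conjugation identity $\Psi_{Ax} = \mathrm{Ad}(x_A)\circ \Psi_A$, from which (1) and (2) become formal consequences of the classical decomposition $W_\aff^{\ext\prime} = \Stab(A) \ltimes W_\aff'$.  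The underlying calculations are essentially the same---your intertwining formula $A[y] = \Psi_A(y)\cdot A$ and the conjugation identity are exactly what the paper computes in the course of proving (1)---but your packaging makes the conceptual picture clearer and explains \emph{why} $\Omega$ is independent of $A$, at the cost of having to verify up front that $\Psi_A$ is a well-defined group homomorphism (you sketch this; the multiplicativity check uses $(w_2^{-1}\lambda_1)_A = \overline{(w_2)_A}^{-1}(\lambda_1)_A$ and the commutation $t_\mu z = z\,t_{\overline{z}^{-1}\mu}$ in $W_\aff^{\ext\prime}$, both straightforward).  Either approach is fine; yours is more portable if one later needs other facts about how the $A$-dependent identifications vary.
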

\begin{proof}
We prove (1).
Assume that $[(w,\lambda)]\in W_{\aff}^{\ext}$ satisfies $Aw + \lambda_{Aw} = A$.
Let $A'\in \mathcal{A}$ be another element and take $x\in W_{\aff}$ such that $A' = Ax$.
Then $A'w + \lambda_{A'w} = Axw + \lambda_{Axw} = x_{A}Aw + \lambda_{x_{A}Aw} = x_{A}Aw + \overline{x_{A}}\lambda_{Aw}$, here $\overline{x_{A}}\in W_{\finite}$ is the image $x_{A}$ under of the projection $W_{\aff}'\to W_{\finite}$.
By the definition of the action of $W_{\aff}'$, $x_{A}Aw + \overline{x_{A}}\lambda_{Aw} = x_{A}(Aw + \lambda_{Aw}) = x_{A}A = A'$.

Let $[(w',\lambda')]\in\Omega$.
Then we have $[(w,\lambda)][(w',\lambda')] = [(ww',(w')^{-1}(\lambda) + \lambda')]$.
We have $Aww' + ((w')^{-1}\lambda + \lambda')_{Aww'} = Aww' + \lambda_{Aw} + (\lambda')_{Aww'}$.
As $[(w',\lambda')]\in \Omega$, we have $(Aw)w' + (\lambda')_{(Aw)w'} = Aw$.
Hence $Aww' + ((w')^{-1}\lambda + \lambda')_{Aww'} = Aw + \lambda_{Aw} = A$.
Therefore $[(w,\lambda)][(w',\lambda')]\in \Omega$.
We have $[(w,\lambda)]^{-1} = [w^{-1},-w(\lambda)]$ and $Aw^{-1} - (w\lambda)_{Aw^{-1}} = Aw^{-1} - \lambda_{A}$.
Since $[(w,\lambda)]\in \Omega$, $(Aw^{-1})w + \lambda_{(Aw^{-1})w} = Aw^{-1}$.
Hence $Aw^{-1} - \lambda_{A} = A$.
Therefore $[(w,\lambda)]^{-1}\in \Omega$.

(2)
Let $[(w,\lambda)]\in W_{\aff}^{\ext}$ and take $x\in W_{\aff}$ such that $Aw + \lambda_{Aw} = Ax$.
Then $(Ax)(x^{-1}[(w,\lambda)]) = (Ax)([(x^{-1}w,\lambda)]) = Aw + \lambda_{Aw} = Ax$.
Hence $x^{-1}[(w,\lambda)]\in \Omega$.
Therefore $W_{\aff}^{\ext} = W_{\aff}\Omega$.
As $W_{\aff}$ acts on $\mathcal{A}$ simply (transitive), $W_{\aff}\cap \Omega$ is the trivial group.
Hence $W_{\aff}^{\ext}\simeq \Omega\ltimes W_{\aff}$.

(3)
Write $y = [(w,\lambda)]$, so $xy = [(xw,\lambda)]$.
Then $A(xy) = Axw + \lambda_{Axw} = (Ax)[(w,\lambda)]$.
\end{proof}

By Lemma~\ref{lem:about root datum etc, length zero element} (2), we have $\Omega\simeq W_{\aff}^{\ext}/W_{\aff}\simeq \Lambda/\Z\Phi^{\vee}$.
In particular, it is commutative.
By fixing $A\in \mathcal{A}$, we have an isomorphism $\Lambda^{\vee}\simeq \mathbb{X}^{\vee}$ defined by $\lambda\mapsto \lambda_A$ and it induces $\Lambda^{\vee}/\Z\Phi^{\vee}\simeq \mathbb{X}^{\vee}/\Z(\Phi')^{\vee}$.
The latter isomorphism does not depend on $A$.
Hence we have $\Omega\simeq \mathbb{X}^{\vee}/\Z(\Phi')^{\vee}$ canonically.
Since $\Omega$ acts on $W_{\aff}$ by the conjugation, we get an action of $\mathbb{X}^{\vee}/\Z(\Phi')^{\vee}$ on $W_{\aff}$.
This isomorphism is also given as follows.
Let $\lambda\in \mathbb{X}^{\vee}$.
Then for each $s\in S_{\aff}$ and $A\in \mathcal{A}$, an alcove $(A - \lambda)s + \lambda$ only depends on $\lambda + \Z(\Phi')^{\vee}$ and it shares a face with $A$.
Moreover, the $W'_{\aff}$-orbit of such face does not depend on $A$.
Hence it defines $s'\in S_{\aff}$.
The map $s\mapsto s'$ gives an isomorphism $W_{\aff}\to W_{\aff}$.
From this description, the conjugate action of $\Omega$ on $W_{\aff}$ preserves $S_{\aff}$.
Hence $\ell(\omega w\omega^{-1}) = \ell(w)$ for $w\in W_{\aff}$ and $\omega \in\Omega$.
The length function $\ell\colon W_{\aff}\to \Z_{\ge 0}$ can be extended to $W_{\aff}^{\ext}$ by $\ell(\omega w) = \ell(w)$ for $\omega\in \Omega$ and $w\in W_{\aff}$.
The Bruhat order on $W_{\aff}^{\ext}$ is defined as $\omega_1 w_1\le \omega_2w_2$ if and only if $\omega_1 = \omega_2$ and $w_1\le w_2$ for $\omega_1,\omega_2\in\Omega$ and $w_1,w_2\in W_{\aff}$.

We fix a positive system $(\Phi')^+\subset \Phi'$.
Let $s_{(\alpha,n)} = t_{-n\alpha^{\vee}}s_{\alpha}\in W'_{\aff}$ be the reflection with respect to the hyperplane $\{v\in \mathbb{X}^{\vee}_{\R}\mid \langle \alpha,v\rangle + n = 0\}$.
For $\alpha\in (\Phi')^+$ and $A\in \mathcal{A}$, take $n\in \Z$ such that $ - 1 < \langle \alpha,\lambda\rangle + n < 0$ for all $\lambda\in A$ and define $\alpha\uparrow A = s_{(\alpha,n)}(A)$ and $\alpha\downarrow A = s_{(\alpha,n + 1)}A$.
Let $\le$ be the order on $\mathcal{A}$ generated by $A < \alpha\uparrow A$.
Let $\lambda\in \mathbb{X}^{\vee}$.
Then the set $\{A\in\mathcal{A}\mid \lambda\in\overline{A}\}$ has the maximal / minimal element where $\overline{A}$ is the closure of $A$.
The maximal (resp.\ minimal) element is denoted by $A_{\lambda}^+$ (resp.\ $A_{\lambda}^-$).

\begin{lem}\label{lem:order of alcove and weights}
Assume that $A_{1} \le A_{2}$.
Take $x\in W'_{\aff}$ such that $A_{2} = xA_{1}$.
Then for $a\in A_{1}$, we have $x(a) - a\in\R_{\ge 0}(\Phi')^{+}$.
\end{lem}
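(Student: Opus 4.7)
The plan is to reduce to the case of a single covering step $A_1 \lessdot A_2$ by induction along a chain witnessing $A_1 \le A_2$, since the order on $\mathcal{A}$ is generated by the relations $A < \alpha \uparrow A$ with $\alpha \in (\Phi')^+$. Recall also that $W_{\aff}'$ acts simply transitively on $\mathcal{A}$, so the element $x$ with $A_2 = xA_1$ is uniquely determined; this legitimizes composing single-step elements.

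\medskip

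\noindent\textbf{Step 1 (single covering step).}  Suppose $A_2 = \alpha \uparrow A_1$, so $x = s_{(\alpha,n)} = t_{-n\alpha^{\vee}}s_{\alpha}$ with the integer $n$ characterized by $-1 < \langle \alpha,\lambda\rangle + n < 0$ for all $\lambda \in A_1$. For $a \in A_1$ a direct computation gives
\[
x(a) - a = \bigl(s_{\alpha}(a) - n\alpha^{\vee}\bigr) - a = -\bigl(\langle \alpha, a\rangle + n\bigr)\alpha^{\vee}.
\]
The coefficient $-(\langle \alpha, a\rangle + n)$ is strictly positive by the defining inequality, and $\alpha^{\vee}$ is a positive coroot because $\alpha \in (\Phi')^+$. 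Hence $x(a)-a \in \R_{\ge 0}(\Phi')^+$ (interpreted on the coroot side, as the setting forces).

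\medskip

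\noindent\textbf{Step 2 (induction on the chain length).}  By the definition of $\le$ on $\mathcal{A}$, choose a chain $A_1 = B_0 < B_1 < \cdots < B_k = A_2$ with each $B_{i+1} = \alpha_i \uparrow B_i$ for some $\alpha_i \in (\Phi')^+$. Let $y_i = s_{(\alpha_i,n_i)} \in W_{\aff}'$ be the element realizing $B_{i+1} = y_i B_i$. Simple transitivity forces $x = y_{k-1}\cdots y_0$. Write $x' = y_{k-2}\cdots y_0$ so that $x = y_{k-1}x'$, and set $b = x'(a) \in B_{k-1}$. By the inductive hypothesis applied to $A_1 \le B_{k-1}$,
\[
x'(a) - a \in \R_{\ge 0}(\Phi')^+.
\]
By Step~1 applied to the single step $B_{k-1} \lessdot A_2$ with the point $b \in B_{k-1}$,
\[
y_{k-1}(b) - b \in \R_{\ge 0}(\Phi')^+.
\]
Adding the two telescoping differences yields
\[
x(a) - a = \bigl(y_{k-1}(b) - b\bigr) + \bigl(x'(a) - a\bigr) \in \R_{\ge 0}(\Phi')^+,
\]
which completes the induction.

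\medskip

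\noindent\textbf{Main obstacle.} The proof itself is short; the only delicate point is Step~1, where one must check that the sign convention built into the definition of $\alpha \uparrow A$ indeed produces a \emph{positive} multiple of $\alpha^{\vee}$. Once that sign is pinned down, the rest is a routine telescoping argument, and no further information about the Bruhat order or the structure of $W_{\aff}'$ is needed.
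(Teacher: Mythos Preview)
Your proof is correct and follows essentially the same approach as the paper: the paper's proof simply says ``We may assume $A_{2} = \alpha\uparrow A_{1}$'' (your Step~2 reduction via a chain and telescoping) and then notes $s_{(\alpha,n)}(a) - a \in \R_{\ge 0}\alpha^\vee$ ``by the definition'' (your Step~1 computation). You have spelled out explicitly what the paper leaves implicit, including the sign check and the telescoping, and your observation about the coroot-side interpretation of $\R_{\ge 0}(\Phi')^{+}$ is apt.
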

\begin{proof}
We may assume $A_{2} = \alpha\uparrow A_{1}$ for some $\alpha\in(\Phi')^{+}$.
Take $n\in\Z$ such that $A_{2} = s_{(\alpha,n)}A_{1}$.
Then we have $s_{(\alpha,n)}(a) - a\in \R_{\ge 0}\alpha$ by the definition.
\end{proof}

\begin{lem}\label{lem:order of alcoves, reflection}
Let $\alpha\in (\Phi')^{+}$.
Then $A + \alpha^{\vee} > A$.
Assume that $\langle \alpha,\lambda\rangle + n < 0$ for all $\lambda\in A$.
Then $s_{(\alpha,n)}(A) > A$.
\end{lem}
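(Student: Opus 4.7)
Both claims follow from a single computation in $W'_\aff$: writing $n \in \Z$ for the integer such that $-1 < \langle\alpha,\lambda\rangle + n < 0$ for all $\lambda\in A$, so that $\alpha\uparrow A = s_{(\alpha,n)}(A)$ by definition, the reflected alcove $\alpha\uparrow A$ satisfies $0 < \langle\alpha,\mu\rangle + n < 1$ for $\mu\in \alpha\uparrow A$, and hence $\alpha\uparrow(\alpha\uparrow A) = s_{(\alpha,n-1)}(\alpha\uparrow A)$. Multiplying $s_{(\alpha,n-1)}s_{(\alpha,n)} = t_{-(n-1)\alpha^\vee}s_\alpha t_{-n\alpha^\vee}s_\alpha$ in $W'_\aff$ and using $s_\alpha t_{-n\alpha^\vee} s_\alpha = t_{n\alpha^\vee}$ collapses the product to the pure translation $t_{\alpha^\vee}$. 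Thus $\alpha\uparrow(\alpha\uparrow A) = A + \alpha^\vee$, which by the definition of $\le$ already gives the first claim
\[
A \;<\; \alpha\uparrow A \;<\; \alpha\uparrow(\alpha\uparrow A) \;=\; A + \alpha^\vee.
\]

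For the second claim, let $m$ again be the integer for which $-1 < \langle\alpha,\lambda\rangle + m < 0$ on $A$, so $\alpha\uparrow A = s_{(\alpha,m)}(A)$. The hypothesis $\langle\alpha,\lambda\rangle + n < 0$ together with $-1 < \langle\alpha,\lambda\rangle + m$ forces $n\le m$. A direct computation of $s_{(\alpha,n)}s_{(\alpha,m)}^{-1}$ in $W'_\aff$ shows that it equals the translation $t_{(m-n)\alpha^\vee}$, so
\[
s_{(\alpha,n)}(A) \;=\; s_{(\alpha,m)}(A) + (m-n)\alpha^\vee \;=\; (\alpha\uparrow A) + (m-n)\alpha^\vee.
\]
Iterating the first claim (which gives $B + \alpha^\vee > B$ for every alcove $B$) yields $(\alpha\uparrow A) + (m-n)\alpha^\vee \ge \alpha\uparrow A > A$, as required.

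The main thing to be careful about is bookkeeping of conventions: one must check that the left action of $s_{(\alpha,n)}\in W'_\aff$ on $\mathbb X^\vee_\R$ really produces the alcove $\alpha\uparrow A$ under the definition given (i.e.\ that the chosen sign convention for $s_{(\alpha,n)} = t_{-n\alpha^\vee}s_\alpha$ is consistent with the condition $-1 < \langle\alpha,\lambda\rangle + n < 0$), and that the translate $t_{\alpha^\vee}A$ under this left action coincides with the set $A + \alpha^\vee$. Once these are in place, everything reduces to the single identity $s_{(\alpha,n-1)}s_{(\alpha,n)} = t_{\alpha^\vee}$, after which the monotonicity statements are automatic from the definition of the order on $\mathcal{A}$.
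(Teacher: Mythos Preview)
Your proof is correct and follows essentially the same approach as the paper: both establish $A+\alpha^\vee=\alpha\uparrow(\alpha\uparrow A)$ via the identity $s_{(\alpha,n-1)}s_{(\alpha,n)}=t_{\alpha^\vee}$, and then reduce the general reflection to this case. The only cosmetic difference is that the paper proves the second claim by induction (one step of translation by $\alpha^\vee$ at a time, using $s_{(\alpha,n-1)}(A+\alpha^\vee)=s_{(\alpha,n)}(A)$), whereas you unwind that induction into the single closed formula $s_{(\alpha,n)}(A)=(\alpha\uparrow A)+(m-n)\alpha^\vee$; this is arguably a bit cleaner but not substantively different.
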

\begin{proof}
We have $A + \alpha^{\vee} = \alpha\uparrow (\alpha\uparrow A) > A$.
Take $n$ as in the lemma and we prove $s_{(\alpha,n)}(A) > A$ by induction on $m_{A}(n) = \sup_{\lambda\in A}\lvert \langle \alpha,\lambda\rangle + n\rangle\rvert$.
If $m_{A}(n) = 0$, then $s_{(\alpha,n)}(A) = \alpha\uparrow A > A$.
Assume $m_{A}(n) > 0$.
We have $\langle \alpha,\lambda\rangle + n < \langle \alpha,\lambda + \alpha^\vee\rangle + (n - 1)$.
Hence $m_{A + \alpha^{\vee}}(n - 1) < m_{A}(n)$.
By inductive hypothesis and the first part of the lemma, we have $s_{(\alpha,n - 1)}(A + \alpha^{\vee}) > A + \alpha^{\vee} > A$.
By the definition, $s_{(\alpha,n - 1)}(A + \alpha^{\vee}) = s_{(\alpha,n)}(A)$.
We get the lemma.
\end{proof}

We put $\widehat{\mathbb{X}}_{\aff} = (\mathbb{X}\otimes_{\Z}\widehat{\Coef})\oplus \widehat{\Coef}\oplus \widehat{\Coef}$ and $\widehat{\mathbb{X}}_{\aff}^{\vee} = (\mathbb{X}^{\vee}\otimes_{\Z}\widehat{\Coef})\oplus\widehat{\Coef}\oplus\widehat{\Coef}$.
The natural pairing $\langle\cdot,\cdot\rangle$ extends to $\widehat{\mathbb{X}}_{\aff}\times \widehat{\mathbb{X}}_{\aff}^{\vee}\to \widehat{\Coef}$ and we have $\widehat{\mathbb{X}}_{\aff}^{\vee}\simeq\Hom_{\widehat{\Coef}}(\widehat{\mathbb{X}}_{\aff},\widehat{\Coef})$.
\begin{rem}
Perhaps it is better to use the notation $\mathbb{X}_{\aff,\widehat{\Coef}}$ for the group $\widehat{\mathbb{X}}_{\aff}$.
\end{rem}
Set $\Phi'_{\aff} = \{(\alpha,n,0)\mid \alpha\in\Phi',n\in\Z\}\subset \widehat{\mathbb{X}}_{\aff}$.
An element in $\Phi'_{\aff}$ (or $\Phi'\times\Z$) is called an affine root.
We fix a non-degenerate symmetric $W_{\finite}$-invariant bilinear form $(\cdot,\cdot)\colon (\mathbb{X}^{\vee}\otimes_{\Z}\widehat{\Coef})\times (\mathbb{X}^{\vee}\otimes_{\Z}\widehat{\Coef})\to \widehat{\Coef}$ such that $(\lambda,\lambda)\in 2\widehat{\Coef}$ for any $\lambda\in \mathbb{X}^{\vee}\otimes_{\Z}\widehat{\Coef}$.
Since the field of fractions of $\widehat{\Coef}$ has the characteristic zero, we have $\langle \alpha,\nu\rangle = 2(\alpha^\vee,\nu)/(\alpha^{\vee},\alpha^{\vee})$ for any $\alpha\in\Phi'$ and $\nu\in \mathbb{X}^{\vee}\otimes_{\Z}\widehat{\Coef}$.
For $\widetilde{\alpha} = (\alpha,n,0)\in \Phi'$, we put $\widetilde{\alpha}^{\vee} = (\alpha^\vee,0,n((\alpha^{\vee},\alpha^{\vee})/2))$.
We define an action of $W_{\aff}^{\ext\prime}$ on $\widehat{\mathbb{X}}_{\aff}$ (resp.\ $\widehat{\mathbb{X}}_{\aff}^{\vee}$) by $(wt_\lambda)(\nu,r,s) = (w(\nu + s\lambda'),r - (\langle \nu,\lambda\rangle + ((\lambda,\lambda)/2)s),s)$ (resp.\ $(wt_{\lambda})(\mu,r,s) = (w(\mu + r\lambda),r,s - ((\mu,\lambda) + r((\lambda,\lambda)/2))$) where $\lambda'\in \mathbb{X}\otimes_{\Z}\widehat{\Coef}$ is defined by $\langle \lambda',\cdot\rangle = (\lambda,\cdot)$.
The pairing $\widehat{\mathbb{X}}_{\aff}\times \widehat{\mathbb{X}}_{\aff}^{\vee}\to \widehat{\Coef}$ is invariant under these actions.
For each $\widetilde{\alpha} = (\alpha,n,0)\in \Phi_{\aff}$, we put $s_{\widetilde{\alpha}} = s_{(\alpha,n)} = t_{-n\alpha^\vee}s_{\alpha}\in W_{\aff}^{\prime}$.
We have $s_{\widetilde{\alpha}}(\widetilde{\lambda}) = \widetilde{\lambda} - \langle\widetilde{\lambda},\widetilde{\alpha}^{\vee}\rangle\widetilde{\alpha}$ for $\widetilde{\lambda}\in \widehat{\mathbb{X}}_{\aff}$ and $s_{\widetilde{\alpha}}(\widetilde{\nu}) = \widetilde{\nu} - \langle\widetilde{\nu},\widetilde{\alpha}\rangle\widetilde{\alpha}^{\vee}$ for $\widetilde{\nu}\in \widehat{\mathbb{X}}_{\aff}^{\vee}$.
It is easy to see that $\Phi'_{\aff}$ and $(\Phi'_{\aff})^{\vee} = \{\widetilde{\alpha}^{\vee}\mid \widetilde{\alpha}\in \Phi'_{\aff}\}$ are stable under the action of $W_{\aff}^{\ext\prime}$.

As for $\Lambda$, let $\widehat{\Lambda}_{\aff}$ (resp.\ $\widehat{\Lambda}_{\aff}^\vee$) be the space of functions $\lambda\colon \mathcal{A}\to \widehat{\mathbb{X}}_{\aff}$ (resp.\ $\lambda\colon \mathcal{A}\to \widehat{\mathbb{X}}_{\aff}^{\vee}$) such that $\lambda(xA) = x\lambda(A)$ for any $x\in W_{\aff}'$.
The group $W_{\aff}$ acts on $\widehat{\Lambda}_{\aff}$ (resp.\ $\widehat{\Lambda}_{\aff}^{\vee}$) by $(x\lambda)(A) = \lambda(Ax)$.
This action can be extended to $W_{\aff}^{\ext}$ as follows.
We define an action of $\Lambda^{\vee}$ on $\widehat{\Lambda}_{\aff}$ by $(t_{\lambda}\mu)(A) = t_{\lambda(A)}\mu(A)$.
This action is compatible with that of $W_{\aff}$, hence gives an action of $W_{\aff}^{\ext}$ on $\widehat{\Lambda}_{\aff}$.
We also have an action of $W_{\aff}^{\ext}$ on $\widehat{\Lambda}_{\aff}^{\vee}$ by the similar way.
For each $A\in \mathcal{A}$, the map $\lambda\mapsto \lambda_A = \lambda(A)$ gives an isomorphism $\widehat{\Lambda}_{\aff}\simeq \widehat{\mathbb{X}}_{\aff}$ (resp.\ $\widehat{\Lambda}_{\aff}^{\vee}\simeq \widehat{\mathbb{X}}_{\aff}^{\vee}$) and the inverse is denoted by $\nu\mapsto \nu^{A}$.

We also have $\Lambda^{\vee}\simeq \mathbb{X}^{\vee}$ by $\lambda\mapsto \lambda_{A}$ and by combining these isomorphisms we get $W_{\aff}^{\ext}\simeq W_{\aff}^{\ext\prime}$.
Therefore, by fixing $A\in \mathcal{A}$, we get identifications
\begin{equation}\label{eq:identifications of affine Weyl groups}
\widehat{\Lambda}_{\aff}\simeq \widehat{\mathbb{X}}_{\aff}, \quad
\Lambda\simeq \mathbb{X}, \quad
W_{\aff} \simeq W_{\aff}',\quad 
W^{\ext}_{\aff} \simeq W^{\ext\prime}_{\aff}.
\end{equation}
The subset $S_{\aff}\subset W_{\aff}$ corresponds to the reflections with respect to the walls of $A$ in $W_{\aff}^{\prime}$ and $\Omega\subset W_{\aff}^{\ext}$ corresponds to $\{w\in W_{\aff}^{\ext\prime}\mid wA = A\}$.
For each $\nu\in \mathbb{X}^{\vee}$, we have the corresponding element $t_{\nu}$ in $W_{\aff}^{\ext\prime}$.
We write an isomorphism $W_{\aff}^{\ext}\to W_{\aff}^{\exp\prime}$ by $w\mapsto w_{A}$ and the inverse map is written as $x\mapsto x^{A}$.
Sometimes we use these identifications with $A = A_{\lambda}^+$ for $\lambda\in \mathbb{X}^{\vee}$.
In this case we write $t_{\nu}^{\lambda} = t_{\nu}^{A_{\lambda}^{+}}$.

\begin{lem}\label{lem:change of alcove, conjugate}
Let $A\in \mathcal{A}$, $w\in W_{\aff}^{\ext}$ and $x\in W_{\aff}$.
Then $w^{Ax} = x^{-1}w^{A}x$.
\end{lem}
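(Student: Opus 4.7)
The plan is to verify the conjugation formula first for $w \in W'_{\aff}$ by a direct calculation on alcoves, then extend to general $w \in W_{\aff}^{\ext\prime}$ by observing that both sides define group homomorphisms agreeing on $W'_{\aff}$.

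For $w \in W'_{\aff}$, the element $w^A \in W_{\aff}$ is characterized by $A \cdot w^A = w \cdot A$, using the right $W_{\aff}$ action and left $W'_{\aff}$ action on $\mathcal{A}$ (which commute by construction of this subsection). Combining Lemma~\ref{lem:about root datum etc, length zero element}(3), the defining property of $w^A$, and this commutativity in sequence yields
\[
(Ax)\cdot(x^{-1}w^A x) = A\cdot(w^A x) = (Aw^A)\cdot x = (wA)\cdot x = w\cdot(Ax),
\]
and simple transitivity of the right $W_{\aff}$ action on $\mathcal{A}$ forces $w^{Ax} = x^{-1}w^A x$ in $W_{\aff}$.

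For general $w \in W_{\aff}^{\ext\prime}$, both sides of the asserted identity define group homomorphisms $W_{\aff}^{\ext\prime} \to W_{\aff}^{\ext}$: the left-hand side is the isomorphism induced by the basepoint $Ax$, and the right-hand side is the isomorphism induced by $A$ composed with conjugation by $x^{-1}$ in $W_{\aff}^{\ext}$. These homomorphisms agree on $W'_{\aff}$ by the preceding step. Since $W_{\aff}^{\ext\prime}$ is generated by $W'_{\aff}$ together with the stabilizer of $A$ in $W_{\aff}^{\ext\prime}$, it suffices to verify the formula on elements of this stabilizer.

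The main obstacle will be this stabilizer part: the stabilizer of $A$ in $W_{\aff}^{\ext\prime}$ is not normal, so conjugation by $x_A$ moves its elements outside itself. For a stabilizer element $\omega$, the key observation is that the stabilizer of $Ax$ equals $x_A$ conjugated from the stabilizer of $A$, giving the decomposition $\omega = (x_A\omega x_A^{-1})\cdot(x_A\omega^{-1}x_A^{-1}\omega)$ with factors respectively in the stabilizer of $Ax$ and in $W'_{\aff}$. Applying the isomorphism at basepoint $Ax$ and invoking the first step on the $W'_{\aff}$-factor then reduces the remaining identity to a commutator matching computation in $W_{\aff}^{\ext}$, completing the verification.
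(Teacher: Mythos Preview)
Your first step, handling $w\in W'_{\aff}$, is correct and matches the paper's argument exactly.

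Your second step, however, is both more complicated than necessary and incomplete as written. After the decomposition $\omega = (x_A\omega x_A^{-1})\cdot(x_A\omega^{-1}x_A^{-1}\omega)$ and applying step~1 to the second factor, what remains to check is precisely the identity
\[
(x_A\,\omega\,x_A^{-1})^{Ax} = \omega^{A}
\]
for $\omega$ in the stabilizer of $A$. You describe this as ``a commutator matching computation,'' but it is not: both sides lie in $\Omega$, and verifying they coincide requires knowing that the isomorphisms $(\cdot)^{A}$ and $(\cdot)^{Ax}$ induce the \emph{same} map on the quotients $W_{\aff}^{\ext\prime}/W'_{\aff}\to W_{\aff}^{\ext}/W_{\aff}$. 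The paper states this basepoint-independence (via the canonical identification with $\mathbb{X}^{\vee}/\Z(\Phi')^{\vee}$), but you neither invoke nor prove it, so the argument as written does not close.

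The paper avoids this detour entirely: instead of generating $W_{\aff}^{\ext\prime}$ by $W'_{\aff}$ and the stabilizer of $A$, it uses $W'_{\aff}$ and the translations $t_{\lambda}$ ($\lambda\in\mathbb{X}^{\vee}$). For $w=t_{\lambda}$ one has $t_{\lambda}^{B}=[1,\lambda^{B}]$ for any alcove $B$, and the identity $\lambda^{Ax}=x^{-1}(\lambda^{A})$ follows in one line from the definition of the $W_{\aff}$-action on $\Lambda^{\vee}$: $(x^{-1}(\lambda^{A}))(Ax)=\lambda^{A}(A)=\lambda$. Then $[1,x^{-1}(\lambda^{A})]=x^{-1}[1,\lambda^{A}]x$ is immediate from the semidirect product structure. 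This is shorter and avoids any appeal to $\Omega$ or to basepoint-independence of quotient maps.
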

\begin{proof}
If $w\in W_{\aff}$, then this follows from $(Ax)x^{-1}w^{A}x = Aw^{A}x = wAx$.
If $w = t_{\lambda}$ with $\lambda\in \mathbb{X}^{\vee}$, then $w^{Ax} = t_{\lambda^{Ax}} = [1,\lambda^{Ax}]\in W_{\aff}^{\ext}$.
We have $(x^{-1}(\lambda^{A}))(Ax) = \lambda^{A}(Axx^{-1}) =\lambda^{A}(A) = \lambda $.
Hence $\lambda^{Ax} = x^{-1}(\lambda^{A})$.
Therefore $[1,\lambda^{Ax}] = x^{-1}[1,\lambda^{A}]x$.
\end{proof}

\begin{rem}\label{rem:actions on alcoves}
The group $W_{\aff}^{\ext\prime}$ acts on $\mathbb{X}^{\vee}$ and it induces an action on $\mathcal{A}$.
We write this action as $(x,A)\mapsto xA$.
The following can be verified easily.
\begin{itemize}
\item If $x^{A} = [(w,\lambda)]\in W_{\aff}^{\ext}$, then $xA = Aw + \lambda_{Aw}$.
\item The action of $W_{\aff}^{\ext\prime}$ from the left and $W_{\aff}$ from the right do not commute in general.
\item For $\nu\in \mathbb{X}^{\vee}$ we have $t_{\nu}A_{0}^{+} = A_{\nu}^{+}$ and $t_{\nu}A_{0}^{-} = A_{\nu}^{-}$.
\end{itemize}
\end{rem}

Fix $A\in \mathcal{A}$ and we put $\Phi_{\aff} = \{\widetilde{\alpha}^{A}\mid \widetilde{\alpha}\in\Phi'_{\aff}\}$ and $\Phi^{\vee}_{\aff} = \{(\widetilde{\alpha}^{\vee})^{A}\mid \widetilde{\alpha}\in\Phi'_{\aff}\}$.
For $\widetilde{\alpha}^{A}\in \Phi_{\aff}$, we put $(\widetilde{\alpha}^{A})^{\vee} = (\widetilde{\alpha}^{\vee})^{A}\in \Phi^{\vee}_{\aff}$.
We also define $s_{\widetilde{\alpha}^{A}}\in W_{\aff}$ as the element corresponding to $s_{\widetilde{\alpha}}$ via the corresponding $W'_{\aff}\simeq W_{\aff}$ induced by $A$.
We can easily check that these do not depend on $A$.

Let $\lambda\in \mathbb{X}^{\vee}$ and $S_{\aff,\lambda}\subset S_{\aff}$ the set of $s\in S_{\aff}$ such that a facet in $s$ contains $\lambda$ in its closure.
Let $W_{\aff,\lambda}$ be the parabolic subgroup of $W_{\aff}$ generated by $S_{\aff,\lambda}$.
By the isomorphism $W_{\aff}\simeq W_{\aff}'$ induced by $A_{\lambda}^{+}$, it corresponds to $W_{\finite}$.
Let $w_{\lambda}\in W_{\aff,\lambda}$ be the longest element and ${}^{\lambda}W_{\aff}$ (resp.\ $W_{\aff}^{\lambda}$) the set of minimal representatives of $W_{\aff,\lambda}\backslash W_{\aff}$ (resp\ $W_{\aff}/W_{\aff,\lambda}$).

\subsection{Moment graphs}
Let $\widehat{R}_{\aff}^{\vee} = S(\widehat{\Lambda}_{\aff}^\vee)$ and $\widehat{S}^\vee_{\aff} = S(\widehat{\mathbb{X}}_{\aff}^\vee)$ be symmetric algebras.
The isomorphism $\widehat{R}_{\aff}^{\vee}\to \widehat{S}_{\aff}^{\vee}$ induced by $\widehat{\Lambda}_{\aff}^{\vee}\xrightarrow{\sim}\widehat{\mathbb{X}}_{\aff}^{\vee}; \lambda\mapsto \lambda_{A}$ is also denoted by $f\mapsto f_{A}$ and the inverse is written as $g\mapsto g^{A}$.
These are graded $\widehat{\Coef}$-algebras via $\deg(\widehat{\Lambda}^{\vee}_{\aff}) = \deg(\widehat{\mathbb{X}}^{\vee}_{\aff}) = 2$.

We fix a length function $\ell\colon \mathcal{A}\to\Z$ in the sense of \cite[2.11]{MR591724}.
\begin{lem}\label{lem:length of translation}
We have $\ell(A + \lambda) = \ell(A) + 2\langle \rho,\lambda\rangle$.
\end{lem}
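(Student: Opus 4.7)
The plan is to unpack Lusztig's definition of the length function on alcoves in terms of a signed count of affine hyperplanes, and then to perform a direct calculation exploiting the fact that $\langle \alpha,\lambda_{A}\rangle \in \Z$ for $\alpha \in \Phi'$ and $\lambda \in \Lambda^{\vee}$.

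First I would recall, from \cite[2.11]{MR591724}, that Lusztig's length function on alcoves is characterized (up to an additive constant) by the formula
\[
\ell(A) = \sum_{\alpha \in (\Phi')^{+}} \lfloor\langle \alpha,a\rangle\rfloor
\]
for any chosen $a \in A$, the floor being independent of the choice of $a$ because $\langle\alpha,a\rangle\notin\Z$ on the open alcove $A$. Geometrically, $\lfloor\langle\alpha,a\rangle\rfloor$ records the signed count of $\alpha$-hyperplanes between $A$ and the reference alcove $A_{0}^{+}$, and crossing a single wall of type $s\in S_{\aff}$ changes exactly one of these floor values by $\pm 1$, giving the defining property $\ell(sA) = \ell(A)\pm 1$ of Lusztig's length.

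Once this reformulation is in hand, the computation is essentially one line. Pick $a\in A$, so that $a + \lambda_{A}\in A + \lambda_{A}$. Since $\langle \alpha,\lambda_{A}\rangle\in\Z$ for every $\alpha\in\Phi'$, the identity $\lfloor x + n\rfloor = \lfloor x\rfloor + n$ for $n\in\Z$ yields
\[
\ell(A+\lambda) - \ell(A) = \sum_{\alpha \in (\Phi')^{+}}\bigl(\lfloor\langle\alpha, a + \lambda_{A}\rangle\rfloor - \lfloor\langle\alpha, a\rangle\rfloor\bigr) = \sum_{\alpha \in (\Phi')^{+}}\langle\alpha,\lambda_{A}\rangle = \langle 2\rho,\lambda_{A}\rangle = 2\langle\rho,\lambda\rangle,
\]
where the final equality uses the compatibility of the pairings on $\Lambda\times\Lambda^{\vee}$ and $\mathbb{X}\times\mathbb{X}^{\vee}$ under the isomorphisms induced by $A$.

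The main obstacle is the first step: justifying that the floor-function formula above is genuinely Lusztig's length function from \cite[2.11]{MR591724}. If one prefers not to appeal to this presentation, an alternative is an induction on $\lambda$ written as a signed sum of simple coroots, using Lemma~\ref{lem:order of alcoves, reflection} together with the characterization $\ell(sA) = \ell(A)\pm 1$ to track the sign changes as $A$ is translated step by step by a simple coroot, then summing the contributions to recover $2\langle\rho,\lambda\rangle$.
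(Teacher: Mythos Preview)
Your argument is correct. The floor-sum formula $\sum_{\alpha\in(\Phi')^{+}}\lfloor\langle\alpha,a\rangle\rfloor$ does give a length function in Lusztig's sense (adjacent alcoves share a face lying on a unique hyperplane $H_{\alpha,n}$ with $\alpha\in(\Phi')^{+}$, so exactly one summand jumps by $\pm 1$), and since any two such functions differ by an additive constant the difference $\ell(A+\lambda)-\ell(A)$ is unambiguous. One small notational point: in this lemma $\lambda$ lies in $\mathbb{X}^{\vee}$ already (this is how the lemma is applied later, e.g.\ with $A_{\lambda}^{+}=A_{0}^{+}+\lambda$), so there is no need to evaluate at $A$; you should write $a+\lambda$ and $\langle\alpha,\lambda\rangle$ rather than $a+\lambda_{A}$ and $\langle\alpha,\lambda_{A}\rangle$, and the final compatibility step then disappears.

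The paper's own proof is simply a pointer to Kato \cite[(1.13.1)]{MR772611} together with the definition of Kato's function $g$, which is precisely the signed hyperplane count underlying your floor formula. So the two arguments are the same in content; you have unpacked what the citation says. What you gain is a self-contained proof that does not require the reader to chase the reference; what the paper gains is brevity and an explicit anchor in the literature where the length function on alcoves is developed systematically. Your acknowledged ``main obstacle''---matching the floor formula to Lusztig's axiomatics in \cite[2.11]{MR591724}---is exactly the content packaged inside Kato's definition of $g$.
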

\begin{proof}
This follows from \cite[(1.13.1)]{MR772611} and the definition of the function $g$ in \cite{MR772611}.
\end{proof}
The set of alcoves $\mathcal{A}$ has a structure of $\widehat{\Lambda}_{\aff}^{\vee}$-labeled moment graph.
For $A,A'\in \mathcal{A}$, $A$ is connected to $A'$ if and only if there exists $\widetilde{\alpha}\in \Phi_{\aff}$ such that $As_{\widetilde{\alpha}} = A'$ and the label of the edge between $A$ and $A'$ is $\widetilde{\alpha}^{\vee}$.
For a sheaf $\mathcal{F}$ on $\mathcal{A}$ and $A\in \mathcal{A}$, $\mathcal{F}^{A}$ is a graded $\widehat{R}_{\aff}^{\vee}$-module by the definition.
We always regard that this is a right $\widehat{R}_{\aff}^{\vee}$-module.
We define a left $\widehat{S}_{\aff}^{\vee}$-module structure on $\mathcal{F}^{A}$ by $fm = mf^{A}$ for $f\in \widehat{S}_{\aff}^{\vee}$ and $m\in \mathcal{F}^{A}$.
In the same way, we can also define a structure of $\widehat{\Lambda}_{\aff}^{\vee}$-labeled moment graph on $W_{\aff}$.

Let $\lambda\in \mathbb{X}^{\vee}$.
We need the following two lemmas to define a structure of a moment graph on $W_{\aff,\lambda}\backslash W_{\aff}$.
\begin{lem}
Let $x,y\in W_{\aff,\lambda}\backslash W_{\aff}$ and assume that $x\ne y$.
Then there exists at most one reflection $t\in W_{\aff}$ such that $xt = y$.
\end{lem}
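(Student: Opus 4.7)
I would proceed by contradiction. Suppose distinct reflections $t_{1},t_{2}\in W_{\aff}$ satisfy $xt_{1}=y=xt_{2}$. Pick any representative $\hat{x}\in W_{\aff}$ of the coset $x$. The equality $W_{\aff,\lambda}\hat{x}t_{1}=W_{\aff,\lambda}\hat{x}t_{2}$ is equivalent to $\hat{x}t_{1}t_{2}\hat{x}^{-1}\in W_{\aff,\lambda}$, so setting $s_{i}=\hat{x}t_{i}\hat{x}^{-1}$ I obtain two reflections of $W_{\aff}$ with $s_{1}s_{2}\in W_{\aff,\lambda}$. The assumption $x\neq y$ means $\hat{x}t_{i}\notin W_{\aff,\lambda}\hat{x}$, so $s_{i}\notin W_{\aff,\lambda}$. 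The problem therefore reduces to the following assertion: if $s_{1},s_{2}\in W_{\aff}$ are reflections with $s_{1}s_{2}\in W_{\aff,\lambda}$ but $s_{1},s_{2}\notin W_{\aff,\lambda}$, then $s_{1}=s_{2}$.

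I would then transport the statement to the classical realization $W'_{\aff}=W_{\finite}\ltimes\Z(\Phi')^{\vee}$ via the isomorphism $W_{\aff}\simeq W'_{\aff}$ induced by the alcove $A=A_{\lambda}^{+}$. By the text just before this lemma, $W_{\aff,\lambda}$ corresponds to $W_{\finite}\subset W'_{\aff}$; reflections correspond to reflections, so each $s_{i}$ becomes an element $s_{(\alpha_{i},n_{i})}=t_{-n_{i}\alpha_{i}^{\vee}}s_{\alpha_{i}}$ for some $\alpha_{i}\in\Phi'$ and $n_{i}\in\Z$. Membership $s_{(\alpha_{i},n_{i})}\in W_{\finite}$ holds iff $n_{i}=0$, so the conditions $s_{i}\notin W_{\aff,\lambda}$ translate into $n_{i}\neq 0$. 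A direct calculation in the semidirect product, using $s_{\alpha_{1}}t_{\mu}=t_{s_{\alpha_{1}}\mu}s_{\alpha_{1}}$, gives
\[
s_{(\alpha_{1},n_{1})}s_{(\alpha_{2},n_{2})}=t_{-n_{1}\alpha_{1}^{\vee}-n_{2}s_{\alpha_{1}}\alpha_{2}^{\vee}}\,s_{\alpha_{1}}s_{\alpha_{2}},
\]
so the hypothesis $s_{1}s_{2}\in W_{\finite}$ is equivalent to the vanishing of the translation part, namely $n_{1}\alpha_{1}^{\vee}+n_{2}s_{\alpha_{1}}\alpha_{2}^{\vee}=0$.

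Writing $\beta=s_{\alpha_{1}}\alpha_{2}\in\Phi'$, this relation says that the two coroots $\alpha_{1}^{\vee}$ and $\beta^{\vee}$ are $\Q$-linearly dependent with nonzero coefficients $n_{1},n_{2}$. Since the root system is reduced, this forces $\beta=\pm\alpha_{1}$, hence $\alpha_{2}=\pm\alpha_{1}$. A short case analysis closes the argument: if $\alpha_{2}=\alpha_{1}$ then $s_{\alpha_{1}}\alpha_{2}^{\vee}=-\alpha_{1}^{\vee}$ and the relation gives $n_{1}=n_{2}$, hence $s_{1}=s_{2}$; if $\alpha_{2}=-\alpha_{1}$ then $s_{\alpha_{1}}\alpha_{2}^{\vee}=\alpha_{1}^{\vee}$, forcing $n_{2}=-n_{1}$, and then the identity $s_{(-\alpha_{1},-n_{1})}=t_{n_{1}(-\alpha_{1})^{\vee}}s_{-\alpha_{1}}=t_{-n_{1}\alpha_{1}^{\vee}}s_{\alpha_{1}}=s_{(\alpha_{1},n_{1})}$ again gives $s_{1}=s_{2}$. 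This contradicts $t_{1}\neq t_{2}$, proving the lemma. The main obstacle is purely notational: correctly matching the intrinsic reflections of the Coxeter group $W_{\aff}$ (acting on $\mathcal{A}$ from the right) with the affine reflections $s_{(\alpha,n)}$ of $W'_{\aff}$; once this bridge via $A_{\lambda}^{+}$ is set up, the remaining computation is an elementary linear-algebra fact about coroots in a reduced root system.
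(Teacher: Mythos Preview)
Your proof is correct and takes a genuinely different route from the paper's. The paper reduces (after conjugation) to showing that if $t_{1},t_{2}$ are distinct reflections with $t_{1}t_{2}\in W_{\aff,\lambda}$ then $t_{1},t_{2}\in W_{\aff,\lambda}$, and invokes the general Coxeter-theoretic theorem of Baumeister--Dyer--Stump--Wegener \cite[Theorem~1.4]{MR3294251} on minimal reflection factorizations of elements of parabolic subgroups. You instead exploit the explicit semidirect-product structure $W'_{\aff}=W_{\finite}\ltimes\Z(\Phi')^{\vee}$ and reduce to the elementary fact that proportional coroots in a reduced root system must be equal or opposite. Your approach is more self-contained and avoids the external reference, at the price of being specific to affine Weyl groups; the paper's argument works verbatim for any Coxeter group and any standard parabolic.

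One small caveat: the identification of $W_{\aff,\lambda}$ with the literal subgroup $W_{\finite}\subset W'_{\aff}$ under the isomorphism via $A_{\lambda}^{+}$ holds on the nose only when $\lambda\in\Z(\Phi')^{\vee}$; in general the image is the stabilizer $\Stab_{W'_{\aff}}(\lambda)$, a conjugate of $W_{\finite}$ in the extended group. This does not affect your argument: the condition ``$s_{(\alpha_i,n_i)}\in\Stab_{W'_{\aff}}(\lambda)$'' reads $\langle\alpha_i,\lambda\rangle+n_i=0$, and with $m_i=n_i+\langle\alpha_i,\lambda\rangle$ in place of $n_i$ the same computation gives $m_{1}\alpha_{1}^{\vee}+m_{2}s_{\alpha_{1}}\alpha_{2}^{\vee}=0$ with $m_{1},m_{2}\neq 0$, and the remainder of your case analysis goes through unchanged.
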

\begin{proof}
Take $w\in W_{\aff}$ (resp.\ $w'\in W_{\aff}$) such that its coset is $x$ (resp.\ $y$).
We prove that there is at most one reflection $t$ such that $wt\in W_{\aff,\lambda}w'$.
Since this is equivalent to $(w(w')^{-1})(w't(w')^{-1})\in W_{\aff}$, replacing $w$ with $w(w')^{-1}$ if necessary, we may assume $w' = 1$.
The assumption $x\ne y$ says $w\notin W_{\aff,\lambda}$.
Assume that there exist two distinct reflections $t_{1},t_{2}\in W_{\aff}$ such that $wt_{1},wt_{2}\in W_{\aff,\lambda}$.
Then we have $t_{1}t_{2} = (wt_{1})^{-1}wt_{2}\in W_{\aff,\lambda}$.
Set $z = t_{1}t_{2}$.
The expression $z = t_{1}t_{2}$ is a minimal way to express $z$ as a multiplication of reflections.
Indeed, since $t_{1}\ne t_{2}$, $z$ is not the unit element.
Since $\ell(z)$ is even, it is not a reflection.
Hence the length of such expressions should be greater than one.
By \cite[Theorem~1.4]{MR3294251}, this implies $t_{1},t_{2}\in W_{\aff,\lambda}$.
Hence $w\in W_{\aff,\lambda}t_{1} = W_{\aff,\lambda}$
This contradicts to the assumption $w\notin W_{\aff,\lambda}$.
\end{proof}

\begin{lem}
Let $x,y\in {}^{\lambda}W_{\aff}$ and $t\in W_{\aff}$ a reflection.
Assume that $x\ne y$ and $xt\in W_{\aff,\lambda}y$.
Then $x > y$ or $x < y$.
More precisely, if $xt < x$ (with respect to the Bruhat order on $W_{\aff}$), then $x > y$, otherwise we have $x < y$.
\end{lem}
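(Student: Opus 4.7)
The plan is to extract two Bruhat inequalities from the minimality of $x$ and $y$ in their cosets, and then split on whether $xt<x$ or $xt>x$. Write the decomposition $xt=uy$ with $u\in W_{\aff,\lambda}$ uniquely determined. Since $y\in {}^{\lambda}W_{\aff}$, concatenating reduced expressions for $u$ and $y$ produces a reduced expression for $xt$ of length $\ell(u)+\ell(y)$, so the subword characterization of the Bruhat order gives $y\le xt$. Multiplying the identity $xt=uy$ on the right by $t$ yields $yt=u^{-1}x\in W_{\aff,\lambda}x$, and since $x\in {}^{\lambda}W_{\aff}$, the same reasoning gives $x\le yt$.

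If $xt<x$, then $y\le xt<x$ gives $x>y$ directly. Suppose instead that $xt>x$. The aim is then to show $yt<y$, since in that case $x\le yt<y$ yields the desired $x<y$. I would argue by contradiction: assume $yt>y$, and invoke the strong lifting property for the Bruhat order (valid in any Coxeter system, hence in $(W_{\aff},S_{\aff})$), which asserts that if $v\le w$ and $wt<w$ for a reflection $t$, then $vt\le w$. Applying this with $(v,w)=(y,xt)$, using $(xt)t=x<xt$ (i.e.\ the hypothesis $xt>x$), gives $yt\le xt$. Applying it with $(v,w)=(x,yt)$, using $(yt)t=y<yt$ (the contradiction hypothesis), gives $xt\le yt$. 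Together these force $xt=yt$, hence $x=y$, contradicting $x\ne y$. Therefore $yt<y$, and the proof is complete.

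The only delicate step is the use of the strong lifting property with a general (not necessarily simple) reflection in the second case; every other step is routine bookkeeping with the parabolic decomposition $W_{\aff}=W_{\aff,\lambda}\cdot{}^{\lambda}W_{\aff}$. Since $W_{\aff}$ is a Coxeter group, strong lifting is available verbatim, and the argument closes cleanly.
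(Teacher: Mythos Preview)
Your argument for the case $xt < x$ is correct and matches the paper's. The gap is in the case $xt > x$: the ``strong lifting property'' you invoke --- that $v \le w$ and $wt < w$ for a reflection $t$ imply $vt \le w$ --- is \emph{false} for non-simple reflections. In $S_3$ with simple reflections $s_1, s_2$, take $t = s_1 s_2 s_1$, $v = s_1$, $w = s_1 s_2$. Then $vt = s_2 s_1 > v$, $wt = s_2 < w$, and $v \le w$, yet $vt = s_2 s_1$ and $w = s_1 s_2$ are incomparable. Both of your applications of ``strong lifting'' (with $(v,w)=(y,xt)$ and $(v,w)=(x,yt)$) satisfy exactly these hypotheses, so the contradiction argument does not close. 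The lifting/exchange property you want holds only for simple reflections.

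The paper handles $xt > x$ by a device that \emph{is} valid for arbitrary reflections: the strong exchange condition. Writing $xt = wy$ with $w \in W_{\aff,\lambda}$ and choosing reduced expressions $w = s_1 \cdots s_k$ and $y = t_1 \cdots t_l$ (their concatenation is reduced since $y \in {}^{\lambda}W_{\aff}$), one applies exchange to $x = (wy)t < wy$. This produces either $x \in W_{\aff,\lambda} y$, forcing $x = y$ and a contradiction, or $w^{-1}x = t_1 \cdots \widehat{t_b} \cdots t_l < y$, whence $x \le w^{-1}x < y$ by minimality of $x$ in its coset. Your preliminary inequalities $y \le xt$ and $x \le yt$ are correct and economical; what is missing is a valid substitute for the lifting step, and the exchange condition is precisely that substitute.
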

\begin{proof}
Take $w\in W_{\aff,\lambda}$ such that $xt = wy$.
Since $y\in {}^{\lambda}W_{\aff}$, we have $wy\ge y$.
Hence if $xt < x$, then $x > xt = wy \ge y$.
Assume that $xt > x$.
Let $w = s_{1}\cdots s_{k}$ and $y = t_{1}\ldots t_{l}$ be reduced expressions.
Since $wyt = x < xt = wy$, by the exchange condition, there exists $b$ such that $x = wyt = s_{1}\cdots s_{b - 1}s_{b + 1}\cdots s_{k}y$ or $x = wyt = wt_{1}\cdots t_{b - 1}t_{b + 1}\cdots t_{l}$.
If $x = wyt = s_{1}\cdots s_{b - 1}s_{b + 1}\cdots s_{k}y$, then $x\in W_{\aff,\lambda}y$.
Since $x,y\in {}^{\lambda}W_{\aff}$, this implies $x = y$.
This contradicts to the assumption.
If $x = wt_{1}\cdots t_{b - 1}t_{b + 1}\cdots t_{l}$, then $w^{-1}x < y$.
Hence $x\le w^{-1}x < y$. (The first inequality follows from $x\in {}^{\lambda}W_{\aff}$ and $w^{-1}\in W_{\aff,\lambda}$.)
\end{proof}

The moment graph structure on $W_{\aff,\lambda}\backslash W_{\aff}$ is defined as follows~\cite[Definition~3.7]{MR3324922}.
The set of vertices is $W_{\aff,\lambda}\backslash W_{\aff}$.
We consider this as an ordered set using the bijection ${}^{\lambda}W_{\aff}\simeq W_{\aff,\lambda}\backslash W_{\aff}$, here the order on ${}^{\lambda}W_{\aff}$ is the Bruhat order.
For vertices $x,y\in W_{\aff,\lambda}\backslash W_{\aff}$, $x$ is connected to $y$ if and only if there exists $\widetilde{\alpha}\in\Phi_{\aff}$ such that $x = ys_{\widetilde{\alpha}}$.
The label of this edge is $\widetilde{\alpha}$.

Let $W' = \{1\}$ or $W_{\aff,\lambda}$ and $(\widehat{R}_{\aff}^{\vee})^{W'}$ the $W'$-fixed points in $\widehat{R}_{\aff}^{\vee}$.
For each sheaf $\mathcal{F}$ on $W'\backslash W_{\aff}$ and $x\in W'\backslash W_{\aff}$, we think $\mathcal{F}^{x}$ as an $((\widehat{R}_{\aff}^{\vee})^{W'},\widehat{R}_{\aff}^{\vee})$-bimodule via $fm = mx^{-1}(f)$ for $f\in (\widehat{R}_{\aff}^{\vee})^{W'}$ and $m\in \mathcal{F}^{x}$.

There are relations between these moment graphs.
The map $W_{\aff}\ni w\mapsto A_{0}^{+}w\in \mathcal{A}$ is a bijection and this also induces a bijection with the set of edges preserving labels.
Note that this is not an isomorphism as ordered sets.

We will state a relation between moment graphs $W_{\aff,0}\backslash W_{\aff}$ and $\mathcal{A}$.
We need some preparation.
We say that an affine root $(\alpha,n)$ is positive if $n\ge 0$ and $n > 0$ if $\alpha$ is negative.
Consider the identification $W_{\aff}\simeq W'_{\aff}$ using $A_{0}^{+}$ and regard $W'_{\aff}$ as a Coxeter system.
Then it is known that for $w\in W'_{\aff}$ and an affine root $(\alpha,n)$, $ws_{(\alpha,n)} > w$ if and only if $w(\alpha,n)$ is positive.
(Here we regard $(\alpha,n)\in \Phi'_{\aff}$ by identifying $(\alpha,n)$ with $(\alpha,n,0)$.)

Let $\mathcal{A}^{+}$ be the set of dominant alcoves.

\begin{lem}\label{lem:length of dominant translation of Weyl group elements}
Let $x\in W_{\aff}$.
\begin{enumerate}
\item Assume that $A_{0}^{+}x\in \mathcal{A}^{+}$. Then $\ell(t^{0}_{\lambda}x) = \ell(t^{0}_{\lambda}) + \ell(x)$ for a dominant element $\lambda\in\mathbb{X}^{\vee}$.
\item We have $A_{0}^{+}x\in \mathcal{A}^{+}$ if and only if $x\in {}^{0}W_{\aff}$.
\end{enumerate}
\end{lem}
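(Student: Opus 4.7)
The plan is to prove (2) first and then derive (1) by combining (2) with Lemma~\ref{lem:length of translation}.

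For (2), transport the problem to $W'_\aff$ via the isomorphism induced by $A_0^+$. Writing the element of $W'_\aff$ corresponding to $x$ as $wt_\mu$ with $w\in W_\finite$ and $\mu\in\mathbb{X}^\vee$, one has $A_0^+ x = w(A_0^+) + w(\mu)$. For each $\alpha\in (\Phi')^+$, splitting on the sign of $w^{-1}(\alpha)$ reduces the dominance condition $\langle \alpha, A_0^+ x\rangle\subset \mathbb{R}_{>0}$ to $\langle w^{-1}(\alpha),\mu\rangle\ge 0$ (when $w^{-1}(\alpha)>0$) and $\langle w^{-1}(\alpha),\mu\rangle\ge 1$ (when $w^{-1}(\alpha)<0$). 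A direct calculation shows that $x^{-1}(\alpha,0) = (w^{-1}(\alpha),\langle w^{-1}(\alpha),\mu\rangle)$ is a positive affine root under exactly the same two conditions. Combining with the standard Coxeter-theoretic characterization that $x\in {}^0W_\aff$ if and only if $x^{-1}(\alpha,0)$ is positive for every $\alpha\in (\Phi')^+$ yields the equivalence in (2).

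For (1), use $A_0^+ t^0_\lambda = A_\lambda^+$ from Remark~\ref{rem:actions on alcoves}, the associativity of the right action of $W_\aff^\ext$ on $\mathcal{A}$, and the fact that translation by $\lambda\in \mathbb{X}^\vee$ preserves the affine hyperplane arrangement, to compute
\[
A_0^+(t^0_\lambda x) = (A_0^+ t^0_\lambda)x = A_\lambda^+ x = A_0^+ x + \lambda.
\]
Since $\lambda$ and $A_0^+ x$ are both dominant, $A_0^+ x + \lambda$ is again dominant, and by (2) the element $t^0_\lambda x$ lies in ${}^0W_\aff$. Lemma~\ref{lem:length of translation} applied to $A = A_0^+ x$ then gives $\ell(A_0^+ t^0_\lambda x) - \ell(A_0^+ x) = 2\langle \rho,\lambda\rangle$ in IM alcove length. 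Under the dominance hypotheses, a minimal gallery between any two of the alcoves $A_0^+$, $A_\lambda^+$, $A_0^+ x$, $A_0^+ x+\lambda$ stays inside the dominant chamber, so every hyperplane crossing is of the form $H_{(\alpha,n)}$ with $\alpha\in (\Phi')^+$ and $n\ge 1$ and increases the IM alcove length by $1$; consequently the Coxeter length on $W_\aff^\ext$ agrees with the IM alcove length difference measured from $A_0^+$ on the relevant elements. Reading the alcove identity through this dictionary yields $\ell(t^0_\lambda x) - \ell(x) = 2\langle \rho,\lambda\rangle = \ell(t^0_\lambda)$.

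The main obstacle is carefully matching the IM alcove length on $\mathcal{A}$ with the Coxeter length on $W_\aff^\ext$ and tracking the $\Omega$-component of $t^0_\lambda$ when $\lambda\notin \Z\Phi^\vee$; the dominance hypotheses of the lemma are precisely what force all relevant galleries to proceed monotonically in the dominant direction, so that the two notions of length agree and the identity in (1) is just the length-counting version of the alcove translation formula in Lemma~\ref{lem:length of translation}.
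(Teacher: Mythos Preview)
Your proof of (2) follows the same line as the paper's: translate to $W'_\aff$ via the identification at $A_0^+$, write the element as a product of a translation and a finite Weyl element, and compare the condition ``$A_0^+ x$ is dominant'' with the positivity condition for $x^{-1}$ applied to the finite simple roots viewed as affine roots. Your parametrization $wt_\mu$ differs from the paper's $t_\mu w$, but the content is identical.

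For (1), you take a genuinely different route. The paper simply plugs into the explicit length formula~\eqref{eq:length formula}: once the characterization of ``$A_0^+x$ dominant'' in terms of $\mu,w$ is in hand, the identity $\ell(t_\lambda t_\mu w)=\ell(t_\lambda)+\ell(t_\mu w)$ for dominant $\lambda$ drops out by inspection. Your approach instead passes through the alcove length function of Lemma~\ref{lem:length of translation} and a claimed dictionary between Coxeter length and alcove-length difference for elements sending $A_0^+$ into $\mathcal{A}^+$. This strategy can be made to work, but the write-up has real gaps. First, you invoke ``associativity of the right action of $W_\aff^\ext$ on $\mathcal{A}$'', but the paper explicitly warns this is \emph{not} an action; Lemma~\ref{lem:about root datum etc, length zero element}(3) only gives $(Ax)y=A(xy)$ for $x\in W_\aff$, $y\in W_\aff^\ext$, which is the wrong order for your step $(A_0^+t^0_\lambda)x=A_0^+(t^0_\lambda x)$. (The identity you need does hold, because right $W_\aff$ commutes with left $\mathbb{X}^\vee$-translation, but this must be argued separately.) Second, the claim that minimal galleries stay in $\mathcal{A}^+$ is neither proved nor actually needed, and your sign convention ``$n\ge 1$'' is backwards in the paper's notation $H_{(\alpha,n)}=\{\langle\alpha,v\rangle=-n\}$. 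What you genuinely need is that every hyperplane separating $A_0^+$ from a dominant alcove has $A_0^+$ on its $\langle\alpha,\cdot\rangle$-low side; this follows from $\langle\alpha,A_0^+\rangle\subset(0,1)$ for all $\alpha\in(\Phi')^+$, and makes the signed alcove-length difference equal the unsigned hyperplane count. Finally, you use $\ell(t^0_\lambda)=2\langle\rho,\lambda\rangle$ without proof---but establishing this is essentially the length formula again, so the detour through Lemma~\ref{lem:length of translation} gains little. The paper's direct computation is both shorter and avoids all of these issues.
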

\begin{proof}
We use the identification $W'_{\aff}\simeq W_{\aff}$ \eqref{eq:identifications of affine Weyl groups} using $A_{0}^{+}$.
Then $W_{\aff,0}$ corresponds to $W_{\finite}$ and $S_{\aff}$ corresponds to the set of simple reflections with respect to the hyperplanes containing faces of $A_{0}^{+}$.
Take $w\in W_{\finite}$ and $\mu\in\mathbb{X}^{\vee}$ such that $x = (t_{\mu}w)_{A_{0}^{+}}$.
We have $t_{\mu}wA_{0}^{+}\in \mathcal{A}^{+}$ if and only if $t_{\mu}w(\varepsilon \rho^{\vee})$ is dominant for small $\varepsilon > 0$.
This is equivalent to: for any $\alpha\in(\Phi')^{+}$, we have $\varepsilon\langle w^{-1}(\alpha),\rho^{\vee}\rangle + \langle \alpha,\mu\rangle > 0$.
Namely $\mu$ is dominant and if $w^{-1}(\alpha) < 0$, then $\langle \alpha,\mu\rangle > 0$.
On the other hand, we have the following length formula
\begin{equation}\label{eq:length formula}
\ell(t_{\mu}w) = \sum_{w^{-1}(\alpha) > 0,\alpha\in(\Phi')^{+}}\lvert\langle \alpha,\mu\rangle\rvert + \sum_{w^{-1}(\alpha) < 0,\alpha\in(\Phi')^{+}}\lvert\langle \alpha,\mu\rangle - 1\rvert.
\end{equation}
From this, we can easily get $\ell(t_{\lambda}t_{\mu}w) = \ell(t_{\lambda}) + \ell(t_{\mu}w)$.
Hence we get (1).

Let $\beta$ be a positive root.
Then $s_{\beta}t_{\mu}w > t_{\mu}w$ if and only if $(w^{-1}(\beta), \langle \mu,\beta\rangle)$ is a positive affine root, namely ($w^{-1}(\beta) > 0$ and $\langle \mu,\beta\rangle\ge 0$) or ($w^{-1}(\beta) < 0$ and $\langle \mu,\beta\rangle > 0$).
Therefore $s_{\beta}t_{\mu}w > t_{\mu}w$ for any positive root $\beta$ if and only if $t_{\mu}wA_{0}^{+}\in \mathcal{A}^{+}$.
\end{proof}

\begin{lem}\label{lem:Bruhat order is generic Bruhat order on dominants}
The bijection $W_{\aff}\simeq \mathcal{A}$ induced by $A_{0}^{+}$ gives an isomorphism ${}^{0}W_{\aff}\simeq \mathcal{A}^{+}$ as ordered sets.
\end{lem}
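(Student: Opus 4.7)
By Lemma~\ref{lem:length of dominant translation of Weyl group elements}(2), the map $w\mapsto A_{0}^{+}w$ restricts to a bijection $\varphi\colon {}^{0}W_{\aff}\xrightarrow{\sim}\mathcal{A}^{+}$. The task is to check that $\varphi$ is an isomorphism of posets, where ${}^{0}W_{\aff}$ carries the Bruhat order inherited from $W_{\aff}$ and $\mathcal{A}^{+}$ carries the alcove order inherited from $\mathcal{A}$. The strategy is to verify that $\varphi$ exchanges covering relations on both sides.

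The key compatibility is that for any reflection $t\in W_{\aff}$ and any $x\in W_{\aff}$, one has $A_{0}^{+}\cdot (tx) = t'(A_{0}^{+}x)$, where $t'\in W_{\aff}'$ is the corresponding affine reflection under the identification induced by $A_{0}^{+}$. This holds because the left $W_{\aff}'$-action and the right $W_{\aff}$-action on $\mathcal{A}$ commute. Moreover, the length function on $W_{\aff}$ agrees, via $\varphi$, with the number of affine hyperplanes separating $A_{0}^{+}$ from the target alcove; this is standard~\cite{MR591724}.

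Given a Bruhat cover $x\lessdot y$ within ${}^{0}W_{\aff}$, write $y = tx$ with $t$ a reflection in $W_{\aff}$ and $\ell(y) = \ell(x) + 1$; let $t' = s_{(\alpha,n)}$ with $\alpha\in (\Phi')^{+}$. Then $A_{0}^{+}y = s_{(\alpha,n)}(A_{0}^{+}x)$, which equals either $\alpha\uparrow A_{0}^{+}x$ or $\alpha\downarrow A_{0}^{+}x$. Since both alcoves are dominant and the length grows, Lemma~\ref{lem:order of alcoves, reflection} forces the up-arrow case, giving $A_{0}^{+}x < A_{0}^{+}y$ in $\mathcal{A}^{+}$. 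Conversely, given an alcove cover $A_{0}^{+}x\lessdot \alpha\uparrow A_{0}^{+}x = A_{0}^{+}y$ with $\alpha\in(\Phi')^{+}$ and both alcoves in $\mathcal{A}^{+}$, we pick $n$ with $A_{0}^{+}y = s_{(\alpha,n)}(A_{0}^{+}x)$ and set $t = (s_{(\alpha,n)})^{A_{0}^{+}}$, so that $y = tx$. The length formula~\eqref{eq:length formula} together with dominance of $A_{0}^{+}x$ forces $\ell(tx) > \ell(x)$, so $x\lessdot y$ is a Bruhat cover in $W_{\aff}$.

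To pass from covers to the full orders, observe that the alcove order on $\mathcal{A}^{+}$ is by definition the transitive closure of its cover relations, so $\varphi$ carries the alcove order into Bruhat cover chains in ${}^{0}W_{\aff}$. For the reverse, given $x \le y$ in Bruhat on ${}^{0}W_{\aff}$, one needs a Bruhat cover chain lying \emph{inside} ${}^{0}W_{\aff}$; this is the standard parabolic lift property applied to the pair $(W_{\aff},W_{\aff,0})$. The main obstacle is precisely this last point together with the bookkeeping of up- versus down-arrows via length: one must ensure that the length growth in $W_{\aff}$ matches the ``moving away from walls dominantly'' behavior of $\alpha\uparrow$ in $\mathcal{A}^{+}$, for which Lemma~\ref{lem:length of dominant translation of Weyl group elements} and Lemma~\ref{lem:order of alcoves, reflection} are the essential inputs.
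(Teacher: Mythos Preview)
Your approach via covering relations is different from the paper's, and the direction ``Bruhat on ${}^{0}W_{\aff}$ implies alcove order on $\mathcal{A}^{+}$'' is essentially sound, modulo two fixable slips. First, $s_{(\alpha,n)}(A_{0}^{+}x)$ need not equal $\alpha\uparrow A_{0}^{+}x$ or $\alpha\downarrow A_{0}^{+}x$: the reflecting hyperplane $H_{(\alpha,n)}$ is in general not a wall of $A_{0}^{+}x$. What you actually need is that dominance of $A_{0}^{+}y$ forces $n\le -1$ (otherwise $\langle\alpha,\mu\rangle<-n\le 0$ on $A_{0}^{+}y$), whence $A_{0}^{+}$ and $A_{0}^{+}x$ lie on the same (negative) side of $H_{(\alpha,n)}$ and Lemma~\ref{lem:order of alcoves, reflection} gives $A_{0}^{+}x<A_{0}^{+}y$. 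Second, in your converse step you only obtain $x<y$ in Bruhat order, not a cover, since $\alpha\uparrow A$ and $A$ need not be adjacent alcoves; but $x<y$ is all you need.

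There is, however, a genuine gap in the direction ``alcove order on $\mathcal{A}^{+}$ implies Bruhat order''. You verify that an $\alpha\uparrow$ step \emph{with both endpoints dominant} yields a Bruhat relation, and then assert that ``the alcove order on $\mathcal{A}^{+}$ is by definition the transitive closure of its cover relations''. But the alcove order on $\mathcal{A}^{+}$ is the restriction of the order on all of $\mathcal{A}$: a relation $A<A'$ with $A,A'\in\mathcal{A}^{+}$ arises from a chain of $\alpha\uparrow$ steps in $\mathcal{A}$ whose intermediate alcoves may leave $\mathcal{A}^{+}$, and for non-dominant alcoves your length argument breaks down (an $\alpha\uparrow$ step can \emph{decrease} Bruhat length when $n\ge 0$). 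You have not shown that one can choose a chain staying inside $\mathcal{A}^{+}$, and there is no evident analogue of the parabolic lifting property on the alcove side. The paper circumvents precisely this difficulty by translating the entire $\alpha\uparrow$ chain by a sufficiently dominant $\lambda$: after translation every reflection $s_{(\alpha_i,m_i)}$ has $m_i<0$, so each step is a Bruhat increase, and then the length additivity of Lemma~\ref{lem:length of dominant translation of Weyl group elements}(1) lets one cancel the translation to conclude $z_{1}<z_{2}$.
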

\begin{proof}
Let $z_{1},z_{2}\in {}^{0}W_{\aff}$ and assume that $A_{0}^{+}z_{1} < A_{0}^{+}z_{2}$.
Set $x = (z_{1})_{A_{0}^{+}},y = (z_{2})_{A_{0}^{+}}$.
Then $xA_{0}^{+} < yA_{0}^{+}$.
We consider $W'_{\aff}$ as a Coxeter system by the identification $W'_{\aff}\simeq W_{\aff}$ induced by $A_{0}^{+}$.
Since $xA_{0}^{+} < yA_{0}^{+}$, there exist $(\alpha_{1},n_{1}),\ldots,(\alpha_{r},n_{r})\in (\Phi')^{+}\times \Z$ and $A_{0},\ldots,A_{r}\in \mathcal{A}$ such that $A_{i} = s_{(\alpha_{i},n_{i})}A_{i - 1}$, $ - 1 < \langle \alpha_{i},a\rangle + n_{i} < 0$ for any $a\in A_{i - 1}$, $A_{0} = xA_{0}^{+}$ and $A_{r} = yA_{0}^{+}$.
Let $\lambda\in\mathbb{X}^{\vee}$ and put $A'_{i} = A_{i} + \lambda$, $m_{i} = n_{i} - \langle \alpha_{i},\lambda\rangle$.
Then for $a'\in A'_{i - 1}$, $ - 1 < \langle \alpha_{i},a'\rangle + m_{i} < 0$ and $A'_{i} = s_{(\alpha_{i},m_{i})}A'_{i - 1}$.
We take sufficiently dominant $\lambda$ such that all $m_{i}$ are negative.
Take $t_{\mu_{i}}w_{i}\in W'_{\aff}$ such that $A_{i} = t_{\mu_{i}}w_{i}A_{0}^{+}$.
Then $t_{\lambda}t_{\mu_{i - 1}}w_{i - 1}(\varepsilon\rho^{\vee})\in A'_{i - 1}$ for sufficiently small $\varepsilon > 0$ and therefore $ - 1 < \langle \alpha_{i},\varepsilon w_{i - 1}(\rho^{\vee})\rangle + \langle \alpha_{i},\mu_{i - 1} + \lambda \rangle + m_{i} < 0$.
Hence $(t_{\lambda}t_{\mu_{i - 1}}w_{i - 1})^{-1}(\alpha_{i},m_{i}) = (w_{i - 1}^{-1}(\alpha_{i}),m_{i} + \langle \alpha_{i},\mu_{i - 1} + \lambda \rangle)$ is a negative affine root.
Since $m_{i} < 0$, $(\alpha_{i},m_{i})$ is a negative affine root.
Hence $t_{\lambda}t_{\mu_{i}}w_{i} = s_{(\alpha_{i},m_{i})}t_{\lambda}t_{\mu_{i - 1}}w_{i - 1} > t_{\lambda}t_{\mu_{i - 1}}w_{i - 1}$.
Since $x = t_{\mu_{0}}w_{0}$ and $y = t_{\mu_{r}}w_{r}$, we have $t_{\lambda}x  < t_{\lambda}y$.
We get $t^{0}_{\lambda}z_{1} < t^{0}_{\lambda}z_{2}$.
By Lemma~\ref{lem:length of dominant translation of Weyl group elements} (1), we have $z_{1} < z_{2}$.
Similar argument shows that if $z_{1} < z_{2}$ then $A_{0}^{+}z_{1} < A_0^{+}z_{2}$.
\end{proof}

\begin{lem}\label{lem:finite weyl group orbit of dominant alcove, order}
If $A\in\mathcal{A}^{+}$ and $x\in W_{\finite}$, then $xA\le A$.
\end{lem}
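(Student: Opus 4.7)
The plan is to induct on $\ell(x)$ in the finite Weyl group $W_{\finite}$. The base case $x = 1$ is immediate since $xA = A$. For the inductive step, I would choose a simple root $\beta \in (\Phi')^+$ with $x^{-1}\beta \in -(\Phi')^+$; such a $\beta$ exists whenever $\ell(x) \ge 1$ by a standard Coxeter argument (take the first simple reflection in a reduced expression for $x^{-1}$), and this choice makes $\ell(s_\beta x) = \ell(x) - 1$. Setting $x' = s_\beta x \in W_{\finite}$, the inductive hypothesis applied to $x'$ and the same dominant alcove $A$ gives $x'A \le A$, so it will suffice to show $xA \le x'A$.

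The comparison $xA \le x'A$ will be extracted from Lemma~\ref{lem:order of alcoves, reflection} applied to the alcove $xA$ and the affine root $(\beta, 0)$. The key computation is: for $\mu = x\lambda \in xA$ with $\lambda \in A$, one has $\langle \beta, \mu\rangle = \langle x^{-1}\beta, \lambda\rangle$; since $A$ is an open alcove in the (open) dominant chamber, $\langle \gamma, \lambda\rangle > 0$ for every $\gamma \in (\Phi')^+$, so applying this with $\gamma = -x^{-1}\beta \in (\Phi')^+$ yields $\langle \beta, \mu\rangle < 0$. Lemma~\ref{lem:order of alcoves, reflection} then gives $s_\beta(xA) > xA$, and since $s_\beta(xA) = (s_\beta x)A = x'A$, this produces $xA < x'A \le A$, closing the induction.

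I do not foresee any real obstacle; the only point to watch is that $\beta$ must be chosen on the correct side so that both uses align: we need $s_\beta$ to shorten $x$ from the left (so that $x' = s_\beta x$ is a legitimate inductive input) and we need $\langle \beta, \cdot\rangle < 0$ on $xA$ (so that Lemma~\ref{lem:order of alcoves, reflection} applies). Both are controlled by the single condition $x^{-1}\beta \in -(\Phi')^+$, which is precisely why the induction closes. In particular, no affine reflections with nonzero translation part ever enter the argument, matching the intuition that within the $W_{\finite}$-orbit of a dominant alcove, $A$ is the unique maximum.
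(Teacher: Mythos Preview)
Your proof is correct and follows essentially the same approach as the paper's: induct on $\ell(x)$, pick a simple root $\beta$ with $s_\beta x < x$ (equivalently $x^{-1}\beta < 0$), use dominance of $A$ to get $\langle \beta,\cdot\rangle < 0$ on $xA$, and apply Lemma~\ref{lem:order of alcoves, reflection} with $n=0$ to conclude $s_\beta(xA) > xA$. One small quibble: your parenthetical should say ``first simple reflection in a reduced expression for $x$'' rather than for $x^{-1}$, but the existence of such $\beta$ is standard and the argument is unaffected.
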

\begin{proof}
We prove the lemma by induction on the length of $x$.
Take a simple root $\alpha$ such that $sx < x$ with $s = s_{\alpha}$.
Then $x^{-1}(\alpha)$ is negative.
Hence for $\lambda\in A$, $\langle \alpha,x(\lambda)\rangle = \langle x^{-1}(\alpha),\lambda\rangle < 0$.
Therefore $s(xA) = s_{(\alpha,0)}(xA) > xA$ by Lemma~\ref{lem:order of alcoves, reflection}.
\end{proof}

\begin{lem}\label{lem:A is dominant, As > A -> As is dominant}
Let $A\in \mathcal{A}^{+}$ and $s\in S_{\aff}$.
If $As > A$, then $As\in \mathcal{A}^{+}$.
\end{lem}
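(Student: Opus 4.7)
The plan is to argue by contradiction: assuming $As \notin \mathcal{A}^{+}$, I will show that $As < A$, contradicting the hypothesis. Since $A$ and $As$ are adjacent alcoves lying in different Weyl chambers, some positive root must change sign across their common wall, forcing that wall to be a hyperplane of the form $\{v : \langle \alpha, v\rangle = 0\}$ for some $\alpha \in (\Phi')^{+}$.

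The key geometric claim is that $\alpha$ is necessarily \emph{simple}. If $\alpha = \sum_{i} c_{i}\alpha_{i}$ with two or more of the $c_{i}$ positive, then on the closed dominant chamber, where each $\langle \alpha_{i}, v\rangle \ge 0$, the equation $\langle \alpha, v\rangle = \sum_{i} c_{i}\langle \alpha_{i}, v\rangle = 0$ forces $\langle \alpha_{i}, v\rangle = 0$ for every $i$ with $c_{i} > 0$. Hence $\{\langle \alpha, v\rangle = 0\}$ meets the closed dominant chamber only in codimension $\ge 2$ in $\mathbb{X}^{\vee}_{\R}$ and cannot be a codimension-one wall of any dominant alcove. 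I expect this simplicity step to be the main obstacle; once it is in hand, the rest is a direct computation from the definitions.

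With $\alpha$ simple, the dominance of $A$ together with its adjacency to $\{\langle \alpha, v\rangle = 0\}$ give $0 < \langle \alpha, v\rangle < 1$ on $A$, or equivalently $-1 < \langle \alpha, v\rangle + (-1) < 0$. Taking $n = -1$ in the paper's notation, this identifies $As$ with $s_{(\alpha,0)}(A) = \alpha \downarrow A$; in particular $-1 < \langle \alpha, v\rangle < 0$ on $As$. Applying Lemma~\ref{lem:order of alcoves, reflection} to $As$ with this $\alpha$ and $n = 0$ then yields $A = s_{(\alpha,0)}(As) > As$, the desired contradiction.
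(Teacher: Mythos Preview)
Your proof is correct and follows essentially the same contradiction argument as the paper: both identify the common wall of $A$ and $As$ as $\{\langle\alpha, v\rangle = 0\}$ for a (simple) positive root $\alpha$ and conclude $As = s_{\alpha}A < A$. The paper cites Lemma~\ref{lem:finite weyl group orbit of dominant alcove, order} for this last step while you appeal directly to Lemma~\ref{lem:order of alcoves, reflection} (on which that lemma is built); note that neither argument actually uses the simplicity of $\alpha$, so your second paragraph, though correct, is not needed.
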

\begin{proof}
Assume that $As\notin \mathcal{A}^{+}$.
Then $A\cap As$ is contained in a hyperplane which contains a wall of the positive chamber, namely there exists a simple root $\alpha$ such that $A\cap As\subset \{v\in \mathbb{X}^{\vee}_{\R}\mid \langle \alpha,v\rangle = 0\}$.
Hence $As = s_{\alpha}A$.
By Lemma~\ref{lem:finite weyl group orbit of dominant alcove, order}, $As < A$.
\end{proof}

\subsection{Global sections and localization}
Let $X\subset \mathcal{A}$ be a subset and $\mathcal{F}$ a sheaf on $X$.
The space of global sections $\Gamma(\mathcal{F})$ is a module of a structure algebra of $X$.
However, in this paper, we regard $\Gamma(\mathcal{F})$ as an object in the category $\EB(X)$ defined as follows.

Set $Q_{\aff}^{\vee} = \widehat{R}_{\aff}^{\vee}[(\widetilde{\alpha}^{\vee})^{-1}\mid \widetilde{\alpha}\in\Phi_{\aff}]$.
An object of $\EB(X)$ is $(M,(M_{Q}^{A})_{A\in X},\xi_{M})$ where $M$ is a graded $(\widehat{S}_{\aff}^{\vee},\widehat{R}_{\aff}^{\vee})$-bimodule and $M_{Q}^{A}$ is a graded $(\widehat{S}_{\aff}^{\vee},Q_{\aff}^{\vee})$-bimodule such that
\begin{itemize}
\item for $m\in M_{Q}^{A}$ and $f\in \widehat{S}^{\vee}_{\aff}$ we have $fm = mf^{A}$.
\item the set $\{A\in X\mid M_{Q}^{A}\ne 0\}$ is bounded above.
\item $\xi_{M}$ is an injective $(\widehat{S}_{\aff}^{\vee},Q_{\aff}^{\vee})$-bimodule homomorphism $M\otimes_{\widehat{R}_{\aff}^{\vee}}Q_{\aff}^{\vee} \hookrightarrow \prod_{A\in X}M_{Q}^{A}$.
\item for any finite subset $Y\subset X$, $M\otimes_{\widehat{R}_{\aff}^{\vee}}Q_{\aff}^{\vee} \hookrightarrow \prod_{A\in X}M_{Q}^{A}\twoheadrightarrow \bigoplus_{A\in Y}M_{Q}^{A}$ is surjective.
\end{itemize}
We identify $M\otimes_{\widehat{R}_{\aff}^{\vee}}Q_{\aff}^{\vee}$ with the image of $\xi_{M}$ and we regard it as a submodule of $\prod_{A\in X}M_{Q}^{A}$. (Hence we omit $\xi_{M}$.)
We also omit $(M_{Q}^{A})_{A\in X}$ from the notation and simply write $M$ for an object in $\EB(X)$.
A pair $\varphi = (\varphi,(\varphi_{Q}^{A})_{A\in X})$ is a morphism $M\to N$ in $\EB(X)$ if $\varphi\colon M\to N$ is an $(\widehat{S}_{\aff}^{\vee},\widehat{R}_{\aff}^{\vee})$-bimodule homomorphism of degree zero, $\varphi_{Q}^{A}\colon M_{Q}^{A}\to N_{Q}^{A}$ is an $(\widehat{S}_{\aff}^{\vee},Q_{\aff}^{\vee})$-bimodule homomorphism which are compatible with $\xi_{M},\xi_{N}$.
Since $M\otimes_{\widehat{R}_{\aff}^{\vee}}Q_{\aff}^{\vee}\to M_{Q}^{A}$ is surjective for each $A\in \mathcal{A}$, $\varphi_{Q}^{A}$ is determined by $\varphi$.
If $M\in \EB(X)$ and $Y\subset X$, let $M^{Y}$ be the image of $M\to M\otimes_{\widehat{R}_{\aff}^{\vee}}Q_{\aff}^{\vee} \subset \prod_{A\in X}M_{Q}^{A}\twoheadrightarrow \prod_{A\in Y}M_{Q}^{A}$.
We set $M^{x} = M^{\{x\}}$.
Note that if $Y\subset X$ then for $M\in \EB(Y)$, we can regard $M\in \EB(X)$ with $M_{Q}^{A} = 0$ for $A\in X\setminus Y$.
In this way, we regard $\EB(Y)\subset \EB(X)$.

Fiebig defined a localization functor \cite{MR2370278}.
In this paper, we use the following variant of this functor.
Let $M\in \EB(X)$ and we define a sheaf $\mathcal{L}(M)$ on $X$ as follows.
\begin{itemize}
\item $\mathcal{L}(M)^{A} = M^{A}$.
\item Let $E$ be an edge. Then we put $\mathcal{L}(M)^{E} = \Coker(M^{\{A,A'\}}\mathcal{Z}(E)\to M^{A}\oplus M^{A'})$ where $A,A'$ are endpoints of $E$.
\end{itemize}
Here $\mathcal{Z}(E) = \{(f,g)\in (\widehat{R}_{\aff}^{\vee})^{2}\mid f\equiv g\pmod{\alpha_{E}}\}$.

\begin{lem}\label{lem:modules attached to an edge}
Let $X\subset \mathcal{A}$ be a subset, $E$ an edge connecting $A$ with $A'$ in $X$.
Assume that a sheaf $\mathcal{F}$ on $X$ satisfies (BM2) and (BM4).
Then we have $\Gamma(\mathcal{F})^{\{A,A'\}}\mathcal{Z}(E) = \Gamma(\{A,A'\},\mathcal{F}) = \Ker(\mathcal{F}^{A}\oplus\mathcal{F}^{A'}\to \mathcal{F}^{E})$.
\end{lem}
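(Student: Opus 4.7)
\emph{Plan of proof.} The statement contains two equalities, which I will handle in turn. The second one, $\Gamma(\{A,A'\},\mathcal{F})=\Ker(\mathcal{F}^{A}\oplus\mathcal{F}^{A'}\to\mathcal{F}^{E})$, is immediate from the definition: the induced moment graph on $\{A,A'\}$ has $E$ as its only edge with both endpoints in the subset, so a section on $\{A,A'\}$ is precisely a pair $(a,a')$ satisfying $\rho^{\mathcal{F}}_{E,A}(a)=\rho^{\mathcal{F}}_{E,A'}(a')$. For the easy inclusion $\Gamma(\mathcal{F})^{\{A,A'\}}\mathcal{Z}(E)\subseteq\Gamma(\{A,A'\},\mathcal{F})$, restricting a global section to $\{A,A'\}$ obviously produces an element satisfying the edge relation, and I multiply by $(f,g)\in\mathcal{Z}(E)$. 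Since $\rho^{\mathcal{F}}_{E,\bullet}$ is $\widehat{R}^{\vee}_{\aff}$-linear and $\mathcal{F}^{E}$ is annihilated by $\alpha_{E}$, the image of $af$ in $\mathcal{F}^{E}$ depends only on $f\bmod \alpha_{E}$, and similarly for $a'g$; since $f\equiv g\pmod{\alpha_{E}}$ and $\rho^{\mathcal{F}}_{E,A}(a)=\rho^{\mathcal{F}}_{E,A'}(a')$, the compatibility is preserved.

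The substantive inclusion is $\Gamma(\{A,A'\},\mathcal{F})\subseteq\Gamma(\mathcal{F})^{\{A,A'\}}\mathcal{Z}(E)$. Without loss of generality I assume $A>A'$, so that (BM2) applies to $\rho^{\mathcal{F}}_{E,A}$. Given $(a,a')$ in $\Gamma(\{A,A'\},\mathcal{F})$, I first apply (BM4) to lift $a'\in\mathcal{F}^{A'}$ to a global section $\widetilde{a}'\in\Gamma(\mathcal{F})$. Since $\widetilde{a}'$ is compatible on $E$ and $\rho^{\mathcal{F}}_{E,A'}(\widetilde{a}'_{A'})=\rho^{\mathcal{F}}_{E,A'}(a')=\rho^{\mathcal{F}}_{E,A}(a)$, the element $a-\widetilde{a}'_{A}\in\mathcal{F}^{A}$ lies in $\Ker\rho^{\mathcal{F}}_{E,A}=\mathcal{F}^{A}\alpha_{E}$ by (BM2). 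Write $a-\widetilde{a}'_{A}=b\alpha_{E}$ with $b\in\mathcal{F}^{A}$, and use (BM4) once more to lift $b$ to $\widetilde{b}\in\Gamma(\mathcal{F})$. Then
\[
(a,a')=\widetilde{a}'|_{\{A,A'\}}\cdot(1,1)+\widetilde{b}|_{\{A,A'\}}\cdot(\alpha_{E},0),
\]
and both $(1,1)$ and $(\alpha_{E},0)$ belong to $\mathcal{Z}(E)$, exhibiting $(a,a')$ as an element of $\Gamma(\mathcal{F})^{\{A,A'\}}\mathcal{Z}(E)$.

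The main obstacle is this last containment; the rest is formal. The subtle point is the two-step reduction: one cannot directly lift the pair $(a,a')$ simultaneously, because (BM4) only guarantees lifting at a single vertex. Lifting $a'$ first reduces to showing that any element of the form $(c,0)$ with $\rho^{\mathcal{F}}_{E,A}(c)=0$ lies in $\Gamma(\mathcal{F})^{\{A,A'\}}\mathcal{Z}(E)$, and this is precisely what the combination of (BM2) (identifying this kernel with $\mathcal{F}^{A}\alpha_{E}$) and a second application of (BM4) achieves. The asymmetry $A>A'$ is essential because (BM2) only controls the restriction map at the larger endpoint.
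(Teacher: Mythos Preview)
Your proof is correct and follows essentially the same approach as the paper's: both lift the component at the smaller vertex via (BM4), reduce to a pair with one zero entry, apply (BM2) at the larger vertex to extract the factor $\alpha_E$, and lift again. The only difference is cosmetic---the paper orders $A<A'$ while you order $A>A'$---so the roles of the two coordinates are swapped, but the argument is identical.
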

\begin{proof}
We may assume $A < A'$.
The second equality is the definition of the global section.
We prove the first equality.
Obviously, the left-hand side is contained in the right-hand side.
Let $(a,b)\in \Gamma(\{A,A'\},\mathcal{F})$ with $a\in \mathcal{F}^{A}$ and $b\in \mathcal{F}^{A'}$.
Since $\Gamma(\mathcal{F})\to \mathcal{F}^{A}$ is surjective, there exists $m = (m_{A_{1}})_{A_{1}\in X}\in \Gamma(\mathcal{F})$ such that $m_{A} = a$.
Then $(m_{A},m_{A'})$ is in the left hand side.
By replacing $(a,b)$ with $(a,b) - (m_{A},m_{A'})$, we may assume $a = 0$.
Then $b$ is in the kernel of $\mathcal{F}^{A'}\to \mathcal{F}^{E}$, which is $\mathcal{F}^{A'}\alpha_{E}$.
Take $n = (n_{A_{1}})\in \Gamma(\mathcal{F})$ such that $b = n_{A'}\alpha_{E}$.
Then $(0,b) = (n_{A},n_{A'})(0,\alpha_{E})$ and it is in the left hand side.
\end{proof}

Let $W' = \{1\}$ or $W_{\aff,\lambda}$.
Then for $X\subset W'\backslash W_{\aff}$, we can define $\EB(X)$ by the similar way.
An object is a pair $(M,(M^{x}_{Q})_{x\in X})$ where $M$ is an $((\widehat{R}_{\aff}^{\vee})^{W'},\widehat{R}_{\aff}^{\vee})$-bimodule, $M^{x}_{Q}$ is a $((Q^{\vee}_{\aff})^{W'},Q^{\vee}_{\aff})$-bimodule such that $fm = mx^{-1}(f)$ where $m\in M^{x}_{Q}$, $f\in (\widehat{R}_{\aff}^{\vee})^{W'}$ and $M\otimes_{\widehat{R}_{\aff}^{\vee}}Q^{\vee}_{\aff} = \bigoplus_{x\in X}M^{x}_{Q}$.
For a sheaf $\mathcal{F}$ on $X$, $\Gamma(\mathcal{F})\in \EB(X)$ and for $M\in \EB(X)$ we can define a sheaf $\mathcal{L}(M)$ on $X$.

\subsection{Translation}
The category of sheaves on $\mathcal{A}$ is invariant under the translations.
The precise statement is the following.
Let $\lambda\in \mathbb{X}^{\vee}$ and $\sigma_{\lambda}$ the image of $\lambda\in \mathbb{X}^{\vee}$ under the isomorphism $\mathbb{X}^{\vee}/\Z\Phi^{\vee}\simeq \Omega$.
For a sheaf $\mathcal{F}$ on $\mathcal{A}$, we define $T_{\lambda}^{*}\mathcal{F}\in \Sh(\mathcal{A})$ as follows.
We put $(T_{\lambda}^{*}\mathcal{F})^{A} = \mathcal{F}^{A - \lambda}$ for $A\in \mathcal{A}$ and $(T_{\lambda}^{*}\mathcal{F})^{E} = \mathcal{F}^{E - \lambda}$ for an edge $E$ in $\mathcal{A}$ where $E - \lambda$ is an edge connecting $A - \lambda$ with $A' - \lambda$ if $E$ connects $A$ with $A'$.
We twist the $\widehat{R}_{\aff}^{\vee}$-module structure on $\mathcal{F}^{A - \lambda}$ and $\mathcal{F}^{E - \lambda}$ by $\sigma_{\lambda}$.
The module $(T_{\lambda}^{*}\mathcal{F})^{E}$ is killed by $\alpha_{E}$ by the following lemma.

\begin{lem}
Let $E$ be an edge.
Then $\alpha_{E - \lambda} = \sigma_{\lambda}^{-1}(\alpha_{E})$.
\end{lem}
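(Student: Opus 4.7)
The plan is to evaluate both sides of $\alpha_{E-\lambda} = \sigma_\lambda^{-1}(\alpha_E)$ at the single alcove $A - \lambda$ (where $E$ connects $A$ with $A'$) and then conclude by $W'_{\aff}$-equivariance, which pins down any section of $\widehat{\Lambda}_{\aff}^\vee$ globally from its value at one point. Writing $\alpha_E = \widetilde{\alpha}^\vee$ with $A' = As_{\widetilde{\alpha}}$, let $\widetilde{\beta} = (\beta, n, 0) \in \Phi'_{\aff}$ be the affine root corresponding to $\widetilde{\alpha}$ under the isomorphism $W_{\aff} \simeq W'_{\aff}$ induced by $A$; by construction $\widetilde{\alpha}^\vee(A) = \widetilde{\beta}^\vee$.

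For the left hand side: since $A' - \lambda = (t_{-\lambda}s_{\widetilde{\beta}}t_{\lambda})(A - \lambda)$, the affine reflection attached to $E - \lambda$ at $A - \lambda$ is $t_{-\lambda}s_{\widetilde{\beta}}t_\lambda \in W'_{\aff}$. A direct computation using $s_\beta t_\mu = t_{s_\beta\mu}s_\beta$ gives
\[
t_{-\lambda}\,s_{(\beta,n)}\,t_\lambda = s_{(\beta,\, n + \langle\beta,\lambda\rangle)},
\]
so the relevant affine root is $\widetilde{\delta} = (\beta, n + \langle\beta,\lambda\rangle, 0)$. Plugging into $\widetilde{\delta}^\vee = (\beta^\vee, 0, (n + \langle\beta,\lambda\rangle)(\beta^\vee,\beta^\vee)/2)$ and using $\langle\beta,\lambda\rangle = 2(\beta^\vee,\lambda)/(\beta^\vee,\beta^\vee)$, a comparison with the explicit $t_{-\lambda}$-action on $\widehat{\mathbb{X}}_{\aff}^\vee$ (which fixes the $\mathbb{X}^\vee$-part and shifts the central part by $-(\cdot,\lambda)$) yields $\widetilde{\delta}^\vee = t_{-\lambda}\widetilde{\beta}^\vee$. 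Hence $\alpha_{E - \lambda}(A-\lambda) = t_{-\lambda}\widetilde{\beta}^\vee$.

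For the right hand side, I would realize $\sigma_\lambda \in \Omega \subset W_{\aff}^{\ext}$ in the semi-direct presentation as $\sigma_\lambda = t_{\lambda^A}\,w_A$, where $w_A \in W_{\aff}$ is the unique element with $Aw_A = A - \lambda$; Lemma~\ref{lem:about root datum etc, length zero element} then confirms $A\sigma_\lambda = A$, so this lift really lies in $\Omega$. Inverting gives $\sigma_\lambda^{-1} = w_A^{-1}\,t_{-\lambda^A}$, and applying the $W_{\aff}$-action $(x\nu)(B) = \nu(Bx)$ and the $\Lambda^\vee$-action $(t_\mu\nu)(B) = t_{\mu(B)}\nu(B)$ on $\widehat{\Lambda}_{\aff}^\vee$, together with the identity $(A-\lambda)w_A^{-1} = A$, one computes
\[
(\sigma_\lambda^{-1}\widetilde{\alpha}^\vee)(A-\lambda) = t_{-\lambda^A(A)}\,\widetilde{\alpha}^\vee(A) = t_{-\lambda}\widetilde{\beta}^\vee.
\]
Both sides of the lemma now agree at $A - \lambda$, and $W'_{\aff}$-equivariance finishes the proof.

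The main obstacle I anticipate is the bookkeeping around $\Omega$: the naive lift $t_{\lambda^A} \in \Lambda^\vee$ of $\sigma_\lambda$ is \emph{not} in $\Omega$ (it sends $A$ to $A + \lambda$ rather than fixing $A$), so the corrective factor $w_A \in W_{\aff}$ must be carried through the entire calculation of the $W_{\aff}^{\ext}$-action. The whole thing collapses cleanly only because the defining equation $Aw_A = A - \lambda$ forces the convenient cancellation $(A-\lambda)w_A^{-1} = A$, which feeds exactly the right $\lambda^A(A) = \lambda$ into the final expression.
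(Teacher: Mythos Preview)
Your proof is correct and takes a genuinely different route from the paper's. The paper argues entirely inside $W_{\aff}^{\ext}$: it establishes the identity $(A-\lambda)w = A(\sigma_\lambda w\sigma_\lambda^{-1}) - \lambda$ for all $w\in W_{\aff}$ by repeated use of Lemma~\ref{lem:about root datum etc, length zero element} and Lemma~\ref{lem:change of alcove, conjugate}, then specializes to $w=\sigma_\lambda^{-1}s_{\alpha_E}\sigma_\lambda$ and invokes $\sigma_\lambda^{-1}s_{\alpha_E}\sigma_\lambda = s_{\sigma_\lambda^{-1}\alpha_E}$. This is clean and coordinate-free but requires threading through several abstract identifications. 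Your approach is more concrete: you pass to $W'_{\aff}$ and $\widehat{\mathbb{X}}_{\aff}^\vee$ via evaluation at $A-\lambda$, and compute both sides explicitly. The explicit conjugation $t_{-\lambda}s_{(\beta,n)}t_\lambda = s_{(\beta,n+\langle\beta,\lambda\rangle)}$ together with the formula for the $t_{-\lambda}$-action on $\widehat{\mathbb{X}}_{\aff}^\vee$ is a nice self-contained verification.

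One point you gloss over deserves a sentence: the factorization $\sigma_\lambda = t_{\lambda^A}w_A$ only agrees with the canonical lift $[(w_A,\lambda^A)]$ because $w_A$ acts trivially on $\Lambda^\vee$. This holds since $Aw_A = A-\lambda = t_{-\lambda}A$ shows that $w_A$ corresponds to the pure translation $t_{-\lambda}\in W'_{\aff}$ under the identification at $A$, and translations have trivial image in $W_{\finite}$. Without this observation your inversion $\sigma_\lambda^{-1} = w_A^{-1}t_{-\lambda^A}$ would a priori need a correction term $w_A(\lambda^A)$ in place of $\lambda^A$; it happens to vanish here, and your final line is correct, but you should say why.
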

\begin{proof}
Let $A\in \mathcal{A}$.
We have $A - \lambda = At_{-\lambda}^{A}$.
We put $\sigma = \sigma_{\lambda}$.
Then $A - \lambda = At_{-\lambda}^{A} = A(t_{-\lambda}^{A}\sigma\sigma^{-1})$.
The definition of $\sigma$ says $t_{-\lambda}^{A}\sigma\in W_{\aff}$.
Hence, by Lemma~\ref{lem:about root datum etc, length zero element} (3), $A(t_{-\lambda}^{A}\sigma\sigma^{-1}) = (A(t_{-\lambda}^{A}\sigma))\sigma^{-1}$ and, by the definition of $\Omega$, we have $(A(t_{-\lambda}^{A}\sigma))\sigma^{-1} = A(t_{-\lambda}^{A}\sigma)$.
Therefore, for $w\in W_{\aff}$, we have $(A - \lambda)w = (A(t_{-\lambda}^{A}\sigma))w = A(t_{-\lambda}^{A}\sigma w)$.
Moreover, we have $A(t_{-\lambda}^{A}\sigma w) = A(t_{-\lambda}^{A}\sigma w)\sigma^{-1} = A(t_{-\lambda}^{A}\sigma w\sigma^{-1})$ by the definition of $\Omega$ and Lemma~\ref{lem:about root datum etc, length zero element} (3).
By Lemma~\ref{lem:change of alcove, conjugate},  $t_{-\lambda}^{A}\sigma w\sigma^{-1} = \sigma w\sigma^{-1} t_{-\lambda}^{A\sigma w\sigma^{-1}}$.
Hence $A(t_{-\lambda}^{A}\sigma w\sigma^{-1}) = A(\sigma w\sigma^{-1} t_{-\lambda}^{A\sigma w\sigma^{-1}}) = (A(\sigma w\sigma^{-1})) t_{-\lambda}^{A\sigma w\sigma^{-1}}$ by Lemma~\ref{lem:about root datum etc, length zero element} (3).
This is equal to $A(\sigma w\sigma^{-1}) - \lambda$.
As a consequence, we get $(A - \lambda)w = A(\sigma w\sigma^{-1}) - \lambda$.

Let $A,A'$ be endpoints of $E$ and $t$ a reflection such that $A' = At$.
Then by putting $w = \sigma^{-1}t\sigma$, we have $(A - \lambda)(\sigma^{-1} t\sigma) = A' - \lambda$.
Hence $s_{\alpha_{E - \lambda}} = \sigma^{-1}t\sigma$.
Now the lemma follows from $\sigma^{-1}s_{\alpha_{E}}\sigma = s_{\sigma^{-1}\alpha_{E}}$.
\end{proof}

It is obvious that $T_{\lambda}^{*}$ gives an automorphism of the category $\Sh(\mathcal{A})$ that preserves $\BM(\mathcal{A})$.
Moreover, we have $T_{\lambda}^{*}\mathcal{B}^{\mathcal{A}}(A) = \mathcal{B}^{\mathcal{A}}(A + \lambda)$.
In particular, we get the following.
\begin{lem}\label{lem:grk of translation}
Let $A,A'\in \mathcal{A}$ and $\lambda\in \mathbb{X}^{\vee}$.
Then we have $\grk\mathcal{B}^{\mathcal{A}}(A + \lambda)^{A' + \lambda} = \grk\mathcal{B}^{\mathcal{A}}(A)^{A'}$ and $\grk\mathcal{B}^{\mathcal{A}}(A + \lambda)^{[A' + \lambda]} = \grk\mathcal{B}^{\mathcal{A}}(A)^{[A']}$.
\end{lem}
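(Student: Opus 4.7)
The strategy is to read off both identities from the equality $T_{\lambda}^{*}\mathcal{B}^{\mathcal{A}}(A)=\mathcal{B}^{\mathcal{A}}(A+\lambda)$ noted just above the lemma. Evaluating this equality of sheaves at the vertex $A'+\lambda$ gives an isomorphism of graded $\widehat{R}_{\aff}^{\vee}$-modules $\mathcal{B}^{\mathcal{A}}(A+\lambda)^{A'+\lambda}\cong(T_{\lambda}^{*}\mathcal{B}^{\mathcal{A}}(A))^{A'+\lambda}$; by the definition of $T_{\lambda}^{*}$ the right-hand side is $\mathcal{B}^{\mathcal{A}}(A)^{A'}$ with its $\widehat{R}_{\aff}^{\vee}$-module structure twisted by the automorphism $\sigma_{\lambda}$. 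Because $\sigma_{\lambda}$ is a degree-preserving graded ring automorphism, twisting preserves the property of being graded free and does not change the multiset of shifts; concretely, a twisted copy of $\widehat{R}_{\aff}^{\vee}(n)$ is isomorphic to $\widehat{R}_{\aff}^{\vee}(n)$ itself via $r\mapsto \sigma_{\lambda}^{-1}(r)$. Hence the two graded ranks coincide, giving the first identity.

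For the second identity I plan to run the same argument after checking that the $[x]$-construction is compatible with $T_{\lambda}^{*}$. Unwinding the definitions, $(T_{\lambda}^{*}\mathcal{F})^{[A'+\lambda]}$ is the kernel of $\mathcal{F}^{A'}\to\bigoplus_{E}\mathcal{F}^{E-\lambda}$, where $E$ ranges over edges at $A'+\lambda$ connecting to some $y>A'+\lambda$. Substituting $E'=E-\lambda$ and $y'=y-\lambda$ rewrites this kernel as indexed by edges $E'$ at $A'$ connecting to $y'$, and matches $\mathcal{F}^{[A']}$ provided $y>A'+\lambda$ is equivalent to $y'>A'$.

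The main (and only nontrivial) obstacle is therefore to verify that the partial order on $\mathcal{A}$ is invariant under translation by $\lambda\in\mathbb{X}^{\vee}$. Since the order is generated by the cover relations $A<\alpha\uparrow A$ for $\alpha\in(\Phi')^{+}$, it suffices to show $\alpha\uparrow(A+\lambda)=(\alpha\uparrow A)+\lambda$. Writing $\alpha\uparrow A=s_{(\alpha,n)}(A)$ with $-1<\langle\alpha,\nu\rangle+n<0$ for $\nu\in A$, the inequality for $\nu+\lambda\in A+\lambda$ reads $-1<\langle\alpha,\nu+\lambda\rangle+(n-\langle\alpha,\lambda\rangle)<0$, and a direct computation with the reflection formula $s_{(\alpha,m)}(\mu)=\mu-(\langle\alpha,\mu\rangle+m)\alpha^{\vee}$ shows $s_{(\alpha,n-\langle\alpha,\lambda\rangle)}(A+\lambda)=s_{(\alpha,n)}(A)+\lambda$. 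This gives the desired bijection of edges at $A'$ and at $A'+\lambda$, and the $[x]$-identity then follows by the same twist argument as in the first case.
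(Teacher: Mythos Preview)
Your proposal is correct and follows exactly the approach the paper intends: the paper states the lemma immediately after noting that $T_{\lambda}^{*}$ is an automorphism of $\Sh(\mathcal{A})$ preserving $\BM(\mathcal{A})$ and that $T_{\lambda}^{*}\mathcal{B}^{\mathcal{A}}(A)=\mathcal{B}^{\mathcal{A}}(A+\lambda)$, leaving the details implicit. Your write-up simply unpacks those details, including the translation-invariance of the order on $\mathcal{A}$, which the paper tacitly uses when asserting that $T_{\lambda}^{*}$ preserves the Braden--MacPherson conditions.
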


\section{The action of Soergel bimodules}
\subsection{Definition}\label{subsec:Soergel bimodules}
Let $\mathcal{S}$ be the category of Soergel bimodules attached to $(W_{\aff},S_{\aff})$ and $\widehat{\mathbb{X}}_{\aff}^{\vee}$~\cite{MR4321542}.
This category is a full-subcategory of $\EB(W_{\aff})$ and generated by $B_{s}\ (s\in S_{\aff})$ as a monoidal category, where $B_{s} = \{(f,g)\in (\widehat{R}_{\aff}^{\vee})^{2}\mid f\equiv g\pmod{\widetilde{\alpha}_{s}^{\vee}}\}(1)$.
The action of $a,b\in \widehat{R}_{\aff}^{\vee}$ is given by $a(f,g)b = (afb,s(a)gb)$.
We have $B_{s}\otimes Q^{\vee}_{\aff}\simeq (Q^{\vee}_{\aff})^{2}$ and we put $(B_{s})_{Q}^{1} = Q^{\vee}_{\aff}\oplus 0,(B_{s})^{s}_{Q} = 0\oplus Q^{\vee}_{\aff}$ and $(B_{s})^{x}_{Q} = 0$ for $x\in W_{\aff}\setminus\{1,s\}$.
We put $\supp_{W_{\aff}}(B) = \{x\in W_{\aff}\mid B_{Q}^{x}\ne 0\}$ for $B\in \mathcal{S}$.

We define an action of $\mathcal{S}$ on $\BM(\mathcal{A})$.
Namely for $B\in \mathcal{S}$ and $\mathcal{F}\in \BM(\mathcal{A})$, we define $\mathcal{F}\star B\in \BM(\mathcal{A})$.
For this, it is sufficient to define $(\mathcal{F}\star B)|_{O}$ for any $O\in \bbopen$ which are compatible.

Let $O\in\bbopen$ and take $O'\in \bbopen$ such that $O(\supp_{W_{\aff}}B)^{-1}\subset O'$.
Such $O'$ exists by the following lemma.

\begin{lem}\label{lem:existence of O' supset Ox}
Let $O\in \bbopen$ and $x\in W_{\aff}$.
Then there exists $O'\in \bbopen$ such that $O' \supset Ox$.
\end{lem}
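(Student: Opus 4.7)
The plan is to argue by induction on $\ell(x)$, reducing the general case to that of a single simple reflection. The base case $\ell(x)=0$ is trivial: take $O'=O$. For the inductive step, write $x=x's$ with $s\in S_{\aff}$ and $\ell(x')=\ell(x)-1$; the inductive hypothesis applied to $x'$ produces $O_{1}\in\bbopen$ with $Ox'\subset O_{1}$, so $Ox=Ox'\cdot s\subset O_{1}s$, and it suffices to find some $O'\in\bbopen$ containing $O_{1}s$. This reduces the problem to the case $x=s\in S_{\aff}$.

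For a simple reflection $s\in S_{\aff}$ and $O\in\bbopen$, let $A_{0}$ be a lower bound of $O$, so $O\subset\{A\in\mathcal{A}\mid A\ge A_{0}\}$. The alcoves $A_{0}$ and $A_{0}s$ are adjacent across the $s$-wall of $A_{0}$, hence of the form $A_{0}s=s_{(\alpha,n)}A_{0}$ for some affine root, and Lemma~\ref{lem:order of alcoves, reflection} guarantees that they are comparable in the alcove order. I would take
\[
O'=\{A\in\mathcal{A}\mid A\ge A_{0}\}\cup\{A\in\mathcal{A}\mid A\ge A_{0}s\},
\]
which is upward closed and is bounded below by $\min(A_{0},A_{0}s)$, hence belongs to $\bbopen$. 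The inclusion $Os\subset O'$ then reduces to the following assertion: for every $A\ge A_{0}$ one has $As\ge A_{0}$ or $As\ge A_{0}s$. This is immediate in the subcase $As\ge A$; in the opposite subcase $As<A$, $As$ is the alcove just below $A$ across its $s$-wall, and the required inequality is an analogue of the classical Bruhat lifting lemma.

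The main obstacle is verifying this lifting statement for the alcove order. One approach is to take a chain $A_{0}=B_{0}<B_{1}<\dots<B_{r}=A$ witnessing $A_{0}\le A$ in the alcove order, each step of the form $B_{i+1}=\alpha_{i}\uparrow B_{i}$, and track how the $s$-wall attached to each $B_{i}$ evolves along the chain, applying Lemma~\ref{lem:order of alcoves, reflection} at each covering. Alternatively, one can transport the problem to the ``generic Bruhat order'' on $W_{\aff}$ via the bijection $W_{\aff}\simeq\mathcal{A}$ and invoke the corresponding, classical, lifting property for the periodic Bruhat order. Either way, once the lifting property is in hand, the induction produces the desired $O'\in\bbopen$ containing $Ox$.
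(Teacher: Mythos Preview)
Your reduction to a simple reflection and your construction of $O'$ follow the paper's proof closely; the paper simply takes $O' = Os \cup O$ (after replacing $O$ by $\{A' \mid A' \ge A\}$) and asserts this works, whereas you take the union of two principal up-sets, but in either formulation the verification hinges on the same lifting property you isolate. That property is precisely \cite[Proposition~3.2]{MR591724}, already invoked elsewhere in the paper: from $A_0 \le A$ one has $\min(A_0, A_0 s) \le \min(A, As)$, so $As \ge \min(A_0, A_0 s)$ and the inclusion $Os \subset O'$ follows immediately. There is no need for your chain argument or for transporting to $W_{\aff}$; the latter route is in fact delicate, since the bijection $W_{\aff} \simeq \mathcal{A}$ given by $w \mapsto A_0^+ w$ does not respect the two orders in general.
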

\begin{proof}
We may assume $x = s\in S_{\aff}$ and $O = \{A'\in \mathcal{A}\mid A'\ge A\}$ for some $A$.
Then $O' = Os \cup O$ satisfies the lemma.
\end{proof}

Let $\mathcal{F}$ be a sheaf on $O'$.
We have $\Gamma(\mathcal{F})\in \EB(O')$.
We define $(\Gamma(\mathcal{F})\otimes B)|_{O}$ as follows.
We put $(\Gamma(\mathcal{F})\otimes B)_{Q}^{A} = \bigoplus_{x\in \supp_{W_{\aff}}B}\Gamma(\mathcal{F})_{Q}^{Ax^{-1}}\otimes_{Q_{\aff}^{\vee}} B_{Q}^{x}$ for $A\in O$.
We have $(\Gamma(\mathcal{F})\otimes_{\widehat{R}_{\aff}^{\vee}} B)\otimes_{\widehat{R}_{\aff}^{\vee}}Q_{\aff}^{\vee} = \bigoplus_{A\in O'}\bigoplus_{x\in \supp_{W_{\aff}}B}\Gamma(\mathcal{F})_{Q}^{Ax^{-1}}\otimes_{Q_{\aff}^{\vee}}B_{Q}^{x}$ and therefore we have a (surjective) map $(\Gamma(\mathcal{F})\otimes B)\otimes_{\widehat{R}_{\aff}^{\vee}}Q_{\aff}^{\vee}\to \bigoplus_{A\in O}(\Gamma(\mathcal{F})\otimes B)_{Q}^{A}$.
Let $\Gamma(\mathcal{F})\otimes B$ be the image of $\Gamma(\mathcal{F})\otimes_{\widehat{R}_{\aff}^{\vee}}B$ under this map.
Then we have $\Gamma(\mathcal{F})\otimes B\in \EB(O)$.
Now we define $\mathcal{F}\star B = \mathcal{L}(\Gamma(\mathcal{F})\otimes B)$ which is a sheaf on $O$.
It is not difficult to see that this definition ``does not depend on $O'$'' if $\mathcal{F}$ is flabby, namely if $O''\in \bbopen$ contains $O'$ and $\mathcal{F}\in \BM(O'')$, then $\mathcal{F}\star B\simeq (\mathcal{F}|_{O'})\star B$.

We prove the following.
\begin{thm}\label{thm:action of Soergel bimodule preserves BM sheaves}
In the above setting, if $\mathcal{F}\in \BM(O')$ then $\mathcal{F}\star B\in\BM(O)$.
\end{thm}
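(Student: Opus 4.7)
The plan is to reduce to the case $B=B_s$ for a simple reflection $s\in S_\aff$ and then to verify the Braden--MacPherson axioms directly. For the reduction, recall that $\mathcal{S}$ is the Karoubian closure of the monoidal subcategory of $\EB(W_\aff)$ generated by the $B_s$, so every $B\in\mathcal{S}$ is a direct summand of some Bott--Samelson bimodule $BS=B_{s_1}\otimes\cdots\otimes B_{s_r}$. Assuming inductively that after $k$ steps $\mathcal{F}_k=\mathcal{F}\star(B_{s_1}\otimes\cdots\otimes B_{s_k})$ is Braden--MacPherson, the associativity $\mathcal{F}'\star(B_1\otimes B_2)\simeq(\mathcal{F}'\star B_1)\star B_2$ (valid on BM $\mathcal{F}'\star B_1$ since $\Gamma\circ\mathcal{L}\simeq\id$ on flabby objects) applied to $\mathcal{F}_k$ reduces the $(k+1)$-st step to the single-$B_s$ case. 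Iterating, $\mathcal{F}\star BS\in\BM(O)$, and Proposition~\ref{prop:classification of BM sheaves}(2) implies that any direct summand, in particular $\mathcal{F}\star B$, lies in $\BM(O)$.

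For $B=B_s$, take $O'\supset O\cup Os$ (Lemma~\ref{lem:existence of O' supset Ox}). Since $\supp_{W_\aff}B_s=\{1,s\}$ with $s^{-1}=s$, the tensor product formula yields $(\Gamma(\mathcal{F})\otimes B_s)_Q^A=\Gamma(\mathcal{F})_Q^A\oplus\Gamma(\mathcal{F})_Q^{As}$ for each $A\in O$, so $(\mathcal{F}\star B_s)^A$ is the image of $\Gamma(\mathcal{F})\otimes_{\widehat{R}_\aff^\vee}B_s$ in this direct sum. Using (BM4) for $\mathcal{F}$ and Lemma~\ref{lem:modules attached to an edge}, together with the bimodule presentation of $B_s$ by $(1,1)$ and $(0,\widetilde{\alpha}_s^\vee)$, this image is identified (up to the shift $(1)$ from $B_s$) with the section module
\[
\Gamma(\{A,As\},\mathcal{F}) = \{(m,n)\in\mathcal{F}^A\oplus\mathcal{F}^{As}\mid\rho_{E,A}^\mathcal{F}(m)=\rho_{E,As}^\mathcal{F}(n)\},
\]
where $E$ is the edge of label $\widetilde{\alpha}_s^\vee$ between $A$ and $As$. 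Projection on the second factor yields the short exact sequence
\[
0\longrightarrow\mathcal{F}^A\widetilde{\alpha}_s^\vee\longrightarrow(\mathcal{F}\star B_s)^A\longrightarrow\mathcal{F}^{As}\longrightarrow 0,
\]
whose kernel is computed from (BM2) for $\mathcal{F}$ at $E$; it splits by graded freeness of $\mathcal{F}^{As}$, giving (BM1).

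Axiom (BM4) is built into the image construction. Axiom (BM3) reduces, via Lemma~\ref{lem:equivalence on flabby}, to flabbiness on bounded-below opens $O_1\subset O$; this follows by applying the same image construction on a bounded-below open $O_1'\subset O'$ containing $O_1\cup O_1s$ and using flabbiness of $\mathcal{F}|_{O_1'}$. For (BM2), an edge $E'$ connecting $A$ with $A'<A$ admits, via the cokernel formula $\mathcal{L}(M)^{E'}=\Coker(M^{\{A,A'\}}\mathcal{Z}(E')\to M^A\oplus M^{A'})$ and the parallel description $(\mathcal{F}\star B_s)^{A'}\cong \Gamma(\{A',A's\},\mathcal{F})$, an explicit model for $(\mathcal{F}\star B_s)^{E'}$; the surjectivity of $\rho_{E',A}^{\mathcal{F}\star B_s}$ and the identification of its kernel with $(\mathcal{F}\star B_s)^A\alpha_{E'}$ then follow from the analogous properties of $\mathcal{F}$ at $E'$ and at its $s$-companion edge.

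The hard step will be (BM2) in the case where the walls of $E$ (type $s$ through $A$) and $E'$ interact nontrivially, i.e., $A's$ is not related to $As$ by a reflection of the same label as $E'$. In this situation $\rho_{E',A}^{\mathcal{F}\star B_s}$ mixes the two summands $\mathcal{F}^A$ and $\mathcal{F}^{As}$, and one must argue that $(m,n)\in(\mathcal{F}\star B_s)^A$ vanishes modulo $\alpha_{E'}$ only when both components individually lie in the appropriate $\alpha_{E'}$-multiples in a compatible way. Here the GKM condition (Lemma~\ref{lem:GKM condition}) is essential to decouple the role of $\widetilde{\alpha}_s^\vee$ from $\alpha_{E'}$, after which (BM2) for $\mathcal{F}$ at the relevant edges propagates to (BM2) for $\mathcal{F}\star B_s$.
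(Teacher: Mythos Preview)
Your overall strategy matches the paper's: reduce to $B=B_s$ and verify the axioms for $\mathcal{F}\star B_s$, which the paper does by first identifying $\mathcal{F}\star B_s$ with the translation functor $\theta_s\mathcal{F}$ (Proposition~\ref{prop:action of BS module is the translation functor}). Your treatment of (BM1) and (BM4) is essentially correct and agrees with the paper. However, two steps in your outline are genuine gaps.

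\textbf{The flabbiness argument (BM3) is missing.} Your sentence ``apply the same image construction on $O_1'\supset O_1\cup O_1s$ and use flabbiness of $\mathcal{F}|_{O_1'}$'' at best yields surjectivity of $\Gamma(O,\mathcal{F}\star B_s)\to\Gamma(U,\mathcal{F}\star B_s)$ for \emph{$s$-stable} opens $U$, once one has the identification $\Gamma(U,\mathcal{F}\star B_s)\simeq\Gamma(U,\mathcal{F})\otimes B_s$ for such $U$. But that identification is itself a nontrivial computation (the paper's Lemma~\ref{lem:s-stable global section of translation functor}), and even granting it, an arbitrary open $U\subset O$ need not be $s$-stable, nor does $U\cup Us$ have to be open. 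The paper does not attempt a direct argument here: it invokes Lanini's result \cite[Theorem~7.17]{MR3324922} that $\theta_s$ preserves flabby sheaves. Your proposal does not supply a substitute. Relatedly, your justification of associativity in the reduction step (``$\Gamma\circ\mathcal{L}\simeq\id$ on flabby objects'') is circular: you would need to know $\mathcal{F}\star B_{s_1}$ is flabby, or equivalently already BM, before you can iterate. The paper avoids this by proving Lemma~\ref{lem:s-stable global section of translation functor} directly.

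\textbf{The edge computation (BM2) is underspecified.} Your description of the ``hard step'' is slightly off: for any edge $E'$ connecting $A$ with $A'\neq As$, the $s$-translate $E's$ always connects $As$ with $A's$ (with label $s(\alpha_{E'})^\vee$), so there is no dichotomy of the kind you describe. The actual difficulty is to show that $(\Gamma(\mathcal{F})\otimes B_s)^{\{A,A'\}}\mathcal{Z}(E')=\Gamma(\{A,As,A',A's\},\mathcal{F})(1)$, which is what makes the localization agree with $\theta_s$. The paper's proof of this equality proceeds by a four-way case analysis on the order relations among $A,As,A',A's$; the case $As>A<A'>A's$ requires the combinatorial Lemma~\ref{lem:lemma on a order in alcoves} on the alcove order. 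None of this is visible in your sketch, and the GKM condition alone, while used, does not replace the case analysis.
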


First we prove this when $B = B_{s}$ for some $s\in S_{\aff}$.
In this case, by the proof of Lemma~\ref{lem:existence of O' supset Ox}, we may assume that $O'$ is $s$-stable.
Hence $\mathcal{F}\star B$ is also a sheaf on $O'$.
Therefore we may assume $O = O'$ is $s$-invariant.
We prove in more general situation.
Let $X\subset \mathcal{A}$ be a bounded below $s$-invariant subset.
We replace $X\cap O$ with $X$, so we prove that if $\mathcal{F}\in \BM(X)$ then $\mathcal{F}\star B_{s}\in \BM(X)$ for any $s$-invariant  bounded below subset $X$.

For the proof,  we express $\mathcal{F}\star B_{s}$ using the translation functor~\cite{MR2395170}.
Fix $s\in S_{\aff}$ and let $X$ be an $s$-invariant locally closed subset of $\mathcal{A}$ and $\mathcal{F}$ a sheaf on $X$.
We define $\theta_{s}\mathcal{F}$ by the following:
Let $A\in X$, we put $(\theta_{s}\mathcal{F})^{A} = \Gamma(\{A,As\},\mathcal{F})(1)$.
If $E$ is an edge connecting $A$ with $A'$, then we have a map $(\theta_{s}\mathcal{F})^{A}\to \mathcal{F}^{A}(1)\to \mathcal{F}^{E}(1)$.
We define $(\theta_{s}\mathcal{F})^{E} = \Ima((\theta_{s}\mathcal{F})^{A}\oplus (\theta_{s}\mathcal{F})^{A'}\to \mathcal{F}^{E}(1)\oplus \mathcal{F}^{Es}(1))$ if $A'\ne As$.
If $A' = As$, then we put $(\theta_{s}\mathcal{F})^{E} = (\theta_{s}\mathcal{F})^{A}/(\theta_{s}\mathcal{F})^{A}\alpha_{E}$.

\begin{prop}\label{prop:action of BS module is the translation functor}
If $\mathcal{F}$ satisfies (BM1)(BM2) and (BM4), we have $\mathcal{F}\star B_{s}\simeq \theta_{s}\mathcal{F}$.
\end{prop}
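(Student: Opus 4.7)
The approach is to unpack both sides of the claimed isomorphism $\mathcal{F}\star B_{s}\simeq \theta_{s}\mathcal{F}$ and to identify them stalk by stalk and edge by edge. The only ingredient I will need beyond the definitions is Lemma~\ref{lem:modules attached to an edge}, which gives a concrete description of $\Gamma(\{A,As\},\mathcal{F})$ in terms of $\mathcal{Z}(E)$ and the global sections on the two-vertex sub-moment-graph.

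First, for a fixed $A\in X$ I would describe $(\mathcal{F}\star B_{s})^{A}$ directly. By construction $(\Gamma(\mathcal{F})\otimes B_{s})_{Q}^{A}=\mathcal{F}_{Q}^{A}\otimes(B_{s})_{Q}^{1}\oplus \mathcal{F}_{Q}^{As}\otimes (B_{s})_{Q}^{s}$, and $(\mathcal{F}\star B_{s})^{A}$ is the image of $\Gamma(\mathcal{F})\otimes_{\widehat{R}_{\aff}^{\vee}}B_{s}$ in this space. Writing $B_{s}$ with its right $\widehat{R}_{\aff}^{\vee}$-basis $\{(1,1),(\widetilde{\alpha}_{s}^{\vee},0)\}$, a direct computation shows that under the projection to $M_{Q}^{A}$ this image consists exactly of those pairs $(a,b)\in \mathcal{F}^{A}\oplus \mathcal{F}^{As}$ (with the grading shift $(1)$) whose components satisfy the congruence modulo the label $\alpha_{E}$ of the edge $E$ joining $A$ and $As$. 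Using (BM2) and (BM4) together with Lemma~\ref{lem:modules attached to an edge}, this set is precisely $\Gamma(\{A,As\},\mathcal{F})(1)=(\theta_{s}\mathcal{F})^{A}$.

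Next, for edges I would split into two cases. If $E$ is the edge joining $A$ and $As$, then by definition of $\mathcal{L}$ one has $(\mathcal{F}\star B_{s})^{E}=\Coker(M^{\{A,As\}}\mathcal{Z}(E)\to M^{A}\oplus M^{As})$; but by Step~1 the two stalks $M^{A}$ and $M^{As}$ are both isomorphic to $(\theta_{s}\mathcal{F})^{A}$, and after this identification the cokernel collapses to $(\theta_{s}\mathcal{F})^{A}/(\theta_{s}\mathcal{F})^{A}\alpha_{E}$, matching the definition of $(\theta_{s}\mathcal{F})^{E}$. If instead $E$ joins $A$ to some $A'\ne As$, then $Es$ is a distinct edge joining $As$ to $A's$, and I would compute the localization cokernel at $E$ by using Step~1 at both $A$ and $A'$; after the identifications, the quotient is exactly the image of $(\theta_{s}\mathcal{F})^{A}\oplus(\theta_{s}\mathcal{F})^{A'}$ in $\mathcal{F}^{E}(1)\oplus \mathcal{F}^{Es}(1)$, which is the definition of $(\theta_{s}\mathcal{F})^{E}$. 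The compatibility of the restriction maps $\rho_{E,A}$ is then forced by the construction of $\mathcal{L}$ and is a formal check.

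The main obstacle is the edge case $A'\ne As$: the cokernel formula in $\mathcal{L}$ involves $\mathcal{Z}(E)$, which only sees the label of $E$, whereas the definition of $(\theta_{s}\mathcal{F})^{E}$ packages together information about both $E$ and $Es$. Bookkeeping the grading shift $(1)$, the twist of the right action by $w_{A}$, and the passage between the canonical label $\widetilde{\alpha}_{s}^{\vee}$ of $B_{s}$ and the $A$-dependent edge label $\alpha_{E}$ is what makes this case the delicate point; the GKM condition (Lemma~\ref{lem:GKM condition}) will be needed to guarantee that the two descriptions agree, by ruling out unwanted torsion when comparing the cokernel with the image.
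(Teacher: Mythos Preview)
Your overall architecture matches the paper's: identify stalks first via Lemma~\ref{lem:modules attached to an edge}, then treat the two edge cases separately. The stalk computation is correct and essentially the paper's argument.

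However, you underestimate both edge cases. For $E=E_{A,As}$, it is not true that ``the cokernel collapses'' formally: although $M^{A}$ and $M^{As}$ are both identified with $\Gamma(\{A,As\},\mathcal{F})(1)$, the right $\widehat{R}_{\aff}^{\vee}$-actions differ by the twist $s$, so the diagonal map $M^{\{A,As\}}\hookrightarrow M^{A}\oplus M^{As}$ is not the obvious one. The paper devotes a separate lemma to computing this kernel explicitly by writing elements in terms of the basis $(1,1),(\widetilde{\alpha}_{s}^{\vee},0)$ and tracking both projections.

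The more serious gap is the case $A'\ne As$. What must be shown is
\[
(\Gamma(\mathcal{F})\otimes B_{s})^{\{A,A'\}}\mathcal{Z}(E)=\Gamma(\{A,As,A',A's\},\mathcal{F})(1),
\]
and this is not just grading bookkeeping plus GKM torsion. The inclusion $\subset$ is easy, but for $\supset$ one must successively kill the four components of an element $(a_{A},a_{As},a_{A'},a_{A's})$ by subtracting things in the left-hand side, and each step uses (BM2) to know that a certain $\rho_{E,\bullet}$ is surjective with kernel generated by the label. This requires knowing the \emph{relative order} of $A,As,A',A's$. The paper splits into four cases according to the signs of $A\lessgtr As$ and $A'\lessgtr A's$, and one of these cases (when $As>A<A'>A's$) needs the nontrivial combinatorial fact that $As<A's$ (Lemma~\ref{lem:lemma on a order in alcoves}). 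Your proposal does not mention any order analysis; without it the reduction does not go through, because you cannot invoke (BM2) until you know which vertex is on top.
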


Let $A,A'\in \mathcal{A}$ and assume that they are connected by an edge.
In this subsection, we write such edge by $E_{A,A'}$.
By the definition we have $B_{s}\simeq \mathcal{Z}(E_{A,As})$.

\begin{lem}
If $\mathcal{F}$ satisfies (BM2) and (BM4), then we have $(\mathcal{F}\star B_{s})^{A}\simeq \Gamma(\{A,As\},\mathcal{F})(1)$.
Here the action of $f\in \widehat{S}_{\aff}^{\vee}$ and $g\in \widehat{R}_{\aff}^{\vee}$ on $(a,b)\in \Gamma(\{A,As\},\mathcal{F})\subset \mathcal{F}^{A}\oplus \mathcal{F}^{As}$ is given by $f(a,b)g = (fag,fbs(g))$.
\end{lem}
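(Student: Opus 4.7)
The plan is to unwind the definition $\mathcal{F}\star B_s = \mathcal{L}(\Gamma(\mathcal{F})\otimes B_s)$ directly and then apply Lemma~\ref{lem:modules attached to an edge}. By the definition of $\mathcal{L}$, the module $(\mathcal{F}\star B_s)^A$ is the image of $\Gamma(\mathcal{F})\otimes_{\widehat{R}_{\aff}^\vee} B_s$ inside $(\Gamma(\mathcal{F})\otimes B_s)_Q^A$. The defining formula $(\Gamma(\mathcal{F})\otimes B_s)_Q^A = \bigoplus_{x\in W_{\aff}}\Gamma(\mathcal{F})_Q^{Ax^{-1}}\otimes_{Q_{\aff}^\vee}(B_s)_Q^x$, together with $\supp_{W_{\aff}}(B_s) = \{1, s\}$ and $(B_s)_Q^1 \simeq (B_s)_Q^s \simeq Q_{\aff}^\vee$, identifies this localization with $\mathcal{F}_Q^A \oplus \mathcal{F}_Q^{As}$, under which the image of $\gamma \otimes (h_1, h_2)$ becomes $(\gamma_A h_1, \gamma_{As} h_2) \in \mathcal{F}^A \oplus \mathcal{F}^{As}$, with the right multiplications being the natural ones on the stalks.

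The key geometric input is that the edge $E_{A, As}$ of $\mathcal{A}$ has label $\widetilde{\alpha}_s^\vee$ for every $A$, since the reflection taking $A$ to $As$ corresponds to the single affine root $\widetilde{\alpha}_s$ determined by $s$. Hence $B_s = \mathcal{Z}(E_{A, As})$ as subsets of $(\widehat{R}_{\aff}^\vee)^2$, and the image above coincides with $\Gamma(\mathcal{F})^{\{A, As\}}\mathcal{Z}(E_{A, As})$. By Lemma~\ref{lem:modules attached to an edge} this equals $\Gamma(\{A, As\}, \mathcal{F})$ under (BM2) and (BM4). The degree shift $(1)$ is inherited directly from the built-in shift $B_s = \mathcal{Z}(\widetilde{\alpha}_s^\vee)(1)$.

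Finally, for the action formula $f(a, b)g = (fag, fb s(g))$, the crucial point is that $(B_s)_Q^s$ in $\EB(W_{\aff})$ carries the twisted left action $r\cdot b = b\cdot s^{-1}(r) = bs(r)$; therefore in $\Gamma(\mathcal{F})_Q^{As}\otimes_{Q_{\aff}^\vee}(B_s)_Q^s$ the tensor relation $mr \otimes b = m\otimes bs(r)$ translates, under $m\otimes 1\leftrightarrow m$, into right multiplication by $s(g)$ on the factor $\mathcal{F}_Q^{As}$. This yields $(a, b)g = (ag, bs(g))$, and the left $\widehat{S}_{\aff}^\vee$-action $f(a, b) = (fa, fb)$ then follows from the $\EB$-compatibility $fm = mf^A$ combined with the conjugation formula $s(f^A) = f^{As}$ analogous to Lemma~\ref{lem:change of alcove, conjugate}. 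I expect the main obstacle to be precisely this bookkeeping of twists, since the $s$-twist on $(B_s)_Q^s$, the change-of-alcove identity relating $f^A$ and $f^{As}$, and the $\EB$-compatibility must all be juggled to produce a consistent formula; once these identifications are pinned down the remainder of the argument is a direct application of Lemma~\ref{lem:modules attached to an edge}.
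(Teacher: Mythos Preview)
Your approach is exactly the paper's: identify $(\mathcal{F}\star B_s)^A$ as the image of $\Gamma(\mathcal{F})\otimes B_s$ inside $\mathcal{F}_Q^A\oplus\mathcal{F}_Q^{As}$, observe that $B_s\simeq\mathcal{Z}(E_{A,As})(1)$, and invoke Lemma~\ref{lem:modules attached to an edge}. The paper's proof is three lines and omits the bimodule verification entirely; your added discussion of the twists is a welcome expansion.

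One bookkeeping slip to fix, which you correctly anticipated as the danger zone: under your chosen identification $m\otimes 1\leftrightarrow m$ on the $As$-summand, the image of $\gamma\otimes(h_1,h_2)$ is $(\gamma_A h_1,\,\gamma_{As}\,s(h_2))$, not $(\gamma_A h_1,\gamma_{As} h_2)$. Indeed, with the twisted left action $r\cdot b=s(r)b$ on $(B_s)_Q^s$, the tensor relation gives $m\otimes h_2 = m\,s(h_2)\otimes 1$, so the map $m\otimes h\mapsto mh$ is not even well-defined on that summand. This does not harm the conclusion, since $(h_1,h_2)\in B_s$ iff $(h_1,s(h_2))\in\mathcal{Z}(E_{A,As})$ (because $h_2-s(h_2)\in\widetilde{\alpha}_s^\vee\widehat{R}_{\aff}^\vee$), so the image is still $\Gamma(\mathcal{F})^{\{A,As\}}\mathcal{Z}(E_{A,As})$; and it is precisely this twist that produces the $s(g)$ in the right action formula $(a,b)g=(ag,bs(g))$, making your final paragraph consistent. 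Once you correct that one line, the argument is complete.
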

\begin{proof}
The image of $\Gamma(\mathcal{F})$ in $(\mathcal{F}\star B_{s})^{A}_{Q} = \mathcal{F}^{A}_{Q}\oplus \mathcal{F}^{As}_{Q}$ is $\Gamma(\mathcal{F})^{\{A,As\}}$ by the definition and we have $B_{s}\simeq \mathcal{Z}(E_{A,As})(1)$.
Since $(\mathcal{F}\star B_{s})^{A}$ is the image of $\Gamma(\mathcal{F})\otimes B_{s}$, $(\mathcal{F}\star B_{s})^{A} = \Gamma(\mathcal{F})^{\{A,As\}}\mathcal{Z}(E_{A,s})(1)$.
We get the conclusion by Lemma~\ref{lem:modules attached to an edge}.
\end{proof}

Next, we compare the modules attached to an edge $E = E_{A,A'}$ connecting $A$ with $A'$.
First, assume that $E\ne Es$.
We have
\begin{align*}
& \Ker((\theta_{s}\mathcal{F})^{A}\oplus (\theta_{s}\mathcal{F})^{A'}\to \mathcal{F}^{E}(1)\oplus \mathcal{F}^{Es}(1))\\
& \simeq \Ker(\Gamma(\{A,As\},\mathcal{F})\oplus \Gamma(\{A',A's\},\mathcal{F})\to (\mathcal{F}^{E}\oplus \mathcal{F}^{Es})(1))
\end{align*}
and it is equal to $\Gamma(\{A,As,A',A's\},\mathcal{F})(1)$ by the definition of global sections.
Recall that $\mathcal{L}(M)^{E} = (M^{A}\oplus M^{A'})/M^{\{A,A'\}}\mathcal{Z}(E)$.
Hence the following lemma proves $(\mathcal{F}\star B_{s})^{E}\simeq (\theta_{s}\mathcal{F})^{E}$.

\begin{lem}
Assume that $\mathcal{F}$ satisfies (BM1), (BM2) and (BM4).
Then we have $(\Gamma(\mathcal{F})\otimes B_{s})^{\{A,A'\}}\mathcal{Z}(E) \simeq \Gamma(\{A,As,A',A's\},\mathcal{F})(1)$ for any $A\ne A's$.
\end{lem}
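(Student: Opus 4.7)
\noindent The plan follows the template of the preceding lemma (which gave $(\mathcal{F}\star B_s)^A\simeq \Gamma(\{A,As\},\mathcal{F})(1)$) and of Lemma~\ref{lem:modules attached to an edge}, now applied simultaneously at the pair $\{A,A'\}$. Both sides naturally embed into $\mathcal{F}^A_Q\oplus \mathcal{F}^{As}_Q\oplus \mathcal{F}^{A'}_Q\oplus \mathcal{F}^{A's}_Q$, and under this embedding a typical generator $(m\otimes(f_1,f_2))\cdot(h,k)$ of the left-hand side, with $m\in\Gamma(\mathcal{F})$, $(f_1,f_2)\in B_s$, and $(h,k)\in\mathcal{Z}(E)$, is the quadruple
\[
\bigl(m_A f_1 h,\ m_{As} f_2\, s(h),\ m_{A'} f_1 k,\ m_{A's} f_2\, s(k)\bigr),
\]
where the $s$-twist on each second coordinate comes from the right $\widehat{R}^\vee_{\aff}$-action $f(a,b)g=(fag,fb\,s(g))$ identified in the preceding lemma. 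I will prove the two inclusions separately.

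For $(\subseteq)$, the hypothesis $A\ne A's$ (equivalently $A'\ne As$) forces $s\cdot s_{\alpha_E^A}$ not to be a reflection, so the only edges inside $\{A,As,A',A's\}$ are $E$, $E_{A,As}$, $E_{A',A's}$, and $E_{As,A's}$ (the last with label $s(\alpha_E^A)$). I verify compatibility of the above quadruple at each of the four edges: at $E$ from $h\equiv k\pmod{\alpha_E}$ and the section property of $m$; at $E_{A,As}$ and $E_{A',A's}$ from $f_1\equiv f_2\pmod{\widetilde{\alpha}_s^\vee}$ combined with the reflection identity $s(x)\equiv x\pmod{\widetilde{\alpha}_s^\vee}$; at $E_{As,A's}$ from $s(h)\equiv s(k)\pmod{s(\alpha_E)}$.

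For $(\supseteq)$, starting from $n=(n_A,n_{As},n_{A'},n_{A's})\in\Gamma(\{A,As,A',A's\},\mathcal{F})(1)$, I construct a preimage by four successive corrections. First, BM4 provides $m^{(0)}\in\Gamma(\mathcal{F})$ with $m^{(0)}_A=n_A$, so $m^{(0)}\otimes(1,1)\cdot(1,1)$ matches $n$ at $A$. After a harmless case analysis on the Bruhat order to apply BM2 at the larger endpoint, the residual discrepancies at $A'$, $As$, $A's$ lie respectively in the ideals $\alpha_E$, $\widetilde{\alpha}_s^\vee$, and $\widetilde{\alpha}_s^\vee s(\alpha_E)$. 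I correct them in turn by adding terms of the form $m^{(1)}\otimes(1,1)\cdot(0,\alpha_E)$ (nonzero only at $A'$ and $A's$), $m^{(2)}\otimes(0,\widetilde{\alpha}_s^\vee)\cdot(1,1)$ (nonzero only at $As$ and $A's$), and finally $m^{(3)}\otimes(0,\widetilde{\alpha}_s^\vee)\cdot(0,\alpha_E)$ (nonzero only at $A's$); each added term is supported only at positions still allowed to change.

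The principal obstacle is justifying the last step, namely that after matching at $A$, $A'$, $As$ the remaining $A's$-residual is divisible by $\widetilde{\alpha}_s^\vee\,s(\alpha_E)$. I obtain this by combining edge compatibility at $E_{A',A's}$ (giving divisibility by $\widetilde{\alpha}_s^\vee$) with edge compatibility at $E_{As,A's}$ (giving divisibility by $s(\alpha_E)$) and then using GKM together with BM1 for $\mathcal{F}^{A's}$ to conclude that the two linearly independent labels $\widetilde{\alpha}_s^\vee$ and $s(\alpha_E)$ form a regular sequence on this graded free $\widehat{R}^\vee_{\aff}$-module. A secondary subtlety is the bookkeeping of the $s$-twists in the right actions, which must be tracked carefully through each correction step.
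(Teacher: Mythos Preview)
Your approach is the same as the paper's: embed both sides in $\mathcal{F}^A\oplus\mathcal{F}^{As}\oplus\mathcal{F}^{A'}\oplus\mathcal{F}^{A's}$ and prove the two inclusions. Your formula for a typical generator is correct, and your forward inclusion is fine (the paper argues slightly differently, checking on the generators $(1,1)$ and $(\widetilde{\alpha}_s^\vee,0)$ of $B_s$ rather than edge by edge, but the content is the same; your observation that $s\,s_{\alpha_E}$ has even length and hence is not a reflection is exactly what guarantees there are only the four edges).

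The reverse inclusion, however, is where the phrase ``harmless case analysis'' conceals a genuine gap. Your fixed order of corrections $A,\,A',\,As,\,A's$ works only in the single configuration $As>A<A'<A's$. In the other three Bruhat configurations (always assuming $A<A'$) the argument you wrote breaks: when $As<A$, after matching at $A$ you cannot invoke (BM2) at $As$ to get $\widetilde{\alpha}_s^\vee$-divisibility of the $As$-residual, because (BM2) only describes the kernel at the \emph{larger} endpoint; and when $A's<A'$, your final step fails since you cannot apply (BM2) at $A's$ along $E_{A',A's}$. The paper handles this by genuinely changing the order of the four reductions in each case, so that the vertex treated last is always maximal among those remaining; in particular in two of the cases the last vertex is $A'$ rather than $A's$, and the final divisibility is by $\widetilde{\alpha}_s^\vee\,\alpha_E$ rather than $\widetilde{\alpha}_s^\vee\,s(\alpha_E)$. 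Most importantly, the case $As>A<A'>A's$ requires the nontrivial inequality $As<A's$, which the paper isolates and proves as a separate lemma (Lemma~\ref{lem:lemma on a order in alcoves}) using the exchange condition. So the case analysis is not merely bookkeeping: it determines the correction order, and in one case it rests on an auxiliary combinatorial fact that you have not supplied. Once you carry out the four cases with the appropriate orders (as the paper does), your corrections $m\otimes(1,1)$, $m\otimes(\widetilde{\alpha}_s^\vee,0)$, $m\otimes(0,\widetilde{\alpha}_s^\vee)$ together with the factors $(1,1),(\alpha_E,0),(0,\alpha_E)$ from $\mathcal{Z}(E)$ do exactly the job.
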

\begin{proof}
In the proof, we ignore the grading.
Both sides are subspaces of $\mathcal{F}^{A}\oplus \mathcal{F}^{As}\oplus \mathcal{F}^{A'}\oplus \mathcal{F}^{A's}$.
We may assume $A' > A$.
First we prove that the left hand side is contained in the right hand side.
Since the right hand side is stable under the action of $\mathcal{Z}(E)$, we prove $(\Gamma(\mathcal{F})\otimes B_{s})^{\{A,A'\}}$ is contained in the right hand side.
Since $B_{s} = \widehat{R}_{\aff}^{\vee}(1,1) + \widehat{R}_{\aff}^{\vee}(\widetilde{\alpha}^{\vee}_{s},1)$, it is sufficient to prove that $a\otimes (1,1),a\otimes (\widetilde{\alpha}^{\vee}_{s},1)$ is contained in the right hand side for $a = (a_{A_{1}})_{A_{1}\in \mathcal{A}}\in \Gamma(\mathcal{F})$.
In general, we write the $A_{1}$-component of $m\in \bigoplus_{A_{2}}\mathcal{F}^{A_{2}}$ as $m_{A_{1}}$.
Then in $\mathcal{F}^{A}\oplus \mathcal{F}^{As}\oplus \mathcal{F}^{A'}\oplus \mathcal{F}^{A's}$, $a\otimes (1,1) = (a_{A},a_{As},a_{A'},a_{A's})$ which is obviously in the right hand side.
We also have $a\otimes (\widetilde{\alpha}_{s}^{\vee},0) = (a_{A}\widetilde{\alpha}_{s}^{\vee},0,a_{A'}\widetilde{\alpha}_{s}^{\vee},0)$.
The label attached to the edge connecting $A$ with $As$ (resp. $A'$ with $A's$) is $\widetilde{\alpha}_{s}^{\vee}$.
From this, one can easily check that $a\otimes (\widetilde{\alpha}_{s},0)$ is in the right hand side.

We prove the reverse inclusion.
Let $a = (a_{A},a_{As},a_{A'},a_{A's})$ in the right hand side.
First, notice that to prove that this element is in the left-hand side, we may assume one of these is zero.
For example, to assume that $a_{A} = 0$, we argue as follows.
Since $\Gamma(\mathcal{F})\to \mathcal{F}^{A}$ is surjective, there exists $m\in \Gamma(\mathcal{F})$ such that the $A$-component of $m$ is $a_{A}$.
Then $m\otimes (1,1)$ is in the left-hand side and the $A$-component of $a - m\otimes (1,1)$ is zero.

We assume $As < A < A' < A's$.
We may assume $a_{As} = 0$.
Then $a_{A}\in \Ker(\mathcal{F}^{A}\to \mathcal{F}^{E_{A,As}}) = \mathcal{F}^{A}\widetilde{\alpha}_{s}^{\vee}$.
Take $a' = (a_{A_{1}})\in \Gamma(\mathcal{F})$ such that $a_{A} = a'_{A}\widetilde{\alpha}_{s}^{\vee}$.
Then we have $a'\otimes (\widetilde{\alpha}_{s}^{\vee},0)\in \Gamma(\mathcal{F})\otimes B_{s}$.
By replacing $a$ with $a - a'\otimes (\widetilde{\alpha}_{s}^{\vee},0)$, we may assume $a_{A} = 0$.
Then $a_{A'}\in \mathcal{F}^{A'}\alpha_{E}$.
Take $a''\in \Gamma(\mathcal{F})$ such that $a_{A'} = a''_{A'}\alpha_{E}$.
By replacing $a$ with $a - (a''\otimes (1,1))(0,\alpha_{E})$, we may assume $a_{A'} = 0$.
Then $a_{A's}\in \mathcal{F}^{A's}\alpha_{E}\cap \mathcal{F}^{A's}\widetilde{\alpha}_{s}^{\vee}$.
Since $\mathcal{F}^{A's}$ is free and our moment graph $\mathcal{A}$ satisfies the GKM condition, we have $\mathcal{F}^{A's}\alpha_{E}\cap \mathcal{F}^{A's}\widetilde{\alpha}_{s}^{\vee} = \mathcal{F}^{A's}\alpha_{E}\widetilde{\alpha}_{s}^{\vee}$.
We take $a'''\in \Gamma(\mathcal{F})$ such that $a_{A's} = a'''_{A's}\alpha_{E}\widetilde{\alpha}_{s}^{\vee}$.
Then $a = (a'''\otimes (0,\widetilde{\alpha}_{s}^{\vee}))(0,\alpha_{E})$ and this is in the left hand side.

The proof in the other cases are similar.
Namely, we look for an element in the left-hand side and deduce to the case that a certain component of $a$ is zero.
We only explain in which order the components are reduced to zero and leave the details to the reader.

Assume $As < A < A' > A's$.
In this case, we have $As < A's$~\cite[proposition~3.1]{MR591724} and we can reduce components of $a$ to zero in the order $As, A, A's, A'$.
Assume $As > A < A' < A's$.
Then we have $A's > As$, and the order is $A,As,A',A's$.
Assume $As > A < A' > A's$.
Then we have $As < A's$ by the following lemma and the order is $A,As,A's,A'$.
\end{proof}

\begin{lem}\label{lem:lemma on a order in alcoves}
Let $A,A'\in \mathcal{A}$ such that $A$ and $A'$ are connected in the moment graph and $s\in S_{\aff}$.
If $As > A < A' > A's$ and $A'\ne As$, then $As < A's$.
\end{lem}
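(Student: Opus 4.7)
The plan is to combine Lemma~\ref{lem:order of alcoves, reflection} with the commutativity of the right $W_{\aff}$-action and the left $W_{\aff}'$-action on $\mathcal{A}$, reducing the statement to an elementary fact about adjacent alcoves and a separating hyperplane.

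Since $A$ and $A'$ are connected by an edge in the moment graph with $A < A'$, I would first use Lemma~\ref{lem:order of alcoves, reflection} to choose $\alpha\in(\Phi')^+$ and $n\in\Z$ such that, setting $t = s_{(\alpha,n)}\in W_{\aff}'$, one has $A' = tA$ and $\langle\alpha,v\rangle + n < 0$ for all $v\in A$.

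The key observation is that because the right action of $W_{\aff}$ on $\mathcal{A}$ commutes with the left action of $W_{\aff}'$, we get $A's = (tA)s = t(As)$. Thus $As$ and $A's$ are related by the \emph{same} reflection $t$. Applying Lemma~\ref{lem:order of alcoves, reflection} to the pair $(As,A's)$, the desired inequality $As < A's$ is equivalent to $\langle\alpha,v\rangle + n < 0$ for all $v\in As$.

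I would then argue by contradiction. If this condition fails, then since $As$ is an open alcove disjoint from every reflecting hyperplane, necessarily $\langle\alpha,v\rangle + n > 0$ holds throughout $As$; hence the hyperplane $H=\{v\in\mathbb{X}^\vee_\R \mid \langle\alpha,v\rangle + n = 0\}$ strictly separates $A$ from $As$. But $A$ and $As$ are adjacent alcoves and so share a unique wall, which must therefore coincide with $H$. Consequently $t$ is the reflection across the type-$s$ wall of $A$, forcing $A' = tA = As$ and contradicting $A'\ne As$.

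The main obstacle I anticipate is noticing that the two ``horizontal'' reflections of the square $\{A,As,A',A's\}$ agree as elements of $W_{\aff}'$; once this commutativity is invoked, the rest is a one-line geometric check via Lemma~\ref{lem:order of alcoves, reflection}. Incidentally, the hypotheses $As > A$ and $A' > A's$ turn out to play no essential role in this proof---only the adjacency of $A$ and $As$ together with $A'\ne As$ are used.
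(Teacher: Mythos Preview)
Your proof is correct and takes a genuinely different route from the paper's. The paper first translates the question into the Coxeter group $W_{\aff}$ by shifting all four alcoves into the dominant chamber and invoking Lemma~\ref{lem:Bruhat order is generic Bruhat order on dominants}, and then settles the resulting Bruhat-order statement via a length count and the exchange condition (using all four hypotheses $As>A$, $A<A'$, $A'>A's$, $A'\ne As$). Your argument instead stays on the geometric side: you realise the edge as a left reflection $t=s_{(\alpha,n)}\in W_{\aff}'$, use the commutativity of the left $W_{\aff}'$- and right $W_{\aff}$-actions to get $A's=t(As)$, and then observe via Lemma~\ref{lem:order of alcoves, reflection} that the only way $As$ could lie on the wrong side of the hyperplane $H$ is if $H$ is the common wall of $A$ and $As$, forcing $A'=As$. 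This is shorter, avoids the passage through Lemma~\ref{lem:Bruhat order is generic Bruhat order on dominants}, and---as you correctly note---uses only $A<A'$ and $A'\ne As$, so it actually proves a stronger statement than the one in the paper. One minor point of phrasing: Lemma~\ref{lem:order of alcoves, reflection} literally gives only the implication ``$\langle\alpha,v\rangle+n<0$ on $As$ $\Rightarrow$ $As<A's$'', not the equivalence you state; but your contradiction argument only needs that implication together with the dichotomy that $\langle\alpha,\cdot\rangle+n$ has constant sign on the alcove $As$, so no harm is done.
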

\begin{proof}
Assume that we are given finite $A_{1},\ldots,A_{r}\in \mathcal{A}$ and take $\lambda\in \mathbb{X}^{\vee}$ such that $A_{i} + \lambda$ is dominant for any $i = 1,\ldots,r$.
Take $w_{i}\in W_{\aff}$ such that $A_{i} + \lambda_{i} = A_{0}^{+}w_{i}$.
Then we have $w_{i} < w_{j}$ if and only if $A_{i} < A_{j}$ for any $i ,j = 1,\ldots,r$ by Lemma~\ref{lem:Bruhat order is generic Bruhat order on dominants}.
Therefore it is sufficient to prove the corresponding lemma in $W_{\aff}$.
Namely we prove the following: let $w\in W_{\aff},s\in S_{\aff}$ and $t\in W_{\aff}$ such that $t\ne s$ is a reflection, $ws > w < wt > wts$.
Then we have $ws < wts$.
Since $wts = ws(s^{-1}ts)$ and $s^{-1}ts$ is a reflection, it is sufficient to prove that $\ell(ws) < \ell(wts)$.
We have $\ell(ws) = \ell(w) + 1$ and $\ell(wts) = \ell(wt) - 1$.
Therefore it is sufficient to prove that $\ell(wt) > \ell(w) + 2$.
Assume that $\ell(wt) \le \ell(w) + 2$.
Since $\ell(wt) > \ell(w)$ and $\ell(wt)\not\equiv \ell(w)\pmod{2}$, we have $\ell(wt) = \ell(w) + 1$.

Take a reduced expression $wt = s_{1}\ldots s_{l}$ of $wt$ such  that $s_{l} = s$.
By the exchange condition, there exists $i = 1,\ldots,l$ such that $w = s_{1} \cdots s_{i - 1}s_{i + 1}\cdots s_{l}$ and by the assumption $\ell(wt) = \ell(w) + 1$, this is a reduced expression.
If $i \ne l$, then $ws < w$.
If $i = l$, then $t = s$.
In both cases, we get a contradiction.
\end{proof}

\begin{lem}
Let $E = E_{A,As}$.
Then for any sheaf $\mathcal{F}$ on $X$, we have $\mathcal{L}(\Gamma(\mathcal{F})\otimes B_{s})^{E} \simeq \mathcal{L}(\Gamma(\mathcal{F})\otimes B_{s})^{A}/\mathcal{L}(\Gamma(\mathcal{F})\otimes B_{s})^{A}\alpha_{E}$.
\end{lem}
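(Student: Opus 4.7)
The plan is to exploit the coordinate-swap symmetry of $B_s$. From the formula $(\Gamma(\mathcal{F}) \otimes B_s)_Q^{A'} = \bigoplus_{x \in \{1,s\}} \Gamma(\mathcal{F})_Q^{A'x^{-1}} \otimes (B_s)_Q^x$, both stalks $M_Q^A$ and $M_Q^{As}$ identify canonically with $\mathcal{F}_Q^A \oplus \mathcal{F}_Q^{As}$, with the two summands indexed by $\{1,s\}$. First I introduce the swap isomorphism $\phi \colon M_Q^{As} \to M_Q^A$ exchanging these summands and observe that the involution $(f,g) \leftrightarrow (g,f)$ preserves $B_s$: hence $\phi((a \otimes (f,g))_{As}) = (a \otimes (g,f))_A$, so $\phi$ restricts to an isomorphism $\phi \colon M^{As} \xrightarrow{\sim} M^A$.

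A direct computation then yields, for any elementary tensor $u = a \otimes (f,g) \in M$, the identity $u_A - \phi(u_{As}) = (f-g)(a_A, -a_{As}) \in \alpha_E \cdot M_Q^A$, since $f \equiv g \pmod{\alpha_E}$. Using the previous lemma's identification of $M^A$ with $\Gamma(\{A,As\}, \mathcal{F})(1)$ as a submodule of the $\alpha_E$-torsion-free module $\mathcal{F}^A \oplus \mathcal{F}^{As}$ (this uses (BM1) for $\mathcal{F}$), $M^A$ is $\alpha_E$-saturated in $M_Q^A$, and consequently $u_A - \phi(u_{As}) \in M^A \alpha_E$.

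Next I identify $M^A \oplus M^{As}$ with $M^A \oplus M^A$ via $(m_1,m_2) \mapsto (m_1, \phi(m_2))$, and I claim $M^{\{A,As\}} \mathcal{Z}(E)$ corresponds to the near-diagonal $\Delta = \{(n_1,n_2) \in M^A \oplus M^A \mid n_1 - n_2 \in M^A \alpha_E\}$. Writing $\mathcal{Z}(E) = (1,1)\widehat{R}_{\aff}^{\vee} + (\alpha_E,0)\widehat{R}_{\aff}^{\vee}$ and combining with the previous paragraph yields $M^{\{A,As\}} \mathcal{Z}(E) \subset \Delta$. For the reverse inclusion, given $(n_1,n_2) \in \Delta$, I lift $n_2 = \phi(m_2)$ to some $(u_A, m_2) \in M^{\{A,As\}}$ using surjectivity of $M^{\{A,As\}} \twoheadrightarrow M^{As}$, and rewrite $n_1 - u_A = (n_1 - n_2) - (u_A - n_2) \in M^A \alpha_E$ using saturation, so that $(n_1,n_2) - (u_A, n_2) = (n_1 - u_A, 0) \in (M^A \alpha_E) \oplus 0 \subset M^{\{A,As\}} \mathcal{Z}(E)$.

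Finally, $\mathcal{L}(M)^E = (M^A \oplus M^A)/\Delta$ is manifestly isomorphic to $M^A/M^A\alpha_E$ via $(n_1,n_2) \mapsto n_1 - n_2$, giving the claim. The main obstacle is justifying the $\alpha_E$-saturation step; this relies on the previous lemma's kernel description of $M^A$, on (BM1) for $\mathcal{F}$, and on the GKM condition (so that $\alpha_E$ acts as a non-zero-divisor in the relevant stalks), all of which are in force for the proposition this lemma serves.
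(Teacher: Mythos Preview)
Your argument has a genuine gap at the saturation step, and it cannot be repaired along the lines you suggest. You claim that $M^A$ is $\alpha_E$-saturated in $M_Q^A$ once $M^A$ is identified with $\Gamma(\{A,As\},\mathcal{F})(1)$ inside $\mathcal{F}^A\oplus\mathcal{F}^{As}$. But this is false: since $\mathcal{F}^E$ is annihilated by $\alpha_E$, \emph{every} element $(a,b)\in\mathcal{F}^A\oplus\mathcal{F}^{As}$ satisfies $\rho_{E,A}(a\alpha_E)=0=\rho_{E,As}(b\,s(\alpha_E))$, so $(a,b)\alpha_E\in\Gamma(\{A,As\},\mathcal{F})$ regardless of whether $(a,b)$ itself lies there. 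Thus $M^A\alpha_E$-saturation would force $M^A=\mathcal{F}^A\oplus\mathcal{F}^{As}$, which fails whenever $\mathcal{F}^E\ne 0$. Consequently, knowing only that $u_A-\phi(u_{As})\in\alpha_E\cdot M_Q^A$ and $u_A-\phi(u_{As})\in M^A$ does not yield $u_A-\phi(u_{As})\in M^A\alpha_E$.

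There is also a problem with your $\phi$. The coordinate swap $(f,g)\mapsto(g,f)$ on $B_s$ is \emph{not} a left $\widehat{R}_{\aff}^\vee$-module map (since $a\cdot(f,g)=(af,s(a)g)$), so the formula $\phi((a\otimes(f,g))_{As})=(a\otimes(g,f))_A$ is not well defined on the tensor product: different representatives $a\otimes(f,g)$ of the same element of $M$ can give different right-hand sides. The correct left-linear involution of $B_s$ is $(f,g)\mapsto(s(g),s(f))$; with that choice one does get a well-defined $\phi$, and in fact $m_A-\phi(m_{As})\in M^A\alpha_E$ holds \emph{directly} by writing $m=n\otimes(1,1)+n'\otimes(\widetilde{\alpha}_s^\vee,0)$ and computing $m_A-m_{As}=(n'_A,n'_{As})\cdot\widetilde{\alpha}_s^\vee$ with $(n'_A,n'_{As})=(n'\otimes(1,1))_A\in M^A$ --- no saturation required. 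This is essentially the paper's route: it uses the right-module basis $\{(1,1),(\widetilde{\alpha}_s^\vee,0)\}$ of $B_s$ to express an arbitrary element of $M^{\{A,As\}}\mathcal{Z}(E)$ and then reads off the kernel of $M^A\to\mathcal{L}(M)^E$ as $M^A\widetilde{\alpha}_s^\vee$ by a short explicit computation. That argument works for an arbitrary sheaf $\mathcal{F}$, with no need for (BM1), (BM2), or (BM4), matching the lemma as stated.
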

\begin{proof}
Set $M = \Gamma(\mathcal{F})\otimes B_{s}$.
By definition we have $(\mathcal{F}\star B_{s})^{E} = \mathcal{L}(M)^{E} = (M^{A}\oplus M^{As})/M^{\{A,As\}}\mathcal{Z}(E)$.
The inclusion into the first factor gives a map $M^{A}\to \mathcal{L}(M)^{E}$.
By the definition $M^{\{A,As\}}\to M^{As}$ is surjective.
Hence the map $M^{A}\to \mathcal{L}(M)^{E}$ is surjective.
We prove that the kernel $K$ of this map is $M^{A}\widetilde{\alpha}_{s}^{\vee}$.

Let $a\in M^{A}$ and take $m\in M$ whose image is $a$.
Then $(a\widetilde{\alpha}_{s}^{\vee},0) = m(\widetilde{\alpha}_{s}^{\vee},0)\in M^{\{A,As\}}\mathcal{Z}(E)$.
Hence $a\widetilde{\alpha}_{s}^{\vee}\in K$.

We prove the reverse inclusion.
As a left $\widehat{R}_{\aff}^{\vee}$-module, $\mathcal{Z}(E)$ has a basis $(1,1),(\widetilde{\alpha}_{s}^{\vee},0)$.
Hence any element of $M^{\{A,As\}}\mathcal{Z}(E)$ can be written as $(m_{A} + m'_{A}\widetilde{\alpha}_{s}^{\vee},m_{As})$ with $m,m'\in M$ and $m_{A}$ is the image of $m$ under $M\to M^{A}$.
If this is in $K$, then $m_{As} = 0$.
We prove $m_{A}\in M^{A}\widetilde{\alpha}_{s}^{\vee}$.
We have $M = \Gamma(\mathcal{F})\otimes B_{s}$ and the element $m$ in $M$ is written as $m = n\otimes (1,1) + n'\otimes (\widetilde{\alpha}_{s}^{\vee},0)$ with $n = (n_{A'})_{A'},n' = (n'_{A'})_{A'}\in \Gamma(\mathcal{F})$.
Note that $M^{A} = M^{As} \subset \mathcal{F}^{A}\oplus \mathcal{F}^{As}$.
The image of $m$ in $M^{A}$ (resp.\ $M^{As}$) is $(n_{A} + n'_{A}\widetilde{\alpha}_{s}^{\vee},n_{As})$ (resp.\ $(n_{A},n_{As} + n'_{As}\widetilde{\alpha}_{s}^{\vee})$).
If $m_{As} = 0$, then $n_{A} = n_{As} + n'_{As}\widetilde{\alpha}_{s}^{\vee} = 0$.
Hence $(n_{A} + n'_{A}\widetilde{\alpha}_{s}^{\vee},n_{As}) = (n'_{A}\widetilde{\alpha}_{s}^{\vee},-n'_{As}\widetilde{\alpha}_{s}^{\vee}) = (n'_{A},n'_{As})\widetilde{\alpha}_{s}^{\vee}$. (Note that $s(\widetilde{\alpha}_{s}) = -\widetilde{\alpha}_{s}$.)
This is in $M^{A}\widetilde{\alpha}_{s}^{\vee}$.
Hence $m_{A}\in M^{A}\widetilde{\alpha}_{s}^{\vee}$.
\end{proof}

Proposition~\ref{prop:action of BS module is the translation functor} follows.

\begin{lem}\label{lem:s-stable global section of translation functor}
Assume that $\mathcal{F}$ satisfies (BM1) and (BM2).
Then we have $\Gamma(X,\theta_{s}\mathcal{F}) \simeq \Gamma(X,\mathcal{F})\otimes B_{s}$.
\end{lem}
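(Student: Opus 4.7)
The plan is to build an explicit $\widehat{R}_{\aff}^{\vee}$-bilinear map
\[
\Phi \colon \Gamma(X,\mathcal{F})\otimes_{\widehat{R}_{\aff}^{\vee}}B_{s} \to \Gamma(X,\theta_{s}\mathcal{F}),\qquad \Phi(m\otimes (f,g))_{A} = (m_{A}f,\,m_{As}g),
\]
and to show it is a bijection. Writing $B_{s} = \widehat{R}_{\aff}^{\vee}(1,1)\oplus \widehat{R}_{\aff}^{\vee}(\widetilde{\alpha}_{s}^{\vee},0)$, every element of the tensor product has a unique expression $m_{1}\otimes (1,1)+m_{2}\otimes (\widetilde{\alpha}_{s}^{\vee},0)$, and its image at $A$ is $((m_{1})_{A}+(m_{2})_{A}\widetilde{\alpha}_{s}^{\vee},\,(m_{1})_{As})$.

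Well-definedness splits in two cases. At the edge $E=E_{A,As}$, the identity $\rho_{E,A}(m_{A}f)=\rho_{E,As}(m_{As}g)$ holds because $m$ is a section, $\alpha_{E}=\widetilde{\alpha}_{s}^{\vee}$ annihilates $\mathcal{F}^{E}$, and $f\equiv g\pmod{\widetilde{\alpha}_{s}^{\vee}}$. At any edge $E'=E_{A,A'}$ with $A'\ne As$, the $s$-invariance of $X$ guarantees $E_{As,A's}$ is also an edge of $X$, and the required compatibility in $(\theta_{s}\mathcal{F})^{E'}$ follows by applying the compatibility of $m$ at $E'$ (for the first coordinate) and at $E_{As,A's}$ (for the second). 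Injectivity is immediate from (BM1): if the image vanishes, then $(m_{1})_{As}=0$ for every $A$ and $(m_{2})_{A}\widetilde{\alpha}_{s}^{\vee}=0$ in the graded-free module $\mathcal{F}^{A}$, forcing $m_{1}=m_{2}=0$.

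For surjectivity, given $(b_{A})_{A}\in\Gamma(X,\theta_{s}\mathcal{F})$ with $b_{A}=(a_{A},c_{A})$, the plan is to set $v_{B}=c_{Bs}\in\mathcal{F}^{B}$ and recover $b=\Phi(v\otimes(1,1)+x\otimes(\widetilde{\alpha}_{s}^{\vee},0))$ for a suitable $x\in\Gamma(X,\mathcal{F})$. The sheaf-compatibility of $(b_{A})$ at $E=E_{A,As}$ unfolds as $(a_{A},c_{A})\equiv(c_{As},a_{As})\pmod{(\theta_{s}\mathcal{F})^{A}\alpha_{E}}$, producing a pair $(\xi,\eta)\in\Gamma(\{A,As\},\mathcal{F})$ with $a_{A}-c_{As}=\xi\widetilde{\alpha}_{s}^{\vee}$ and $c_{A}-a_{As}=-\eta\widetilde{\alpha}_{s}^{\vee}$; by (BM1), $\xi$ is the unique $x_{A}$, and $\eta$ coincides with the analogously defined $x_{As}$. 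The membership $(\xi,\eta)\in\Gamma(\{A,As\},\mathcal{F})$ directly yields the compatibility of $x$ at $E_{A,As}$, and the compatibility of $v$ at the same edge follows by combining $(a_{A},c_{A})\in\Gamma(\{A,As\},\mathcal{F})$ with $a_{A}-c_{As}\in\mathcal{F}^{A}\widetilde{\alpha}_{s}^{\vee}$.

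The main obstacle is the compatibility at edges $E'=E_{A,A'}$ with $A'\ne As$. For $v$ this comes from the compatibility of $b$ at the $s$-shifted edge $E_{As,A's}$ via the second-coordinate part of the definition of $(\theta_{s}\mathcal{F})^{E_{As,A's}}$, while $u:=(a_{A})_{A}\in\Gamma(X,\mathcal{F})$ is the first-coordinate part at $E'$. Subtracting gives $\rho_{E',A}(x_{A})\widetilde{\alpha}_{s}^{\vee}=\rho_{E',A}(u_{A}-v_{A})=\rho_{E',A'}(u_{A'}-v_{A'})=\rho_{E',A'}(x_{A'})\widetilde{\alpha}_{s}^{\vee}$ in $\mathcal{F}^{E'}$. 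To cancel $\widetilde{\alpha}_{s}^{\vee}$, I would invoke that by (BM1) and (BM2) the module $\mathcal{F}^{E'}$ is graded free over $S(\widehat{\mathbb{X}}_{\aff}^{\vee})/\alpha_{E'}$; the GKM condition (Lemma~\ref{lem:GKM condition}) then makes $\widetilde{\alpha}_{s}^{\vee}$, which labels the \emph{distinct} edge $E_{A,As}$ from $A$, a non-zero-divisor on $\mathcal{F}^{E'}$. Cancellation produces $\rho_{E',A}(x_{A})=\rho_{E',A'}(x_{A'})$, whence $x\in\Gamma(X,\mathcal{F})$; the final check $\Phi(v\otimes(1,1)+x\otimes(\widetilde{\alpha}_{s}^{\vee},0))_{A}=(c_{As}+x_{A}\widetilde{\alpha}_{s}^{\vee},c_{A})=(a_{A},c_{A})=b_{A}$ closes the proof.
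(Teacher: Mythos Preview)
Your proof is correct and follows essentially the same route as the paper's. Both arguments identify $\Gamma(X,\mathcal{F})\otimes B_s$ and $\Gamma(X,\theta_s\mathcal{F})$ with the subset $\{(a,b)\in\Gamma(X,\mathcal{F})^2 \mid a-b\in\Gamma(X,\mathcal{F})\widetilde{\alpha}_s^\vee\}$: the paper does this by tensoring the short exact sequence $0\to B_s\to(\widehat{R}_{\aff}^\vee)^2\to\widehat{R}_{\aff}^\vee/(\widetilde{\alpha}_s^\vee)\to 0$ with $\Gamma(X,\mathcal{F})$ and then matching that subset with $\Gamma(X,\theta_s\mathcal{F})$, while you build the explicit map $\Phi$ and check bijectivity by hand. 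Your pair $(u,v)=((a_A),(c_{As}))$ is exactly the paper's $(a,b)$, and both proofs invoke Lemma~\ref{lem:GKM condition} at the same point to cancel $\widetilde{\alpha}_s^\vee$ and conclude that the element $x$ (their $c$) lies in $\Gamma(X,\mathcal{F})$.

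One presentational remark: your opening formula $\Phi(m\otimes(f,g))_A=(m_Af,\,m_{As}g)$ is not literally well-defined on the tensor product, because the left $\widehat{R}_{\aff}^\vee$-action on $B_s$ is twisted by $s$ in the second coordinate; for instance $mh\otimes(1,1)=m\otimes(h,s(h))$ would have to map simultaneously to $(m_Ah,m_{As}h)$ and $(m_Ah,m_{As}s(h))$. This does no damage to your argument, since you immediately pass to the free left $\widehat{R}_{\aff}^\vee$-basis $(1,1),(\widetilde{\alpha}_s^\vee,0)$ of $B_s$ and define $\Phi$ there, where no balancing condition is required; but the general formula should be dropped or explicitly restricted to that basis.
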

\begin{proof}
We have an embedding $B_{s}\subset \widehat{R}_{\aff}^{\vee}\oplus \widehat{R}_{\aff}^{\vee}$ and the cokernel is isomorphic to $\widehat{R}_{\aff}^{\vee}/\widetilde{\alpha}_{s}^{\vee}\widehat{R}_{\aff}^{\vee}$.
Since $\mathcal{F}^{x}$ are graded free for $x\in X$, $\Gamma(X,\mathcal{F})\subset \prod_{x\in X}\mathcal{F}^{x}$ is torsion-free.
Hence the first map of the induced exact sequence
\[
\Gamma(X,\mathcal{F})\otimes B_{s} \to \Gamma(X,\mathcal{F})^{2}\to \Gamma(X,\mathcal{F})\otimes_{\widehat{R}_{\aff}^{\vee}}(\widehat{R}_{\aff}^{\vee}/\widetilde{\alpha}_{s}^{\vee}\widehat{R}_{\aff}^{\vee})\to 0
\]
is injective.
Moreover, by this exact sequence, the image of this embedding is $\{(a,b)\in \Gamma(X,\mathcal{F})^{2}\mid a - b \in \Gamma(X,\mathcal{F})\widetilde{\alpha}_{s}^{\vee}\}$.
We prove that $\Gamma(X,\theta_{s}\mathcal{F})$ has the same description.

Since $\theta_{s}(\mathcal{F})^{A} = \mathcal{F}^{A}\oplus \mathcal{F}^{As}$, we have $\Gamma(X,\theta_{s}\mathcal{F})\subset \bigoplus_{A\in X}(\theta_{s}\mathcal{F})^{A} = \bigoplus_{A\in X}\mathcal{F}^{A}\oplus \bigoplus_{A\in X}\mathcal{F}^{As} = \left(\bigoplus_{A\in X}\mathcal{F}^{A}\right)^{\oplus 2}$.
First we prove that $\Gamma(X,\theta_{s}\mathcal{F})\subset (X,\mathcal{F})^{2}$.
Take $a_{A}\in \mathcal{F}^{A}$ and $b_{As}\in \mathcal{F}^{As}$ such that $(a_{A},b_{As})_{A}\in \Gamma(X,\theta_{s}\mathcal{F})$.
Let $E$ be an edge connecting $A$ with $A'$.
If $A'\ne As$, then $\rho_{E,A}^{\theta_{s}\mathcal{F}}(a_{A},b_{As}) =  \rho_{E,A'}^{\theta_{s}\mathcal{F}}(a_{A'},b_{A's})$ implies $\rho_{E,A}^{\mathcal{F}}(a_{A}) =  \rho_{E,A'}^{\mathcal{F}}(a_{A'})$ and $\rho_{Es,As}^{\mathcal{F}}(b_{As}) =  \rho_{Es,A's}^{\mathcal{F}}(b_{A's})$.
By replacing $E$ with $Es$ in the latter, we get $\rho_{E,A}^{\mathcal{F}}(b_{A}) =  \rho_{E,A'}^{\mathcal{F}}(b_{A'})$.
Therefore, we get
\begin{equation}\label{eq:almost a,b in Gamma}
\text{$\rho_{E,A}^{\mathcal{F}}(a_{A}) =  \rho_{E,A'}^{\mathcal{F}}(a_{A'})$ and $\rho_{E,A}^{\mathcal{F}}(b_{A}) =  \rho_{E,A'}^{\mathcal{F}}(b_{A'})$ for any edge $E\ne Es$.}
\end{equation}

We have $\rho_{E,A}^{\theta_{s}\mathcal{F}}(a_{A},b_{As}) = \rho_{E,As}^{\theta_{s}\mathcal{F}}(a_{As},b_{A})$, hence there exists $(c_{A},c_{As})\in (\theta_{s}\mathcal{F})^{A} = \Gamma(\{A,As\},\mathcal{F})$ such that $(a_{A} - b_{A},b_{As} - a_{As}) = (c_{A},c_{As})\widetilde{\alpha}_{s}^{\vee}$.
In particular, as $\mathcal{F}^{E}\widetilde{\alpha}_{s}^{\vee} = 0$. we have $\rho_{E,A}^{\mathcal{F}}(a_{A}) = \rho_{E,A}^{\mathcal{F}}(b_{A})$ and $\rho_{E,As}^{\mathcal{F}}(a_{As}) = \rho_{E,As}^{\mathcal{F}}(b_{As})$.
Since $(a_{A},b_{As})\in \Gamma(\{A,As\},\mathcal{F})$, we have $\rho^{\mathcal{F}}_{E,A}(a_{A}) = \rho^{\mathcal{F}}_{E,As}(b_{As})$.
Hence $\rho^{\mathcal{F}}_{E,A}(a_{A}) = \rho^{\mathcal{F}}_{E,As}(a_{As})$ and $\rho^{\mathcal{F}}_{E,A}(b_{A}) = \rho^{\mathcal{F}}_{E,As}(b_{As})$.
With \eqref{eq:almost a,b in Gamma}, $a = (a_{A})$ and $b = (b_{A})$ are both in $\Gamma(X,\mathcal{F})$.

We have $(a_{A} - b_{A},b_{As} - a_{As}) = (c_{A},c_{As})\widetilde{\alpha}_{s}^{\vee}$ in $(\theta_{s}\mathcal{F})^{A}$.
Therefore, we have $a_{A} - b_{A} = c_{A}\widetilde{\alpha}_{s}^{\vee}$ for any $A$.
We prove $c = (c_{A})\in \Gamma(X,\mathcal{F})$, namely for any edge $E$ connecting $A$ with $A'$, we have $\rho^{\mathcal{F}}_{E,A}(c_{A}) = \rho_{E,A'}^{\mathcal{F}}(c_{A})$.
If $A' = As$, then this follows from $(c_{A},c_{As})\in \Gamma(\{A,As\},\mathcal{F})$.
If $A'\ne As$, then as $a,b\in \Gamma(X,\mathcal{F})$, we have $\rho_{E,A}^{\mathcal{F}}(a_{A} - b_{A}) = \rho_{E,A'}^{\mathcal{F}}(a_{A'} - b_{A'})$.
By Lemma~\ref{lem:GKM condition}, we have $\rho_{E,A}^{\mathcal{F}}(c_{A}) = \rho_{E,A'}^{\mathcal{F}}(c_{A'})$.

Hence the image of $\Gamma(X,\theta_{s}\mathcal{F})$ is contained in $\{(a,b)\in \Gamma(X,\mathcal{F})^{2}\mid a - b \in \Gamma(X,\mathcal{F})\widetilde{\alpha}_{s}^{\vee}\}$.
Reversing the argument in the above, we get the reverse inclusion.
\end{proof}

\begin{lem}\label{lem:freeness of the stalk of translation functor}
If $\mathcal{F}$ satisfies (BM1), (BM2) and (BM4), then $(\theta_{s}\mathcal{F})^{A}$ is graded free.
\end{lem}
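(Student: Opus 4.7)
The plan is to exhibit $(\theta_{s}\mathcal{F})^{A} = \Gamma(\{A,As\},\mathcal{F})(1)$ as a split extension of two graded free modules. Since the shift $(1)$ preserves graded freeness, it is enough to show $\Gamma(\{A,As\},\mathcal{F})$ is graded free as a right $\widehat{R}_{\aff}^{\vee}$-module. Let $E$ be the edge connecting $A$ and $As$, with label $\widetilde{\alpha}_{s}^{\vee}$. By definition,
\[
\Gamma(\{A,As\},\mathcal{F}) = \{(a,b)\in\mathcal{F}^{A}\oplus\mathcal{F}^{As}\mid \rho^{\mathcal{F}}_{E,A}(a) = \rho^{\mathcal{F}}_{E,As}(b)\}.
\]

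Without loss of generality assume $As < A$; the opposite case is handled by swapping the roles of the two endpoints. By (BM2), $\rho^{\mathcal{F}}_{E,A}$ is surjective with kernel $\mathcal{F}^{A}\widetilde{\alpha}_{s}^{\vee}$. Consider the projection onto the second factor $\pi\colon\Gamma(\{A,As\},\mathcal{F})\to\mathcal{F}^{As}$. Given any $b\in\mathcal{F}^{As}$, (BM4) produces a global section $m\in\Gamma(\mathcal{F})$ with $m_{As}=b$; its restriction $(m_{A},b)$ lies in $\Gamma(\{A,As\},\mathcal{F})$ and projects to $b$, so $\pi$ is surjective. Its kernel is $\{(a,0)\mid a\in\mathcal{F}^{A}\widetilde{\alpha}_{s}^{\vee}\}$, which is isomorphic to $\mathcal{F}^{A}(-2)$ because $\widetilde{\alpha}_{s}^{\vee}$ is a non-zero-divisor in the domain $\widehat{R}_{\aff}^{\vee}$ acting on the torsion-free module $\mathcal{F}^{A}$.

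We therefore obtain a short exact sequence
\[
0\to \mathcal{F}^{A}(-2)\to \Gamma(\{A,As\},\mathcal{F})\to \mathcal{F}^{As}\to 0
\]
of graded $\widehat{R}_{\aff}^{\vee}$-modules whose outer terms are graded free by (BM1). Since $\mathcal{F}^{As}$ is graded free, the sequence splits, so $\Gamma(\{A,As\},\mathcal{F})\simeq \mathcal{F}^{A}(-2)\oplus\mathcal{F}^{As}$ is graded free, and hence so is $(\theta_{s}\mathcal{F})^{A}=\Gamma(\{A,As\},\mathcal{F})(1)$.

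I do not foresee any substantive obstacle. The only points requiring care are checking that (BM4) alone — without invoking flabbiness (BM3) — suffices to yield the surjectivity of $\pi$ (which it does, because a global section extends a local value at $As$ and in particular provides a compatible pair on $\{A,As\}$), and tracking the grading shift coming from multiplication by the degree-two element $\widetilde{\alpha}_{s}^{\vee}$.
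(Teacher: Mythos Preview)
Your proof is correct and follows essentially the same approach as the paper's own proof: both assume $A>As$, project $\Gamma(\{A,As\},\mathcal{F})$ onto $\mathcal{F}^{As}$, use (BM4) for surjectivity, identify the kernel with $\mathcal{F}^{A}\widetilde{\alpha}_{s}^{\vee}\simeq\mathcal{F}^{A}(-2)$ via (BM2), and conclude graded freeness from the resulting short exact sequence. The paper writes the sequence after the shift as $0\to\mathcal{F}^{A}(-1)\to\Gamma(\{A,As\},\mathcal{F})(1)\to\mathcal{F}^{As}(1)\to 0$ and leaves the splitting implicit, but this is the same argument you spell out.
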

\begin{proof}
We may assume $A > As$.
The map $(\theta_{s}\mathcal{F})^{A} = \Gamma(\{A,As\},\mathcal{F})(1)\to \mathcal{F}^{As}(1)$ is surjective since $\Gamma(\mathcal{F})\to \mathcal{F}^{As}$ is surjective.
The kernel of this map is isomorphic to $\Ker\rho_{E_{A,As},A}^{\mathcal{F}} = \mathcal{F}^{A}\widetilde{\alpha}_{s}^{\vee}$.
Therefore, we have an exact sequence $0\to \mathcal{F}^{A}(-1)\xrightarrow{\widetilde{\alpha}_{s}^{\vee}} \Gamma(\{A,As\},\mathcal{F})(1)\to \mathcal{F}^{As}(1)\to 0$ and hence $\Gamma(\{A,As\},\mathcal{F})$ is graded free.
\end{proof}

\begin{prop}\label{prop:translation preserves BM}
If $\mathcal{F}\in\BM(X)$ then $\theta_{s}\mathcal{F}\in \BM(X)$.
\end{prop}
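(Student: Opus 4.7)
The plan is to verify the four axioms (BM1)--(BM4) for $\theta_s\mathcal{F}$. The axiom (BM1) is exactly Lemma~\ref{lem:freeness of the stalk of translation functor}, so nothing new is needed there.

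For (BM2), I would split on whether the edge $E$ connecting $A$ to the smaller endpoint $A'$ satisfies $A' = As$ or not. If $A' = As$, then $(\theta_s\mathcal{F})^E = (\theta_s\mathcal{F})^A/(\theta_s\mathcal{F})^A\alpha_E$ by the very definition, and the axiom is immediate. Otherwise, $E$ and $Es$ are distinct edges of $\mathcal{A}$ connecting $\{A,A'\}$ and $\{As,A's\}$. Surjectivity of $\rho^{\theta_s\mathcal{F}}_{E,A}$ onto $(\theta_s\mathcal{F})^E \subset \mathcal{F}^E\oplus\mathcal{F}^{Es}$ follows from surjectivity of $\rho^{\mathcal{F}}_{E,A}$ given by (BM2) for $\mathcal{F}$, together with the fact that any element of $\Gamma(\{A,As\},\mathcal{F})$ realizing a prescribed image can be produced by combining (BM2) and (BM4) for $\mathcal{F}$. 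For the kernel, a pair $(a,b)\in \Gamma(\{A,As\},\mathcal{F})$ with $\rho^{\mathcal{F}}_{E,A}(a)=0=\rho^{\mathcal{F}}_{Es,As}(b)$ should be shown divisible by $\alpha_E$ as a pair in $\Gamma(\{A,As\},\mathcal{F})$; this combines (BM2) for $\mathcal{F}$ with the GKM condition (Lemma~\ref{lem:GKM condition}) to separate the $\alpha_E$ and $\widetilde{\alpha}_s^\vee$ divisibilities.

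For (BM4), given $(a,b)\in (\theta_s\mathcal{F})^A$ with $a\in\mathcal{F}^A,\ b\in\mathcal{F}^{As}$, I would first assume $A>As$ and extend $b$ to a global section $d\in \Gamma(X,\mathcal{F})$ with $d_{As}=b$ using the flabbiness and (BM4) of $\mathcal{F}$. The edge compatibility $\rho^{\mathcal{F}}_{E_{A,As},A}(a)=\rho^{\mathcal{F}}_{E_{A,As},As}(b)=\rho^{\mathcal{F}}_{E_{A,As},A}(d_A)$ combined with (BM2) for $\mathcal{F}$ forces $a-d_A\in \mathcal{F}^A\widetilde{\alpha}_s^\vee$; writing $a-d_A=u_A\widetilde{\alpha}_s^\vee$ and extending $u_A$ to $u\in \Gamma(X,\mathcal{F})$ by (BM4) for $\mathcal{F}$, the pair $(d+u\widetilde{\alpha}_s^\vee,d)$ satisfies the description of $\Gamma(X,\theta_s\mathcal{F})$ in Lemma~\ref{lem:s-stable global section of translation functor} and restricts to $(a,b)$ at $A$. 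The case $A<As$ is handled symmetrically by extending $a$ first and correcting $b$. For (BM3), I would mimic the Zorn's lemma argument of Lemma~\ref{lem:equivalence on flabby}: given a maximal partial extension $(V,g)$ with $V\ne X$, pick a maximal $A\in X\setminus V$ and extend across $A$ by applying Lemma~\ref{lem:s-stable global section of translation functor} over an $s$-invariant bounded-below neighborhood of $A$ (whose existence follows from Lemma~\ref{lem:existence of O' supset Ox}) and then using the flabbiness and (BM4) of $\mathcal{F}$ just established.

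The main obstacle is (BM2) in the case $A'\ne As$. The four subcases determined by the relative Bruhat positions of $\{A,As\}$ and $\{A',A's\}$ (constrained by Lemma~\ref{lem:lemma on a order in alcoves}) each require slightly different bookkeeping of which side of (BM2) for $\mathcal{F}$ is applied on $E$ and on $Es$. Once these orientations are pinned down, the GKM condition uniformly reduces the kernel computation to the identity $\mathcal{F}^A\widetilde{\alpha}_s^\vee\cap \mathcal{F}^A\alpha_E = \mathcal{F}^A\widetilde{\alpha}_s^\vee\alpha_E$ together with its analogue over $\mathcal{F}^{As}$.
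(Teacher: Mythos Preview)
Your handling of (BM1), (BM2), and (BM4) is essentially in line with the paper, though with two differences worth noting. First, for (BM2) in the case $A'\ne As$ the paper does \emph{not} split into four Bruhat subcases; that case analysis occurred earlier in the proof of Proposition~\ref{prop:action of BS module is the translation functor}, not here. The paper simply observes that $(a_A,a_{As})$ lies in the kernel iff $\rho^{\mathcal{F}}_{E,A}(a_A)=0$ and $\rho^{\mathcal{F}}_{Es,As}(a_{As})=0$, writes $a_A=b_A\alpha_E$, $a_{As}=b_{As}\alpha_{Es}$ via (BM2) for $\mathcal{F}$, and then uses Lemma~\ref{lem:GKM condition} on the edge between $A$ and $As$ to conclude $(b_A,b_{As})\in\Gamma(\{A,As\},\mathcal{F})$. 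No orientation bookkeeping is needed. Second, for (BM4) the paper gives a one-line argument: by Proposition~\ref{prop:action of BS module is the translation functor} and Lemma~\ref{lem:s-stable global section of translation functor} the map $\Gamma(X,\theta_s\mathcal{F})\to(\theta_s\mathcal{F})^A$ is identified with $\Gamma(\mathcal{F})\otimes B_s\to(\Gamma(\mathcal{F})\otimes B_s)^A$, and the latter is surjective by construction. Your explicit lift via $d$ and $u$ is correct and amounts to unwinding that identification.

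The genuine gap is in (BM3). The paper does not prove flabbiness directly; it invokes \cite[Theorem~7.17]{MR3324922}. Your Zorn's-lemma sketch is underspecified at the crucial step. Extending a section of $\theta_s\mathcal{F}$ from $V$ to $V\cup\{A\}$ requires producing an element of $(\theta_s\mathcal{F})^A=\Gamma(\{A,As\},\mathcal{F})$, which involves data at \emph{both} $A$ and $As$. If $As\in V$ the given section already carries an element of $(\theta_s\mathcal{F})^{As}=\Gamma(\{A,As\},\mathcal{F})$, and the compatibility across the edge $E_{A,As}$ constrains your choice in a way you have not addressed; if $As\notin V$ you must choose data at $As$ consistently with future extensions. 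Invoking Lemma~\ref{lem:s-stable global section of translation functor} ``over an $s$-invariant neighborhood'' does not resolve this, because the partial section lives on $V$, which need not be $s$-invariant, and $V\cup Vs$ need not be open. Making this step precise is exactly the content of Lanini's argument, and it is not a routine adaptation of Lemma~\ref{lem:equivalence on flabby}.
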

\begin{proof}
In the proof, we ignore the grading shift.
By \eqref{lem:freeness of the stalk of translation functor}, $\theta_{s}\mathcal{F}$ satisfies (BM1).
Let $E$ be an edge connecting $A$ with $A' < A$.
Assume that $A'\ne As$.
By the definition we have $(\theta_{s}\mathcal{F})^{E} = (\Gamma(\{A,As\},\mathcal{F})\oplus \Gamma(\{A',A's\},\mathcal{F}))/\Gamma(\{A,A',As,A's\},\mathcal{F})$.
Hence $(a_{A},a_{As})\in \Gamma(\{A,As\},\mathcal{F})$ is in the kernel of $(\theta_{s}\mathcal{F})^{A} = \Gamma(\{A,As\},\mathcal{F})\to (\theta_{s}\mathcal{F})^{E}$ if and only if $(a_{A},a_{As},0,0)\in \Gamma(\{A,A',As,A's\},\mathcal{F})$.
Namely $\rho_{E,A}^{\mathcal{F}}(a_{A}) = 0,\rho_{Es,As}^{\mathcal{F}}(a_{As}) = 0$.
Therefore $a_{A}\in \Ker\rho_{E,A}^{\mathcal{F}} = \mathcal{F}^{A}\alpha_{E}$.
Hence there exists $b_{A}\in \mathcal{F}^{A}$ such that $a_{A} = b_{A}\alpha_{E}$.
Similarly we have $a_{As} = b_{As}\alpha_{E}$ for some $b_{As}\in \mathcal{F}^{As}$.
Let $E'$ be the edge connecting $A$ with $As$.
Then $\rho^{\mathcal{F}}_{E',A}(a_{A}) = \rho^{\mathcal{F}}_{E',As}(a_{As})$.
By Lemma~\ref{lem:GKM condition}, $\rho^{\mathcal{F}}_{E',A}(b_{A}) = \rho^{\mathcal{F}}_{E',As}(b_{As})$ and we get $(b_{A},b_{As}) \in \Gamma(\{A,As\},\mathcal{F})$.
Hence $(a_{A},a_{As})\in \Gamma(\{A,As\},\mathcal{F})\alpha_{E}$.
Therefore $(\theta_{s}\mathcal{F})^{E}\simeq (\theta_{s}\mathcal{F})^{A}/(\theta_{s}\mathcal{F})^{A}\alpha_{E}$ if $A'\ne As$.
This is also true when $A' = As$ by the definition of $\theta_{s}$.
We get (BM2).

The sheaf $\theta_{s}\mathcal{F}$ is again flabby by \cite[Theorem~7.17]{MR3324922}.
The map $\Gamma(X,\theta_{s}\mathcal{F})\to (\theta_{s}\mathcal{F})^{A}$ is identified with $\Gamma(\mathcal{F})\otimes B_{s}\to (\Gamma(\mathcal{F})\otimes B_{s})^{A}$ by Proposition~\ref{prop:action of BS module is the translation functor} and Lemma~\ref{lem:s-stable global section of translation functor}.
It is surjective by the definition.
\end{proof}

We now prove Theorem~\ref{thm:action of Soergel bimodule preserves BM sheaves}.
We return the general situation, namely $O,O'\in \bbopen$, such that $O(\supp B)\subset O'$ and $\mathcal{F}\in \BM(O')$.
We may assume $B = B_{s_{1}}\otimes\cdots\otimes B_{s_{l}}$ for $s_{1},\ldots,s_{l}\in S_{\aff}$.
We prove $\mathcal{F}\star B\in \BM(O)$ by induction on $l$.
Set $B' = B_{s_{2}}\otimes\cdots\otimes B_{s_{l}}$.
Take $O_{1}\in \bbopen$ such that $O\subset O_{1},O_{1}s_{1} = O_{1},O(\supp B')\subset O_{1}$.
We can also assume that $O'\subset O_{1}$.
There exists a Braden-MacPherson sheaf on $O_{1}$ whose restriction to $O'$ is isomorphic to $\mathcal{F}$.
Hence we may assume $\mathcal{F}\in \BM(O_{1})$.
We have $\Gamma(O_{1},\mathcal{F}\star B_{s_{1}}) = \Gamma(O_{1},\mathcal{F})\otimes B_{s_{1}}$ by Proposition~\ref{prop:action of BS module is the translation functor} and Lemma~\ref{lem:s-stable global section of translation functor}.
Therefore $\Gamma(O_{1},\mathcal{F}\star B_{s_{1}})\otimes B'\simeq \Gamma(O_{1},\mathcal{F})\otimes B$.
By applying the functor $\mathcal{L}$, we get $(\mathcal{F}\star B_{s_{1}})\star B'\simeq \mathcal{F}\star B$.
Since $\mathcal{F}\star B_{s_{1}}$ is a Braden-MacPherson sheaf, the left hand side is a Braden-MacPherson sheaf by inductive hypothesis.
Hence $\mathcal{F}\star B$ is a Braden-MacPherson sheaf.
During the proof, we got the following.
\begin{prop}\label{prop:associativity of the action}
We have $(\mathcal{F}\star B_{1})\star B_{2} \simeq \mathcal{F}\star (B_{1}\otimes B_{2})$.
\end{prop}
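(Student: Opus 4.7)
The cleanest route is to reduce to the already-established equality of global sections in the single–simple-reflection case and then apply the localization functor $\mathcal{L}$. Concretely, the plan is to prove the auxiliary identity
\[
\Gamma(O_1,\mathcal{F}\star B_1)\simeq \Gamma(O_1,\mathcal{F})\otimes B_1
\]
in $\EB(O_1)$ for sufficiently large $O_1\in\bbopen$, from which the proposition falls out as
\[
(\mathcal{F}\star B_1)\star B_2
=\mathcal{L}\bigl(\Gamma(\mathcal{F}\star B_1)\otimes B_2\bigr)
\simeq \mathcal{L}\bigl((\Gamma(\mathcal{F})\otimes B_1)\otimes B_2\bigr)
\simeq \mathcal{L}\bigl(\Gamma(\mathcal{F})\otimes(B_1\otimes B_2)\bigr)
=\mathcal{F}\star(B_1\otimes B_2),
\]
since $\otimes_{\widehat{R}_{\aff}^\vee}$ is associative on the underlying bimodules and, by the definition of $\otimes$ in $\EB$, the decomposition $(M\otimes B)_Q^A=\bigoplus_{x}M_Q^{Ax^{-1}}\otimes B_Q^{x}$ is transparently associative.

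First, I would do the set-up of the open subsets. Given $O\in\bbopen$, using Lemma~\ref{lem:existence of O' supset Ox} iteratively, pick $O_1\in\bbopen$ containing $O\cdot(\supp_{W_\aff}(B_1\otimes B_2))^{-1}$, $O\cdot(\supp_{W_\aff}B_1)^{-1}$ and $O\cdot(\supp_{W_\aff}B_2)^{-1}$, and replace $\mathcal{F}$ with a Braden--MacPherson extension to $O_1$ (which exists by Lemma~\ref{lem:BM is a limt of BM} and Proposition~\ref{prop:classification of BM sheaves}). Because both $\star$ and $\otimes$ in $\EB$ are defined by the same kind of stabilizer-bookkeeping, the outcome on $O$ does not depend on the choice of $O_1$.

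Next, I would establish the auxiliary identity by induction on the length of a Bott--Samelson expression. Every Soergel bimodule $B_1$ is a direct summand of some $B_{s_1}\otimes\cdots\otimes B_{s_l}$, and both $\star$ and $\otimes$ commute with direct sums in the second variable, so it suffices to treat the case $B_1=B_{s_1}\otimes\cdots\otimes B_{s_l}$. For $l=1$, the identity $\Gamma(O_1,\mathcal{F}\star B_s)\simeq\Gamma(O_1,\mathcal{F})\otimes B_s$ is exactly the combination of Proposition~\ref{prop:action of BS module is the translation functor} (identifying $\mathcal{F}\star B_s$ with $\theta_s\mathcal{F}$) and Lemma~\ref{lem:s-stable global section of translation functor} (which computes $\Gamma(\theta_s\mathcal{F})$ as $\Gamma(\mathcal{F})\otimes B_s$). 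For the inductive step with $B_1=B_{s_1}\otimes B'$, I would apply $\mathcal{L}$ to the chain
\[
\Gamma(O_1,\mathcal{F}\star B_1)
\simeq \Gamma(O_1,(\mathcal{F}\star B_{s_1})\star B')
\simeq \Gamma(O_1,\mathcal{F}\star B_{s_1})\otimes B'
\simeq \Gamma(O_1,\mathcal{F})\otimes B_{s_1}\otimes B',
\]
where the middle isomorphism is the inductive hypothesis applied to $\mathcal{F}\star B_{s_1}\in\BM(O_1)$ (which is a Braden--MacPherson sheaf by Proposition~\ref{prop:translation preserves BM}) and the last uses the $l=1$ case.

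The main thing to be careful about is that every isomorphism above must genuinely live in $\EB$, i.e., must be compatible with the local data $(M_Q^A)_{A\in O_1}$, not merely with the underlying $(\widehat{S}_{\aff}^\vee,\widehat{R}_{\aff}^\vee)$-bimodule structure; this is where the bookkeeping with the stabilizer-decomposition $(M\otimes B)_Q^A=\bigoplus_{x}M_Q^{Ax^{-1}}\otimes B_Q^{x}$ is used to identify both sides piece by piece. Once this is done, the chain of $\mathcal{L}$'s gives exactly the content of the proposition, and in fact this is essentially what was done in the closing paragraphs of the proof of Theorem~\ref{thm:action of Soergel bimodule preserves BM sheaves} for the special case $B_1=B_{s_1}$; the present proposition is the formal statement of that observation extended by the induction described above.
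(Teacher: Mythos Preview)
Your proposal is correct and follows essentially the same route as the paper: the paper's proof of Theorem~\ref{thm:action of Soergel bimodule preserves BM sheaves} peels off one $B_{s_1}$ at a time using the identity $\Gamma(O_1,\mathcal{F}\star B_{s_1})\simeq\Gamma(O_1,\mathcal{F})\otimes B_{s_1}$ (Proposition~\ref{prop:action of BS module is the translation functor} plus Lemma~\ref{lem:s-stable global section of translation functor}), tensors with $B'$, applies $\mathcal{L}$, and the proposition is then recorded as a byproduct. Your version makes the inductive structure of the auxiliary identity $\Gamma(\mathcal{F}\star B_1)\simeq\Gamma(\mathcal{F})\otimes B_1$ explicit; just be sure, when setting up $O_1$, to also require $O_1 s_i = O_1$ for each simple reflection appearing (as the paper does), since Lemma~\ref{lem:s-stable global section of translation functor} needs an $s$-invariant domain.
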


The proof of the following is straghtforward.
We omit it.
\begin{thm}
The operator $\star$ defines an action of $\mathcal{S}$ on $\BM(\mathcal{A})$.
Namely, the isomorphism in Proposition~\ref{prop:associativity of the action} satisfies the coherence conditions.
\end{thm}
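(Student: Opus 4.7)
The plan is to reduce the coherence axioms of the $\mathcal{S}$-action to the standard coherence of the tensor product of graded bimodules over $\widehat{R}_{\aff}^{\vee}$, using that $\mathcal{F}\star B$ is defined as $\mathcal{L}(\Gamma(\mathcal{F})\otimes B)$ and that the associativity isomorphism of Proposition~\ref{prop:associativity of the action} comes from the ordinary associator of $\otimes_{\widehat{R}_{\aff}^{\vee}}$. The key observation is that the $\EB$-level formula $(M\otimes B)_{Q}^{A}=\bigoplus_{x\in W_{\aff}}M_{Q}^{Ax^{-1}}\otimes_{Q_{\aff}^{\vee}}B_{Q}^{x}$ is strictly compatible with the usual bimodule associator, so the coherence data on the operation $\otimes$ on $\EB(\mathcal{A})$ is inherited directly from ordinary tensor products.

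First I would make the associator of Proposition~\ref{prop:associativity of the action} explicit: unwinding the proof, for $\mathcal{F}\in\BM(\mathcal{A})$ and $B_{1},B_{2}\in\mathcal{S}$ one checks, on a sufficiently large $O\in\bbopen(\mathcal{A})$, that $\Gamma(O,(\mathcal{F}\star B_{1})\star B_{2})\simeq(\Gamma(O,\mathcal{F})\otimes B_{1})\otimes B_{2}$ and $\Gamma(O,\mathcal{F}\star(B_{1}\otimes B_{2}))\simeq\Gamma(O,\mathcal{F})\otimes(B_{1}\otimes B_{2})$, and the isomorphism of Proposition~\ref{prop:associativity of the action} is $\mathcal{L}$ applied to the ordinary associator. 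Second I would verify the pentagon axiom: for $B_{1},B_{2},B_{3}\in\mathcal{S}$, the two composites around the pentagon, evaluated via $\Gamma$, coincide with the two sides of the MacLane pentagon for
\begin{equation*}
((\Gamma(O,\mathcal{F})\otimes B_{1})\otimes B_{2})\otimes B_{3}\xrightarrow{\sim}\Gamma(O,\mathcal{F})\otimes(B_{1}\otimes(B_{2}\otimes B_{3})),
\end{equation*}
which is coherent because $\otimes_{\widehat{R}_{\aff}^{\vee}}$ on ordinary bimodules is. Applying $\mathcal{L}$ transports this commutativity to $\BM(\mathcal{A})$. The unit axiom is analogous: the monoidal unit of $\mathcal{S}$ is $\widehat{R}_{\aff}^{\vee}$, and the canonical $\mathcal{L}(\Gamma(\mathcal{F})\otimes\widehat{R}_{\aff}^{\vee})\simeq\mathcal{F}$ reduces the triangle to the standard unit triangle for bimodule tensor products.

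The main technical obstacle is the bookkeeping of the open sets $O\in\bbopen(\mathcal{A})$ underlying the construction. A triple $(B_{1},B_{2},B_{3})$ requires $O$ and intermediate $O'$ large enough that each sub-product $B_{i}$, $B_{i}\otimes B_{j}$, $B_{1}\otimes B_{2}\otimes B_{3}$ can be accommodated, i.e.\ $O\cdot(\supp_{W_{\aff}}B_{1})^{-1}(\supp_{W_{\aff}}B_{2})^{-1}(\supp_{W_{\aff}}B_{3})^{-1}\subset O'$ together with all sub-products. Existence of such $O'$ follows from iterating Lemma~\ref{lem:existence of O' supset Ox}, while the independence of $\mathcal{F}\star B$ from the auxiliary choice of $O'$ (remarked after the definition of $\star$) together with Lemma~\ref{lem:BM is a limt of BM} ensures that once the pentagon is verified on every sufficiently large $O$, it holds globally on $\BM(\mathcal{A})$. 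Once this compatibility is in place, the remaining content is a formal manipulation of bimodule associators, which is why the authors classify the proof as straightforward.
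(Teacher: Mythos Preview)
Your proposal is correct and is precisely the natural argument the paper has in mind: the paper omits the proof entirely, stating only that it is ``straightforward,'' and your reduction to the MacLane coherence of $\otimes_{\widehat{R}_{\aff}^{\vee}}$ on ordinary bimodules (transported through $\Gamma$ and $\mathcal{L}$, with the $\bbopen$ bookkeeping handled by Lemma~\ref{lem:existence of O' supset Ox} and the independence remark) is exactly how one would fill in that omission.
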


\subsection{Sheaves on $W_{\aff}$ and $W_{\aff,\lambda}\backslash W_{\aff}$}
In this subsection we fix $\lambda\in \mathbb{X}^{\vee}$.
Let $\pi_{M}\colon W_{\aff}\to W_{\aff,\lambda}\backslash W_{\aff}$ be the natural projection.
Then, this is a morphism between moment graphs.
Therefore we have the push-forward functor $\pi_{M,*}$ \cite[Definition 5.6]{MR3324922} and the pull-back functor $\pi_{M}^{*}$~\cite[Definition~5.7]{MR3324922}.
The pair $(\pi_{M}^{*},\pi_{M,*})$ is an adjoint pair~\cite[Proposition~5.9]{MR3324922}.

On the other hand, we can define the functors $\pi_{B,*}\colon \EB(W_{\aff})\to \EB(W_{\aff,\lambda}\backslash W_{\aff})$ and $\pi_{B}^{*}\colon \EB(W_{\aff,\lambda}\backslash W_{\aff})\to \EB(W_{\aff})$ as follows.
(See \cite{MR4846616} for the details.)
For $M\in \EB(W_{\aff})$, we put $\pi_{B,*}(M) = M|_{((\widehat{R}_{\aff}^{\vee})^{W_{\aff,\lambda}},\widehat{R}_{\aff}^{\vee})}$ and $(\pi_{B,*}M)_{Q}^{x} = \bigoplus_{a\in x}M_{Q}^{a}$ where we regard $x\in W_{\aff,\lambda}\backslash W_{\aff}$ as a subset of $W_{\aff}$.

Before defining $\pi_{B}^{*}$, we give notation.
Let $x\in W_{\aff,\lambda}\backslash W_{\aff}$ and consider the action of $W_{\aff,\lambda}$ on $\prod_{a\in x}Q_{\aff}^{\vee}$ defined by $w(f_{a}) = (w(f_{w^{-1}a}))$.
Let $(\prod_{a\in x}Q_{\aff}^{\vee})^{W_{\aff,\lambda}}$ be the algebra of $W_{\aff,\lambda}$-fixed points.
Then we have a map $(Q_{\aff}^{\vee})^{W_{\aff,\lambda}}\otimes_{\widehat{\Coef}}Q_{\aff}^{\vee}\to (\prod_{a\in x}Q_{\aff}^{\vee})^{W_{\aff,\lambda}}$ defined by $f\otimes g\mapsto (fa(g))_{a\in x}$.
This is surjective.
Let $N\in \EB(W_{\aff,\lambda}\backslash W_{\aff})$.
Then $(Q_{\aff}^{\vee})^{W_{\aff,\lambda}}\otimes_{\widehat{\Coef}}Q_{\aff}^{\vee}$ acts on $N_{Q}^{x}$ and this defines an action of $(\prod_{a\in x}Q_{\aff}^{\vee})^{W_{\aff,\lambda}}$.

Now we define $\pi_{B}^{*}$.
First we put $\pi_{B}^{*}N = \widehat{R}_{\aff}^{\vee}\otimes_{(\widehat{R}_{\aff}^{\vee})^{W_{\aff,\lambda}}}N$.
We have $(\widehat{R}_{\aff}^{\vee}\otimes_{(\widehat{R}_{\aff}^{\vee})^{W_{\aff,\lambda}}}N)\otimes_{\widehat{R}_{\aff}^{\vee}}Q_{\aff}^{\vee}\simeq \bigoplus_{x\in W_{\aff,\lambda}\backslash W_{\aff}}Q_{\aff}^{\vee}\otimes_{(Q_{\aff}^{\vee})^{W_{\aff,\lambda}}}N_{Q}^{x}$.
The module $Q_{\aff}^{\vee}\otimes_{(Q_{\aff}^{\vee})^{W_{\aff,\lambda}}}N_{Q}^{x}$ is a $Q_{\aff}^{\vee}\otimes_{(Q_{\aff}^{\vee})^{W_{\aff,\lambda}}}(\prod_{a\in x}Q_{\aff}^{\vee})^{W_{\aff,\lambda}}$-module and the latter is isomorphic to $\prod_{a\in x}Q_{\aff}^{\vee}$~\cite[Lemma~2.21]{MR4846616}.
Therefore it has a decomposition $Q_{\aff}^{\vee}\otimes_{(Q_{\aff}^{\vee})^{W_{\aff,\lambda}}}N_{Q}^{x} = \prod_{a\in x}(\pi_{B}^{*}N)_{Q}^{x}$.
It is not difficult to see that $(\pi_{B}^{*},\pi_{B,*})$ is an adjoint pair, see \cite[Proposition~2.27]{MR4846616}.
Note that $(\pi_{B}^{*}N)_{Q}^{x}\simeq N_{Q}^{x}$ as a right $Q_{\aff}^{\vee}$-module.

\begin{lem}\label{lem:push, pull and global section}
We have $\pi_{B}^{*}(\Gamma(\mathcal{F}))\simeq \Gamma(\pi_{M}^{*}\mathcal{F})$ and $\pi_{B,*}(\Gamma(\mathcal{F}))\simeq \Gamma(\pi_{M,*}\mathcal{F})$ if $\mathcal{F}$ satisfies (BM1) and (BM2).
\end{lem}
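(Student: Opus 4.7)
The plan is to compare the sheaf-theoretic functors of \cite{MR3324922} with the bimodule-theoretic functors just introduced, unraveling both sides vertex-by-vertex and edge-by-edge after taking global sections.

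For the push-forward statement, each stalk $(\pi_{M,*}\mathcal{F})^{x}$ at a coset $x=W_{\aff,\lambda}w\in W_{\aff,\lambda}\backslash W_{\aff}$ is built from the sections of $\mathcal{F}$ over the fibre $\pi_M^{-1}(x)$, while the edges of $W_{\aff,\lambda}\backslash W_{\aff}$ correspond to reflections connecting distinct cosets. I would show that assembling a global section of $\pi_{M,*}\mathcal{F}$ is the same data as assembling a tuple $(a_w)_{w\in W_{\aff}}\in\prod_{w\in W_{\aff}}\mathcal{F}^{w}$ satisfying the full edge compatibility on $W_{\aff}$, by splitting the edges of $W_{\aff}$ into internal fibre edges (which are captured by the ``section over the fibre'' requirement built into $(\pi_{M,*}\mathcal{F})^x$) and cross-coset edges (which are precisely the edges of the quotient moment graph). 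The left $(\widehat{R}_{\aff}^{\vee})^{W_{\aff,\lambda}}$-action on $\Gamma(\pi_{M,*}\mathcal{F})$ is then the restriction of the natural $\widehat{R}_{\aff}^{\vee}$-action on $\Gamma(\mathcal{F})$, and the localised pieces $\Gamma(\pi_{M,*}\mathcal{F})_Q^x$ unfold as $\bigoplus_{a\in x}\Gamma(\mathcal{F})_Q^{a}$, matching $\pi_{B,*}(\Gamma(\mathcal{F}))$ on the nose.

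For the pullback, each stalk $(\pi_M^{*}\mathcal{F})^{w}$ is obtained from $\mathcal{F}^{\pi_M(w)}$ via extension of scalars from $(\widehat{R}_{\aff}^{\vee})^{W_{\aff,\lambda}}$ to $\widehat{R}_{\aff}^{\vee}$. I would construct a canonical map
\[
\pi_B^{*}(\Gamma(\mathcal{F}))=\widehat{R}_{\aff}^{\vee}\otimes_{(\widehat{R}_{\aff}^{\vee})^{W_{\aff,\lambda}}}\Gamma(\mathcal{F})\longrightarrow \Gamma(\pi_M^{*}\mathcal{F})
\]
by tensoring the embedding $\Gamma(\mathcal{F})\hookrightarrow\prod_{x}\mathcal{F}^{x}$ with $\widehat{R}_{\aff}^{\vee}$ over $(\widehat{R}_{\aff}^{\vee})^{W_{\aff,\lambda}}$ and then invoking the decomposition $\widehat{R}_{\aff}^{\vee}\otimes_{(\widehat{R}_{\aff}^{\vee})^{W_{\aff,\lambda}}}\mathcal{F}^{x}\simeq \bigoplus_{a\in x}(\pi_M^{*}\mathcal{F})^{a}$ coming from \cite[Lemma~2.21]{MR4846616}. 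Surjectivity amounts to showing that a section of $\pi_M^{*}\mathcal{F}$ is determined by its values on a chosen system of coset representatives, with the remaining values reconstructed via the $W_{\aff,\lambda}$-action and the compatibility along internal fibre edges.

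The hypotheses (BM1) and (BM2) will enter at two points. Torsion-freeness from (BM1) is needed so that $\Gamma(\mathcal{F})\hookrightarrow\prod_{x}\mathcal{F}^{x}$ remains injective after the base change and so that internal fibre edges behave well under it. Property (BM2), together with the GKM condition (Lemma~\ref{lem:GKM condition}), identifies the kernels of the edge-restriction maps, so that compatibility along an internal fibre edge translates precisely into the expected $W_{\aff,\lambda}$-equivariance on the localised pieces, making the descent effective. The main obstacle will be this descent step for the pullback; the push-forward direction reduces to careful index bookkeeping.
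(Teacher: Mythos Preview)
Your push-forward argument is essentially the paper's: $\Gamma(\pi_{M,*}\mathcal{F})$ is identified with $\Gamma(\mathcal{F})$ (as right $\widehat{R}_{\aff}^{\vee}$-modules) by unfolding the definition of $\pi_{M,*}$, and the bimodule and localised structures match. The paper phrases this via the trivial moment graph $\{*\}$ and $q_* = \Gamma$, but the content is the same.

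The pullback argument has a real gap. First, the stalks of $\pi_M^{*}\mathcal{F}$ are not obtained by extension of scalars: one has $(\pi_M^{*}\mathcal{F})^{w} = \mathcal{F}^{\pi_M(w)}$ on the nose as a right $\widehat{R}_{\aff}^{\vee}$-module (cf.\ the paper's proof: ``$(\pi_M^{*}\mathcal{F})^{a} = \mathcal{F}^{\pi_M(a)} = \mathcal{F}^{x}$ is constant on $x$''). The extension of scalars appears only after taking sections over the whole fibre, via $\Gamma(x,\pi_M^{*}\mathcal{F}) \simeq \Gamma(\mathcal{Z}(x))\otimes_{\widehat{R}_{\aff}^{\vee}}\mathcal{F}^{x}$ and $\Gamma(\mathcal{Z}(x))\simeq \widehat{R}_{\aff}^{\vee}\otimes_{(\widehat{R}_{\aff}^{\vee})^{W_{\aff,\lambda}}}\widehat{R}_{\aff}^{\vee}$. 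In particular, a global section of $\pi_M^{*}\mathcal{F}$ is \emph{not} determined by its values on a system of coset representatives, so your proposed surjectivity argument (``reconstruct the remaining values via the $W_{\aff,\lambda}$-action'') does not go through.

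What the paper does instead is a five-lemma style argument. Write the defining exact sequence $0\to \Gamma(\mathcal{F})\to \bigoplus_{x}\mathcal{F}^{x}\to \bigoplus_{E}\mathcal{F}^{E}$, tensor with $\widehat{R}_{\aff}^{\vee}$ over $(\widehat{R}_{\aff}^{\vee})^{W_{\aff,\lambda}}$ (exact because this extension is \emph{free}, since $p$ is not a torsion prime---this, not torsion-freeness of $\Gamma(\mathcal{F})$, is what makes the base change harmless), and compare with the analogous sequence for $\pi_{M,*}\pi_M^{*}\mathcal{F}$. The vertex terms match by the structure-sheaf computation above (using (BM1) for flatness of $\mathcal{F}^{x}$). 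The delicate point, and the place where (BM1), (BM2) and GKM actually enter, is showing that the map on edge terms $\widehat{R}_{\aff}^{\vee}\otimes_{(\widehat{R}_{\aff}^{\vee})^{W_{\aff,\lambda}}}\mathcal{F}^{E}\to (\pi_{M,*}\pi_M^{*}\mathcal{F})^{E}$ is \emph{injective}: one reduces via (BM2) and graded freeness to the case $\mathcal{F}^{E} = \widehat{R}_{\aff}^{\vee}/(\alpha_E)$, and then uses the GKM condition to show that an element of $\Gamma(\mathcal{Z}(x))/\alpha_E\Gamma(\mathcal{Z}(x))$ mapping to zero in $\bigoplus_{\pi_M(E')=E}\widehat{R}_{\aff}^{\vee}/(\alpha_E)$ must already lie in $\alpha_E\Gamma(\mathcal{Z}(x))$. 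Your sketch gestures at (BM2) and GKM for ``making the descent effective'' but does not isolate this injectivity statement, which is the crux.
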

\begin{proof}
Let $\{*\}$ be the trivial moment graph consisting of one point and $q\colon W_{\aff}\to \{*\}$ (resp.\ $q_{0}\colon W_{\aff,\lambda}\backslash W_{\aff}\to \{*\}$) be the unique map.
Then we have $\Gamma\circ\pi_{M,*} = q_{0,*}\circ \pi_{M,*}\simeq q_{*} \simeq \Gamma$ as right $\widehat{R}_{\aff}^{\vee}$-modules.
By the construction of this isomorphism, this implies $\pi_{B,*}\circ \Gamma\simeq \Gamma\circ\pi_{M,*}$.

We have $\pi_{B,*}\circ \Gamma\circ\pi_{M}^{*}\simeq \Gamma\circ \pi_{M,*}\circ \pi_{M}^{*}$ and we have a map $\id\to \pi_{M,*}\circ\pi_{M}^{*}$ by the adjointness.
Hence we have a map $\Gamma\to \pi_{B,*}\circ\Gamma\circ\pi_{M}^{*}$ and, by the adjointness, it induces $\pi_{B}^{*}\circ\Gamma\to \Gamma\circ\pi_{M}^{*}$.
We prove that this is an isomorphism.
By the definition of the global sections, we have the following diagram with horizontal exact sequences (note that $\Gamma\circ\pi_{M,*} = \Gamma$):
\[
\begin{tikzcd}
0\arrow[r] & \widehat{R}_{\aff}^{\vee}\otimes_{(\widehat{R}_{\aff}^{\vee})^{W_{\aff,\lambda}}}\Gamma(\mathcal{F})\arrow[r]\arrow[d] &\bigoplus_{x}\widehat{R}_{\aff}^{\vee}\otimes_{(\widehat{R}_{\aff}^{\vee})^{W_{\aff,\lambda}}}\mathcal{F}^{x} \arrow[r]\arrow[d] & \bigoplus_{E}\widehat{R}_{\aff}^{\vee}\otimes_{(\widehat{R}_{\aff}^{\vee})^{W_{\aff,\lambda}}}\mathcal{F}^{E}\arrow[d]  \\
0\arrow[r] & \Gamma(\pi_{M,*}\pi_{M}^{*}\mathcal{F}) \arrow[r] &\bigoplus_{x}(\pi_{M,*}\pi_{M}^{*}\mathcal{F})^{x} \arrow[r] & \bigoplus_{E}(\pi_{M,*}\pi_{M}^{*}\mathcal{F})^{E}. 
\end{tikzcd}
\]
Here, $x$ (resp.\ $E$) runs through vertices (resp.\ edges) in $W_{\aff,\lambda}\backslash W_{\aff}$.
The first horizontal sequence is exact since $\widehat{R}_{\aff}^{\vee}$ is free as $(\widehat{R}_{\aff}^{\vee})^{W_{\aff,\lambda}}$-module (since $p$ is not a torsion prime).

Therefore, it is sufficient to prove that $\widehat{R}_{\aff}^{\vee}\otimes_{(\widehat{R}_{\aff}^{\vee})^{W_{\aff,\lambda}}}\mathcal{F}^{x}\simeq (\pi_{M,*}\pi_{M}^{*}\mathcal{F})^{x}$ for any $x\in W_{\aff,\lambda}\backslash W_{\aff}$ and $\widehat{R}_{\aff}^{\vee}\otimes_{(\widehat{R}_{\aff}^{\vee})^{W_{\aff,\lambda}}}\mathcal{F}^{E}\to (\pi_{M,*}\pi_{M}^{*}\mathcal{F})^{E}$ is injective for any edge $E$ in $W_{\aff,\lambda}\backslash W_{\aff}$.

Let $x\in W_{\aff,\lambda}\backslash W_{\aff}$ and we regard $x\subset W_{\aff}$.
We have $(\pi_{M,*}\pi_{M}^{*}\mathcal{F})^{x} = \Gamma(x,\pi_{M}^{*}\mathcal{F})$.
Let $\mathcal{Z}(x)$ be the structure sheaf on $x\subset W_{\aff}$.
For each $a\in x$, $(\pi_{M}^{*}\mathcal{F})^{a} = \mathcal{F}^{\pi_{M}(a)} = \mathcal{F}^{x}$ is constant on $x$.
Since $\mathcal{F}^{x}$ is a flat $\widehat{R}_{\aff}^{\vee}$-module, we have $\Gamma(x,\pi_{M}^{*}\mathcal{F})\simeq \Gamma(\mathcal{Z}(x))\otimes_{\widehat{R}_{\aff}^{\vee}}\mathcal{F}^{x}$.
By \cite[Lemma~2.20]{MR4620135}, $\Gamma(\mathcal{Z}(x))\simeq \widehat{R}_{\aff}^{\vee}\otimes_{(\widehat{R}_{\aff}^{\vee})^W_{\aff,\lambda}}\widehat{R}_{\aff}^{\vee}$.
We get the claim for $x$.

Let $E$ be an edge connecting $x$ with $y < x$.
We prove $\widehat{R}_{\aff}^{\vee}\otimes_{(\widehat{R}_{\aff}^{\vee})^{W_{\aff,\lambda}}}\mathcal{F}^{E}\to \bigoplus_{\pi_{M}(E') = E}\mathcal{F}^{E}$ is injective.
As $\mathcal{F}^{E}\simeq \mathcal{F}^{x}/\mathcal{F}^{x}\alpha_{E}$ and $\mathcal{F}^{x}$ is graded free, it is sufficient to prove $\widehat{R}_{\aff}^{\vee}\otimes_{(\widehat{R}_{\aff}^{\vee})^{W_{\aff,\lambda}}}\widehat{R}_{\aff}^{\vee}/(\alpha_{E})\to \bigoplus_{\pi_{M}(E') = E}\widehat{R}_{\aff}^{\vee}/(\alpha_{E})$ is injective.
Note that we have $\widehat{R}_{\aff}^{\vee}\otimes_{(\widehat{R}_{\aff}^{\vee})^{W_{\aff,\lambda}}}\widehat{R}_{\aff}^{\vee}\simeq \Gamma(\mathcal{Z}(x))$ and hence the map is $\Gamma(\mathcal{Z}(x))/\Gamma(\mathcal{Z}(x))\alpha_{E}\to \bigoplus_{\pi_{M}(E') = E}\widehat{R}_{\aff}^{\vee}/(\alpha_{E})$.
Explicitly, this is given as follows: Let $(z_{a})_{a\in x}\in \Gamma(\mathcal{Z}(x))$.
We take $t\in W_{\aff}$ such that $y = xt$.
For each $a\in x$, $at\in y$.
Let $E_{a}$ be the edge connecting $a$ with $at$.
Then the image of $(z_{a})$ is $(z_{a}\pmod{\alpha_{E}})_{E_{a}}$.
Therefore $(z_{a})_{a\in x}$ is in the kernel of $\Gamma(\mathcal{Z}(x))/\Gamma(\mathcal{Z}(x))\alpha_{E}\to \bigoplus_{\pi_{M}(E') = E}\widehat{R}_{\aff}^{\vee}/(\alpha_{E})$ if and only if $z_{a} = z'_{a}\alpha_{E}$ for some $z'_{a}\in \widehat{R}_{\aff}^{\vee}$.
For an edge $F$ in $x$ connecting $a$ with $a'$, since $(z_{a})\in \Gamma(\mathcal{Z}(x))$, we have $z_{a}\equiv z_{a'}\pmod{\alpha_{F}}$.
By the GKM condition, $\alpha_{E_{a}} = \alpha_{E}$ and $\alpha_{F}$ are linearly independent.
Hence $z'_{a}\equiv z'_{a'}\pmod{\alpha_{F}}$.
Namely $(z'_{a})\in \Gamma(\mathcal{Z}(x))$.
Hence $(z_{a})\in \Gamma(\mathcal{Z}(x))\alpha_{E}$.
\end{proof}

\begin{lem}\label{lem:pull and localization}
We have $\mathcal{L}\pi_{B}^{*}M\simeq \pi_{M}^{*}\mathcal{L}M$ for $M\in \mathcal{EB}(W_{\aff,\lambda}\backslash W_{\aff})$.
\end{lem}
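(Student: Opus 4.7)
The plan is to verify the claimed isomorphism stalk-by-stalk on $W_{\aff}$: I compare the vertex stalks, the edge stalks, and then check compatibility of the restriction maps $\rho$.

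For a vertex $a \in W_{\aff}$ with $x = \pi_M(a)$, the construction of $\pi_M^{*}$ gives $(\pi_M^{*}\mathcal{L}M)^a = (\mathcal{L}M)^x = M^x$. On the other side, $(\mathcal{L}\pi_B^{*}M)^a$ is the image of $\pi_B^{*}M = \widehat{R}_{\aff}^{\vee}\otimes_{(\widehat{R}_{\aff}^{\vee})^{W_{\aff,\lambda}}}M$ in $(\pi_B^{*}M)_Q^a \simeq M_Q^x$. Since the twisted left action on $(\pi_B^{*}M)_Q^a$ is $fm = ma^{-1}(f)$ and $M^x \subset M_Q^x$ is already closed under right multiplication by $\widehat{R}_{\aff}^{\vee}$, this image is precisely $M^x$, giving the identification canonically.

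For an edge $E$ connecting $a$ and $a' = as_{\widetilde{\alpha}}$, there are two cases. If $\pi_M(a) \neq \pi_M(a')$ (call them $x$ and $y$), then $E$ maps to an edge $E'$ in $W_{\aff,\lambda}\backslash W_{\aff}$ with the same label, so $\mathcal{Z}(E) = \mathcal{Z}(E')$. Both edge stalks are cokernels of maps into $M^x \oplus M^y$. The inclusion $M^{\{x,y\}}\mathcal{Z}(E') \subset (\pi_B^{*}M)^{\{a,a'\}}\mathcal{Z}(E)$ is immediate. For the reverse, a typical element of $(\pi_B^{*}M)^{\{a,a'\}}$ has the form $(m_x,m_y)\cdot(a^{-1}(f),(a')^{-1}(f))$ for $m \in M$ and $f \in \widehat{R}_{\aff}^{\vee}$, and $(a^{-1}(f),(a')^{-1}(f)) \in \mathcal{Z}(E')$ because $a^{-1}(f) - (a')^{-1}(f) = a^{-1}(f) - s_{\widetilde{\alpha}}(a^{-1}(f))$ is a multiple of $\alpha_E$; the $\mathcal{Z}(E)$-factor is then absorbed into $\mathcal{Z}(E')$.

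If instead $\pi_M(a) = \pi_M(a') = x$, so $E$ is collapsed by $\pi_M$, then the pull-back gives $(\pi_M^{*}\mathcal{L}M)^E = M^x/M^x\alpha_E$. I compare with $(\mathcal{L}\pi_B^{*}M)^E = (M^x \oplus M^x)/(\pi_B^{*}M)^{\{a,a'\}}\mathcal{Z}(E)$ via the subtraction map $(u,v) \mapsto u - v$. Well-definedness reduces to $m_xgh - m_xs_{\widetilde{\alpha}}(g)h' \in M^x\alpha_E$ for $g \in \widehat{R}_{\aff}^{\vee}$ and $(h,h') \in \mathcal{Z}(E)$, which holds because both $g - s_{\widetilde{\alpha}}(g)$ and $h - h'$ lie in $\alpha_E\widehat{R}_{\aff}^{\vee}$. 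Surjectivity is obvious; for injectivity, if $u - v = w\alpha_E$, then $(u,v) = (u,u) - (w,w)\cdot(0,\alpha_E)$, where $(u,u),(w,w) \in (\pi_B^{*}M)^{\{a,a'\}}$ arise as constant sections and $(0,\alpha_E) \in \mathcal{Z}(E)$. The $\rho$-compatibility in all cases is then automatic, since restriction maps on both sides are induced by the natural projections of the cokernel construction. The main obstacle is the collapsing case: the left $\widehat{R}_{\aff}^{\vee}$-actions on $(\pi_B^{*}M)^a$ and $(\pi_B^{*}M)^{a'}$ differ by the twist by $s_{\widetilde{\alpha}}$, and one must keep careful track of this to see that the cokernel genuinely collapses to $M^x/M^x\alpha_E$. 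A cleaner, more conceptual route, if one knows $\mathcal{L}$ is left adjoint to $\Gamma$, would combine Lemma~\ref{lem:push, pull and global section} with the adjunctions $(\pi_B^{*},\pi_{B,*})$ and $(\pi_M^{*},\pi_{M,*})$ and invoke the Yoneda lemma directly.
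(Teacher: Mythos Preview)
Your proof is correct and follows the same stalk-by-stalk strategy as the paper, which gives essentially the identical computation at vertices and then disposes of the edge modules with the single sentence ``A similar argument gives the identification of the modules attached to edges.'' Your explicit treatment of the non-collapsing and collapsing edge cases thus fills in what the paper leaves to the reader.
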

\begin{proof}
Let $x\in W_{\aff}$ and we prove $(\mathcal{L}\pi_{B}^{*}M)^{x}\simeq (\pi_{M}^{*}\mathcal{L}M)^{x}$.
By the definitions, this means $(\pi_{B}^{*}M)^{x} \simeq M^{\pi_{M}(x)}$.
We calculate the left hand side.
This is the image of $\pi_{B}^{*}M\to (\pi_{B}^{*}M)_{Q}^{x}$.
We have $\pi_{B}^{*}M = \widehat{R}_{\aff}^{\vee}\otimes_{(\widehat{R}_{\aff}^{\vee})^{W_{\aff,\lambda}}}M$ and $(\pi_{B}^{*}M)_{Q}^{x} = M_{Q}^{\pi_{M}(x)}$.
The map $\pi_{B}^{*}M\to (\pi_{B}^{*}M)_{Q}^{x}$ is given by $f\otimes m\mapsto mx^{-1}(f)$.
Hence the image is the same as $M\to M_{Q}^{\pi_{M}(x)}$, which is $M^{\pi_{M}(x)}$ by the definition.
Hence we get the lemma.
A similar argument gives the identification of the modules attached to edges.
\end{proof}

The functor $\Gamma$ induces an equivalence of categories $\mathcal{S}\simeq \BM(W_{\aff})$~\cite{MR4321542}.
Let ${}_{\lambda}\mathcal{S}$ be the category of singular Soergel bimodules attached to $W_{\aff,\lambda}\backslash W_{\aff}$ introduced in \cite{MR4620135}.
By an analogue of the proof of $\mathcal{S}\simeq \BM(W_{\aff})$, we also have ${}_{\lambda}\mathcal{S}\simeq \BM(W_{\aff,\lambda}\backslash W_{\aff})$.

\begin{lem}\label{lem:*B commutes with Gamma}
For $\mathcal{F}\in \BM(W_{\aff})$ and $B\in \mathcal{S}$, we have $\Gamma(\mathcal{F}\star B)\simeq \Gamma(\mathcal{F})\otimes B$ in $\mathcal{S}$.
\end{lem}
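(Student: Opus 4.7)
The plan is to induct on the tensor length of a Bott--Samelson presentation of $B$, using the case $B = B_s$ as the base. The key observation is that, although the construction of $\star$ and the translation functor $\theta_s$ were carried out for sheaves on $\mathcal{A}$, the same formulas and proofs apply verbatim for sheaves on the moment graph $W_{\aff}$, since they only invoke the GKM condition, the moment graph axioms, and properties (BM1), (BM2), (BM4). In particular, the analogues of Proposition~\ref{prop:action of BS module is the translation functor}, Lemma~\ref{lem:s-stable global section of translation functor}, Proposition~\ref{prop:translation preserves BM}, Theorem~\ref{thm:action of Soergel bimodule preserves BM sheaves}, and Proposition~\ref{prop:associativity of the action} all hold on $W_{\aff}$.

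For the base case $B = B_s$, I combine the two $W_{\aff}$-analogues to get
\[
\Gamma(\mathcal{F}\star B_s) \;\simeq\; \Gamma(\theta_s \mathcal{F}) \;\simeq\; \Gamma(\mathcal{F})\otimes B_s,
\]
the first isomorphism from Proposition~\ref{prop:action of BS module is the translation functor} and the second from Lemma~\ref{lem:s-stable global section of translation functor}. Both isomorphisms are functorial in $\mathcal{F}$ and produce morphisms in $\EB(W_{\aff})$; since $\Gamma(\mathcal{F})\in \mathcal{S}$ (via the equivalence $\mathcal{S}\simeq \BM(W_{\aff})$) and $\mathcal{S}$ is closed under tensor product with $B_s$, both sides live in $\mathcal{S}$ and the isomorphism is a morphism in $\mathcal{S}$.

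For general $B\in \mathcal{S}$, I reduce to Bott--Samelson bimodules $B_{\underline{s}} = B_{s_1}\otimes\cdots\otimes B_{s_l}$. Every $B\in \mathcal{S}$ is a direct summand of such a $B_{\underline{s}}$. Since $\Gamma$ is an equivalence, both functors $B\mapsto \Gamma(\mathcal{F}\star B)$ and $B\mapsto \Gamma(\mathcal{F})\otimes B$ are additive and respect direct summands, so it suffices to establish the isomorphism for $B = B_{\underline{s}}$. I then induct on $l$: applying the associativity isomorphism (Proposition~\ref{prop:associativity of the action} on $W_{\aff}$) gives
\[
\Gamma(\mathcal{F}\star B_{\underline{s}}) \;\simeq\; \Gamma\bigl((\mathcal{F}\star B_{s_1}) \star (B_{s_2}\otimes\cdots\otimes B_{s_l})\bigr).
\]
The sheaf $\mathcal{F}\star B_{s_1}$ remains in $\BM(W_{\aff})$ by the analogue of Theorem~\ref{thm:action of Soergel bimodule preserves BM sheaves}, so the inductive hypothesis applies to it with $B_{s_2}\otimes\cdots\otimes B_{s_l}$, and then the base case applied to $\mathcal{F}$ with $B_{s_1}$ yields
\[
\Gamma(\mathcal{F}\star B_{\underline{s}}) \;\simeq\; \Gamma(\mathcal{F}\star B_{s_1})\otimes (B_{s_2}\otimes\cdots\otimes B_{s_l}) \;\simeq\; \Gamma(\mathcal{F})\otimes B_{\underline{s}}.
\]

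The main obstacle I anticipate is purely bookkeeping: one must verify that each lemma of Section~\ref{subsec:Soergel bimodules} transfers without change from $\mathcal{A}$ to $W_{\aff}$. The transfer is formal because every ingredient (moment graph structure, the identifications $A\mapsto w_A$, the action of $\widehat{R}_{\aff}^{\vee}$ twisted by $x^{-1}$ on the stalks, the GKM condition, and the freeness of $B_s$ as a left $\widehat{R}_{\aff}^{\vee}$-module with basis $(1,1),(\widetilde{\alpha}_s^{\vee},0)$) has a direct counterpart on $W_{\aff}$; nevertheless, tracking the naturality of the isomorphism and checking that the $(M_Q^x)_x$ data match on both sides requires some care, which is the only nontrivial point.
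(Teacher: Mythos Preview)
Your proof is correct and follows essentially the same route as the paper: reduce to Bott--Samelson bimodules and induct on the length, with the base case $B=B_s$ supplied by Proposition~\ref{prop:action of BS module is the translation functor} and Lemma~\ref{lem:s-stable global section of translation functor}.

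The one refinement worth noting is how the paper handles the naturality issue you flagged at the end. Rather than chaining together isomorphisms and then checking they assemble coherently, the paper first writes down a single canonical map: since $\mathcal{F}\star B = \mathcal{L}(\Gamma(\mathcal{F})\otimes B)$ by definition, the unit $M\to \Gamma(\mathcal{L}(M))$ (coming from the projections $M\to M^x$) applied to $M=\Gamma(\mathcal{F})\otimes B$ gives a natural map
\[
\Gamma(\mathcal{F})\otimes B \longrightarrow \Gamma(\mathcal{F}\star B)
\]
in $\EB(W_{\aff})$, natural in both $\mathcal{F}$ and $B$. One then only has to check this fixed map is bijective, which can be done on Bott--Samelson bimodules exactly as in your induction. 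This sidesteps the bookkeeping you were worried about, since naturality and compatibility of the $(M_Q^x)$ data are built into the unit map from the start.
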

\begin{proof}
By the definition of $\mathcal{F}\star B$, we have $\mathcal{F}\star B = \mathcal{L}(\Gamma(\mathcal{F})\otimes B)$.
In general, for $M\in \mathcal{EB}(W_{\aff})$, we have a natural morphism $M\to \Gamma(\mathcal{L}(M))$ defined by the natural projection $M\to M^{x}$ for $x\in W_{\aff}$.
Hence we have a natural map $\Gamma(\mathcal{F})\otimes B\to \Gamma(\mathcal{F}\star B)$.
This is an isomorphism when $B = B_{s}\ (s\in S_{\aff})$ by Proposition~\ref{prop:action of BS module is the translation functor} and Lemma~\ref{lem:s-stable global section of translation functor}.
Hence it is an isomorphism in general.
\end{proof}

Indecomposable objects in $\mathcal{S}$ (resp.\ ${}_{\lambda}\mathcal{S}$) are parametrized by $W_{\aff}\times \Z$ (resp.\ $(W_{\aff,\lambda}\backslash W_{\aff})\times \Z)$).
For $w\in W_{\aff}$ (resp.\ $w\in W_{\aff,\lambda}\backslash W_{\aff}$), we have the indecomposable object $B(w)\in \mathcal{S}$ (resp.\ ${}_{\lambda}B(w)\in {}_{\lambda}\mathcal{S}$) which is characterized by $\supp_{W_{\aff}}(B(w))\subset \{y\in W_{\aff}\mid y\le w\}$, $B(w)^{w}\simeq \widehat{R}_{\aff}^{\vee}(\ell(w))$ (resp.\ $\supp_{W_{\aff,\lambda}\backslash W_{\aff}}({}_{\lambda}B(w))\subset \{W_{\aff,\lambda}y\mid y\in {}^{\lambda}W_{\aff}, y\le w\}$, ${}_{\lambda}B(w)^{w}\simeq \widehat{R}_{\aff}^{\vee}(\ell(w))$, here the length of $w$ is the length of the minimal representative of $w$).
Then we have 
\begin{equation}\label{eq:comparison of indecomposables}
\text{$B(w)\simeq \Gamma(\mathcal{B}^{W_{\aff}}(w))(\ell(w))$ and ${}_{\lambda}B(w)\simeq \Gamma(\mathcal{B}^{W_{\aff,\lambda}\backslash W_{\aff}}(w))(\ell(w))$}
\end{equation}
As in \cite{MR4620135}, the functor $\pi_{B,*}$ (resp.\ $\pi_{B}^{*}$) sends ${}_{\lambda}\mathcal{S}$ to $\mathcal{S}$ (resp.\ $\mathcal{S}$ to ${}_{\lambda}\mathcal{S}$).
It is compatible with $\pi_{M,*}$ and $\pi_{M}^{*}$ by Lemma~\ref{lem:push, pull and global section}.
In particular, from \cite[Theorem~2.26]{MR4620135}, we have the following.
\begin{lem}\label{lem:pull-back for parabolic Soergel bimodules}
Let $x\in W_{\aff,\lambda}\backslash W_{\aff}$ and $w\in x$ the maximal element.
Then $\pi_{M}^{*}\mathcal{B}^{W_{\aff,\lambda}\backslash W_{\aff}}(x)\simeq \mathcal{B}^{W_{\aff}}(w)$.
\end{lem}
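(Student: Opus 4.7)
The plan is to transport the claim through the equivalence between Braden--MacPherson sheaves and (singular) Soergel bimodules, where the analogous pullback identity is the cited \cite[Theorem~2.26]{MR4620135}.

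First, I observe that for any Braden--MacPherson sheaf $\mathcal{F}$ one has $\mathcal{L}(\Gamma(\mathcal{F})) \simeq \mathcal{F}$. On stalks, (BM4) gives $\mathcal{L}(\Gamma(\mathcal{F}))^y = \Ima(\Gamma(\mathcal{F}) \to \mathcal{F}^y) = \mathcal{F}^y$. On edges, Lemma~\ref{lem:modules attached to an edge} identifies $\Gamma(\mathcal{F})^{\{A,A'\}}\mathcal{Z}(E)$ with $\Ker(\mathcal{F}^A \oplus \mathcal{F}^{A'} \to \mathcal{F}^E)$, so the cokernel of the inclusion into $\mathcal{F}^A \oplus \mathcal{F}^{A'}$ is $\Ima(\mathcal{F}^A \oplus \mathcal{F}^{A'} \to \mathcal{F}^E) = \mathcal{F}^E$, the last equality by (BM2). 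Combining with Lemma~\ref{lem:pull and localization} reduces the problem to the $\EB$-side via
\[
\pi_M^* \mathcal{B}^{W_{\aff,\lambda}\backslash W_{\aff}}(x) \simeq \mathcal{L}\bigl(\pi_B^* \Gamma(\mathcal{B}^{W_{\aff,\lambda}\backslash W_{\aff}}(x))\bigr).
\]

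Next, by \eqref{eq:comparison of indecomposables}, $\Gamma(\mathcal{B}^{W_{\aff,\lambda}\backslash W_{\aff}}(x))$ is the indecomposable singular Soergel bimodule ${}_{\lambda}B(x)$ shifted by $-\ell(x_{\mathrm{min}})$, where $x_{\mathrm{min}} \in {}^{\lambda}W_{\aff}$ is the minimal representative of $x$. Applying \cite[Theorem~2.26]{MR4620135}, the pullback $\pi_B^*\,{}_{\lambda}B(x)$ is $B(w)$ shifted by $-\ell(w_{\lambda})$, with $w_{\lambda}$ the longest element of $W_{\aff,\lambda}$. Since $w = w_{\lambda}\cdot x_{\mathrm{min}}$ and $\ell(w) = \ell(w_{\lambda}) + \ell(x_{\mathrm{min}})$ (because $x_{\mathrm{min}} \in {}^{\lambda}W_{\aff}$), the accumulated grading shifts cancel against those relating $B(w)$ and $\Gamma(\mathcal{B}^{W_{\aff}}(w))$, yielding $\pi_B^*\Gamma(\mathcal{B}^{W_{\aff,\lambda}\backslash W_{\aff}}(x)) \simeq \Gamma(\mathcal{B}^{W_{\aff}}(w))$. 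Applying $\mathcal{L}$ and invoking the first step once more concludes the proof.

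The main obstacle is the careful bookkeeping of grading shifts in the second step, which ultimately reduces to the length additivity $\ell(w) = \ell(w_{\lambda}) + \ell(x_{\mathrm{min}})$. As an independent sanity check one may verify the two key invariants directly on $\pi_M^*\mathcal{B}^{W_{\aff,\lambda}\backslash W_{\aff}}(x)$: its support is $\pi_M^{-1}\{y \le x\} = \{z \le w\}$ by parabolic Bruhat order, and its stalk at the top vertex is $(\pi_M^*\mathcal{B}^{W_{\aff,\lambda}\backslash W_{\aff}}(x))^w = \mathcal{B}^{W_{\aff,\lambda}\backslash W_{\aff}}(x)^x \simeq \widehat{R}_{\aff}^{\vee}$, so by Proposition~\ref{prop:classification of BM sheaves}(2) the summand $\mathcal{B}^{W_{\aff}}(w)$ must appear; indecomposability of the pullback can then be matched via graded ranks, giving an alternative route that avoids the external reference at the cost of essentially reconstructing a fragment of the singular Soergel-bimodule formalism.
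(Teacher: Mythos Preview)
Your proof is correct and follows essentially the same route as the paper: transport the statement to the Soergel bimodule side via the compatibility of $\pi_M^*$/$\pi_B^*$ with $\Gamma$ and $\mathcal{L}$, then invoke \cite[Theorem~2.26]{MR4620135}. The paper's argument is a one-line citation relying on Lemma~\ref{lem:push, pull and global section}, whereas you use the equivalent Lemma~\ref{lem:pull and localization} together with the identity $\mathcal{L}\circ\Gamma\simeq\id$ on Braden--MacPherson sheaves and track the grading shifts explicitly; the substance is the same.
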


We often use the identification ${}^{\lambda}W_{\aff}\simeq W_{\aff,\lambda}\backslash W_{\aff}$.
In particular, for $w\in {}^{\lambda}W_{\aff}$ we write $\mathcal{B}^{W_{\aff,\lambda}\backslash W_{\aff}}(w)$.
With this notation, Lemma~\ref{lem:pull-back for parabolic Soergel bimodules} is $\pi_{B}^{*}\mathcal{B}^{W_{\aff,\lambda}\backslash W_{\aff}}(w)\simeq \mathcal{B}^{W_{\aff}}(w_{\lambda}w)$.

\subsection{A result of Lanini}\label{subsec:A result of Lanini}
We have $\mathcal{A}^{+} = A_{0}^{+}({}^{0}W_{\aff})$ (Lemma~\ref{lem:length of dominant translation of Weyl group elements}).
Let $\varphi\colon \mathcal{A}\to W_{\aff}$ be the inverse of $w\mapsto A_{0}^{+}w$ and $i\colon \mathcal{A}^{+}\hookrightarrow \mathcal{A}$ the inclusion map.
Define a map $\varphi_{+}$ by
\begin{equation}\label{eq:maps between alcoves}
\begin{tikzcd}
\mathcal{A} \arrow[r,"\varphi"] & W_{\aff} \arrow[r,"\pi_{M}"] & W_{\aff,0}\backslash W_{\aff} \\
\mathcal{A}^{+} \arrow[u,"i"]\arrow[rr]\arrow[rru,"\varphi_{+}"] && {}^{0}W_{\aff}.\arrow[u,"\sim" sloped]
\end{tikzcd}
\end{equation}
Note that $\varphi_{+}$ gives an isomorphism between partially ordered sets, but $W_{\aff,0}\backslash W_{\aff}$ has more edges than $\mathcal{A}^{+}$.
\begin{lem}\label{lem:translation and varphi^*}
For $s\in S_{\aff}$, $\varphi^{*}\theta_{s}\simeq \theta_{s}\varphi^{*}$.
\end{lem}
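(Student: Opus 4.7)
The plan is to exploit the fact that $\varphi\colon\mathcal{A}\to W_{\aff}$ is, on the level of underlying moment-graph data (vertices, edges, labels), an isomorphism; only the Bruhat orders differ. Crucially, the operation $\theta_s$ was defined purely in terms of this data together with the right $s$-action: the formulas for $(\theta_s\mathcal{F})^A$, for $(\theta_s\mathcal{F})^E$ in the case $A'\ne As$, and for $(\theta_s\mathcal{F})^E$ in the case $A'=As$ all reference only the stalks on vertices, the stalks on edges, the labels, and the $s$-translates of those. No partial order enters the construction. So I expect the lemma to be essentially formal.

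More precisely, the three facts I will use are: (i) $\varphi$ is a bijection of vertices inducing a bijection of edges preserving labels, as recorded right after \eqref{eq:maps between alcoves}; (ii) $\varphi$ is equivariant for the right $W_{\aff}$-action, so $\varphi(Ax)=\varphi(A)x$ for $x\in W_{\aff}$, and in particular $\varphi(As)=\varphi(A)s$ and $\varphi(Es)=\varphi(E)s$; (iii) the pullback functor is defined on stalks by $(\varphi^*\mathcal{G})^{A}=\mathcal{G}^{\varphi(A)}$ and $(\varphi^*\mathcal{G})^{E}=\mathcal{G}^{\varphi(E)}$ with the evident restriction maps, which make sense because labels are preserved.

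Given these, I will compare the two sheaves $\varphi^*\theta_s\mathcal{G}$ and $\theta_s\varphi^*\mathcal{G}$ on $\mathcal{A}$ stalk by stalk. At a vertex $A\in\mathcal{A}$, the former equals $\Gamma(\{\varphi(A),\varphi(A)s\},\mathcal{G})(1)$ by definition of $\varphi^*$ and $\theta_s$, while the latter equals $\Gamma(\{A,As\},\varphi^*\mathcal{G})(1)$. The restriction of $\varphi^*\mathcal{G}$ to the two-vertex moment graph $\{A,As\}$ is, by (i)--(iii), canonically identified with the restriction of $\mathcal{G}$ to $\{\varphi(A),\varphi(A)s\}$ together with the unique edge between them (when $A\ne As$); hence the two spaces of global sections are canonically isomorphic. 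At an edge $E$ of $\mathcal{A}$ connecting $A$ with $A'$, I will handle the two cases ($A'=As$ and $A'\ne As$) of the definition of $(\theta_s\mathcal{F})^E$ separately; in each case the defining formula involves only $(\theta_s\mathcal{F})^A$, $(\theta_s\mathcal{F})^{A'}$, $\mathcal{F}^E$, $\mathcal{F}^{Es}$, $\alpha_E$, and the obvious maps among them, all of which are transported isomorphically through $\varphi$ by (i)--(iii). Assembling these canonical vertex- and edge-isomorphisms, which are manifestly compatible with the restriction morphisms $\rho_{E,A}$, yields the required natural isomorphism $\varphi^*\theta_s\simeq\theta_s\varphi^*$.

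I do not anticipate any real obstacle here; the content of the lemma is that $\theta_s$ is functorial under isomorphisms of moment graphs that commute with the right $s$-action, and $\varphi$ is such an isomorphism. The only point worth writing out with care is the edge-level identification in the case $A'=As$, where one must check that the label $\alpha_E$ used to form $(\theta_s\mathcal{F})^A/(\theta_s\mathcal{F})^A\alpha_E$ is the same on both sides — this is immediate from (i), which says $\varphi$ preserves labels.
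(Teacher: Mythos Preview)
Your proposal is correct and takes essentially the same approach as the paper: the paper's proof is the one-line observation that the definition of $\theta_s$ does not use the partial order, and $\varphi$ is an isomorphism of labeled graphs (compatible with the right $s$-action), so the lemma follows. You have simply spelled out in detail what the paper leaves implicit.
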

\begin{proof}
We do not use the order for the definition of $\theta_{s}$.
Since $\varphi$ is an isomorphism between labeled graph, we get the lemma.
\end{proof}

In general, for a sheaf on a moment graph $\mathcal{V}$, we define $\supp^{+}\mathcal{F}\subset \mathcal{V}$ as the set of $x\in \mathcal{V}$ such that $\mathcal{F}^{x}\ne 0$ or there exists an edge $E$ and $y\in \mathcal{V}$ such that $E$ connects $x$ with $y$, $\mathcal{F}^{E}\ne 0$ and $\mathcal{F}^{y}\ne 0$.
Note that if $\mathcal{F}$ satisfies (BM2) and $\supp\mathcal{F}\subset \{y\in \mathcal{V}\mid y\le x\}$, $\supp^{+}\mathcal{F}\subset \{y\in \mathcal{V}\mid y\le x\}$.
In particular, $\supp^{+}\mathcal{B}^{\mathcal{V}}(x)\subset\{y\in \mathcal{V}\mid y\le x\}$.
\begin{lem}\label{lem:restriction of flabby, by supp^+}
Let $\mathcal{V}$ be a moment graph, $\mathcal{F}$ a flabby sheaf on $\mathcal{V}$ and $X\subset \mathcal{V}$ a subset.
Assume that $\supp^{+}\varphi\subset X$.
Then $\mathcal{F}|_{X}$ is also flabby.
\end{lem}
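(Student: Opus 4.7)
The plan is to take a section of $\mathcal{F}|_X$ over a bounded below open subset $U$ of $X$, extend it by zero to a suitable open subset of $\mathcal{V}$, then use flabbiness of $\mathcal{F}$ on $\mathcal{V}$ to extend to all of $\mathcal{V}$, and finally restrict to $X$. By Lemma~\ref{lem:equivalence on flabby} it suffices to treat sections over $O\in\bbopen(X)$.

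First I would define $\widetilde{O} = \{y\in \mathcal{V}\mid y\ge x\ \text{for some}\ x\in O\}$, which is the smallest open subset of $\mathcal{V}$ containing $O$. Because $O$ is upward-closed in $X$, one checks that $\widetilde{O}\cap X = O$: if $y\in \widetilde{O}\cap X$, then $y\ge x$ for some $x\in O$ with $x\in X$, and openness of $O$ inside $X$ gives $y\in O$. Given $s = (s_x)_{x\in O}\in \Gamma(O,\mathcal{F}|_X)$, I would define an extension $\widetilde{s} = (\widetilde{s}_y)_{y\in \widetilde{O}}$ by $\widetilde{s}_y = s_y$ for $y\in O$ and $\widetilde{s}_y = 0$ for $y\in \widetilde{O}\setminus X$ (this makes sense since $\supp\mathcal{F}\subset \supp^{+}\mathcal{F}\subset X$ forces $\mathcal{F}^y = 0$ for such $y$).

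The heart of the argument is to verify that $\widetilde{s}\in \Gamma(\widetilde{O},\mathcal{F})$, i.e.\ the edge compatibility $\rho^{\mathcal{F}}_{E,y_1}(\widetilde{s}_{y_1}) = \rho^{\mathcal{F}}_{E,y_2}(\widetilde{s}_{y_2})$ holds for every edge $E$ connecting $y_1,y_2\in \widetilde{O}$. If $y_1,y_2\in O$ this is immediate from $s\in \Gamma(O,\mathcal{F}|_X)$, since $E$ is then an edge of the induced moment graph $X$. If $y_1,y_2\in \widetilde{O}\setminus X$, both sides vanish. The critical mixed case is $y_1\in O$, $y_2\in \widetilde{O}\setminus X$: then $y_2\notin \supp^{+}\mathcal{F}$, so by definition of $\supp^{+}$, for any edge $E$ incident to $y_2$ we must have $\mathcal{F}^{E} = 0$ or $\mathcal{F}^{y_1} = 0$. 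In either case both $\rho^{\mathcal{F}}_{E,y_1}(s_{y_1})$ and $\rho^{\mathcal{F}}_{E,y_2}(0)$ are zero. This is exactly where the hypothesis $\supp^{+}\mathcal{F}\subset X$ is used, and it is the only non-routine step.

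Once $\widetilde{s}\in \Gamma(\widetilde{O},\mathcal{F})$ is in hand, flabbiness of $\mathcal{F}$ on $\mathcal{V}$ yields a global section $\widehat{s}\in \Gamma(\mathcal{F})$ restricting to $\widetilde{s}$ on $\widetilde{O}$. Restricting the tuple $\widehat{s}$ to indices in $X$ gives an element of $\Gamma(X,\mathcal{F}|_X)$ whose further restriction to $O = \widetilde{O}\cap X$ agrees with $s$. Hence $\Gamma(X,\mathcal{F}|_X)\to \Gamma(O,\mathcal{F}|_X)$ is surjective for every $O\in \bbopen(X)$, so $\mathcal{F}|_X$ is flabby by Lemma~\ref{lem:equivalence on flabby}.
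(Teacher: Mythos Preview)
Your core argument---extend a section by zero to an open subset of $\mathcal{V}$, verify the edge compatibilities using the definition of $\supp^{+}$, then use flabbiness of $\mathcal{F}$ on $\mathcal{V}$ and restrict back to $X$---is correct and is exactly what the paper does. The paper simply takes an arbitrary open $U\subset\mathcal{V}$, works with $X\cap U$ (every open subset of $X$ arises this way), and extends by zero to $U$; your version with $\widetilde{O}$ is the same idea.

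There is one genuine slip: your opening and closing invocations of Lemma~\ref{lem:equivalence on flabby} are not what that lemma says. Lemma~\ref{lem:equivalence on flabby} asserts that $\mathcal{F}|_X$ is flabby if and only if $(\mathcal{F}|_X)|_O$ is flabby for every $O\in\bbopen(X)$; it does \emph{not} say that surjectivity of $\Gamma(X,\mathcal{F}|_X)\to\Gamma(O,\mathcal{F}|_X)$ for all $O\in\bbopen(X)$ implies flabbiness. So your reduction to bounded-below opens is not justified as written. Fortunately the reduction is unnecessary: your extension-by-zero argument works verbatim for an arbitrary open $U\subset X$ (take $\widetilde{U}$ to be its upward closure in $\mathcal{V}$; the verification that $\widetilde{U}\cap X=U$ and the edge checks go through unchanged). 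Drop the appeal to Lemma~\ref{lem:equivalence on flabby} and run your argument for general open $U$, and the proof is complete and matches the paper's.
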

\begin{proof}
Let $U\subset \mathcal{V}$ be an open subset and $a = (a_{x})\in \Gamma(X\cap U,\mathcal{F})$.
Define $b = (b_{x})\in \prod_{x\in U}\mathcal{F}^{x}$ as follows: if $x \in X$, then $b_{x} = a_{x}$, otherwise $b_{x} = 0$.
We prove $b \in \Gamma(U,\mathcal{F})$.
Let $E$ be an edge connecting $x\in U$ with $y\in U$ and we check $\rho_{E,x}^{\mathcal{F}}(b_{x}) = \rho_{E,y}^{\mathcal{F}}(b_{y})$.
If $x,y\in X$, then this follows from $a \in \Gamma(X\cap U,\mathcal{F})$.
If $x,y\notin X$, then this follows from $b_{x} = b_{y} = 0$.
Assume that $x\in X$ and $y\notin X$.
Then $y\notin \supp^{+}\mathcal{F}$.
Hence $\mathcal{F}^{E} = 0$ or $\mathcal{F}^{x} = 0$.
This implies $\rho_{E,x}^{\mathcal{F}}(b_{x}) = 0 = \rho_{E,y}^{\mathcal{F}}(b_{y})$ (since $b_{y} = 0$).
Hence $b\in \Gamma(U,\mathcal{F})$.
Therefore we have an extension $\widetilde{b}\in \Gamma(\mathcal{V},\mathcal{F})$ since $\mathcal{F}$ is flabby.
Then $\widetilde{b}|_{X}$ gives an extension of $a$.
\end{proof}

\begin{lem}
Let $\mathcal{F}$ be a sheaf on $W_{\aff,0}\backslash W_{\aff}$.
Then $\supp^{+}\pi_{M}^{*}\mathcal{F} = \pi_{M}^{-1}(\supp^{+}\mathcal{F})$.
\end{lem}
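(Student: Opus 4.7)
The plan is to unwind the definitions and proceed by a direct case analysis, using the defining property of the pull-back $\pi_M^*$ that on a vertex $a\in W_{\aff}$ one has $(\pi_M^*\mathcal{F})^a = \mathcal{F}^{\pi_M(a)}$, and that on an edge $E$ in $W_{\aff}$ the module $(\pi_M^*\mathcal{F})^E$ is determined by the value of $\mathcal{F}$ on the image $\pi_M(E)$ (which is either an edge of $W_{\aff,\lambda}\backslash W_{\aff}$ or a single vertex when $\pi_M$ collapses $E$). Both inclusions $\supseteq$ and $\subseteq$ will be proved separately; no subtle property of the sheaf is needed beyond the elementary observation that the edge modules of $\pi_M^*\mathcal{F}$ are obtained from the corresponding edge or vertex modules of $\mathcal{F}$, hence one is nonzero exactly when the other is.

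For the inclusion $\pi_M^{-1}(\supp^{+}\mathcal{F})\subset \supp^{+}\pi_M^{*}\mathcal{F}$, take $x\in W_{\aff}$ with $\pi_M(x)\in \supp^{+}\mathcal{F}$. If $\mathcal{F}^{\pi_M(x)}\neq 0$ then $(\pi_M^{*}\mathcal{F})^{x}=\mathcal{F}^{\pi_M(x)}\neq 0$ and we are done. Otherwise there exists an edge $E_{0}$ in $W_{\aff,\lambda}\backslash W_{\aff}$ connecting $\pi_M(x)$ to some $y_{0}$ with $\mathcal{F}^{E_{0}}\neq 0$ and $\mathcal{F}^{y_{0}}\neq 0$. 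Write $y_{0}=\pi_M(x)s_{\widetilde{\alpha}}$ for some $\widetilde{\alpha}\in \Phi_{\aff}$ (using the description of edges in the quotient moment graph) and set $y=xs_{\widetilde{\alpha}}\in W_{\aff}$. Since $y_{0}\neq \pi_M(x)$, the cosets of $x$ and $y$ are distinct, so $y\neq x$ and the edge $E$ in $W_{\aff}$ connecting $x$ with $y$ has the same label and projects to $E_{0}$. Then $(\pi_M^{*}\mathcal{F})^{E}=\mathcal{F}^{E_{0}}\neq 0$ and $(\pi_M^{*}\mathcal{F})^{y}=\mathcal{F}^{\pi_M(y)}=\mathcal{F}^{y_{0}}\neq 0$, placing $x$ in $\supp^{+}\pi_M^{*}\mathcal{F}$.

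For the reverse inclusion, suppose $x\in \supp^{+}\pi_M^{*}\mathcal{F}$. If $(\pi_M^{*}\mathcal{F})^{x}\neq 0$ then $\mathcal{F}^{\pi_M(x)}\neq 0$ and we are done. Otherwise there is an edge $E$ between $x$ and some $y=xs_{\widetilde{\alpha}}$ with $(\pi_M^{*}\mathcal{F})^{E}\neq 0$ and $(\pi_M^{*}\mathcal{F})^{y}\neq 0$. There are two cases according to whether $\pi_M(x)=\pi_M(y)$. If $\pi_M(x)\neq \pi_M(y)$, then $\pi_M(E)$ is an honest edge of $W_{\aff,\lambda}\backslash W_{\aff}$ with $\mathcal{F}^{\pi_M(E)}=(\pi_M^{*}\mathcal{F})^{E}\neq 0$ and $\mathcal{F}^{\pi_M(y)}=(\pi_M^{*}\mathcal{F})^{y}\neq 0$, so $\pi_M(x)\in \supp^{+}\mathcal{F}$. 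If instead $\pi_M(x)=\pi_M(y)$, then by the pull-back formula $(\pi_M^{*}\mathcal{F})^{E}$ is a quotient of $\mathcal{F}^{\pi_M(x)}$, and its non-vanishing forces $\mathcal{F}^{\pi_M(x)}\neq 0$, hence again $\pi_M(x)\in \supp^{+}\mathcal{F}$.

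The only real obstacle is the second sub-case of the reverse inclusion: when $\pi_M$ contracts the edge $E$, one must appeal to the precise definition of $\pi_M^{*}$ to conclude that a nonzero $(\pi_M^{*}\mathcal{F})^{E}$ implies $\mathcal{F}^{\pi_M(x)}\neq 0$. Once this is granted, the rest of the argument is a bookkeeping exercise. The proof is short and the two inclusions are essentially symmetric, relying only on the compatibility of $\pi_M^{*}$ with vertex and edge modules and on the explicit description of edges in $W_{\aff}$ and $W_{\aff,\lambda}\backslash W_{\aff}$.
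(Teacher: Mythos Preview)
Your proof is correct and follows essentially the same approach as the paper's: both argue each inclusion by case analysis on whether the stalk at $x$ (resp.\ $\pi_M(x)$) is nonzero, and in the edge case split according to whether $\pi_M$ collapses the edge, using that $(\pi_M^*\mathcal{F})^E = \mathcal{F}^{\pi_M(x)}/\alpha_E\mathcal{F}^{\pi_M(x)}$ when $\pi_M(x)=\pi_M(y)$. The only cosmetic differences are that you prove the inclusions in the opposite order and phrase the collapsed-edge case as a direct implication rather than a contradiction (note that in your ``otherwise'' branch you have already assumed $\mathcal{F}^{\pi_M(x)}=0$, so your conclusion $\mathcal{F}^{\pi_M(x)}\neq 0$ is really showing that this sub-case cannot occur).
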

\begin{proof}
Let $x\in \supp^{+}\pi_{M}^{*}\mathcal{F}$.
If $(\pi_{M}^{*}\mathcal{F})^{x} \ne 0$, then $\mathcal{F}^{\pi_{M}(x)}\ne 0$, hence $\pi_{M}(x)\in \supp^{+}\mathcal{F}$.
Otherwise, we have $\mathcal{F}^{\pi_{M}(x)} = 0$.
We also have that there exist an edge $E$ and $y\in W_{\aff}$ such that $E$ connects $x$ with $y$, $(\pi_{M}^{*}\mathcal{F})^{E}\ne 0$ and $(\pi_{M}^{*}\mathcal{F})^{y}\ne 0$.
If $\pi_{M}(x) \ne \pi_{M}(y)$, then $\pi_{M}(E)$ connects $\pi_{M}(x)$ with $\pi_{M}(y)$, $\mathcal{F}^{\pi_{M}(E)} = (\pi_{M}^{*}\mathcal{F})^{E}\ne 0$ and $\mathcal{F}^{\pi_{M}(y)} = (\pi_{M}^{*}\mathcal{F})^{y}\ne 0$.
Hence $\pi_{M}(x)\in \supp^{+}\mathcal{F}$.
If $\pi_{M}(x) = \pi_{M}(y)$, then $(\pi_{M}^{*}\mathcal{F})^{E} = \mathcal{F}^{\pi_{M}(x)}/\alpha_{E}\mathcal{F}^{\pi_{M}(x)} = 0$ as $\mathcal{F}^{\pi_{M}(x)} = 0$.
This is a contradiction.

On the other hand, assume that $\pi_{M}(x)\in \supp^{+}\mathcal{F}$.
If $\mathcal{F}^{\pi_{M}(x)}\ne 0$, then $(\pi_{M}^{*}\mathcal{F})^{x}\ne 0$.
Otherwise, there exists an edge $E'$ in $W_{\aff,0}\backslash W_{\aff}$ and $y'\in W_{\aff,0}\backslash W_{\aff}$ such that $E'$ connects $\pi_{M}(x)$ with $y'$, $\mathcal{F}^{E'}\ne 0$ and $\mathcal{F}^{y'}\ne 0$.
Take $y\in \pi_{M}^{-1}(y')$ and an edge $E$ in $W_{\aff}$ such that $E$ connects $x$ with $y$.
Then $\pi_{M}(E) = E'$.
We have $(\pi_{M}^{*}\mathcal{F})^{E} = \mathcal{F}^{\pi_{M}(E)}\ne 0$ and $(\pi_{M}^{*}\mathcal{F})^{y} = \mathcal{F}^{\pi_{M}(y)}\ne 0$.
Hence $x\in \supp^{+}\pi_{M}^{*}\mathcal{F}$.
\end{proof}

We have a bijection ${}^{0}W_{\aff}\simeq W_{\aff,0}\backslash W_{\aff}$ and therefore we can regard $\supp^{+}\mathcal{F}\subset {}^{0}W_{\aff}$ for a sheaf $\mathcal{F}$ on $W_{\aff,0}\backslash W_{\aff}$.
Under this identification, we have $\pi_{M}^{-1}(\supp^{+}\mathcal{F}) = W_{\aff,0}\supp^{+}\mathcal{F}$.
We also have $\supp^{+}\varphi^{*}\mathcal{G} = A_{0}^{+}\supp^{+}\mathcal{G}$ for a sheaf $\mathcal{G}$ on $W_{\aff}$ and $A_{0}^{+}W_{\aff,0} = W_{\finite}A_{0}^{+}$.
Hence 
\begin{equation}\label{eq:supp^+ of pull-backs}
\supp^{+}\varphi^{*}\pi_{M}^{*}\mathcal{F} = A_{0}^{+}(\supp^{+}\pi_{M}^{*}\mathcal{F}) = W_{\finite}A_{0}^{+}\supp^{+}\mathcal{F}.
\end{equation}

The following result is essentially proved by Lanini~\cite{MR3324922}.
\begin{prop}\label{prop:result of Lanini}
Let $\mathcal{F}\in \BM(W_{\aff,0}\backslash W_{\aff})$ and $O\in \bbopen$ such that $O\cap \supp^{+}\varphi^{*}\pi_{M}^{*}\mathcal{F}\subset \mathcal{A}^{+}$.
Then $(\varphi_{+}^{*}\mathcal{F})|_{O\cap \mathcal{A}^{+}}$ is a Braden-MacPherson sheaf.
\end{prop}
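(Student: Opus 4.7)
The plan is to reduce to the indecomposable case and then transport the Braden--MacPherson axioms along the diagram \eqref{eq:maps between alcoves}. Since ${}^{0}W_{\aff}\simeq W_{\aff,0}\backslash W_{\aff}$ is ideal finite (Bruhat intervals from below are finite), Proposition~\ref{prop:classification of BM sheaves}(3) lets us decompose $\mathcal{F}$ into indecomposables, and since (BM1)--(BM4) pass through finite direct sums we may assume $\mathcal{F}=\mathcal{B}^{W_{\aff,0}\backslash W_{\aff}}(x)$ for a single $x$. Then Lemma~\ref{lem:pull-back for parabolic Soergel bimodules} gives $\mathcal{H}:=\pi_{M}^{*}\mathcal{F}\simeq \mathcal{B}^{W_{\aff}}(w)$ with $w$ the maximal element of the coset $x$, so $\mathcal{H}\in\BM(W_{\aff})$.

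Next, since $\varphi\colon\mathcal{A}\to W_{\aff}$ is a bijection of labeled graphs, the pullback $\mathcal{G}:=\varphi^{*}\mathcal{H}$ is a well-defined sheaf on $\mathcal{A}$ whose stalks and edge modules are identified with those of $\mathcal{H}$. By Lemma~\ref{lem:Bruhat order is generic Bruhat order on dominants} the alcove order on $\mathcal{A}^{+}$ coincides with the Bruhat order on ${}^{0}W_{\aff}$, so $\mathcal{G}|_{\mathcal{A}^{+}}$ is canonically identified, on stalks and on edges, with $\varphi_{+}^{*}\mathcal{F}$. Axiom (BM1) for $(\varphi_{+}^{*}\mathcal{F})|_{O\cap\mathcal{A}^{+}}$ is then immediate from the graded freeness of the stalks of $\mathcal{F}$, and (BM2) follows because the agreement of orders on $\mathcal{A}^{+}$ together with the preservation of labels under $\varphi_{+}$ lets us read the edge-kernel condition off the corresponding property of $\mathcal{F}$.

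For (BM3) and (BM4) I would exploit the hypothesis $O\cap\supp^{+}\mathcal{G}\subset\mathcal{A}^{+}$ together with Lemma~\ref{lem:restriction of flabby, by supp^+}. Because $\Gamma(X,\mathcal{G})$ depends only on the labeled-graph data and not on the order, it equals $\Gamma(\varphi(X),\mathcal{H})$ for any $X\subset O$. The support hypothesis then ensures that for any $\mathcal{A}$-open $U\subset O$ only the vertices and edges in $U\cap\mathcal{A}^{+}$ carry nontrivial data, and on $\mathcal{A}^{+}$ the alcove topology matches the Bruhat topology, so extending sections reduces to the flabbiness of $\mathcal{H}$ on an appropriate Bruhat-open subset of $W_{\aff}$. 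Once $\mathcal{G}|_{O}$ is flabby, Lemma~\ref{lem:restriction of flabby, by supp^+} transfers flabbiness to $(\varphi_{+}^{*}\mathcal{F})|_{O\cap\mathcal{A}^{+}}$, and (BM4) follows by combining the same argument with the surjection $\Gamma(\mathcal{H})\to\mathcal{H}^{\varphi(A)}$ coming from the BM property of $\mathcal{H}$. The main obstacle is precisely this flabbiness step: since $\varphi$ is not order preserving, $\mathcal{A}$-open subsets need not map to Bruhat-open subsets of $W_{\aff}$, and the support hypothesis on $\mathcal{G}\cap O$ is exactly what bridges the two topologies, so the bulk of the technical work lies in making this transfer rigorous.
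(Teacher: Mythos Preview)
Your handling of (BM1), (BM2), and (BM4) is fine, and you have correctly identified (BM3) as the crux. However, the proposed ``bridge'' via the support hypothesis does not work, and the paper uses a genuinely different idea that you have missed.

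The obstruction is this. Given an $\mathcal{A}$-open $U\subset O$ and a section $a\in\Gamma(U,\mathcal{G})$, you would like to view $a$ as a section of $\mathcal{H}$ over some Bruhat-open subset of $W_{\aff}$ and then invoke flabbiness of $\mathcal{H}$. The support hypothesis does force $a$ to be supported on $U\cap\mathcal{A}^{+}$, and $\varphi(U\cap\mathcal{A}^{+})$ is Bruhat-open in ${}^{0}W_{\aff}$. But ${}^{0}W_{\aff}$ is Bruhat-\emph{closed} in $W_{\aff}$, so $\varphi(U\cap\mathcal{A}^{+})$ is only locally closed in $W_{\aff}$, and flabbiness of $\mathcal{H}$ gives you nothing. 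The alternative route through $\mathcal{F}$ itself also fails: since $\mathcal{A}^{+}$ has strictly fewer edges than $W_{\aff,0}\backslash W_{\aff}$, a section of $\varphi_{+}^{*}\mathcal{F}$ over $U\cap\mathcal{A}^{+}$ need not satisfy the extra edge constraints and hence need not arise from a section of $\mathcal{F}$. So neither flabbiness of $\mathcal{H}$ nor of $\mathcal{F}$ can be invoked directly, and the support hypothesis alone does not bridge the topologies.

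The paper avoids this problem entirely by proving the stronger statement that $\varphi^{*}\mathcal{G}$ is flabby on all of $\mathcal{A}$ for \emph{every} $\mathcal{G}\in\BM(W_{\aff})$, without reference to any open set $O$. The key input is Lemma~\ref{lem:translation and varphi^*}, which says $\varphi^{*}\theta_{s}\simeq\theta_{s}\varphi^{*}$: since any $\mathcal{G}\in\BM(W_{\aff})$ is a direct summand of $\theta_{s_{1}}\cdots\theta_{s_{l}}\mathcal{B}^{W_{\aff}}(1)$, the sheaf $\varphi^{*}\mathcal{G}$ is a direct summand of $\theta_{s_{1}}\cdots\theta_{s_{l}}\varphi^{*}\mathcal{B}^{W_{\aff}}(1)$. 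The skyscraper $\varphi^{*}\mathcal{B}^{W_{\aff}}(1)$ is trivially flabby, and Lanini's theorem \cite[Theorem~7.17]{MR3324922} says $\theta_{s}$ preserves flabbiness on $\mathcal{A}$. Only \emph{after} flabbiness of $\varphi^{*}\pi_{M}^{*}\mathcal{F}|_{O}$ is established does the support hypothesis enter, via Lemma~\ref{lem:restriction of flabby, by supp^+}, to pass to $O\cap\mathcal{A}^{+}$.
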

\begin{proof}
Since $\varphi_{+}$ is a bijection between partially ordered sets (Lemma~\ref{lem:Bruhat order is generic Bruhat order on dominants}), $\varphi_{+}^{*}\mathcal{F}$ satisfies (BM1), (BM2).
The moment graph $\mathcal{A}^{+}$ has less edges than $W_{\aff,0}\backslash W_{\aff}$.
Hence $\Gamma(\varphi_{+}^{*}\mathcal{F}|_{O\cap \mathcal{A}^{+}})\supset \Gamma(\mathcal{F}|_{\varphi_{+}(O\cap \mathcal{A}^{+})})$.
This implies (BM4).

We prove that $\varphi^{*}\pi_{M}^{*}\mathcal{F}$ is flabby.
More generally, we prove $\varphi^{*}\mathcal{G}$ is flabby for any $\mathcal{G}\in \BM(W_{\aff})$.
Let $\mathcal{G}_{0}$ be the sheaf on $W_{\aff}$ defined as follows: $\mathcal{G}_{0}^{1} = \widehat{R}_{\aff}^{\vee}$, $\mathcal{G}_{0}^{x} = 0$ for $x\in W_{\aff}$, $x\ne 1$ and $\mathcal{G}_{0}^{E} = 0$ for any edge $E$.
Then any $\mathcal{G}$ is a direct summand of $\theta_{s_{1}}\cdots \theta_{s_{l}}\mathcal{G}_{0}$ for some $s_{1},\ldots,s_{l}\in S_{\aff}$.
Hence $\varphi^{*}\mathcal{G}$ is a direct summand of $\theta_{s_{1}}\cdots \theta_{s_{l}}\varphi^{*}\mathcal{G}_{0}$ by Lemma~\ref{lem:translation and varphi^*}.
Since the sheaf $\varphi^{*}\mathcal{G}_{0}$ is flabby and since $\theta_{s}$ preserves flabby sheaves~\cite[Proposition~7.16]{MR3324922}, $\varphi^{*}\mathcal{G}$ is also flabby.
Hence $\varphi^{*}\pi_{M}^{*}\mathcal{F}$ is flabby and $\varphi^{*}\pi_{M}^{*}\mathcal{F}|_{O}$ is also flabby.
By the assumption, $\supp^{+}\varphi^{*}\pi_{M}^{*}\mathcal{F}|_{O}\subset \mathcal{A}^{+}$.
Therefore $(\varphi^{*}\pi_{M}^{*}\mathcal{F}|_{O})|_{\mathcal{A}^{+}} = i^{*}\varphi^{*}\pi_{M}^{*}\mathcal{F}|_{O\cap \mathcal{A}^{+}}$ is also flabby by Lemma~\ref{lem:restriction of flabby, by supp^+}.
We have $i^{*}\varphi^{*}\pi_{M}^{*} = \varphi_{+}^{*}$.
\end{proof}

In this paper, we say that $\lambda\in \mathbb{X}^{\vee}$ is sufficiently dominant if $\langle \alpha,\lambda\rangle$ is sufficiently large for any $\alpha\in(\Phi')^{+}$.

\begin{lem}\label{lem:dominant lambda for Lanini}
Let $A_{1},A_{2}\in \mathcal{A}$.
\begin{enumerate}
\item For sufficiently dominant $\lambda\in \mathbb{X}^{\vee}$, we have $W_{\finite}(A_{1} + \lambda)\cap\{A\in\mathcal{A}\mid A\ge A_{2} + \lambda\} \subset \{A_{1} + \lambda\}$.
\item For sufficiently dominant $\lambda\in \mathbb{X}^{\vee}$, we have $\{A\in\mathcal{A}\mid A\ge A_{2} + \lambda\}\cap W_{\finite}\{A\mid A\in\mathcal{A}^{+},A\le A_{1} + \lambda\}\subset\mathcal{A}^{+}$.
\end{enumerate}
\end{lem}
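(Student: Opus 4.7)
My plan is to convert the order relations on alcoves into linear inequalities in $\mathbb{X}^{\vee}_{\R}$ via Lemma~\ref{lem:order of alcove and weights}, and then pair with $\rho$ (the half-sum of positive roots in $\mathbb{X}\otimes\R$) to derive a contradiction when $\lambda$ is deep in the dominant chamber. The key identity is that for $x\in W_{\finite}$ with $x\ne 1$,
$\rho - x^{-1}\rho = \sum_{\alpha\in(\Phi')^{+},\,x\alpha<0}\alpha$
is a nonzero sum of positive roots, so $\langle \rho,(x-1)\lambda\rangle = -\sum_{\alpha\in(\Phi')^{+},\,x\alpha<0}\langle\alpha,\lambda\rangle$ tends to $-\infty$ as $\lambda$ becomes sufficiently dominant.

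For (1), I suppose $x(A_{1}+\lambda)\ge A_{2}+\lambda$ with $x\in W_{\finite}$, $x\ne 1$. Applying Lemma~\ref{lem:order of alcove and weights} to a point $a_{2}+\lambda\in A_{2}+\lambda$ yields $y\in W'_{\aff}$ with $y(a_{2}+\lambda)\in x(A_{1}+\lambda)=x(A_{1})+x(\lambda)$ and $y(a_{2}+\lambda)-(a_{2}+\lambda)\in\R_{\ge 0}(\Phi')^{+}$. Writing $y(a_{2}+\lambda)=x(a_{1})+x(\lambda)$ for some $a_{1}\in A_{1}$ gives $(x-1)\lambda+x(a_{1})-a_{2}\in\R_{\ge 0}(\Phi')^{+}$. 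Pairing with $\rho$ produces an inequality whose bounded terms $\langle\rho,x(a_{1})-a_{2}\rangle$ are $O(1)$ in $A_{1},A_{2}$, while $\langle\rho,(x-1)\lambda\rangle\to-\infty$, contradicting $x\ne 1$ once $\lambda$ is sufficiently dominant. Hence $x=1$ and so the intersection is $\{A_{1}+\lambda\}$.

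For (2), let $A=xB$ with $B\in\mathcal{A}^{+}$, $B\le A_{1}+\lambda$, $xB\ge A_{2}+\lambda$, and $x\in W_{\finite}$, $x\ne 1$. Applying Lemma~\ref{lem:order of alcove and weights} to $xB\ge A_{2}+\lambda$ produces $b'\in B$ and $a_{2}\in A_{2}$ with $x(b')-a_{2}-\lambda\in\R_{\ge 0}(\Phi')^{+}$; applying it to $B\le A_{1}+\lambda$ at this same $b'$ produces $a_{1}\in A_{1}$ with $v:=a_{1}+\lambda-b'\in\R_{\ge 0}(\Phi')^{+}$. Pairing each with $\rho$ confines $\langle\rho,b'\rangle$ to within $O(1)$ of $\langle\rho,\lambda\rangle$; together with $b'$ being dominant (so $\langle\alpha,b'\rangle\ge 0$ for $\alpha\in(\Phi')^{+}$) this forces $\sum_{\alpha>0,\,x\alpha<0}\langle\alpha,b'\rangle=O(1)$, hence each such $\langle\alpha,b'\rangle$ is $O(1)$. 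On the other hand, $\langle\rho,v\rangle=O(1)$ combined with $\langle\rho,\gamma^{\vee}\rangle\ge 1$ for every positive coroot $\gamma^{\vee}$ forces $v$ itself to be bounded, so $b'=\lambda+O(1)$ and hence $\langle\alpha,b'\rangle=\langle\alpha,\lambda\rangle+O(1)$ is arbitrarily large for $\lambda$ sufficiently dominant. The contradiction forces $x=1$, so $A=B\in\mathcal{A}^{+}$.

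The main obstacle is keeping every ``$O(1)$'' error uniformly bounded in terms of $A_{1},A_{2}$ only---independent of the varying alcove $B$ and of $\lambda$---so that choosing $\lambda$ sufficiently dominant in the sense of Section~\ref{subsec:A result of Lanini} (i.e.\ $\langle\alpha,\lambda\rangle$ sufficiently large for every $\alpha\in(\Phi')^{+}$) defeats every error term simultaneously and rules out every non-identity $x\in W_{\finite}$ at once. The step that upgrades a bound on $\langle\rho,v\rangle$ to boundedness of $v$ itself is essential, since without it we would only know that $b'$ has the correct $\rho$-height but not that it lies near $\lambda$, and the final coordinate-wise contradiction would be out of reach.
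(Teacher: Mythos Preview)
Your argument is correct. For part~(1) it is essentially the same as the paper's: both use Lemma~\ref{lem:order of alcove and weights} to convert $x(A_{1}+\lambda)\ge A_{2}+\lambda$ into $(x-1)\lambda + x(a_{1})-a_{2}$ lying in the positive coroot cone, and then observe this fails for $x\ne 1$ once $\lambda$ is sufficiently dominant. The paper does not explicitly pair with $\rho$, but the contradiction is the same.

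For part~(2) the approaches genuinely differ. You carry out a direct linear-algebraic estimate: extracting $b'\in B$, bounding $\langle\rho-x^{-1}\rho,b'\rangle$ and $\langle\rho,v\rangle$ uniformly, upgrading the latter to a bound on $v$ itself via $\langle\rho,\alpha^{\vee}\rangle=1$ for simple $\alpha$, and finally comparing the two bounds on $\langle\alpha,b'\rangle$ for $\alpha$ with $x\alpha<0$. The paper instead reduces~(2) to~(1) by a short order-theoretic trick: from $A\in\mathcal{A}^{+}$ and Lemma~\ref{lem:finite weyl group orbit of dominant alcove, order} one gets $A\ge xA\ge A_{2}+\lambda$, so together with $A\le A_{1}+\lambda$ the translated alcove $A-\lambda$ lies in the \emph{finite} interval $\{A_{0}\mid A_{2}\le A_{0}\le A_{1}\}$; applying~(1) to each of these finitely many $A_{0}$ yields a single sufficiently dominant $\lambda$ that works, and then $xA\in W_{\finite}A\cap\{A'\ge A_{2}+\lambda\}\subset\{A\}$. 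The paper's route is shorter and more structural (it uses interval-finiteness in place of your explicit $\rho$-height and boundedness estimates), while your route is self-contained and avoids the appeal to Lemma~\ref{lem:finite weyl group orbit of dominant alcove, order}.
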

\begin{proof}
Let $x\in W_{\finite}$ be a non-unit element.
Take $w\in W'_{\aff}$ such that $A_{2} = wA_{1}$, fix $a_{1}\in A_{1}$ and put $a_{2} = w(a_{1})$.
Then for sufficiently dominant $\lambda$, we have $(x(\lambda) - \lambda) - (a_{2} - x(a_{1}))\notin \R_{\ge 0}(\Phi')^{+}$.
Hence $x(a_{1} + \lambda)\notin a_{2} + \lambda + \R_{\ge 0}(\Phi')^{+}$.
Therefore $x(A_{1} + \lambda)\not\ge A_{2} + \lambda$ by Lemma~\ref{lem:order of alcove and weights}.
This implies (1).

We prove (2).
Use (1) to take sufficiently dominant $\lambda$ such that for any $A_{0}\in \mathcal{A}$ with $A_{1}\ge A_{0}\ge A_{2}$, we have $W_{\finite}(A_{0} + \lambda) \cap \{A'\mid A'\ge A_{2} + \lambda\} \subset \{A_{0} + \lambda\}$.
Let $A\in \mathcal{A}^{+}$ and $x\in W_{\finite}$ such that $A\le A_{1} + \lambda$ and $xA\ge A_{2} + \lambda$.
(Namely $xA$ is in the left hand side of (2).)
We prove $xA = A$.
Since $A\in \mathcal{A}^{+}$, $A\ge xA$ by Lemma~\ref{lem:finite weyl group orbit of dominant alcove, order}.
Hence $A\ge A_{2} + \lambda$.
Applying the above condition to $A_{0} = A - \lambda$, we have $W_{\finite}A\cap \{A'\mid A'\ge A_{2} + \lambda\}\subset \{A\}$.
Since $xA$ is in the left hand side, we have $xA = A$.
\end{proof}

\begin{lem}\label{lem:restriction to subset of supp^+}
Let $\mathcal{V}$ be a moment graph, $X\subset \mathcal{V}$ a subset and $\mathcal{F}$ a Braden-MacPherson sheaf on $\mathcal{V}$ such that $\supp^{+}\mathcal{F}\subset X$.
Then $\mathcal{F}|_{X}$ is a Braden-MacPherson sheaf.
Moreover, if $\mathcal{F}$ is indecomposable, then $\mathcal{F}|_{X}$ is also indecomposable.
\end{lem}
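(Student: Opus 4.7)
The plan is to verify the four Braden--MacPherson axioms (BM1)--(BM4) one by one for $\mathcal{F}|_{X}$, and then deduce indecomposability from the observation that the hypothesis $\supp^{+}\mathcal{F}\subset X$ forces all data of $\mathcal{F}$ outside $X$ to be zero. Most of the work is essentially bookkeeping; the only genuinely delicate point is the vanishing of $\mathcal{F}^{E}$ on edges that straddle the boundary of $X$.

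For (BM1) and (BM2), the argument is entirely local: since $(\mathcal{F}|_{X})^{x}=\mathcal{F}^{x}$ for $x\in X$ and the edges of $X$ are by definition those edges of $\mathcal{V}$ with both endpoints in $X$, both axioms are inherited directly from $\mathcal{F}$. For (BM3) I would invoke Lemma~\ref{lem:restriction of flabby, by supp^+}: the hypothesis $\supp^{+}\mathcal{F}\subset X$ is precisely what is needed for a flabby sheaf to restrict to a flabby sheaf. For (BM4), given $x\in X$ and $a\in\mathcal{F}^{x}$, I would use (BM4) for $\mathcal{F}$ to choose $m\in\Gamma(\mathcal{F})$ with $m_{x}=a$; the restriction $m|_{X}$ is a section of $\mathcal{F}|_{X}$ whose $x$-component is still $a$.

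For the indecomposability statement, the preliminary observation I would record is that $\supp^{+}\mathcal{F}\subset X$ forces $\mathcal{F}^{y}=0$ for every $y\notin X$ (by the very definition of $\supp^{+}$) and $\mathcal{F}^{E}=0$ for every edge $E$ with at least one endpoint outside $X$. The latter is the step that requires real care, and I would split into cases according to whether the larger endpoint of $E$ is outside $X$ or not. If the larger endpoint lies outside $X$, then its stalk vanishes and (BM2) immediately gives $\mathcal{F}^{E}=0$. Otherwise the larger endpoint lies in $X$ while the smaller one does not; if its stalk is zero then (BM2) again gives $\mathcal{F}^{E}=0$, while if its stalk is nonzero together with $\mathcal{F}^{E}\ne 0$ the definition of $\supp^{+}$ would place the smaller endpoint in $\supp^{+}\mathcal{F}\subset X$, a contradiction.

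With these vanishings in hand, any decomposition $\mathcal{F}|_{X}=\mathcal{G}_{1}\oplus\mathcal{G}_{2}$ extends to a decomposition $\mathcal{F}=\widetilde{\mathcal{G}}_{1}\oplus\widetilde{\mathcal{G}}_{2}$ on $\mathcal{V}$ by setting the stalks and edge modules of each summand outside $X$ to zero: all restriction maps crossing the boundary of $X$ are zero on both sides, so the extension is a sheaf morphism. If $\mathcal{F}$ is indecomposable, one of the $\widetilde{\mathcal{G}}_{i}$ vanishes, hence so does the corresponding $\mathcal{G}_{i}$, yielding indecomposability of $\mathcal{F}|_{X}$.
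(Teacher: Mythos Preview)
Your proof is correct and follows essentially the same route as the paper's: (BM1)--(BM2) are local, (BM3) is Lemma~\ref{lem:restriction of flabby, by supp^+}, (BM4) comes from restricting global sections, and indecomposability is obtained by extending by zero outside $X$. The paper phrases the last step in terms of extending an idempotent $e\in\End(\mathcal{F}|_{X})$ rather than a direct sum decomposition, and for the compatibility across boundary edges it simply points back to the dichotomy ``$\mathcal{F}^{E}=0$ or $\mathcal{F}^{x}=0$'' from the proof of Lemma~\ref{lem:restriction of flabby, by supp^+}; your explicit verification that in fact $\mathcal{F}^{E}=0$ for every boundary edge (using (BM2) when the larger endpoint has zero stalk) is slightly more than is strictly needed but is correct and makes the extension argument cleaner.
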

\begin{proof}
It is obvious that $\mathcal{F}|_{X}$ satisfies (BM1) and (BM2).
For any $x\in X$, we have $\Gamma(\mathcal{F})\to \Gamma(\mathcal{F}|_{X})\to \mathcal{F}^{x}$ and the composition is surjective.
Hence $\Gamma(\mathcal{F}|_{X})\to \mathcal{F}^{x}$ is also surjective.
Therefore $\mathcal{F}$ satisfies (BM4).
The sheaf $\mathcal{F}|_{X}$ is flabby by Lemma~\ref{lem:restriction of flabby, by supp^+}.

Let $e = ((e_{x}),(e_{E}))\in \End(\mathcal{F}|_{X})$ be an idempotent.
Set $e_{x} = 0$ for $x\in \mathcal{V}\setminus X$ and $e_{E} = 0$ for an edge $E$ outside $X$.
Then by a similar argument in the proof of Lemma~\ref{lem:restriction of flabby, by supp^+}, we can prove $((e_{x})_{x\in \mathcal{V}},(e_{E})_{E\subset \mathcal{V}})\in \End(\mathcal{F})$ and obviously it is an idempotent.
We get the last claim of the lemma.
\end{proof}

\begin{cor}\label{cor:Corollary of Lanini}
Let $A\in \mathcal{A}$ and $O\in\bbopen$.
For $\lambda\in \mathbb{X}^{\vee}$, set $O_{\lambda} = O + \lambda$.
Take $x_{\lambda}\in W_{\aff}$ such that $A + \lambda = A_{0}^{+}x_{\lambda}$.
If $\lambda$ is sufficiently dominant, then $\supp^{+} \varphi^{*}\pi_{M}^{*}\mathcal{B}^{W_{\aff,0}\backslash W_{\aff}}(x_{\lambda})|_{O_{\lambda}}\subset \mathcal{A}^{+}$ and $\mathcal{B}^{\mathcal{A}}(A + \lambda)|_{O_{\lambda}\cap \mathcal{A}^{+}}$ is a direct summand of $\varphi_{+}^{*}\mathcal{B}^{W_{\aff,0}\backslash W_{\aff}}(x_{\lambda})|_{O_{\lambda}\cap \mathcal{A}^{+}}$.
\end{cor}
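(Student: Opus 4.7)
The plan is to deduce both assertions from results of the previous subsection, with the translation-equivariance $T_{\lambda}^{*}\mathcal{B}^{\mathcal{A}}(A)\simeq \mathcal{B}^{\mathcal{A}}(A+\lambda)$ (stated just before Lemma~\ref{lem:grk of translation}) serving as the engine that converts each condition on $\lambda$ into a finite list of dominance requirements, all of which hold on cofinal subsets of $\mathbb{X}^{\vee}$.

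For the first assertion, fix $A_{2}\in\mathcal{A}$ with $O\subset\{A'\mid A'\ge A_{2}\}$, so $O_{\lambda}\subset\{A'\mid A'\ge A_{2}+\lambda\}$. Combining \eqref{eq:supp^+ of pull-backs} with the support bound $\supp^{+}\mathcal{B}^{W_{\aff,0}\backslash W_{\aff}}(x_{\lambda})\subset\{y\le x_{\lambda}\}$ and the order-isomorphism $\varphi_{+}$ from Lemma~\ref{lem:Bruhat order is generic Bruhat order on dominants} (under which $\{y\le x_{\lambda}\}$ corresponds to $\{A'\in\mathcal{A}^{+}\mid A'\le A+\lambda\}$), I would obtain
\[
\supp^{+}\varphi^{*}\pi_{M}^{*}\mathcal{B}^{W_{\aff,0}\backslash W_{\aff}}(x_{\lambda}) \subset W_{\finite}\{A'\in\mathcal{A}^{+}\mid A'\le A+\lambda\}.
\]
Intersecting with $O_{\lambda}$ and invoking Lemma~\ref{lem:dominant lambda for Lanini}~(2) with $A_{1}=A$ places this intersection inside $\mathcal{A}^{+}$ for all sufficiently dominant $\lambda$; since $\supp^{+}(\mathcal{H}|_{O_{\lambda}})\subset \supp^{+}\mathcal{H}\cap O_{\lambda}$ for any sheaf $\mathcal{H}$, this gives the first assertion.

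For the second assertion, the first part together with Proposition~\ref{prop:result of Lanini} makes $\mathcal{G}:=\varphi_{+}^{*}\mathcal{B}^{W_{\aff,0}\backslash W_{\aff}}(x_{\lambda})|_{O_{\lambda}\cap\mathcal{A}^{+}}$ a Braden-MacPherson sheaf on $O_{\lambda}\cap\mathcal{A}^{+}$. Under the natural reading $A\in O$, and with $\lambda$ dominant enough that $A+\lambda\in\mathcal{A}^{+}$ (Lemma~\ref{lem:length of dominant translation of Weyl group elements}), the vertex $A+\lambda$ is maximal in $\supp\mathcal{G}\subset\{A'\le A+\lambda\}\cap O_{\lambda}\cap\mathcal{A}^{+}$. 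Proposition~\ref{prop:classification of BM sheaves}~(2) then yields $n\in\mathbb{Z}$ with $\mathcal{B}^{O_{\lambda}\cap\mathcal{A}^{+}}(A+\lambda)(n)$ a direct summand of $\mathcal{G}$, and comparing stalks at $A+\lambda$ (each $\widehat{R}_{\aff}^{\vee}$ concentrated in degree zero) forces $n=0$.

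The main technical step, and what I expect to be the principal obstacle, is the identification $\mathcal{B}^{O_{\lambda}\cap\mathcal{A}^{+}}(A+\lambda)\simeq \mathcal{B}^{\mathcal{A}}(A+\lambda)|_{O_{\lambda}\cap\mathcal{A}^{+}}$. Here translation-equivariance gives $\supp^{+}\mathcal{B}^{\mathcal{A}}(A+\lambda)\cap O_{\lambda}=(\supp^{+}\mathcal{B}^{\mathcal{A}}(A)\cap O)+\lambda$, and the set on the right is \emph{finite} by interval-finiteness of $\mathcal{A}$ (since $\supp\mathcal{B}^{\mathcal{A}}(A)\subset\{A'\le A\}$ is bounded above while $O$ is bounded below). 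Hence for sufficiently dominant $\lambda$ each element of this translate lies in $\mathcal{A}^{+}$, and Lemma~\ref{lem:restriction to subset of supp^+} applied to $\mathcal{V}=O_{\lambda}$ and $X=O_{\lambda}\cap\mathcal{A}^{+}$ makes $\mathcal{B}^{\mathcal{A}}(A+\lambda)|_{O_{\lambda}\cap\mathcal{A}^{+}}$ an indecomposable Braden-MacPherson sheaf with stalk $\widehat{R}_{\aff}^{\vee}$ at $A+\lambda$; the uniqueness clause of Proposition~\ref{prop:classification of BM sheaves} then supplies the identification. Since every dominance condition imposed above holds on a cofinal subset of $\mathbb{X}^{\vee}$, a single $\lambda$ can be chosen to satisfy them all simultaneously.
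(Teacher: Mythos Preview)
Your proposal is correct and follows essentially the same route as the paper: bound $\supp^{+}\varphi^{*}\pi_{M}^{*}\mathcal{B}^{W_{\aff,0}\backslash W_{\aff}}(x_{\lambda})$ via \eqref{eq:supp^+ of pull-backs} and Lemma~\ref{lem:Bruhat order is generic Bruhat order on dominants}, feed this into Lemma~\ref{lem:dominant lambda for Lanini}(2) for the first assertion, then combine Proposition~\ref{prop:result of Lanini} with Proposition~\ref{prop:classification of BM sheaves}(2) to extract the indecomposable summand, and finally identify $\mathcal{B}^{O_{\lambda}\cap\mathcal{A}^{+}}(A+\lambda)$ with $\mathcal{B}^{\mathcal{A}}(A+\lambda)|_{O_{\lambda}\cap\mathcal{A}^{+}}$ via Lemma~\ref{lem:restriction to subset of supp^+} after pushing the finite set $O_{\lambda}\cap\{A''\le A+\lambda\}$ into $\mathcal{A}^{+}$. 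The only cosmetic differences are that you phrase the last step through $\supp^{+}\mathcal{B}^{\mathcal{A}}(A)$ rather than the ambient interval $\{A''\le A\}$, and you spell out why the shift $n$ is zero; also, Lemma~\ref{lem:length of dominant translation of Weyl group elements} is not quite the right citation for $A+\lambda\in\mathcal{A}^{+}$ (which is immediate from the definition of $\mathcal{A}^{+}$), but this is harmless.
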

\begin{proof}
Take $A'\in \mathcal{A}$ such that $O\subset \{A''\in\mathcal{A}\mid A''\ge A'\}$.
We may assume $O = \{A''\in\mathcal{A}\mid A''\ge A'\}$.
Set  $\mathcal{F} = \mathcal{B}^{W_{\aff,0}\backslash W_{\aff}}(x_{\lambda})$ for simplifying the notation.
We regard $\supp^{+}\mathcal{F}\subset {}^{0}W_{\aff}\subset W_{\aff}$ by using a bijection ${}^{0}W_{\aff}\simeq W_{\aff,0}\backslash W_{\aff}$.
Since $\supp^{+}\mathcal{F}\subset \{y\in {}^{0}W_{\aff}\mid y\le x_{\lambda}\}$, by Lemma~\ref{lem:Bruhat order is generic Bruhat order on dominants}, we have $A_{0}^{+}\supp^{+}\mathcal{F}\subset \{A'\in \mathcal{A}^{+}\mid A'\le A_{0}^{+}x_{\lambda} = A + \lambda\}$.
By \eqref{eq:supp^+ of pull-backs}, $\supp^{+}\varphi^{*}\pi_{M}^{*}\mathcal{F}\subset W_{\finite}\{A''\in \mathcal{A}^{+}\mid A''\le A + \lambda\}$.
By Lemma~\ref{lem:dominant lambda for Lanini}, for sufficiently dominant $\lambda$, we have $W_{\finite}\{A''\in \mathcal{A}^{+}\mid A''\le A + \lambda\}\cap O_{\lambda}\subset \mathcal{A}^{+}$.
For such $\lambda$, we have $\supp^{+}\varphi^{*}\pi_{M}^{*}\mathcal{F}|_{O_{\lambda}} \subset \mathcal{A}^{+}$, namely the first condition of the corollary hold.
We also have $O_{\lambda}\cap \supp^{+}\varphi^{*}\pi_{M}^{*}\mathcal{F}\subset \mathcal{A}^{+}$.
By Proposition~\ref{prop:result of Lanini}, $\varphi_{+}^{*}\mathcal{F}|_{O_{\lambda}\cap \mathcal{A}^{+}}$ is a Braden-MacPherson sheaf.
Moreover, by Lemma~\ref{lem:Bruhat order is generic Bruhat order on dominants}, $A + \lambda = A_{0}^{+}x_{\lambda}$ is maximal in $\supp\varphi_{+}^{*}\mathcal{F}|_{O_{\lambda}\cap \mathcal{A}^{+}}$.
Hence $\mathcal{B}^{O_{\lambda}\cap \mathcal{A}^{+}}(A + \lambda)$ is a direct summand of $\varphi_{+}^{*}\mathcal{F}|_{O_{\lambda}}\cap \mathcal{A}^{+}$.
Therefore it is sufficient to prove that $\mathcal{B}^{O_{\lambda}\cap \mathcal{A}^{+}}(A + \lambda) = \mathcal{B}^{\mathcal{A}}(A + \lambda)|_{O_{\lambda}\cap \mathcal{A}^{+}}$.
The set $O\cap \{A''\in \mathcal{A}\mid A''\le A\}$ is finite.
Hence for sufficiently dominant $\lambda$, $O_{\lambda}\cap \{A''\in \mathcal{A}\mid A''\le A + \lambda\} = (O\cap \{A''\in \mathcal{A}\mid A''\le A\}) + \lambda$ is contained in $\mathcal{A}^{+}$.
Hence, replacing $\lambda$ with more dominant element, we have $\mathcal{B}^{O_{\lambda}\cap \mathcal{A}^{+}}(A + \lambda) = \mathcal{B}^{\mathcal{A}}(A + \lambda)|_{O_{\lambda}\cap \mathcal{A}^{+}}$ by Lemma~\ref{lem:restriction to subset of supp^+}.
\end{proof}

\subsection{Some calculation of graded ranks}\label{subsec:Some calculation of graded ranks}
By the proof of Lemma~\ref{lem:freeness of the stalk of translation functor}, we have the following.

\begin{lem}\label{lem:rank of stalk of translation}
For $s\in S_{\aff}$, $O\in \bbopen$ such that $Os = O$ and $A\in O$ such that $A > As$, if $\mathcal{F}\in \Sh(O)$ satisfies (BM1), (BM2) and (BM4), then we have an exact sequence
\[
0\to \mathcal{F}^{A}(-1)\xrightarrow{\widetilde{\alpha}_{s}^{\vee}}(\theta_{s}\mathcal{F})^{A} = (\theta_{s}\mathcal{F})^{As} \to \mathcal{F}^{As}(1)\to 0.
\]
\end{lem}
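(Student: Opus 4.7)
The plan is to extract the sequence directly from the argument already present in the proof of Lemma~\ref{lem:freeness of the stalk of translation functor}; the only real content here is to repackage it as a short exact sequence with the correct grading shifts and identify the maps. First, by the definition of $\theta_s$, we have $(\theta_s\mathcal{F})^A = \Gamma(\{A,As\},\mathcal{F})(1)$, and since $\{As, Ass\} = \{A, As\}$ the same formula gives $(\theta_s\mathcal{F})^{As} = \Gamma(\{A,As\},\mathcal{F})(1)$; this yields the middle equality in the displayed sequence.

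I would then define the right-hand map of the sequence to be induced by the second-coordinate projection $\Gamma(\{A,As\},\mathcal{F}) \to \mathcal{F}^{As}$, $(a,b)\mapsto b$, which is of degree zero after applying $(1)$ on both sides. Surjectivity follows from the assumption $\mathcal{F}$ satisfies (BM4): the composite $\Gamma(\mathcal{F})\to \Gamma(\{A,As\},\mathcal{F})\to \mathcal{F}^{As}$ is surjective, so the second arrow is. For the left-hand map, define $\iota\colon \mathcal{F}^A(-1) \to (\theta_s\mathcal{F})^A$ by $a\mapsto (a\widetilde{\alpha}_s^{\vee},0)$; this pair really lies in $\Gamma(\{A,As\},\mathcal{F})$ because the label of the edge $E_{A,As}$ is $\widetilde{\alpha}_s^{\vee}$, so the compatibility condition $\rho^{\mathcal{F}}_{E_{A,As},A}(a\widetilde{\alpha}_s^{\vee}) = 0 = \rho^{\mathcal{F}}_{E_{A,As},As}(0)$ is automatic. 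Degree bookkeeping: multiplication by $\widetilde{\alpha}_s^{\vee}$ has degree $2$, and the codomain is shifted by $(1)$, so the shift $(-1)$ on the domain makes $\iota$ degree zero. Injectivity of $\iota$ is immediate from (BM1): $\mathcal{F}^A$ is graded free, hence $\widetilde{\alpha}_s^{\vee}$-torsion free.

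It remains to check exactness in the middle. Suppose $(a,0)\in \Gamma(\{A,As\},\mathcal{F})$ maps to $0\in \mathcal{F}^{As}$; the sectionality condition $\rho^{\mathcal{F}}_{E_{A,As},A}(a) = \rho^{\mathcal{F}}_{E_{A,As},As}(0) = 0$ forces $a\in \Ker \rho^{\mathcal{F}}_{E_{A,As},A}$. Since $A > As$, (BM2) identifies this kernel with $\mathcal{F}^A\widetilde{\alpha}_s^{\vee}$, so we may write $a = a'\widetilde{\alpha}_s^{\vee}$ and then $(a,0) = \iota(a')$. Combining the three steps yields the asserted short exact sequence. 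The only point requiring any attention is the degree-shift accounting, which is routine once one tracks the $(1)$ in the definition of $\theta_s$ and $\deg(\widetilde{\alpha}_s^{\vee}) = 2$; there is no substantive obstacle.
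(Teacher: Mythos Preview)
Your proof is correct and follows exactly the same approach as the paper: the paper simply cites the proof of Lemma~\ref{lem:freeness of the stalk of translation functor}, which already contains precisely the short exact sequence you have written out, obtained by projecting $\Gamma(\{A,As\},\mathcal{F})(1)$ onto $\mathcal{F}^{As}(1)$ (surjective by (BM4)) and identifying the kernel with $\mathcal{F}^{A}\widetilde{\alpha}_s^{\vee}$ via (BM2). Your explicit degree bookkeeping and verification that $(a\widetilde{\alpha}_s^{\vee},0)$ is indeed a section are welcome clarifications but add nothing new.
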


We also have the following for $\mathcal{F}^{[A]}$.

\begin{lem}\label{lem:rank of succ filtration of translation}
Let $O\in \bbopen$, $s\in S_{\aff}$ such that $Os = O$.
Assume that $\mathcal{F}\in \Sh(O)$ has a Verma flag, satisfies (BM1), (BM3) and $\mathcal{F}^{E}$ is a free $\widehat{R}^{\vee}_{\aff}/(\alpha_{E})$-module for any edge $E$.
Let $A\in O$.
Then $\theta_{s}\mathcal{F}$ also has a Verma flag and
\[
\grk((\theta_{s}\mathcal{F})^{[A]})
=
\begin{cases}
v(\grk(\mathcal{F}^{[A]}) + \grk(\mathcal{F}^{[As]})) & (A > As),\\
v^{-1}(\grk(\mathcal{F}^{[A]}) + \grk(\mathcal{F}^{[As]})) & (A < As).
\end{cases}
\]
\end{lem}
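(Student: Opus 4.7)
The plan is to realize $(\theta_s\mathcal{F})^{[A]}$ as the middle term of a short exact sequence of graded free modules whose outer terms are shifted copies of $\mathcal{F}^{[A]}$ and $\mathcal{F}^{[As]}$; the claimed formula and the Verma flag property of $\theta_s\mathcal{F}$ then follow at once.

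Starting from the description $(\theta_s\mathcal{F})^A = \Gamma(\{A,As\},\mathcal{F})(1)$ and unpacking the definition of $(\theta_s\mathcal{F})^E$ separately at the type-$s$ edge $E_0 = (A,As)$ and at the other upward edges $E$ at $A$, an element $(a,b) \in \Gamma(\{A,As\},\mathcal{F})$ lies in $(\theta_s\mathcal{F})^{[A]}$ exactly when $\rho_{E,A}^{\mathcal{F}}(a) = 0$ and $\rho_{Es,As}^{\mathcal{F}}(b) = 0$ for every upward edge $E\neq E_0$ at $A$, together with the multiplicative constraint $(a,b) \in (\theta_s\mathcal{F})^A \cdot \widetilde{\alpha}_s^\vee$ in the case when $E_0$ is upward at $A$ (Case 2). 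The key combinatorial input is the parallel-transport of the Bruhat order: for every edge $E\colon A\to A'$ with $A' \neq As$, one has $A' > A$ if and only if $A's > As$. This follows from Lemma~\ref{lem:lemma on a order in alcoves} applied in both directions, and yields a bijection $E \leftrightarrow Es$ between the upward edges at $A$ other than $E_0$ and the upward edges at $As$ other than $E_0$.

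Combining the unpacking with the parallel-transport bijection and the $\Gamma$-compatibility at $E_0$ produces the short exact sequences
\[
0 \to \mathcal{F}^{[As]}(1) \xrightarrow{b\mapsto(0,b)} (\theta_s\mathcal{F})^{[A]} \xrightarrow{(a,b)\mapsto a} \mathcal{F}^{[A]}(1) \to 0 \quad (A>As),
\]
\[
0 \to \mathcal{F}^{[A]}(-1) \xrightarrow{a\mapsto(a\widetilde{\alpha}_s^\vee,0)} (\theta_s\mathcal{F})^{[A]} \xrightarrow{(a,b)\mapsto b/\widetilde{\alpha}_s^\vee} \mathcal{F}^{[As]}(-1) \to 0 \quad (A<As).
\]
Taking graded ranks in each sequence gives the claimed formulas. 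Graded freeness of $(\theta_s\mathcal{F})^{[A]}$ follows since both outer terms are graded free by the Verma flag hypothesis on $\mathcal{F}$, which also establishes that $\theta_s\mathcal{F}$ has a Verma flag.

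The hardest step is the surjectivity of the projection onto $\mathcal{F}^{[A]}(1)$ in Case 1, or onto $\mathcal{F}^{[As]}(-1)$ in Case 2. Given $a \in \mathcal{F}^{[A]}$ in Case 1, one must produce $b \in \mathcal{F}^{As}$ satisfying $\rho_{Es,As}^{\mathcal{F}}(b) = 0$ for every $E\neq E_0$ upward at $A$ together with $\rho_{E_0,As}^{\mathcal{F}}(b) = \rho_{E_0,A}^{\mathcal{F}}(a)$. The natural approach is to lift $a$ via flabbiness (BM3) and (BM4) to a global section of $\mathcal{F}$ that vanishes on $\{A''>A\}$ and take its $As$-component; the parallel-transport property then ensures that the endpoints of edges at $As$ other than $E_0$ either lie in $\{A''>A\}$, where the lift vanishes by construction, or can be handled by enlarging the open set over which the prescribed vanishing is imposed. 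Case 2 is analogous, with the roles of $E_0$ and the $\widetilde{\alpha}_s^\vee$-multiplication exchanged.
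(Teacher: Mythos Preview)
Your approach is essentially the paper's: in Case~1 ($A>As$) both of you build the sequence $0\to\mathcal{F}^{[As]}(1)\to(\theta_s\mathcal{F})^{[A]}\to\mathcal{F}^{[A]}(1)\to 0$ and prove surjectivity by extending the section $(a\text{ at }A,\,0\text{ on }\{A''>A\})$ via flabbiness. Two points need correction.

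First, you invoke (BM4), but the hypotheses list only (BM1), (BM3), the Verma-flag condition, and freeness of $\mathcal{F}^E$. Fortunately (BM4) is not needed: because $a\in\mathcal{F}^{[A]}$, the tuple $(a,0,\dots)$ is already a section on the open set $\{A''\ge A\}$, and (BM3) alone extends it to $O$. Drop the reference to (BM4).

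Second, the parallel-transport claim (``$A'>A$ iff $A's>As$ for $A'\neq As$'') is correct but does not follow from Lemma~\ref{lem:lemma on a order in alcoves} alone. That lemma handles only the subcases where $\{A,As\}$ and $\{A',A's\}$ are ordered oppositely; when they are ordered the same way you need \cite[Proposition~3.2]{MR591724}, exactly as the paper invokes it. Once this is in place, one checks that $A'>A$ forces $A's>A$ in Case~1, so the extended section vanishes at every relevant endpoint and your ``enlarging the open set'' contingency never arises.

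For Case~2 the paper takes a shorter route: it proves $(\theta_s\mathcal{F})^{[A]}=(\theta_s\mathcal{F})^{[As]}\cdot\widetilde{\alpha}_s^\vee$ directly (using the same order facts plus the GKM condition on $\mathcal{F}^E\oplus\mathcal{F}^{Es}$) and then reads off the result from Case~1 applied at $As$. Your direct short exact sequence is exactly what this produces after the shift by $\widetilde{\alpha}_s^\vee$, so the two are equivalent; in particular the surjectivity you defer as ``analogous'' is literally Case~1 surjectivity at $As$ composed with multiplication by $\widetilde{\alpha}_s^\vee$.
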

\begin{proof}
First, assume that $A > As$.
Let $a = (a_{A},a_{As})\in (\theta_{s}\mathcal{F})^{[A]}$.
For each edge $E$ connecting $A$ with $A' > A$, we have $\rho_{E,A}^{\theta_{s}\mathcal{F}}(a) = 0$.
Since $A'\ne As$, this means $\rho_{E,A}^{\mathcal{F}}(a_{A}) = 0$ and $\rho_{Es,As}^{\mathcal{F}}(a_{As}) = 0$.
In particular we have $a_{A} \in \mathcal{F}^{[A]}$.

We prove $a_{As}\in \mathcal{F}^{[As]}$ when $a_{A} = 0$.
Let $E$ be an edge connecting $As$ with $A' > As$.
If $A' = A$, then since $a\in \Gamma(\{A,As\},\mathcal{F})$, $\rho_{E,As}^{\mathcal{F}}(a_{As}) = \rho_{E,A}^{\mathcal{F}}(a_{A}) = 0$.
Otherwise, we have $A',A's > As$ by \cite[Proposition~3.2]{MR591724}.
In this case, we have $A's > A$.
Indeed, if $A' < A's$, then this follows \cite[Proposition~3.2]{MR591724}.
Otherwise, this follows by applying Lemma~\ref{lem:lemma on a order in alcoves} as $(A,A') = (As,A')$.
The edge $Es$ connects $A$ with $A's$.
By $\rho_{Es,A}^{\theta_{s}(\mathcal{F})}(a) = 0$, we get $\rho_{E,As}^{\mathcal{F}}(a_{As}) = 0$.
Hence we conclude $a_{As}\in \mathcal{F}^{[As]}$.

Therefore we have an exact sequence $0\to \mathcal{F}^{[As]}(1)\to (\theta_{s}\mathcal{F})^{[A]}\to \mathcal{F}^{[A]}(1)$.
We prove that the last map is surjective.
Let $b_{A}\in \mathcal{F}^{[A]}$ and for each $A' > A$ put $b_{A'} = 0$.
Then $b = (b_{A})$ defines an element in $\Gamma(\{A'\mid A'\ge A\},\mathcal{F})$.
Since $\mathcal{F}$ satisfies (BM3), this extends to an element $b$ in $\Gamma(O,\mathcal{F})$.
We prove that the restriction of $b$ to $\{A,As\}$ gives an element in $(\theta_{s}\mathcal{F})^{[A]}$.
Let $E$ be an edge connecting $A$ with $A' > A$.
Then $\rho_{E,A}^{\theta_{s}\mathcal{F}}(b) = (\rho_{E,A}^{\mathcal{F}}(b_{A}),\rho_{Es,As}^{\mathcal{F}}(b_{As}))$.
We have $\rho_{E,A}^{\mathcal{F}}(b_{A}) = 0$ by the assumption.
We prove $A's > A$, which gives $\rho_{Es,As}^{\mathcal{F}}(b_{As}) = 0$ since $b_{A's} = 0$ for such $A'$.
If $A's > A'$, then this follows from \cite[Proposition~3.2]{MR591724}.
If $A's < A'$, then by \cite[Proposition~3.2]{MR591724}, we have $As < A's$.
Hence we have $A's > A$ by applying Lemma~\ref{lem:lemma on a order in alcoves} as $(A,A') = (As,A')$.
Therefore $(\theta_{s}\mathcal{F})^{[A]}\to \mathcal{F}^{[A]}(1)$ is surjective.

We assume $A < As$ and let $a = (a_{A},a_{As})\in (\theta_{s}\mathcal{F})^{[A]}$.
Then for the edge $E$ connecting $A$ with $As$, we have $\rho_{E,A}^{\theta_{s}\mathcal{F}}(a) = 0$.
Hence there exists $b = (b_{A},b_{As})\in (\theta_{s}\mathcal{F})^{A}$ such that $a = b\widetilde{\alpha}_{s}^{\vee}$.
By using $(\theta_{s}\mathcal{F})^{A} = (\theta_{s}\mathcal{F})^{As}$, we regard $b$ as an element in $(\theta_{s}\mathcal{F})^{As}$.
We prove $\rho_{E,As}^{\theta_{s}\mathcal{F}}(b) = 0$ for any edge $E$ connecting $As$ with $A' > As$.
The edge $Es$ connects $A$ with $A's$, and we have $A's > A$ by \cite[Proposition~3.2]{MR591724}.
Hence $0 = \rho_{Es,A}^{\theta_{s}\mathcal{F}}(a) = \rho_{E,As}^{\theta_{s}\mathcal{F}}(b)\widetilde{\alpha}_{s}^{\vee}$.
We have $(\theta_{s}\mathcal{F})^{Es}\subset \mathcal{F}^{E}\oplus \mathcal{F}^{Es}$.
The right hand side is a free $\widehat{R}^{\vee}_{\aff}/(\alpha_{E})$-module, hence by GKM condition, the multiplication by $\widetilde{\alpha}_{s}^{\vee}$ on $\mathcal{F}^{E}\oplus \mathcal{F}^{Es}$ has no non-trivial kernel.
Hence $\rho_{E,As}^{\theta_{s}\mathcal{F}}(b) = 0$.
Therefore $b\in (\theta_{s}\mathcal{F})^{[As]}$.
Hence we have $(\theta_{s}\mathcal{F})^{[A]} = (\theta_{s}\mathcal{F})^{[As]}\widetilde{\alpha}_{s}^{\vee}$ and this implies the lemma.
\end{proof}

Let $\varphi,\varphi_{+},i,\pi_{M}$ be as in \eqref{eq:maps between alcoves}.

\begin{lem}\label{lem:easy property of image of phi^*}
For any $\mathcal{F}\in \BM(W_{\aff})$, $\varphi^{*}\mathcal{F}$ has a Verma flag.
\end{lem}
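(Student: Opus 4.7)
The strategy is a Bott--Samelson reduction combined with an induction, leveraging the commutation $\varphi^{*}\theta_{s}\simeq\theta_{s}\varphi^{*}$ from Lemma~\ref{lem:translation and varphi^*}. Let $\mathcal{G}_{0}$ denote the skyscraper sheaf on $W_{\aff}$ at $1$ with stalk $\widehat{R}_{\aff}^{\vee}$ (as introduced in the proof of Proposition~\ref{prop:result of Lanini}). Every $\mathcal{F}\in\BM(W_{\aff})$ is a direct summand of some $\theta_{s_{1}}\cdots\theta_{s_{l}}\mathcal{G}_{0}$, so $\varphi^{*}\mathcal{F}$ is a direct summand of $\theta_{s_{1}}\cdots\theta_{s_{l}}\varphi^{*}\mathcal{G}_{0}$. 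Since $\widehat{R}_{\aff}^{\vee}$ is a graded local ring (graded projectives are graded free), having a Verma flag is inherited by direct summands; it therefore suffices to prove that $\mathcal{F}_{\mathbf{s}}:=\theta_{s_{1}}\cdots\theta_{s_{l}}\varphi^{*}\mathcal{G}_{0}$ has a Verma flag for every word $\mathbf{s}=(s_{1},\ldots,s_{l})$.

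I would proceed by induction on $l$. Because Lemma~\ref{lem:rank of succ filtration of translation} needs more than just a Verma flag to apply, I would strengthen the inductive hypothesis to: $\mathcal{F}_{\mathbf{s}}$ has a Verma flag, has graded free stalks, is flabby, and has each edge module graded free over $\widehat{R}_{\aff}^{\vee}/(\alpha_{E})$. The base case $l=0$ is immediate since $\varphi^{*}\mathcal{G}_{0}$ has only one nonzero stalk ($\widehat{R}_{\aff}^{\vee}$ at $A_{0}^{+}$) and vanishing edge sheaves. For the inductive step, apply Lemma~\ref{lem:rank of succ filtration of translation} on a suitable $s_{l+1}$-stable $O\in\bbopen$ containing $\supp\mathcal{F}_{\mathbf{s}}$ to obtain the Verma flag of $\theta_{s_{l+1}}\mathcal{F}_{\mathbf{s}}$; flabbiness propagates by \cite[Proposition~7.16]{MR3324922}, exactly as used in Proposition~\ref{prop:result of Lanini}. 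Graded freeness of stalks and edge modules is then checked directly from the definition of $\theta_{s}$, using the short exact sequence
\[
0\to\mathcal{F}_{\mathbf{s}}^{A}(-1)\xrightarrow{\widetilde{\alpha}_{s_{l+1}}^{\vee}}(\theta_{s_{l+1}}\mathcal{F}_{\mathbf{s}})^{A}\to\mathcal{F}_{\mathbf{s}}^{As_{l+1}}(1)\to 0
\]
in the spirit of Lemma~\ref{lem:rank of stalk of translation}, together with the descriptions $(\theta_{s_{l+1}}\mathcal{F}_{\mathbf{s}})^{E}=(\theta_{s_{l+1}}\mathcal{F}_{\mathbf{s}})^{A}/(\theta_{s_{l+1}}\mathcal{F}_{\mathbf{s}})^{A}\widetilde{\alpha}_{s_{l+1}}^{\vee}$ when $E=Es_{l+1}$, and the image description $(\theta_{s_{l+1}}\mathcal{F}_{\mathbf{s}})^{E}\subset\mathcal{F}_{\mathbf{s}}^{E}(1)\oplus\mathcal{F}_{\mathbf{s}}^{Es_{l+1}}(1)$ for the remaining edges, where the GKM condition ensures graded freeness of the resulting submodules.

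The main obstacle is that $\varphi^{*}\mathcal{G}_{0}$ fails (BM2): for a finite simple reflection $s_{\alpha}\in S_{\aff}$, the edge from $A_{0}^{+}$ to $A_{0}^{+}s_{\alpha}<A_{0}^{+}$ has zero edge sheaf while the stalk at $A_{0}^{+}$ is $\widehat{R}_{\aff}^{\vee}\ne\widehat{R}_{\aff}^{\vee}\alpha_{E}$. Hence neither Lemma~\ref{lem:freeness of the stalk of translation functor} nor the surjectivity portion of Lemma~\ref{lem:rank of stalk of translation} applies verbatim to $\mathcal{F}_{\mathbf{s}}$, and the short exact sequence above must be re-derived in the present setting from only the four properties carried in the inductive hypothesis. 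Once this case analysis is executed, all four properties propagate across $\theta_{s_{l+1}}$, the induction closes, and the lemma follows.
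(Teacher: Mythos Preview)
Your overall plan---Bott--Samelson reduction and induction on $l$ via Lemma~\ref{lem:rank of succ filtration of translation}---is exactly the paper's strategy. But you are making the inductive step much harder than necessary, and the ``main obstacle'' you flag is a phantom.

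The point you are not fully exploiting is the one you state at the outset: $\varphi^{*}\theta_{s}\simeq\theta_{s}\varphi^{*}$. This means that at \emph{every} stage of the induction the sheaf $\mathcal{F}_{\mathbf{s}}=\theta_{s_{1}}\cdots\theta_{s_{l}}\varphi^{*}\mathcal{G}_{0}$ equals $\varphi^{*}\mathcal{G}_{\mathbf{s}}$ with $\mathcal{G}_{\mathbf{s}}=\theta_{s_{1}}\cdots\theta_{s_{l}}\mathcal{G}_{0}\in\BM(W_{\aff})$. Since $\varphi$ is an isomorphism of labelled graphs (only the order changes), $(\varphi^{*}\mathcal{G}_{\mathbf{s}})^{A}=\mathcal{G}_{\mathbf{s}}^{\varphi(A)}$ is graded free because $\mathcal{G}_{\mathbf{s}}$ satisfies (BM1), and $(\varphi^{*}\mathcal{G}_{\mathbf{s}})^{E}=\mathcal{G}_{\mathbf{s}}^{\varphi(E)}$ is graded free over $\widehat{R}_{\aff}^{\vee}/(\alpha_{E})$ because $\mathcal{G}_{\mathbf{s}}$ satisfies (BM1) and (BM2). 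Flabbiness of $\varphi^{*}\mathcal{G}_{\mathbf{s}}$ was already established in the proof of Proposition~\ref{prop:result of Lanini}. Thus three of the four hypotheses of Lemma~\ref{lem:rank of succ filtration of translation} hold \emph{automatically} at every stage, and only the Verma flag needs to be carried as an inductive hypothesis. No strengthened induction, no re-derivation of the short exact sequence, and no case analysis on edges is required; (BM2) for $\varphi^{*}\mathcal{G}_{0}$ is simply irrelevant because Lemma~\ref{lem:rank of succ filtration of translation} never asks for it.

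By contrast, your proposed route---propagating stalk and edge freeness through $\theta_{s}$ by hand---is genuinely delicate. Your assertion that ``the GKM condition ensures graded freeness'' of the submodule $(\theta_{s}\mathcal{F}_{\mathbf{s}})^{E}\subset\mathcal{F}_{\mathbf{s}}^{E}(1)\oplus\mathcal{F}_{\mathbf{s}}^{Es}(1)$ is not justified: submodules of graded free $\widehat{R}_{\aff}^{\vee}/(\alpha_{E})$-modules need not be graded free, and the usual argument that $(\theta_{s}\mathcal{F})^{E}\simeq(\theta_{s}\mathcal{F})^{A}/\alpha_{E}(\theta_{s}\mathcal{F})^{A}$ (from the proof of Proposition~\ref{prop:translation preserves BM}) uses (BM2) for $\mathcal{F}$, which you do not have. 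So the detour you propose is not only unnecessary but has a real gap as stated.
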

\begin{proof}
Let $\mathcal{G}\in \BM(W_{\aff})$.
Note that, since $\mathcal{G}$ satisfies (BM1), $\varphi^{*}\mathcal{G}$ also satisfies (BM1).
We also know that $\varphi^{*}\mathcal{G}^{E} = \mathcal{G}^{\varphi(E)}$ is a free $\widehat{R}_{\aff}^{\vee}/\alpha_{E}\widehat{R}_{\aff}^{\vee}$-module since $\mathcal{G}$ satisfies (BM1) and (BM2).
We may assume $\mathcal{G} = \theta_{s_{1}}\cdots \theta_{s_{l}}\mathcal{B}^{W_{\aff}}(1)$ for some $s_{1},\ldots,s_{l}\in S_{\aff}$.
Then $\mathcal{G}$ is flabby by \cite[Theorem~7.17]{MR3324922}.
Now the lemma follows from Lemma~\ref{lem:rank of succ filtration of translation} by induction on $l$.
\end{proof}

\begin{lem}\label{lem:comparison of F^[A]}
Let $\mathcal{V}$ be a moment graph, $\mathcal{F}$ a sheaf on $\mathcal{V}$ and $X\subset \mathcal{V}$ a subset of $\mathcal{V}$.
Assume that $X$ is open or $\supp^{+}\mathcal{F}\subset X$.
Then for $x\in X$, we have $(\mathcal{F}|_{X})^{[x]} = \mathcal{F}^{[x]}$.
\end{lem}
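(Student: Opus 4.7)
The plan is to unpack the definitions. By construction, $\mathcal{F}^{[x]}$ is the kernel of the map $\mathcal{F}^x \to \bigoplus_E \mathcal{F}^E$ where $E$ ranges over all edges of $\mathcal{V}$ connecting $x$ with some $y > x$, while $(\mathcal{F}|_X)^{[x]}$ is the kernel of the analogous map where $E$ ranges only over edges of $X$ with the other endpoint $y > x$ lying in $X$. Since $\mathcal{F}^E$ is the same whether computed on $\mathcal{V}$ or on $X$, it suffices to show that for every edge $E$ connecting $x$ with some $y > x$ such that $y \notin X$, the component $\mathcal{F}^x \to \mathcal{F}^E$ is the zero map (equivalently, $\mathcal{F}^E = 0$, or $\mathcal{F}^x = 0$). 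I will then handle the two cases separately.

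First I would treat the case where $X$ is open. By the definition of the topology on $\mathcal{V}$, if $x \in X$ and $y \ge x$ then $y \in X$. Hence there are no edges $E$ connecting $x$ with some $y > x$ such that $y \notin X$, so the two sets of edges defining $\mathcal{F}^{[x]}$ and $(\mathcal{F}|_X)^{[x]}$ literally coincide.

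Next I would treat the case $\supp^+\mathcal{F} \subset X$. Fix $x \in X$; we may assume $\mathcal{F}^x \ne 0$, since otherwise both $\mathcal{F}^{[x]}$ and $(\mathcal{F}|_X)^{[x]}$ are zero. Let $E$ be any edge connecting $x$ with some $y > x$ such that $y \notin X$. By hypothesis $y \notin \supp^+ \mathcal{F}$. Since $E$ connects $y$ with the vertex $x$ and $\mathcal{F}^x \ne 0$, the defining condition of $\supp^+ \mathcal{F}$ (with the roles of $x$ and $y$ swapped) forces $\mathcal{F}^E = 0$. Thus the component of the defining map $\mathcal{F}^x \to \mathcal{F}^E$ is automatically zero, so omitting such edges does not change the kernel. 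This yields $(\mathcal{F}|_X)^{[x]} = \mathcal{F}^{[x]}$.

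There is no real obstacle here: the lemma is a direct bookkeeping check on the definitions of $\mathcal{F}^{[x]}$, $\supp^+$, and the induced moment graph structure on $X$. The only subtle point is remembering that $\supp^+ \mathcal{F}$ is symmetric in the two endpoints of an edge, which is exactly what lets us conclude $\mathcal{F}^E = 0$ from $y \notin \supp^+ \mathcal{F}$ together with $\mathcal{F}^x \ne 0$.
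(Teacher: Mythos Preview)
Your proposal is correct and follows essentially the same approach as the paper: both arguments reduce to checking that edges from $x$ to some $y>x$ with $y\notin X$ contribute nothing, handle the open case by noting no such edges exist, and in the $\supp^{+}\mathcal{F}\subset X$ case assume $\mathcal{F}^{x}\ne 0$ and use the definition of $\supp^{+}$ (applied at $y$ with the neighbor $x$) to force $\mathcal{F}^{E}=0$. Your write-up is in fact slightly crisper than the paper's on the last step.
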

\begin{proof}
We may assume $\mathcal{F}^{x}\ne 0$.
Obviously we have $\mathcal{F}^{[x]}\subset (\mathcal{F}|_{X})^{[x]}$.
Let $m\in (\mathcal{F}|_{X})^{[x]}$ and $E$ an edge connecting $x$ with $y > x$.
We prove $\rho_{E,x}^{\mathcal{F}}(m) = 0$.
If $y\in X$, then $\rho_{E,x}^{\mathcal{F}}(m) = 0$ since $m\in (\mathcal{F}|_{X})^{[x]}$.
Assume that $y\notin X$.
If $X$ is open, this implies $x\notin X$.
This is a contradiction.
Assume that $\supp^{+}\mathcal{F}\subset X$.
If $y\notin X$, then $y\notin \supp^{+}\mathcal{F}$.
Since we have assumed that $\mathcal{F}^{x}\ne 0$, we have $\mathcal{F}^{y} = 0$ or $\mathcal{F}^{E} = 0$.
Hence $\rho_{E,x}^{\mathcal{F}}(m) = 0$.
\end{proof}

\begin{prop}\label{prop:BM on alcoves has a Verma flag}
Any $\mathcal{F}\in \mathcal{BM}(\mathcal{A})$ has a Verma flag.
\end{prop}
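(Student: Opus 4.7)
My plan has two ingredients: (i) a reduction to the case $\mathcal{F} = \mathcal{B}^{\mathcal{A}}(B)$, and (ii) a transfer of the Verma flag property from the $W_{\aff}$-side (where it is established in Lemma~\ref{lem:easy property of image of phi^*}) to $\mathcal{A}$, via translation invariance combined with the comparison Corollary~\ref{cor:Corollary of Lanini}.

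For the reduction, fix $\mathcal{F}\in \BM(\mathcal{A})$ and $A\in \mathcal{A}$, and pick any $O\in \bbopen$ with $A\in O$. Since $O$ is open in $\mathcal{A}$, Lemma~\ref{lem:comparison of F^[A]} gives $\mathcal{F}^{[A]} = (\mathcal{F}|_{O})^{[A]}$. Because $\mathcal{A}$ is interval finite and $O$ is bounded below, $O$ is ideal finite, so Proposition~\ref{prop:classification of BM sheaves}(3) decomposes $\mathcal{F}|_{O}$ as a finite direct sum $\bigoplus_{i}\mathcal{B}^{O}(B_{i})(n_{i})$. Because $O$ is upward closed in $\mathcal{A}$, the Braden-MacPherson algorithm on $O$ and on $\mathcal{A}$ coincide at vertices of $O$, so $\mathcal{B}^{O}(B_{i}) = \mathcal{B}^{\mathcal{A}}(B_{i})|_{O}$, and a second application of Lemma~\ref{lem:comparison of F^[A]} reduces the claim to proving that $\mathcal{B}^{\mathcal{A}}(B)^{[A]}$ is graded free for every $B\in\mathcal{A}$.

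Now I use the translation functor $T_{\lambda}^{*}$: it is an autoequivalence of $\BM(\mathcal{A})$ sending $\mathcal{B}^{\mathcal{A}}(B)$ to $\mathcal{B}^{\mathcal{A}}(B+\lambda)$, and its action on stalks and on $\mathcal{F}^{[\cdot]}$ is a $\sigma_{\lambda}$-twist that preserves graded freeness. Hence I may replace $(B,A)$ by $(B+\lambda,A+\lambda)$ for any $\lambda\in \mathbb{X}^{\vee}$ I wish. Fixing $O = \{z\in\mathcal{A}\mid z\ge A\}$ and applying Corollary~\ref{cor:Corollary of Lanini} to this $O$ and to $B$ in place of $A$, for $\lambda$ sufficiently dominant I obtain $x_{\lambda}\in W_{\aff}$ with $B+\lambda = A_{0}^{+}x_{\lambda}$ such that (a) $\supp^{+}\varphi^{*}\pi_{M}^{*}\mathcal{B}^{W_{\aff,0}\backslash W_{\aff}}(x_{\lambda})|_{O_{\lambda}}\subset \mathcal{A}^{+}$, and (b) $\mathcal{B}^{\mathcal{A}}(B+\lambda)|_{O_{\lambda}\cap \mathcal{A}^{+}}$ is a direct summand of $\varphi_{+}^{*}\mathcal{B}^{W_{\aff,0}\backslash W_{\aff}}(x_{\lambda})|_{O_{\lambda}\cap \mathcal{A}^{+}}$. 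Since $B+\lambda\in\mathcal{A}^{+}$ forces $x_{\lambda}\in {}^{0}W_{\aff}$ (Lemma~\ref{lem:length of dominant translation of Weyl group elements}(2)), Lemma~\ref{lem:pull-back for parabolic Soergel bimodules} yields $\mathcal{G}:=\varphi^{*}\pi_{M}^{*}\mathcal{B}^{W_{\aff,0}\backslash W_{\aff}}(x_{\lambda})\simeq \varphi^{*}\mathcal{B}^{W_{\aff}}(w_{0}x_{\lambda})$, where $w_{0}$ is the longest element of $W_{\aff,0}$, and Lemma~\ref{lem:easy property of image of phi^*} then guarantees that $\mathcal{G}^{[A+\lambda]}$ is graded free.

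Finally I chain two applications of Lemma~\ref{lem:comparison of F^[A]} --- first along the open inclusion $O_{\lambda}\subset\mathcal{A}$, then along $O_{\lambda}\cap\mathcal{A}^{+}\subset O_{\lambda}$ using the support hypothesis (a) --- to identify $\mathcal{G}^{[A+\lambda]}$ with $(\mathcal{G}|_{O_{\lambda}\cap\mathcal{A}^{+}})^{[A+\lambda]}$. On the Braden-MacPherson side, $\supp\mathcal{B}^{\mathcal{A}}(B+\lambda)\cap O_{\lambda}$ lies in the finite interval $[A+\lambda,B+\lambda] = [A,B]+\lambda$, and translating a fixed finite set by a sufficiently dominant $\lambda$ places it inside $\mathcal{A}^{+}$; thus $\supp^{+}\mathcal{B}^{\mathcal{A}}(B+\lambda)|_{O_{\lambda}}\subset O_{\lambda}\cap\mathcal{A}^{+}$ and the same two-step restriction gives $\mathcal{B}^{\mathcal{A}}(B+\lambda)^{[A+\lambda]} = (\mathcal{B}^{\mathcal{A}}(B+\lambda)|_{O_{\lambda}\cap\mathcal{A}^{+}})^{[A+\lambda]}$. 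By (b) this is a direct summand of $\mathcal{G}^{[A+\lambda]}$, hence graded free over the graded local ring $\widehat{R}_{\aff}^{\vee}$. The principal technical obstacle is this coordinated support bookkeeping: choosing one $\lambda$ that simultaneously makes both $\supp^{+}$ conditions hold so that both restrictions can be traversed by Lemma~\ref{lem:comparison of F^[A]}, which is ensured by Lemma~\ref{lem:dominant lambda for Lanini} together with the finiteness of $[A,B]$.
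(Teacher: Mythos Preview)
Your proof is correct and follows essentially the same route as the paper's own argument: reduce to an indecomposable $\mathcal{B}^{\mathcal{A}}(B)$, translate by a sufficiently dominant $\lambda$ so that Corollary~\ref{cor:Corollary of Lanini} applies, and then transport the Verma flag property from $\varphi^{*}\BM(W_{\aff})$ (Lemma~\ref{lem:easy property of image of phi^*}) via the direct summand comparison, using Lemma~\ref{lem:comparison of F^[A]} to match $[\,\cdot\,]^{[A+\lambda]}$ before and after restriction. Your write-up is in fact more explicit than the paper's at two points the paper leaves implicit: the invocation of Lemma~\ref{lem:pull-back for parabolic Soergel bimodules} to ensure $\pi_{M}^{*}\mathcal{B}^{W_{\aff,0}\backslash W_{\aff}}(x_{\lambda})\in\BM(W_{\aff})$ so that Lemma~\ref{lem:easy property of image of phi^*} applies, and the verification that $\supp^{+}\mathcal{B}^{\mathcal{A}}(B+\lambda)|_{O_{\lambda}}\subset\mathcal{A}^{+}$ so that the restriction step for the Braden--MacPherson side goes through.
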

\begin{proof}
We may assume $\mathcal{F} = \mathcal{B}^{\mathcal{A}}(A)$ and we prove $\mathcal{B}^{\mathcal{A}}(A)^{[A']}$ is free for $A'\in \mathcal{A}$.
For $\lambda\in \mathbb{X}^{*}$, put $O_{\lambda} = \{A''\in \mathcal{A}\mid A''\ge A' + \lambda\}$.
By Corollary~\ref{cor:Corollary of Lanini}, for sufficiently dominant $\lambda$, $\mathcal{B}^{\mathcal{A}}(A + \lambda)|_{O_{\lambda}\cap \mathcal{A}^{+}}$ is a direct summand of $\varphi_{+}^{*}\mathcal{B}^{W_{\aff,0}\backslash W_{\aff}}(x_{\lambda})|_{O_{\lambda}\cap \mathcal{A}^{+}}$ where $x_{\lambda}\in {}^{0}W_{\aff}$ satisfies $A + \lambda = A_{0}^{+}x_{\lambda}$.
Hence it is sufficient to prove that $(\varphi_{+}^{*}\mathcal{B}^{W_{\aff,0}\backslash W_{\aff}}(x_{\lambda})|_{O_{\lambda}\cap \mathcal{A}^{+}})^{[A' + \lambda]}$ is free.
Since $\varphi_{+}^{*} = i^{*}\varphi^{*}\pi_{M}^{*}$, this is $(\varphi^{*}\pi_{M}^{*}\mathcal{B}^{W_{\aff,0}\backslash W_{\aff}}(x_{\lambda})|_{O_{\lambda}\cap \mathcal{A}^{+}})^{[A' + \lambda]}$.
Set $\mathcal{G} = \varphi^{*}\pi_{M}^{*}\mathcal{B}^{W_{\aff,0}\backslash W_{\aff}}(x_{\lambda})$.
As in Corollary~\ref{cor:Corollary of Lanini}, we can take $\lambda$ such that $O_{\lambda}\cap \supp^{+}\mathcal{G}\subset \mathcal{A}^{+}$.
By Lemma~\ref{lem:comparison of F^[A]}, $\mathcal{G}|_{O_{\lambda}\cap \mathcal{A}^{+}}^{[A' + \lambda]} = \mathcal{G}^{[A' + \lambda]}$.
The proposition follows from Lemma~\ref{lem:easy property of image of phi^*}.
\end{proof}

Let $\mathcal{H} = \bigoplus_{w\in W_{\aff}}\Z[v,v^{-1}]H_{w}$ be the Hecke algebra attached to $W_{\aff}$.
Here, we use the notation of Soergel~\cite[Theorem~2.1]{MR1444322}.
In particular, we have $(H_{s} - v^{-1})(H_{s} + v) = 0$ for $s\in S_{\aff}$.
Let $\widehat{\mathcal{P}}$ be the space of formal sums $\sum_{A\in \mathcal{A}}c_{A}A$ with $c_{A}\in \Z[v,v^{-1}]$ such that $\{A\in\mathcal{A}\mid c_{A}\ne 0\}$ is bounded above.
We define an action of $\mathcal{H}$ on $\widehat{\mathcal{P}}$ by
\[
AH_{s} = 
\begin{cases}
As & (As < A),\\
As + (v^{-1} - v)A & (As > A).
\end{cases}
\]
(Note that the order is reversed from the periodic module in \cite{MR1444322}.)

Let $[\mathcal{S}]$ (resp.\ $[\BM(\mathcal{A})]$) be the split Grothendieck group of $\mathcal{S}$ (resp.\ $\BM(\mathcal{A})$).
For $M\in \mathcal{S}$ or $M\in \BM(\mathcal{A})$, $[M]$ denotes the corresponding element in the split Grothendieck group.
These split Grothendieck groups are $\Z[v,v^{-1}]$-module via $v[M] = [M(1)]$.
Moreover, $[\mathcal{S}]$ is a $\Z[v,v^{-1}]$-algebra by $[B_{1}][B_{2}] = [B_{1}\otimes B_{2}]$ and $[\BM(\mathcal{A})]$ is a right $[\mathcal{S}]$-module by $[\mathcal{F}][B] = [\mathcal{F}\star B]$.
By \cite[Theorem~4.3]{MR4321542}, we have an isomorphism $[\mathcal{S}]\simeq \mathcal{H}$.
The isomorphism is characterized by $[B_{s}]\mapsto H_s + v$.
Hence $[\BM(\mathcal{A})]$ is an $\mathcal{H}$-module.
\begin{thm}\label{thm:BM sheaves and periodic module}
The map $[\BM(\mathcal{A})]\to \widehat{\mathcal{P}}$ defined by
\[
\mathcal{F}\mapsto \sum_{A} v^{-\ell(A)}\grk\mathcal{F}^{[A]}A
\]
is an $\mathcal{H}$-module homomorphism.
\end{thm}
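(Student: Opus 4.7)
The plan is to verify the claim directly by reducing to the generators $[B_s]$ of $[\mathcal{S}]\simeq\mathcal{H}$, and then computing coefficients of each alcove on both sides with the aid of Proposition~\ref{prop:action of BS module is the translation functor} and Lemma~\ref{lem:rank of succ filtration of translation}.

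First I would confirm that $\Phi\colon \mathcal{F}\mapsto \sum_A v^{-\ell(A)}\grk\mathcal{F}^{[A]}\,A$ is a well-defined $\Z[v,v^{-1}]$-linear homomorphism $[\BM(\mathcal{A})]\to\widehat{\mathcal{P}}$. Proposition~\ref{prop:BM on alcoves has a Verma flag} says every $\mathcal{F}\in\BM(\mathcal{A})$ has a Verma flag, so each $\mathcal{F}^{[A]}$ is graded free and $\grk\mathcal{F}^{[A]}\in\Z[v,v^{-1}]$ is defined; since $\mathcal{F}^{[A]}\subseteq\mathcal{F}^A$, the set of $A$ contributing is contained in the bounded-above set $\supp\mathcal{F}$, so the sum really lies in $\widehat{\mathcal{P}}$. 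Additivity under direct sums is immediate (so $\Phi$ descends to the split Grothendieck group), and $\mathcal{F}(n)^{[A]}=\mathcal{F}^{[A]}(n)$ with $\grk\mathcal{F}^{[A]}(n)=v^n\grk\mathcal{F}^{[A]}$ gives $\Z[v,v^{-1}]$-linearity.

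Next, since $\mathcal{H}$ is generated as a $\Z[v,v^{-1}]$-algebra by $\{H_s+v : s\in S_{\aff}\}$ (corresponding to $\{[B_s]\}$ under $[\mathcal{S}]\simeq\mathcal{H}$), it suffices to check $\Phi([\mathcal{F}\star B_s])=\Phi([\mathcal{F}])(H_s+v)$ for all $\mathcal{F}\in\BM(\mathcal{A})$ and $s\in S_{\aff}$. For each $A_0\in\mathcal{A}$ I pick an $s$-stable $O\in\bbopen$ containing $A_0$ (available by the proof of Lemma~\ref{lem:existence of O' supset Ox}). On $O$, Proposition~\ref{prop:action of BS module is the translation functor} gives $\mathcal{F}\star B_s\simeq \theta_s\mathcal{F}$, and Lemma~\ref{lem:rank of succ filtration of translation} yields
\begin{equation*}
\grk(\theta_s\mathcal{F})^{[A_0]}=
\begin{cases}
v\bigl(\grk\mathcal{F}^{[A_0]}+\grk\mathcal{F}^{[A_0 s]}\bigr) & (A_0>A_0 s),\\
v^{-1}\bigl(\grk\mathcal{F}^{[A_0]}+\grk\mathcal{F}^{[A_0 s]}\bigr) & (A_0<A_0 s),
\end{cases}
\end{equation*}
whose hypotheses hold for any $\mathcal{F}\in\BM(\mathcal{A})$ (Verma flag, BM1, BM3, and $\mathcal{F}^E$ free by BM1+BM2); Lemma~\ref{lem:comparison of F^[A]} lets us replace $(\mathcal{F}|_O)^{[\cdot]}$ by $\mathcal{F}^{[\cdot]}$ inside these formulas.

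Finally I would match coefficients of each $A_0$ on both sides. Writing $\varepsilon=+1$ if $A_0 s<A_0$ and $\varepsilon=-1$ otherwise, the right-hand side $\Phi([\mathcal{F}])(H_s+v)$ contributes to $A_0$
\begin{equation*}
v^{\varepsilon}\cdot v^{-\ell(A_0)}\grk\mathcal{F}^{[A_0]}+v^{-\ell(A_0 s)}\grk\mathcal{F}^{[A_0 s]}
=v^{\varepsilon-\ell(A_0)}\bigl(\grk\mathcal{F}^{[A_0]}+\grk\mathcal{F}^{[A_0 s]}\bigr),
\end{equation*}
using $\ell(A_0 s)=\ell(A_0)-\varepsilon$; this is exactly $v^{-\ell(A_0)}\grk(\theta_s\mathcal{F})^{[A_0]}$, the coefficient of $A_0$ in $\Phi([\mathcal{F}\star B_s])$. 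The only real work is keeping straight three sign conventions that conspire correctly here: the length shift $\ell(A_0 s)=\ell(A_0)\pm 1$, the factor $v^{\pm 1}$ coming from the $(1)$-shift in the definition of $\theta_s$ together with Lemma~\ref{lem:rank of succ filtration of translation}, and the reversed order underlying the periodic module as defined in the excerpt. Once these are aligned, no additional geometric input is needed.
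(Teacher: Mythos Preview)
Your proposal is correct and follows exactly the same approach as the paper: the paper's proof simply cites Proposition~\ref{prop:BM on alcoves has a Verma flag} for well-definedness and Lemma~\ref{lem:rank of succ filtration of translation} for the $\mathcal{H}$-module property, and you have carefully unpacked these citations (including the reduction to generators $[B_s]$ and the explicit coefficient match) without introducing any new ideas.
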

\begin{proof}
By Proposition~\ref{prop:BM on alcoves has a Verma flag}, the map is well-defined and by Lemma~\ref{lem:rank of succ filtration of translation}, it is an $\mathcal{H}$-module homomorphism.
\end{proof}

\section{Stability of homomorphisms}
In this section, we prove the following.
\begin{thm}\label{thm:finiteness}
Let $\mathcal{F},\mathcal{G}\in \BM(\mathcal{A})$ be indecomposable sheaves.
Then $\Hom(\mathcal{F}|_{O},\mathcal{G}|_{O})$ is stable for sufficiently large $O\in \bbopen$, namely there exists $O'\in\bbopen$ such that if $O\supset O'$ then $\Hom(\mathcal{F}|_{O},\mathcal{G}|_{O})\to \Hom(\mathcal{F}|_{O'},\mathcal{G}|_{O'})$ is an isomorphism.
In particular, $\Hom(\mathcal{F},\mathcal{G})$ is finite-dimensional.
\end{thm}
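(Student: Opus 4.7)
The plan is to combine the surjectivity of restriction maps from Lemma~\ref{lem:surjective between hom} with a graded-rank analysis of the kernels via Lemma~\ref{lem:graded rank of hom of BM sheaves}. Any $O\in\bbopen$ is ideal finite (since $\mathcal{A}$ is interval finite and $O$ is bounded below), so those results apply on $O$. First I would reduce to $\mathcal{F}=\mathcal{B}^{\mathcal{A}}(A_{F})$ and $\mathcal{G}=\mathcal{B}^{\mathcal{A}}(A_{G})$ and observe that, for $O\supset O'$ in $\bbopen$, Lemma~\ref{lem:surjective between hom} gives surjectivity of the restriction $\Hom(\mathcal{F}|_{O},\mathcal{G}|_{O})\to \Hom(\mathcal{F}|_{O'},\mathcal{G}|_{O'})$, so stability is equivalent to the eventual injectivity of these maps.

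Next I would inspect the inductive proof of Lemma~\ref{lem:graded rank of hom of BM sheaves}: enumerating $O\setminus O'=\{x_{1},\ldots,x_{r}\}$ in minimal-first order so that removing the $x_{i}$ one by one keeps each intermediate subset open, the kernel of $\Hom^{\bullet}(\mathcal{F}|_{O},\mathcal{G}|_{O})\to\Hom^{\bullet}(\mathcal{F}|_{O'},\mathcal{G}|_{O'})$ admits a filtration whose graded pieces are $\Hom^{\bullet}(\mathcal{F}^{x_{i}},\mathcal{G}^{[x_{i}]})$ (note $\mathcal{G}^{[x_{i}]}$ is the same whether computed in $O$, $O'$, or $\mathcal{A}$ by Lemma~\ref{lem:comparison of F^[A]}, since the subsets are open). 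Taking degree zero, the kernel of $\Hom(\mathcal{F}|_{O},\mathcal{G}|_{O})\to \Hom(\mathcal{F}|_{O'},\mathcal{G}|_{O'})$ is filtered with graded pieces $\Hom(\mathcal{F}^{x_{i}},\mathcal{G}^{[x_{i}]})$. Hence the theorem reduces to showing the existence of $A''\in\mathcal{A}$ such that $\Hom(\mathcal{F}^{x},\mathcal{G}^{[x]})=0$ (in degree zero) for every $x\in\mathcal{A}$ with $x\not\ge A''$; then $O'=\{y\ge A''\}$ is the desired open set.

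For this vanishing, I would use translation invariance (Lemma~\ref{lem:grk of translation}) together with Lanini's comparison (Corollary~\ref{cor:Corollary of Lanini}). Simultaneously translating $\mathcal{F},\mathcal{G}$ and $x$ by any $\lambda\in\mathbb{X}^{\vee}$ preserves the morphism space and leaves the graded ranks of $\mathcal{F}^{x}$ and $\mathcal{G}^{[x]}$ unchanged. For $\lambda$ sufficiently dominant, Corollary~\ref{cor:Corollary of Lanini} realizes $\mathcal{B}^{\mathcal{A}}(A_{F}+\lambda)|_{O_{\lambda}\cap\mathcal{A}^{+}}$ and $\mathcal{B}^{\mathcal{A}}(A_{G}+\lambda)|_{O_{\lambda}\cap\mathcal{A}^{+}}$ as direct summands of pullbacks $\varphi_{+}^{*}\mathcal{B}^{W_{\aff,0}\backslash W_{\aff}}(\cdot)$ of indecomposable singular Soergel bimodules, whose supports in $\mathcal{A}^{+}$ are finite. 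Combined with the Verma-flag rank formulas of Lemma~\ref{lem:rank of succ filtration of translation} and Proposition~\ref{prop:BM on alcoves has a Verma flag}, this should force $\grk\mathcal{G}^{[x]}$ to sit in strictly smaller $v$-degrees than the minimum degree of $\grk\mathcal{F}^{x}$ once $x$ is far enough below $A_{G}$, yielding the required vanishing of $\Hom(\mathcal{F}^{x},\mathcal{G}^{[x]})$ in degree zero.

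The main obstacle is making this bound uniform in $x$: Lanini's comparison controls only the dominant chamber directly, whereas the claim concerns all $x\in\mathcal{A}$. For a general $x$, one translates by a $\lambda$ depending on $x$ so that $x+\lambda$ itself lies in $\mathcal{A}^{+}$ and applies the singular Soergel bimodule analysis there, but then the translation $\lambda$ varies with $x$ and one must ensure the resulting degree estimate on $\grk\mathcal{G}^{[x]}$ is uniform. I expect this uniformity to come from a finite case analysis over $W_{\finite}$-orbit representatives of alcoves together with translation invariance of graded ranks and the boundedness of supports of the relevant singular Soergel bimodules; carrying out this uniformity step is the technical heart of the argument.
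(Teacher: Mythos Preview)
Your overall strategy matches the paper's: reduce stability to the vanishing of $\Hom(\mathcal{F}^{A},\mathcal{G}^{[A]})$ in degree zero for all but finitely many $A$, then control the relevant graded ranks via Lanini's comparison and translation invariance. The reduction via Lemma~\ref{lem:surjective between hom} and the filtration with graded pieces $\Hom(\mathcal{F}^{x_i},\mathcal{G}^{[x_i]})$ is exactly how the paper proceeds (this is the content of Lemma~\ref{lem:graded rank of hom of BM sheaves}).

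The gap is in your proposed resolution of the uniformity step. A finite case analysis over $W_{\finite}$-orbit representatives will not by itself give what you need: the supports of the sheaves $\mathcal{B}^{W_{\aff,0}\backslash W_{\aff}}(x_\lambda)$ grow with $\lambda$, and without further input there is no a priori reason the degree gap between $\grk\mathcal{F}^A$ and $\grk\mathcal{G}^{[A]}$ should widen as $A$ moves down. The paper instead formulates precise quantitative bounds (Lemma~\ref{lem:for finiteness}): a linear lower bound $k+\ell(A)\ge C_1-\ell(A)/N_1$ on the degrees appearing in $\grk\mathcal{F}^A$, and a uniform upper bound $k+\ell(A)\le C_2$ on those in $\grk\mathcal{G}^{[A]}$. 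These combine with Lemma~\ref{lem:bounded above by A, bounded below by the length is finite} (alcoves bounded above and of length bounded below form a finite set) to yield the theorem.

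The mechanisms behind these two bounds are quite specific and not visible from your sketch. First, one reduces to $\mathcal{F}=\mathcal{B}^{\mathcal{A}}(A_\lambda^+)$ (Lemma~\ref{lem:reduction of finiteness theorem}), already using the $\mathcal{S}$-action. Second, the Lanini comparison is applied not with an arbitrary dominant translate but with the Steinberg translates $(p^n-1)\rho^\vee$: the crucial point is that for these weights the $p$-canonical and ordinary Kazhdan--Lusztig bases agree (Lemmas~\ref{lem:N does not depend on p} and~\ref{lem:KL basis, steinberg}, ultimately relying on the tilting character of the Steinberg module). One can then invoke explicit closed formulas---Kato's expression for the generic Kazhdan--Lusztig polynomials for bound~(1), and Lusztig's expansion of $\underline{H}_{w_0 t^0_{(p^n-1)\rho^\vee}}$ in Bernstein elements for bound~(2)---whose degrees are uniformly controlled independently of $A$. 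This Steinberg-weight trick, passing from $p$-canonical to canonical and then to explicit combinatorics, is the substantive input your sketch is missing.
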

\begin{rem}
The theorem does not claim that $\Hom^{\bullet}(\mathcal{F},\mathcal{G})$ is finitely generated.
\end{rem}

The theorem follows from the following.
\begin{lem}\label{lem:for finiteness}
Let $\mathcal{F}\in \BM(\mathcal{A})$ be an indecomposable sheaf.
\begin{enumerate}
\item There exist $C_{1}\in \R$ and $N_{1} \in \R_{>0}$ such that if the coefficient of $v^{k}$ in $\grk(\mathcal{F}^{A})$ is not zero then $k + \ell(A) \ge C_{1} - \ell(A)/N_{1}$ for any $A\in \mathcal{A}$.
\item There exists $C_{2}\in \R$ such that if a coefficient of $v^{k}$ in $\grk(\mathcal{F}^{[A]})$ is not zero then $k + \ell(A)\le C_{2}$ for any $A\in \mathcal{A}$.
\end{enumerate}
\end{lem}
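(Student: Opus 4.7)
Since $\mathcal{F}$ is indecomposable, Proposition~\ref{prop:classification of BM sheaves} identifies $\mathcal{F}=\mathcal{B}^{\mathcal{A}}(A_0)$ for some $A_0\in\mathcal{A}$. Both assertions are invariant under simultaneous translation: by Lemma~\ref{lem:grk of translation}, $\grk\mathcal{B}^{\mathcal{A}}(A_0)^{A}=\grk\mathcal{B}^{\mathcal{A}}(A_0+\lambda)^{A+\lambda}$ and similarly for $[A]$, while Lemma~\ref{lem:length of translation} gives $\ell(A+\lambda)-\ell(A)=\ell(A_0+\lambda)-\ell(A_0)=2\langle\rho,\lambda\rangle$. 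In particular the difference $\ell(A_0)-\ell(A)$ is translation-invariant. The plan is, for each given $A$, to pick $\lambda$ sufficiently dominant and bound the graded ranks of $\mathcal{B}^{\mathcal{A}}(A_0+\lambda)$ at $A+\lambda$ by pulling back from the Soergel (Coxeter) side, where standard Kazhdan--Lusztig degree bounds apply.

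Concretely, set $O=\{A''\in\mathcal{A}\mid A''\ge A\}$ and pick $\lambda$ so that the conclusion of Corollary~\ref{cor:Corollary of Lanini} holds: both $\supp^+\varphi^*\pi_M^*\mathcal{B}^{W_{\aff,0}\backslash W_{\aff}}(x_\lambda)|_{O_\lambda}\subset\mathcal{A}^+$ (with $A_0+\lambda=A_0^+x_\lambda$) and $\mathcal{B}^{\mathcal{A}}(A_0+\lambda)|_{O_\lambda\cap\mathcal{A}^+}$ is a direct summand of $\varphi_+^{*}\mathcal{B}^{W_{\aff,0}\backslash W_{\aff}}(x_\lambda)|_{O_\lambda\cap\mathcal{A}^+}$. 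Lemma~\ref{lem:comparison of F^[A]} (used for the open set $O_\lambda$ and for the supp$^+$-containment just recorded) lets one identify the $[A+\lambda]$-piece of the restriction with the $[A+\lambda]$-piece of the sheaf on $\mathcal{A}$. Combining with Lemma~\ref{lem:pull-back for parabolic Soergel bimodules} and \eqref{eq:maps between alcoves}, $\varphi_+^{*}\mathcal{B}^{W_{\aff,0}\backslash W_{\aff}}(x_\lambda)=i^{*}\varphi^{*}\mathcal{B}^{W_{\aff}}(w_\lambda x_\lambda)$ where $w_\lambda\in W_{\aff,0}$ is the longest element, and $(\varphi^{*}\mathcal{G})^{A'}=\mathcal{G}^{\varphi(A')}$ on stalks. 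This yields coefficientwise inequalities
\[
\grk\mathcal{B}^{\mathcal{A}}(A_0)^{A}\ \le\ \grk\mathcal{B}^{W_{\aff}}(w_\lambda x_\lambda)^{\varphi(A+\lambda)},
\]
and analogously for the $[\cdot]$-pieces.

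By Lemma~\ref{lem:length of dominant translation of Weyl group elements} and the identification $\varphi_+\colon \mathcal{A}^+\xrightarrow{\sim}{}^{0}W_{\aff}$, $\ell_{W_{\aff}}(w_\lambda x_\lambda)-\ell_{W_{\aff}}(\varphi(A+\lambda))=\ell(w_\lambda)+\ell(A_0)-\ell(A)$ is independent of $\lambda$. The remaining ingredient is a uniform control of the top and bottom degrees of $\grk\mathcal{B}^{W_{\aff}}(w)^{y}$ and $\grk\mathcal{B}^{W_{\aff}}(w)^{[y]}$ on the Coxeter side, governed by the Kazhdan--Lusztig polynomials $h_{y,w}$ attached to the realization from \cite{MR4321542}. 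For (2) this gives a bound on the top degree in terms of $\ell(w)-\ell(y)$ (hence in terms of $\ell(A_0)-\ell(A)$), producing the constant $C_2$. For (1), the Coxeter-side bottom-degree bound is linear in $\ell(w)-\ell(y)$ with a positive slope, which after substitution yields a lower bound on $k$ of the form $k\ge C_1-\ell(A)(1+1/N_1)$, i.e.\ the asserted $k+\ell(A)\ge C_1-\ell(A)/N_1$.

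The main obstacle is the precise derivation of the slope $1/N_1>0$ in (1). One route is inductive: $\mathcal{B}^{W_{\aff}}(w)$ appears as a direct summand of $\theta_{s_1}\cdots\theta_{s_l}\mathcal{B}^{W_{\aff}}(1)$ for a reduced expression $w=s_1\cdots s_l$, and Lemmas~\ref{lem:rank of stalk of translation}, \ref{lem:rank of succ filtration of translation} then let one track how the top and bottom degrees of the graded ranks of stalks and $[\cdot]$-pieces shift by at most one step of $v$ under each $\theta_{s}$. Propagating these bounds through $l=\ell(w)=\ell(w_\lambda)+\ell(A_0+\lambda)-2\langle\rho,\lambda\rangle+\ell(w_\lambda)$ (again using the translation invariance recorded at the start) extracts admissible $C_1$ and $N_1$. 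A secondary subtlety is that $\mathcal{A}^+$ carries fewer edges than $W_{\aff,0}\backslash W_{\aff}$, so the $[\cdot]$-kernel could a priori be strictly larger on $\mathcal{A}^+$; Lemma~\ref{lem:comparison of F^[A]} is invoked (twice) to conclude that this does not degrade the bounds because one is working inside a summand of a pullback whose supp$^+$ is contained in $\mathcal{A}^+$.
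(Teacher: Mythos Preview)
Your overall strategy—translate into $\mathcal{A}^{+}$ and compare with a Coxeter-side Braden--MacPherson sheaf via Corollary~\ref{cor:Corollary of Lanini}—is indeed the paper's starting point. But the argument you sketch after that comparison has a genuine gap, and it is precisely the place where the paper brings in its main idea.

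You propose to bound $\grk\mathcal{B}^{\mathcal{A}}(A_{0})^{A}$ and $\grk\mathcal{B}^{\mathcal{A}}(A_{0})^{[A]}$ by the corresponding quantities for $\mathcal{B}^{W_{\aff}}(w_{\lambda}x_{\lambda})$ and then invoke ``standard Kazhdan--Lusztig degree bounds'' depending only on $\ell(w_{\lambda}x_{\lambda})-\ell(\varphi(A+\lambda))$. The problem is that no such bounds are available—neither for the $p$-canonical polynomials ${}^{p}h_{y,w}$ nor, in the form you need, even for the classical $h_{y,w}$. For~(1) you need the \emph{minimum} exponent of $\grk\mathcal{F}^{A}$ to tend to $+\infty$ linearly as $\ell(A)\to -\infty$; the trivial Coxeter-side bound $\min\deg\grk\mathcal{B}^{W_{\aff}}(w)^{y}\ge \ell(y)-\ell(w)$ goes in the \emph{opposite} direction. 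Your inductive $\theta_{s}$ argument only yields $|k|\le \ell(w_{\lambda}x_{\lambda})$, which grows with $\lambda$ and hence with~$A$. For~(2) you need the top degree of $\grk\mathcal{F}^{[A]}$ to be bounded \emph{independently} of $\ell(w)-\ell(y)$; again no such uniform bound exists for general (or $p$-) Kazhdan--Lusztig polynomials.

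The paper supplies exactly the missing input. After reducing to $\mathcal{F}=\mathcal{B}^{\mathcal{A}}(A_{\lambda}^{+})$ (Lemma~\ref{lem:reduction of finiteness theorem}), it translates not by an arbitrary dominant weight but by the \emph{Steinberg} weight $(p^{n}-1)\rho^{\vee}$. Lemmas~\ref{lem:N does not depend on p} and~\ref{lem:KL basis, steinberg}—which rely on the representation-theoretic fact that the Steinberg tilting module equals its standard module—force ${}^{p}\underline{M}_{t^{0}_{(p^{n}-1)\rho^{\vee}}}=\underline{M}_{t^{0}_{(p^{n}-1)\rho^{\vee}}}$ and ${}^{p}\underline{H}_{w_{0}t^{0}_{(p^{n}-1)\rho^{\vee}}}=\underline{H}_{w_{0}t^{0}_{(p^{n}-1)\rho^{\vee}}}$. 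This lets one replace $p$-canonical data by classical data. For~(1), passing to the limit $n\to\infty$ yields a bound by the \emph{generic} Kazhdan--Lusztig polynomial $q_{A,A_{\lambda}^{+}}$, and Kato's explicit product formula $\sum_{A}q_{A,A_{\lambda}^{+}}A=\eta\,\underline{P}_{A_{\lambda}^{+}}$ with $\eta=\prod_{\alpha>0}\sum_{i\ge 0}v^{2i}\langle -i\alpha^{\vee}\rangle$ produces the linear growth with slope $1/N_{1}$, $N_{1}=\max_{\alpha}\langle\rho,\alpha^{\vee}\rangle$. For~(2), Lusztig's expansion of $\underline{H}_{w_{0}t^{0}_{(p^{n}-1)\rho^{\vee}}}$ in the Bernstein basis via weight multiplicities (combined with Lemma~\ref{lem:lemma by Kato}) shows that all $v$-exponents appearing are bounded by $\ell(w_{0})$, independently of $n$ and $A$, which gives $C_{2}$. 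None of these inputs is captured by generic Coxeter-side degree estimates or by the $\theta_{s}$ induction.
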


To prove that Lemma~\ref{lem:for finiteness} implies Theorem~\ref{thm:finiteness}, we need the following lemma.
\begin{lem}\label{lem:bounded above by A, bounded below by the length is finite}
For $k\in\Z$ and $A\in \mathcal{A}$, $\{A'\in \mathcal{A}\mid A'\le A,\ell(A')\ge k\}$ is a finite set.
\end{lem}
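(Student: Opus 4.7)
My plan is to parametrize alcoves by the decomposition $\mathcal{A} \simeq W'_{\aff} = W_{\finite} \ltimes \Z(\Phi')^{\vee}$, and then combine Lemma~\ref{lem:length of translation} with Lemma~\ref{lem:order of alcove and weights} to force the alcoves in question into a bounded region of $\mathbb{X}^{\vee}_{\R}$, whose intersection with a lattice is automatically finite.

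First I would write every $A' \in \mathcal{A}$ uniquely in the form $A' = t_{\mu} w A_{0}^{+} = wA_{0}^{+} + \mu$ with $(w,\mu) \in W_{\finite}\times \Z(\Phi')^{\vee}$. Applying Lemma~\ref{lem:length of translation} gives $\ell(A') = \ell(wA_{0}^{+}) + 2\langle \rho,\mu\rangle$; since $W_{\finite}$ is finite, the set $\{\ell(wA_{0}^{+}) \mid w\in W_{\finite}\}$ is bounded, and so the hypothesis $\ell(A')\ge k$ translates into a uniform lower bound $\langle \rho,\mu\rangle\ge C_{1}$ for some constant $C_{1}$ depending only on $k$ and $A_{0}^{+}$.

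Next I would exploit $A' \le A$. By Lemma~\ref{lem:order of alcove and weights} (whose proof shows that the cone $\R_{\ge 0}(\Phi')^{+}$ is realised in $\mathbb{X}^{\vee}_{\R}$ as the non-negative span of positive coroots), there is $x\in W'_{\aff}$ with $xA' = A$ such that $x(a) - a\in \R_{\ge 0}(\Phi')^{+}$ for every $a\in A'$. Taking $a$ to be the barycentre of $A'$, this places the barycentre of $A'$ inside $p_{A} - \R_{\ge 0}(\Phi')^{+}$, where $p_{A}$ depends only on $A$. Since the barycentre of $wA_{0}^{+} + \mu$ equals $\mu + w(\mathrm{bar}(A_{0}^{+}))$ and $W_{\finite}$ is finite, this confines $\mu$ to a translate of the negative coroot cone modified by a bounded error independent of $A'$.

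To finish, I observe that $\rho$ is strictly positive on the positive coroot cone, so the intersection of the half-space $\{\mu\in\mathbb{X}^{\vee}_{\R} \mid \langle \rho,\mu\rangle\ge C_{1}\}$ with such a translated negative cone is a bounded subset of $\mathbb{X}^{\vee}_{\R}$. Consequently only finitely many $\mu\in \Z(\Phi')^{\vee}$ qualify, and each can be paired with at most $|W_{\finite}|$ values of $w$, giving the required finiteness. The only mildly delicate point will be reconciling the pointwise cone inequality from Lemma~\ref{lem:order of alcove and weights} (which concerns a chosen $a \in A'$) with the discrete translation parameter $\mu$, but this is routine since alcoves have uniformly bounded diameter and $w$ ranges over the finite set $W_{\finite}$.
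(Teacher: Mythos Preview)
Your proposal is correct and follows essentially the same strategy as the paper: parametrize $A'$ by a pair in $W_{\finite}\times\Z(\Phi')^{\vee}$, use Lemma~\ref{lem:length of translation} to turn $\ell(A')\ge k$ into a lower bound on $\langle\rho,\cdot\rangle$ of the translation part, and use Lemma~\ref{lem:order of alcove and weights} to confine that translation part to a shifted negative coroot cone, whose intersection with the half-space is bounded. The only cosmetic differences are that the paper parametrizes relative to $A$ (writing $A' = t_{\lambda}wA$) rather than $A_{0}^{+}$, and eliminates the $w$-dependence via the longest-element trick $t_{\lambda}wA \ge t_{\lambda}w_{\finite}A$ instead of your barycentre argument; the barycentre version works equally well since elements of $W'_{\aff}$ act by affine maps and hence preserve barycentres.
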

\begin{proof}
We may assume $A = \max_{w\in W_{\finite}}wA$.
Assume that $A'\in \mathcal{A}$ satisfies $A'\le A$ and $\ell(A')\ge k$.
Take $w\in W_{\finite}$ and $\lambda\in \Z(\Phi')^{\vee}$ such that $A' = t_{\lambda}wA$.
We can take $n_{\alpha}\in\Z$ for each simple root $\alpha$ such that $\lambda = \sum_{\alpha}n_{\alpha}\alpha^{\vee}$.
We prove that we have only finitely many possibilities of $(n_{\alpha})$.

Let $w_{\finite}\in W_{\finite}$ be the longest element.
Fix $a\in A$.
Since $w_{\finite}A\le A$, by Lemma~\ref{lem:order of alcove and weights}, there exist $r_{\alpha}\in\R_{\ge 0}$ for each simple root $\alpha$ such that $a - w_{\finite}(a) = \sum_{\alpha}r_{\alpha}\alpha^{\vee}$.
The alcove $w_{\finite}A$ is minimal in $W_{\finite}A$.
Hence $A' = t_{\lambda}wA \ge t_{\lambda}w_{\finite}A$.
Therefore $t_{\lambda}w_{\finite}A\le A$.
Hence $a - w_{\finite}(a) - \lambda\in \R_{\ge 0}(\Phi')^{+}$ by Lemma~\ref{lem:order of alcove and weights}.
Therefore $r_{\alpha} - n_{\alpha}\ge 0$.
On the other hand, by $\ell(A')\ge k$, we have $2\langle \rho,\lambda\rangle + \ell(wA)\ge k$.
Set $k' = \min_{w\in W_{\finite}}(k - \ell(wA))$.
Then $2\langle \rho,\lambda\rangle\ge k'$.
Hence $(n_{\alpha})$ is contained in
\[
\left\{(n_{\alpha})\mid n_{\alpha}\in\Z,n_{\alpha}\le r_{\alpha},\langle\rho,\sum_{\alpha}n_{\alpha}\alpha^{\vee}\rangle\ge k'\right\}.
\]
This is a finite set.
\end{proof}

\begin{proof}[Proof of Theorem~\ref{thm:finiteness} assuming Lemma~\ref{lem:for finiteness}]
By Lemma~\ref{lem:graded rank of hom of BM sheaves}, it is sufficient to prove that $\Hom(\mathcal{F}^{A},\mathcal{G}^{[A]})$ is zero except finitely many $A$.
Take $A'\in \mathcal{A}$ such that $\supp\mathcal{F}\cup \supp \mathcal{G}\subset \{A\in\mathcal{A}\mid A\le A'\}$.
Then $\Hom(\mathcal{F}^{A},\mathcal{G}^{[A]})\ne 0$ only when $A\le A'$.
Therefore, by Lemma~\ref{lem:bounded above by A, bounded below by the length is finite}, it is sufficient to prove that $\Hom(\mathcal{F}^{A},\mathcal{G}^{[A]})$ is zero if $\ell(A)$ is sufficiently small.
We take $C_{1},N_{1}$ in Lemma~\ref{lem:for finiteness} for $\mathcal{F}$ and $C_{2}$ for $\mathcal{G}$.
Then $\Hom^{\bullet}(\mathcal{F}^{A},\mathcal{G}^{[A]})$ is a direct sum of $\widehat{R}_{\aff}^{\vee}(k_{2} - k_{1})$ where $k_{1} + \ell(A)\ge C_{1} -\ell(A)/N_{1}$ and $k_{2} + \ell(A)\le C_{2}$.
Hence $k_{2} - k_{1} \le C_{2} - C_{1} + \ell(A)/N_{1}$.
Therefore if $\ell(A)$ is sufficiently small then $k_{2} - k_{1} $ is negative and then the $0$-th degree part of $\widehat{R}_{\aff}^{\vee}(k_{2} - k_{1})$ is zero.
Hence $\Hom(\mathcal{F}^{A},\mathcal{G}^{[A]}) = 0$ for such $A$.
\end{proof}

We have some reductions.
\begin{lem}\label{lem:easy reduction of finiteness theorme}
Assume $\mathcal{F}\in \BM(\mathcal{A})$ satisfies the consequence of Lemma~\ref{lem:for finiteness} (1) (resp.\ (2)).
\begin{enumerate}
\item Any direct summand of $\mathcal{F}$ satisfies the consequence of Lemma~\ref{lem:for finiteness} (1) (resp.\ (2)).
\item For any $B\in \mathcal{S}$, $\mathcal{F}\star B$ satisfies the consequences of Lemma~\ref{lem:for finiteness} (1) (resp.\ (2)).
\end{enumerate}
\end{lem}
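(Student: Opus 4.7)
For part (1), I will argue directly: if $\mathcal{F} = \mathcal{F}_{1} \oplus \mathcal{F}_{2}$ in $\BM(\mathcal{A})$, then $\mathcal{F}^{A} = \mathcal{F}_{1}^{A} \oplus \mathcal{F}_{2}^{A}$ and $\mathcal{F}^{E} = \mathcal{F}_{1}^{E} \oplus \mathcal{F}_{2}^{E}$ for every vertex $A$ and every edge $E$, with $\rho^{\mathcal{F}}_{E,A}$ splitting componentwise. Consequently $\mathcal{F}^{[A]} = \mathcal{F}_{1}^{[A]} \oplus \mathcal{F}_{2}^{[A]}$ as well, and since graded ranks have nonnegative coefficients, any $v^{k}$ appearing in $\grk \mathcal{F}_{i}^{A}$ (resp.\ in $\grk \mathcal{F}_{i}^{[A]}$) also appears in $\grk \mathcal{F}^{A}$ (resp.\ in $\grk \mathcal{F}^{[A]}$). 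Both consequences therefore pass to each summand with exactly the same constants.

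For part (2), the plan is to reduce first to the case $B = B_{s}$ for a single simple reflection $s \in S_{\aff}$. Every $B \in \mathcal{S}$ is a direct summand of some product $B_{s_{1}} \otimes \cdots \otimes B_{s_{l}}$, and Proposition~\ref{prop:associativity of the action} gives $\mathcal{F} \star (B_{s_{1}} \otimes \cdots \otimes B_{s_{l}}) \simeq (\cdots (\mathcal{F} \star B_{s_{1}}) \star \cdots) \star B_{s_{l}}$; so once the $B_{s}$-case is in hand, an induction on $l$ combined with part (1) will handle every $B$. For $B = B_{s}$ itself, Proposition~\ref{prop:action of BS module is the translation functor} identifies $\mathcal{F} \star B_{s}$ with $\theta_{s}\mathcal{F}$, and then Lemmas~\ref{lem:rank of stalk of translation} and~\ref{lem:rank of succ filtration of translation} provide the explicit formulas
\[
\grk((\theta_{s}\mathcal{F})^{A}) = v^{-\varepsilon} \grk(\mathcal{F}^{A}) + v^{\varepsilon} \grk(\mathcal{F}^{As}), \qquad \grk((\theta_{s}\mathcal{F})^{[A]}) = v^{\varepsilon} \bigl( \grk(\mathcal{F}^{[A]}) + \grk(\mathcal{F}^{[As]}) \bigr),
\]
where $\varepsilon = 1$ if $A > As$ and $\varepsilon = -1$ otherwise.

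Combined with $\ell(As) = \ell(A) - \varepsilon$ (because $s$ acts as a simple reflection on the moment graph of alcoves), a direct case analysis on which term contributes to the coefficient of $v^{k}$ will show: if $\mathcal{F}$ satisfies the consequence of Lemma~\ref{lem:for finiteness}~(1) with constants $(C_{1}, N_{1})$, then $\theta_{s}\mathcal{F}$ satisfies it with $(C_{1} - 2 - 1/N_{1}, N_{1})$; and if $\mathcal{F}$ satisfies (2) with constant $C_{2}$, then $\theta_{s}\mathcal{F}$ satisfies it with $C_{2} + 2$. The one point where the argument could go wrong is the stability of $N_{1}$: it is essential that $N_{1}$ need not be enlarged at any single step, so that after iterating $l$ times the decay rate $1/N_{1}$ remains unchanged and $C_{1}$ has drifted by only $l(2 + 1/N_{1})$. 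Everything else is bookkeeping, and no further geometric input is required.
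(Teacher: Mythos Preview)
Your proof is correct and follows the same approach as the paper: the paper's proof is simply ``(1) is obvious. For (2) we may assume $B = B_{s}$ for some $s\in S_{\aff}$. Then it follows from Lemma~\ref{lem:rank of stalk of translation} and~\ref{lem:rank of succ filtration of translation}.'' You have spelled out the details, including the explicit constant-tracking and the key observation that $N_{1}$ is unchanged at each step, which the paper leaves implicit.
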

\begin{proof}
(1) is obvious.
For (2) we may assume $B = B_{s}$ for some $s\in S_{\aff}$.
Then it follows from Lemma~\ref{lem:rank of stalk of translation} and \ref{lem:rank of succ filtration of translation}.
\end{proof}

\begin{lem}\label{lem:reduction of finiteness theorem}
To prove Lemma~\ref{lem:for finiteness}, we may assume $\mathcal{F} = \mathcal{B}^{\mathcal{A}}(A_{\lambda}^{+})$ for some $\lambda\in \mathbb{X}^{\vee}$.
\end{lem}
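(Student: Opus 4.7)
The plan is to make three successive reductions: first to $\mathcal{F}=\mathcal{B}^{\mathcal{A}}(A)$ for a single alcove $A$, then to replace $A$ by any convenient $\mathbb{X}^{\vee}$-translate, and finally to realize $\mathcal{B}^{\mathcal{A}}(A+\nu)$ as a direct summand (up to grading shift) of $\mathcal{B}^{\mathcal{A}}(A_{0}^{+})\star B(w)$ for some Soergel bimodule $B(w)$; the desired conclusion then follows from Lemma~\ref{lem:easy reduction of finiteness theorme} together with the observation that $A_{0}^{+}=A_{\lambda}^{+}$ at $\lambda=0$.

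For the first reduction, since $\mathcal{F}$ is indecomposable in $\BM(\mathcal{A})$ with bounded-above support, it has a maximum element $A\in\supp\mathcal{F}$. By Proposition~\ref{prop:classification of BM sheaves}(2) together with indecomposability, $\mathcal{F}\simeq\mathcal{B}^{\mathcal{A}}(A)(n)$ for some $n\in\Z$. A grading shift only changes the constants $C_{1},C_{2}$ in Lemma~\ref{lem:for finiteness} without affecting the existence of such constants, so we may take $\mathcal{F}=\mathcal{B}^{\mathcal{A}}(A)$. For the second reduction, the identity $T_{\nu}^{*}\mathcal{B}^{\mathcal{A}}(A)=\mathcal{B}^{\mathcal{A}}(A+\nu)$, combined with $\grk((T_{\nu}^{*}\mathcal{F})^{A'+\nu})=\grk(\mathcal{F}^{A'})$ (Lemma~\ref{lem:grk of translation}) and the length identity $\ell(A'+\nu)=\ell(A')+2\langle\rho,\nu\rangle$ (Lemma~\ref{lem:length of translation}), shows that both parts of Lemma~\ref{lem:for finiteness} are preserved under translation: if condition (1) holds for $\mathcal{F}$ with constants $(C_{1},N_{1})$, then it holds for $T_{\nu}^{*}\mathcal{F}$ with the same $N_{1}$ and with $C_{1}$ replaced by $C_{1}+2\langle\rho,\nu\rangle(1+1/N_{1})$, and similarly for (2). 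We may therefore replace $A$ by any $\mathbb{X}^{\vee}$-translate.

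For the third reduction, choose $\nu$ sufficiently dominant so that $A+\nu\in\mathcal{A}^{+}$, and let $w\in {}^{0}W_\aff$ be determined by $A+\nu=A_{0}^{+}w$ (Lemma~\ref{lem:length of dominant translation of Weyl group elements}). We claim that $\mathcal{B}^{\mathcal{A}}(A+\nu)$ is a direct summand, up to shift, of $\mathcal{B}^{\mathcal{A}}(A_{0}^{+})\star B(w)$, where $B(w)\in\mathcal{S}$ is the indecomposable Soergel bimodule attached to $w$. This star product is Braden--MacPherson by Theorem~\ref{thm:action of Soergel bimodule preserves BM sheaves}, with support contained in $\{A'y\mid A'\le A_{0}^{+},\ y\le w\}$. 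Granting that $A_{0}^{+}w=A+\nu$ is a maximal element of this support, Proposition~\ref{prop:classification of BM sheaves}(2) yields the desired summand and Lemma~\ref{lem:easy reduction of finiteness theorme} then completes the reduction. For the maximality, when $y'\in {}^{0}W_\aff$ with $y'\le w$, the comparison $A_{0}^{+}y'\le A_{0}^{+}w$ reduces to the Bruhat order on ${}^{0}W_\aff$ by Lemma~\ref{lem:Bruhat order is generic Bruhat order on dominants}; for a general $y\le w$ one decomposes $y=vy'$ with $v\in W_{\aff,0}$ and $y'\in {}^{0}W_\aff$, and combines this with Lemma~\ref{lem:finite weyl group orbit of dominant alcove, order} (which says a $W_{\finite}$-orbit of a dominant alcove is bounded above by that alcove) to conclude $A_{0}^{+}y\le A_{0}^{+}w$; alcoves $A'$ strictly below $A_{0}^{+}$ are handled similarly after applying the same right-translation argument.

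The main obstacle is precisely this maximality claim: the right $W_{\aff}$-action on $\mathcal{A}$ does not respect the periodic alcove order in general, so the inequality $A'y\le A_{0}^{+}w$ is not automatic and cannot be proved by naive monotonicity. This is where the flexibility of allowing an arbitrary $\lambda$ (rather than insisting on $\lambda=0$) in the conclusion of the lemma becomes useful: if the combinatorial argument for $\lambda=0$ breaks down in some special configurations, one enlarges $\lambda$ so that $A_{\lambda}^{+}$ is far enough in the dominant cone to force the needed comparison to hold, applying the same Soergel--summand argument above with $A_{\lambda}^{+}$ in place of $A_{0}^{+}$.
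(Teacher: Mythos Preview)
Your overall strategy matches the paper's---realize $\mathcal{B}^{\mathcal{A}}(A)$ as a direct summand (up to shift) of $\mathcal{B}^{\mathcal{A}}(A_{\lambda}^{+})\star B$ for some $B\in\mathcal{S}$ and then apply Lemma~\ref{lem:easy reduction of finiteness theorme}---but your third reduction has a genuine gap, and your detour through translation invariance is unnecessary.

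The gap is the maximality of $A_{0}^{+}w$ in the support of $\mathcal{B}^{\mathcal{A}}(A_{0}^{+})\star B(w)$. Your argument for $A'=A_{0}^{+}$ is fine, but the case $A'<A_{0}^{+}$ is not ``handled similarly'': the support of $\mathcal{B}^{\mathcal{A}}(A_{0}^{+})$ is only known to lie in $\{A':A'\le A_{0}^{+}\}$, and for general such $A'$ and $y\le w$ there is no reason that $A'y\le A_{0}^{+}w$ in the periodic order. The right $W_{\aff}$-action does not preserve this order, and Lemma~\ref{lem:finite weyl group orbit of dominant alcove, order} applies only to $W_{\finite}$-translates of dominant alcoves, not to arbitrary $A'\le A_{0}^{+}$. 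Your last paragraph concedes this and suggests ``enlarging $\lambda$'', but gives no indication of why or how this would succeed.

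The paper resolves the issue in one stroke by invoking \cite[Lemma~3.6]{MR591724}: for \emph{any} alcove $A$ there exist $\lambda\in\mathbb{X}^{\vee}$ and a reduced expression $w=s_{1}\cdots s_{l}$ with $A=A_{\lambda}^{+}w$ such that the chain $A_{\lambda}^{+}<A_{\lambda}^{+}s_{1}<\cdots<A_{\lambda}^{+}s_{1}\cdots s_{l}=A$ is strictly increasing in the periodic order. One then applies $\theta_{s_{i}}$ step by step: since each step goes up, the support stays bounded above by $A_{\lambda}^{+}s_{1}\cdots s_{i}$ (via \cite[Proposition~3.2]{MR591724}, used elsewhere in the paper), so $A$ is maximal in $\supp\bigl(\mathcal{B}^{\mathcal{A}}(A_{\lambda}^{+})\star(B_{s_{1}}\otimes\cdots\otimes B_{s_{l}})\bigr)$ and Proposition~\ref{prop:classification of BM sheaves}(2) gives the summand immediately. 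No preliminary translation of $A$ is needed, and there is no support obstruction to finesse. The ingredient missing from your argument is exactly this Lusztig lemma; your ``enlarge $\lambda$'' is an informal gesture toward it rather than a proof.
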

\begin{proof}
We take $A\in \mathcal{A}$ such that $\mathcal{F} = \mathcal{B}^{\mathcal{A}}(A)$.
There exists $\lambda\in \mathbb{X}^{\vee}$ and $w\in W_{\aff}$ such that $A = A_{\lambda}^{+}w$ and $A_{\lambda}^{+} < A_{\lambda}^{+}s_{1} < \cdots < A_{\lambda}^{+}s_{1}\cdots s_{l} = A$ where $w = s_{1}\ldots s_{l}$ is a reduced expression~\cite[Lemma~3.6]{MR591724}.
The sheaf $\mathcal{B}^{\mathcal{A}}(A)$ is a direct summand of $\mathcal{B}^{\mathcal{A}}(A_{\lambda}^{+})\star (B_{s_{1}}\otimes\cdots \otimes B_{s_{l}})$ up to grading shift.
Hence if Lemma~\ref{lem:for finiteness} holds for $\mathcal{B}^{\mathcal{A}}(A_{\lambda}^{+})$ then it also holds for $\mathcal{B}^{\mathcal{A}}(A)$ by Lemma~\ref{lem:easy reduction of finiteness theorme}.
\end{proof}

\subsection{Some Kazhdan-Lusztig combinatorics}
We introduced the notation $\mathcal{H}$ for the Hecke algebra attached to $W_{\aff}$ (subsection~\ref{subsec:Some calculation of graded ranks}).
Let $\mathcal{H}^{\ext} = \bigoplus_{w\in W^{\ext}_{\aff}}\Z[v,v^{-1}]H_{w}$ be the Hecke algebra attached to $W_{\aff}^{\ext}$.
This contains $\mathcal{H}$ as a subalgebra.
For each $x\in W_{\aff}^{\ext}$, we have the Kazhdan-Lusztig element $\underline{H}_{x}\in \mathcal{H}^{\ext}$.
Again, we use the notation of Soergel~\cite[Theorem~2.1]{MR1444322}.
Let $\mathcal{H}_{0}$ be the Hecke algebra of $W_{\aff,0}$ and we regard this as a subalgebra of $\mathcal{H}^{\ext}$.
Let $\triv\colon \mathcal{H}_{0}\to \Z[v,v^{-1}]$ (resp.\ $\sgn\colon \mathcal{H}_{0}\to \Z[v,v^{-1}]$) be the character defined by $\triv(H_{x}) = v^{-\ell(x)}$ (resp.\ $\sgn(H_{x}) = (-v)^{\ell(x)}$).
Put ${}^{0}W_{\aff}^{\ext} = {}^{0}W_{\aff}\Omega$.
Then for $x\in {}^{0}W_{\aff}^{\ext}$, we also have elements $\underline{M}_{x}\in \triv\otimes_{\mathcal{H}_{0}}\mathcal{H}^{\ext}$ and $\underline{N}_{x}\in \sgn\otimes_{\mathcal{H}_{0}}\mathcal{H}^{\ext}$~\cite[Theorem~3.1]{MR1444322}.
We define $h_{y,x}\in\Z[v,v^{-1}]$ by $\underline{H}_{x} = \sum_{y\in W_{\aff}^{\ext}}h_{y,x}H_{y}$ for $x\in W_{\aff}^{\ext}$.
For $x,y\in {}^{0}W_{\aff}^{\ext}$, we define $m_{y,x},n_{y,x}\in\Z[v,v^{-1}]$ by $\underline{M}_{x} = \sum_{y}m_{y,x}\otimes H_{y}$ and $\underline{N}_{x} = \sum_{y}n_{y,x}\otimes H_{y}$.
We have the $p$-version of these elements ${}^{p}\underline{H}_{x} = \sum {}^{p}h_{y,x}H_{y}\in \mathcal{H}^{\ext}$, ${}^{p}\underline{M}_{x} = \sum {}^{p}m_{y,x}\otimes H_{y}\in \triv\otimes_{\mathcal{H}_{0}}\mathcal{H}^{\ext}$ and ${}^{p}\underline{N}_{x} = \sum {}^{p}n_{y,x}\otimes H_{y}\in \sgn\otimes_{\mathcal{H}_{0}}\mathcal{H}^{\ext}$, see \cite[1.5]{MR4275245}.
If $\omega\in\Omega$, then $\underline{H}_{x\omega}  = \underline{H}_{x}H_{\omega}$ and $h_{y\omega,x\omega} = h_{y,x}$.
The same formula holds for other elements.

By \eqref{eq:comparison of indecomposables}, we have $\grk \mathcal{B}^{W_{\aff}^{\ext}}(x)^{y} = v^{-\ell(x)}\grk B(x)^{y} = v^{\ell(y) - \ell(x)}({}^{p}h_{y,x})$ from the definition of ${}^{p}\underline{H}_{x}$.
We also have $\grk \mathcal{B}^{W_{\aff,0}\backslash W_{\aff}^{\ext}}(x)^{y} = v^{\ell(y) - \ell(x)}({}^{p}m_{y,x})$ for $x,y\in {}^{0}W_{\aff}^{\ext}$.

\begin{lem}\label{lem:translation by dominant element}
Let $\lambda\in \mathbb{X}^{\vee}$ be a regular dominant element.
\begin{enumerate}
\item $t_{\lambda}^{0}$ is maximal in $t_{\lambda}^{0}W_{\aff,0}$.
\item For any $x\in W_{\aff,0}$, we have $t_{\lambda}^{0}x\in {}^{0}W_{\aff}^{\ext}$.
\end{enumerate}
\end{lem}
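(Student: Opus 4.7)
The plan is to transport both statements to $W_{\aff}^{\ext\prime}$ via the identification $W_{\aff}^{\ext} \simeq W_{\aff}^{\ext\prime}$ induced by $A_0^+$, under which $t_\lambda^0 \leftrightarrow t_\lambda$, $W_{\aff,0} \leftrightarrow W_{\finite}$, and the simple reflections in $S_{\aff,0}$ correspond exactly to the simple finite reflections $s_\alpha$ with $\alpha$ a simple root in $(\Phi')^{+}$ (the walls of $A_0^+$ passing through the origin). The Bruhat orders on $W_{\aff}^{\ext}$ and $W_{\aff}^{\ext\prime}$ are compatible under this identification because both are defined by requiring the $\Omega$-parts to coincide and comparing the $W_{\aff}$-parts (resp.\ $W'_{\aff}$-parts) in the usual way. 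The main tool is the standard affine-root criterion for Bruhat order: for a simple affine reflection $s$ with simple affine root $\beta$, $ys > y$ iff $y(\beta)$ is a positive affine root and $sy > y$ iff $y^{-1}(\beta)$ is positive. This criterion extends to the extended group $W_{\aff}^{\ext\prime}$ by decomposing an element as $\omega v$ with $\omega$ stabilizing $A_0^+$ and using that $\Omega$-conjugation permutes the simple reflections in parallel with the $\Omega$-action on simple affine roots.

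For (1), using the action formula in the text I compute $t_\lambda(\alpha, 0, 0) = (\alpha, -\langle \alpha, \lambda\rangle, 0)$ for each simple root $\alpha\in(\Phi')^{+}$. Regular dominance of $\lambda$ forces $\langle \alpha, \lambda\rangle > 0$, so with $\alpha$ positive and second coordinate strictly negative this is a negative affine root by the convention fixed in the text. Hence $t_\lambda s_\alpha < t_\lambda$ for every simple $s_\alpha \in S_{\aff,0}$, and the standard characterization of maximal parabolic coset representatives then yields that $t_\lambda$ is the unique maximum of $t_\lambda W_{\finite}$, which is (1). For (2), given $x \in W_{\finite}$, I compute
\[
(t_\lambda x)^{-1}(\alpha, 0) = x^{-1}t_{-\lambda}(\alpha, 0) = x^{-1}(\alpha, \langle \alpha, \lambda\rangle) = (x^{-1}\alpha, \langle \alpha, \lambda\rangle).
\]
The strictly positive second coordinate (again by regular dominance) makes this a positive affine root regardless of whether $x^{-1}\alpha$ is positive or negative. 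Hence $s_\alpha(t_\lambda x) > t_\lambda x$ for every simple $s_\alpha\in S_{\aff,0}$, which is exactly the condition for $t_\lambda x$ to be minimal in its left $W_{\aff,0}$-coset, i.e., $t_\lambda^0 x \in {}^{0}W_{\aff}^{\ext}$.

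The computations themselves are entirely routine once the identifications are in place. The only mild care required is (i) transporting the root criterion for Bruhat order from the Coxeter group $W'_{\aff}$ to the extended group $W_{\aff}^{\ext\prime}$, which follows from $\Omega$ acting on $S_{\aff}$ by length-preserving permutations, and (ii) correctly applying the affine-root positivity convention stated earlier in the text.
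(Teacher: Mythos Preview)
Your proof is correct and uses a genuinely different mechanism from the paper. The paper argues via the explicit length formula \eqref{eq:length formula}: for (1) it observes that regular dominance of $\lambda$ gives $\ell(t_\lambda w) = \ell(t_\lambda) - \ell(w)$ for all $w \in W_{\finite}$, which is in fact stronger than maximality; for (2) it computes $\ell(s_\beta t_\lambda x)$ directly by manipulating the sum in \eqref{eq:length formula}, using the bijection $s_\beta\colon (\Phi')^+\setminus\{\beta\}\to(\Phi')^+\setminus\{\beta\}$ to match terms and isolate the $\beta$-contribution. You instead invoke the affine-root positivity criterion for Bruhat comparisons with simple reflections (stated in the paper just before Lemma~\ref{lem:length of dominant translation of Weyl group elements}) and verify positivity by direct computation of $t_\lambda(\alpha,0)$ and $(t_\lambda x)^{-1}(\alpha,0)$. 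Your route is shorter and more conceptual, especially for (2), and your handling of the extension to $W_{\aff}^{\ext\prime}$ via the $\Omega$-action on simple affine roots is the right justification. The paper's route has the minor advantage of yielding the exact length $\ell(t_\lambda w)$ in (1), which it does not actually need here but which is sometimes useful elsewhere.
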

\begin{proof}
By using the isomorphism $W_{\aff}^{\ext\prime}\simeq W_{\aff}^{\ext}$ induced by $A_{0}^{+}$, we prove the corresponding statement in $W_{\aff}^{\ext\prime}$.
Under this isomorphism, $W_{\aff,0}$ corresponds to $W_{\finite}$ and $t_{\lambda}^{0}$ corresponds to $t_{\lambda}$.
Since $\lambda$ is dominant regular, the length formula~\eqref{eq:length formula} implies $\ell(t_{\lambda}w) = \ell(t_{\lambda}) - \ell(w)$ for $w\in W_{\finite}$.
Hence, we get (1).

We prove (2).
Let $s = s_{\beta}$ be a simple reflection with respect to a simple root $\beta$.
We have
\begin{align*}
\ell(st_{\lambda}x) & = \ell(t_{s(\lambda)}sx)  = \sum_{(sx)^{-1}(\alpha) > 0,\alpha\in(\Phi')^{+}}\lvert\langle s(\lambda),\alpha\rangle\rvert + \sum_{(sx)^{-1}(\alpha) < 0,\alpha\in(\Phi')^{+}}\lvert\langle s(\lambda),\alpha\rangle - 1\rvert\\
& = \sum_{x^{-1}(\alpha) > 0,\alpha\in(\Phi')^{+},\alpha\ne\beta}\lvert\langle s(\lambda),s(\alpha)\rangle\rvert + \sum_{x^{-1}(\alpha) < 0,\alpha\in(\Phi')^{+},\alpha\ne\beta}\lvert\langle s(\lambda),s(\alpha)\rangle - 1\rvert\\
& \qquad + 
\begin{cases}
\lvert \langle s(\lambda),\beta\rangle\rvert & (x^{-1}(\beta) < 0),\\
\lvert \langle s(\lambda),\beta\rangle - 1\rvert & (x^{-1}(\beta) > 0).
\end{cases}
\end{align*}
In the second line, we replace $\alpha$ with $s(\alpha)$ and use the fact that $s_{\beta}\colon (\Phi')^{+}\setminus \{\beta\} \to  (\Phi')^{+}\setminus \{\beta\}$ is bijective.
Note that $\langle s(\lambda),\beta\rangle = -\langle \lambda,\beta\rangle < 0$.
Hence, from the above formula, it is easy to see that $\ell(st_{\lambda}x) = \ell(t_{\lambda}x) + 1$.
\end{proof}

\begin{lem}\label{lem:N does not depend on p}
For $n\in\Z_{\ge 1}$, we have ${}^{p}\underline{N}_{t^{0}_{p^{n}\rho^{\vee}}} = \sum_{z\in W_{\aff,0}}v^{\ell(z)}\otimes H_{t^{0}_{p^{n}\rho^{\vee}}z} = \underline{N}_{t^{0}_{p^{n}\rho^{\vee}}}$.
\end{lem}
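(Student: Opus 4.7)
Let $w := t^{0}_{p^{n}\rho^{\vee}}$ and $W' := W_{\aff,0}$ with longest element $w_{0}$. The plan is to prove the two-sided equality
\[
{}^{p}\underline{N}_{w} \;=\; \sum_{z \in W'} v^{\ell(z)} \otimes H_{wz} \;=\; \underline{N}_{w}
\]
by first identifying the middle expression, call it $\xi$, with $\underline{N}_{w}$ via the defining bar-invariance and expansion properties, and then upgrading to the $p$-canonical basis by positivity.

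First I would use Lemma~\ref{lem:translation by dominant element}: part (2) ensures each $wz$ is a valid standard basis index (i.e., lies in ${}^{0}W_{\aff}^{\ext}$), while part (1) -- via the length formula~\eqref{eq:length formula} -- gives $\ell(wz) = \ell(w) - \ell(z)$ for all $z \in W'$. Hence $\xi$ has leading term $1 \otimes H_{w}$ (from $z = 1$) and lower-order coefficients $v^{\ell(z)} \in v\Z_{\ge 0}[v]$, matching the expected expansion of $\underline{N}_{w}$ in the standard basis. Moreover, the length identity supplies $H_{wz} = H_{w}H_{z^{-1}}^{-1}$, so after the change of variable $z \mapsto z^{-1}$ one has $\xi = 1 \otimes (H_{w}\zeta)$ with $\zeta := \sum_{z \in W'} v^{\ell(z)} H_{z}^{-1} \in \mathcal{H}^{\ext}$.

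The key auxiliary identity is $\zeta = v^{\ell(w_{0})}\underline{H}_{w_{0}}$, which I would prove by induction on the rank of $W'$ by checking the recursion $\zeta\underline{H}_{s} = (v + v^{-1})\zeta$ for each $s \in S_{\aff,0}$; this is verified term by term via the standard computation $H_{z}^{-1}\underline{H}_{s} = H_{zs}^{-1} + v^{\mp 1}H_{z}^{-1}$ (the sign depending on whether $zs > z$), and it matches the defining relation $\underline{H}_{w_{0}}\underline{H}_{s} = (v + v^{-1})\underline{H}_{w_{0}}$. With $\xi = v^{\ell(w_{0})} \otimes (H_{w}\underline{H}_{w_{0}})$, the bar-invariance of $\xi$ reduces to the bar-invariance of $\underline{H}_{w_{0}}$ via the semi-linearity of the bar involution on the antispherical module and the compatibility $\sgn(\overline{H_u}) = \overline{\sgn(H_u)}$. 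Combined with the leading term and the $v\Z_{\ge 0}[v]$ shape above, uniqueness of the Kazhdan-Lusztig basis in $\sgn \otimes_{\mathcal{H}_{0}} \mathcal{H}^{\ext}$ forces $\xi = \underline{N}_{w}$.

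For the $p$-version, I would invoke positivity of the $p$-canonical basis to write ${}^{p}\underline{N}_{w} = \underline{N}_{w} + \sum_{y < w,\ y \in {}^{0}W_{\aff}^{\ext}} a^{p}_{y,w}\,\underline{N}_{y}$ with $a^{p}_{y,w} \in \Z_{\ge 0}[v,v^{-1}]$. Expanding each side in the standard basis and peeling off coefficients at $1 \otimes H_{wz_{0}}$ in descending Bruhat order on $z_{0}$, one exploits that the coefficient $v^{\ell(z_{0})}$ of $\xi = \underline{N}_{w}$ already achieves the a priori degree bound for a $p$-canonical coefficient -- the $p^{n}$-regularity of $p^{n}\rho^{\vee}$ places $w$ deep inside the dominant chamber, away from the wall-crossings where positive corrections could arise -- so there is no room for further contributions, yielding $a^{p}_{y,w} = 0$ for every $y$. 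The main obstacle is precisely this degree-saturation step: justifying that $v^{\ell(z_{0})}$ really is maximal among admissible ${}^{p}n_{wz_{0},w}$ is where the specific form of $w = t^{0}_{p^{n}\rho^{\vee}}$, as opposed to a generic dominant translation, is indispensable.
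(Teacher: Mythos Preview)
Your treatment of the second equality $\xi = \underline{N}_{w}$ is fine; the paper simply declares it well-known, and your route via $\zeta = v^{\ell(w_{0})}\underline{H}_{w_{0}}$ is a standard way to see it.

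The gap is in the first equality ${}^{p}\underline{N}_{w} = \underline{N}_{w}$. Your degree-saturation step does not close. Writing ${}^{p}\underline{N}_{w} = \underline{N}_{w} + \sum_{y' < w} a^{p}_{y',w}\,\underline{N}_{y'}$ and comparing coefficients at $1\otimes H_{wz_{0}}$, the only bound available is $\deg a^{p}_{y',w}\le \ell(w)-\ell(y')$ (coming from the Bott--Samelson bound on ${}^{p}n_{y',w}$ together with positivity) and $\deg n_{wz_{0},y'}\le \ell(y')-\ell(wz_{0})$. These combine to give $\deg\bigl(a^{p}_{y',w}\,n_{wz_{0},y'}\bigr)\le \ell(w)-\ell(wz_{0}) = \ell(w_{0})$, which is exactly the degree of $n_{wz_{0},w}=v^{\ell(w_{0})}$. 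So there is \emph{no} overshoot, and nothing forces $a^{p}_{y',w}=0$. The heuristic that $p^{n}\rho^{\vee}$ is ``far from walls'' does not control $p$-torsion in parity sheaves; $p$-canonical corrections are not governed by alcove geometry alone, and you have not supplied any mechanism that distinguishes $t^{0}_{p^{n}\rho^{\vee}}$ from a generic dominant translation.

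The paper's argument is genuinely different and supplies exactly the missing input: it invokes the modular representation theory of the reductive group. Via the tilting character formula of Riche--Williamson, the values ${}^{p}n_{y,w}(1)$ for $y$ maximal in $yW_{\aff,0}$ are multiplicities $(T(w\cdot(-\rho^{\vee})):\Delta(y\cdot(-\rho^{\vee})))$, and $T(w\cdot(-\rho^{\vee})) = T((p^{n+1}-1)\rho^{\vee})$ is the Steinberg module, which equals its own standard module. This gives ${}^{p}n_{y,w}(1)=\delta_{y,w}$ for such $y$; positivity then forces ${}^{p}n_{y,w}=0$ for $y\ne w$ in this range. The remaining $y$ are handled by a short recursion in $s\in S_{\aff,0}$ using the identity ${}^{p}n_{y,x}=\sum_{z\in W_{\aff,0}}(-v)^{\ell(z)}\,{}^{p}h_{zy,x}$ and the parity relation ${}^{p}h_{zys,x}=v^{-1}\,{}^{p}h_{zy,x}$ when $xs<x$. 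In short, the Steinberg phenomenon is the essential idea; a purely combinatorial degree argument will not produce it.
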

\begin{proof}
The second equality is well-known.
We prove the first one.
We have ${}^pn_{y,x}\in n_{y,x} + \sum_{x' < x}\Z_{\ge 0}n_{y,x'}$~\cite[Proposition~4.2 (3)]{MR3611719} and by \cite[Corollary~6.3]{MR4437613} the coefficients of $n_{y,x}$ are non-negative.
Hence, the coefficients of ${}^pn_{y,x}$ are also non-negative.

We calculate ${}^{p}n_{y,t^{0}_{p^{n}\rho^{\vee}}}(1)$ next.
By \cite[Theorem~8.9]{MR4517647}, the polynomials ${}^{p}n_{x,y}$ are related to the multiplicity of standard modules in indecomposable tilting modules.
More precisely, we have the following.
Let $G$ be the connected reductive group over an algebraically closed field of characteristic $p$ with the root datum $(\mathbb{X}^{\vee},(\Phi')^{\vee},\mathbb{X},\Phi')$.
For dominant $\lambda\in \mathbb{X}^{\vee}$ we have the indecomposable tilting module $T(\lambda)$ and the standard module $\Delta(\lambda)$ having the highest weight $\lambda$.
Fix $\lambda\in \mathbb{X}^{\vee}$ such that $(\lambda + \rho^{\vee})/p\in \overline{A_{0}^{+}}$.
For each $w\in W_{\finite}$ and $\mu\in \mathbb{X}^{\vee}$, we put $(wt_{\mu})\cdot \lambda = w(\lambda + p\mu + \rho^{\vee}) - \rho^{\vee}$ and if $w\in W_{\aff}^{\ext}$ corresponds to $w'\in W_{\aff}^{\ext\prime}$ by the isomorphism $W_{\aff}^{\ext}\simeq W_{\aff}^{\ext\prime}$ induced by $A_{0}^{+}$, we put $w\cdot \lambda = w'\cdot \lambda$.
Let ${}^{0}W_{\aff}^{(\lambda)}$ be the set of $w\in {}^{0}W_{\aff}^{\ext}$ which is maximal in $w\Stab_{(W_{\aff}^{\ext},\cdot)}(\lambda)$.
Then for $w\in {}^{0}W_{\aff}^{(\lambda)}$
\[
[T(w\cdot \lambda)] = \sum_{y\in {}^{0}W_{\aff}^{(\lambda)}}{}^{p}n_{y,w}(1)[\Delta(y\cdot \lambda)]
\]
in the Grothendieck group.
Now we put $\lambda = -\rho^{\vee}$ and $w = t^{0}_{p^{n}\rho^{\vee}}$.
Then $\Stab_{(W_{\aff}^{\ext},\cdot)}(-\rho^{\vee}) = W_{\aff,0}$.
The module $T(w\cdot \lambda) = T((p^{n + 1} - 1)\rho^{\vee})$ is the Steinberg representation and isomorphic to $\Delta((p^{n + 1} - 1)\rho^{\vee})$.
Therefore we get
\[
{}^{p}n_{y,t^{0}_{p^{n}\rho^{\vee}}}(1)
=
\begin{cases}
1 & (y = t^{0}_{p^{n}\rho^{\vee}}),\\
0 & (y \ne t^{0}_{p^{n}\rho^{\vee}})
\end{cases}
\]
for $y\in {}^{0}W_{\aff}^{(-\rho^{\vee})}$ and, as the coefficients of ${}^{p}n_{y,x}$ is non-negative, we have
\begin{equation}\label{eq:consequence of tilging character formula}
{}^{p}n_{y,t^{0}_{p^{n}\rho^{\vee}}}
=
\begin{cases}
1 & (y = t^{0}_{p^{n}\rho^{\vee}}),\\
0 & (y \ne t^{0}_{p^{n}\rho^{\vee}})
\end{cases}
\end{equation}
for any $y\in {}^{0}W_{\aff}^{(-\rho^{\vee})}$.
(Note that we always have ${}^{p}n_{x,x} = 1$.)

We prove 
\begin{equation}\label{eq:target of calculation of N basis}
{}^{p}n_{y,t^{0}_{p^{n}\rho^{\vee}}}
=
\begin{cases}
v^{\ell(z)} & (y = t^{0}_{p^{n}\rho^{\vee}}z,\ z\in W_{\aff,0}),\\
0 & (y \notin t^{0}_{p^{n}\rho^{\vee}}W_{\aff,0})
\end{cases}
\end{equation}
for $y\in {}^{0}W_{\aff}$.
This gives the lemma.
To prove this, we first prove the following.
Let $s\in S_{\aff}\cap W_{\aff,0}$ and assume that $ys > y$.
\begin{enumerate}
\item If $ys \notin {}^{0}W_{\aff}^{\ext}$, then ${}^{p}n_{y,t^{0}_{p^{n}\rho^{\vee}}} = 0$.
\item If $ys\in {}^{0}W_{\aff}^{\ext}$, then ${}^{p}n_{y,t^{0}_{p^{n}\rho^{\vee}}} = v({}^{p}n_{ys,t^{0}_{p^{n}\rho^{\vee}}})$.
\end{enumerate}
We note
\begin{equation}\label{eq:n in terms of h}
{}^{p}n_{y,x} = \sum_{z\in W_{\aff,0}}(-v)^{\ell(z)}({}^{p}h_{zy,x}).
\end{equation}
Assume that $ys\notin {}^{0}W_{\aff}^{\ext}$.
Then there exists $t\in W_{\aff,0}\cap S_{\aff}$ such that $tys < ys$.
Since $ty > y$, by the exchange condition of Coxeter systems, we have $y = tys$.
Therefore, for $z\in W_{\aff,0}$ such that $zt > z$, we have $\ell(zys) = \ell(zty) = \ell(zt) + \ell(y) = \ell(z) + \ell(t) + \ell(y) = \ell(zy) + 1$.
By Lemma~\ref{lem:translation by dominant element}, $t_{p^{n}\rho^{\vee}}^{0}s < t_{p^{n}\rho^{\vee}}^{0}$.
Hence we have ${}^{p}h_{zys,t^{0}_{p^{n}\rho^{\vee}}} = v^{-1}({}^{p}h_{zy,t^{0}_{p^{n}\rho^{\vee}}})$~\cite[Lemma~4.3]{MR3611719}.
Therefore ${}^{p}h_{zty,t^{0}_{p^{n}\rho^{\vee}}} = v^{-1}({}^{p}h_{zy,t^{0}_{p^{n}\rho^{\vee}}})$.
By \eqref{eq:n in terms of h}, this implies 
\[
{}^{p}n_{y,t^{0}_{p^{n}\rho^{\vee}}} = \sum_{z\in W_{\aff,0},zt > z}(-v)^{\ell(z)}({}^{p}h_{zy,t^{0}_{p^{n}\rho^{\vee}}} - v\,{}^{p}h_{zty,t^{0}_{p^{n}\rho^{\vee}}}) = 0.
\]

If $ys\in {}^{0}W_{\aff}^{\ext}$, then for each $z\in W_{\aff,0}$, we have $\ell(zys) = \ell(z) + \ell(ys) = \ell(z) + \ell(y) + 1 = \ell(zy) + 1$.
Therefore $zys > zy$ and ${}^{p}h_{zy,t^{0}_{p^{n}\rho^{\vee}}} = v({}^{p}h_{zys,t^{0}_{p^{n}\rho^{\vee}}})$.
Hence ${}^{p}n_{y,t^{0}_{p^{n}\rho^{\vee}}} = v({}^{p}n_{ys,t^{0}_{p^{n}\rho^{\vee}}})$ by \eqref{eq:n in terms of h}.
We get two claims.

We prove \eqref{eq:target of calculation of N basis} by backward induction on $y$.
If $y\in {}^{0}W_{\aff}^{(-\rho^{\vee})}$, then this is \eqref{eq:consequence of tilging character formula}.
Assume that $y\notin {}^{0}W_{\aff}^{(-\rho^{\vee})}$.
Then there exists $s\in S_{\aff}$ such that $ys > y$.
Assume that $y = t^{0}_{p^{n}\rho^{\vee}}z$ where $z\in W_{\aff,0}$.
By Lemma~\ref{lem:translation by dominant element}, $ys = t^{0}_{p^{n}\rho^{\vee}}zs\in {}^{0}W_{\aff}$.
Hence by the claim (2), ${}^{p}n_{y,t^{0}_{p^{n}\rho^{\vee}}} = v({}^{p}n_{ys,t^{0}_{p^{n}\rho^{\vee}}})$.
By inductive hypothesis, we have ${}^{p}n_{ys,t^{0}_{p^{n}\rho^{\vee}}} = v^{\ell(zs)}$.
Hence ${}^{p}n_{y,t^{0}_{p^{n}\rho^{\vee}}} = v^{\ell(zs) + 1}$.
By Lemma~\ref{lem:translation by dominant element}, $\ell(y) = \ell(t^{0}_{p^{n}\rho^{\vee}}) - \ell(z)$ and $\ell(ys) = \ell(t^{0}_{p^{n}\rho^{\vee}}) - \ell(zs)$.
Therefore $\ell(ys) = \ell(y) + 1$ implies $\ell(zs) + 1 = \ell(z)$.

Assume that $y\notin t^{0}_{p^{n}\rho^{\vee}}W_{\aff,0}$.
Then $ys\notin t^{0}_{p^{n}\rho^{\vee}}W_{\aff,0}$.
Hence ${}^{p}n_{ys,t^{0}p^{n}\rho^{\vee}} = 0$.
By the claim, ${}^{p}n_{y,t^{0}p^{n}\rho^{\vee}} = 0$.
\end{proof}

Recall that $w_{0}\in W_{\aff,0}$ is the longest element.

\begin{lem}\label{lem:KL basis, steinberg}
Let $n\in\Z_{>0}$.
Then ${}^{p}\underline{M}_{t^{0}_{(p^{n} - 1)\rho^{\vee}}} = \underline{M}_{t^{0}_{(p^{n} - 1)\rho^{\vee}}}$ and ${}^{p}\underline{H}_{w_{0}t^{0}_{(p^{n} - 1)\rho^{\vee}}} = \underline{H}_{w_{0}t^{0}_{(p^{n} - 1)\rho^{\vee}}}$.
\end{lem}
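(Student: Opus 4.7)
Plan: The two identities will be established following the same strategy as Lemma~\ref{lem:N does not depend on p}, and in fact the $\underline{H}$ identity will be deduced from the $\underline{M}$ identity via the pull-back functor studied in Subsection~3.2.

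First I would verify the length-additive factorization
\[
\ell(w_{0}t^{0}_{(p^{n}-1)\rho^{\vee}}) = \ell(w_{0}) + \ell(t^{0}_{(p^{n}-1)\rho^{\vee}}).
\]
Writing $w_{0}t_{(p^{n}-1)\rho^{\vee}} = t_{-(p^{n}-1)\rho^{\vee}}w_{0}$ and applying the length formula~\eqref{eq:length formula} with $\mu=-(p^n-1)\rho^\vee$ and $w=w_0$, every term $w_{0}^{-1}(\alpha)=w_{0}(\alpha)$ is negative, so the first sum in \eqref{eq:length formula} vanishes and the second equals $\sum_{\alpha>0}\bigl((p^{n}-1)\langle\alpha,\rho^{\vee}\rangle+1\bigr) = \langle 2\rho,(p^n-1)\rho^\vee\rangle + |\Phi^{+}| = \ell(t_{(p^{n}-1)\rho^{\vee}})+\ell(w_{0})$. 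Together with Lemma~\ref{lem:translation by dominant element}(2), this identifies $w_{0}t^{0}_{(p^{n}-1)\rho^{\vee}}$ as the unique maximum element of the coset $W_{\aff,0}\,t^{0}_{(p^{n}-1)\rho^{\vee}}$, whose minimum representative is $t^{0}_{(p^{n}-1)\rho^{\vee}}$.

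Next, assuming the first identity, I would apply Lemma~\ref{lem:pull-back for parabolic Soergel bimodules} both to the ordinary and to the $p$-canonical indecomposable singular Soergel bimodules; in both cases it gives
\[
\pi_{B}^{*}\mathcal{B}^{W_{\aff,0}\backslash W_{\aff}}(t^{0}_{(p^{n}-1)\rho^{\vee}}) \simeq \mathcal{B}^{W_{\aff}}(w_{0}t^{0}_{(p^{n}-1)\rho^{\vee}}).
\]
The equivalences $\mathcal{S}\simeq \BM(W_{\aff})$ and ${}_{0}\mathcal{S}\simeq \BM(W_{\aff,0}\backslash W_{\aff})$, together with \eqref{eq:comparison of indecomposables}, translate the first identity to ${}^{p}\mathcal{B}^{W_{\aff,0}\backslash W_{\aff}}(t^{0}_{(p^{n}-1)\rho^{\vee}}) \simeq \mathcal{B}^{W_{\aff,0}\backslash W_{\aff}}(t^{0}_{(p^{n}-1)\rho^{\vee}})$, and pulling back via $\pi_{B}^{*}$ then yields the $\underline{H}$ identity.

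For the first identity ${}^{p}\underline{M}_{t^{0}_{(p^{n}-1)\rho^{\vee}}} = \underline{M}_{t^{0}_{(p^{n}-1)\rho^{\vee}}}$, I would imitate the inductive argument from the proof of Lemma~\ref{lem:N does not depend on p}. The positivity ${}^{p}m_{y,x}-m_{y,x}\in \sum_{x'<x}\Z_{\ge 0}[v,v^{-1}]m_{y,x'}$ reduces the claim to showing ${}^{p}m_{y,t^{0}_{(p^{n}-1)\rho^{\vee}}}(1)=m_{y,t^{0}_{(p^{n}-1)\rho^{\vee}}}(1)$ for all $y$, because the transition matrix $(m_{y,x'}(1))_{y,x'\le x}$ is upper unitriangular and the correction coefficients have non-negative integer coefficients (hence are zero iff zero at $v=1$). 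The anchor case $y\in {}^{0}W_{\aff}^{(-\rho^\vee)}$ is handled by the character formula of~\cite[Theorem~8.9]{MR4517647}, applied in its spherical form, which expresses ${}^{p}m_{y,x}(1)$ as a multiplicity in the modular principal block; at the Steinberg-type weight $x\cdot(-\rho^{\vee})$ this simplifies by BGG-reciprocity and the projectivity/simplicity of the Steinberg module, forcing the multiplicities to be $\delta$-functions. The inductive step uses the same right-multiplication-by-$s$ argument as in~Lemma~\ref{lem:N does not depend on p}, with the Verma-multiplicity recursion in place of~\eqref{eq:n in terms of h}. The main obstacle will be the representation-theoretic identification of the character formula for ${}^{p}m$ at the Steinberg weight; once that is in hand, the combinatorial bookkeeping runs parallel to the argument already written out in the proof of Lemma~\ref{lem:N does not depend on p}.
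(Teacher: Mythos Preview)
Your deduction of the $\underline{H}$ identity from the $\underline{M}$ identity via the pull-back functor (Lemma~\ref{lem:pull-back for parabolic Soergel bimodules}) is correct and is essentially what the paper does when it cites \cite[(3.2)]{MR4275245}; your length computation establishing that $w_{0}t^{0}_{(p^{n}-1)\rho^{\vee}}$ is maximal in its $W_{\aff,0}$-coset is also fine.

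The real divergence is in the first identity. The paper does \emph{not} rerun the argument of Lemma~\ref{lem:N does not depend on p} in the spherical setting; instead it invokes \cite[Theorem~3.5]{MR4275245}, which furnishes a direct combinatorial relation between the anti-spherical and spherical $p$-canonical bases (with the expected $\rho^{\vee}$-shift in the translation parameter, explaining why $p^{n}$ in Lemma~\ref{lem:N does not depend on p} becomes $p^{n}-1$ here). Thus the $\underline{M}$ identity follows in one line from the already-proved $\underline{N}$ identity, and no new representation-theoretic input is required.

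Your proposed direct attack is not unreasonable in principle, but it has a genuine gap: the representation-theoretic anchor you need is not the one you describe. The tilting character formula of \cite[Theorem~8.9]{MR4517647} is an anti-spherical statement; its ``spherical form'' would require an interpretation of ${}^{p}m_{y,x}(1)$ as decomposition numbers (projectives in terms of standards, or equivalently simples in standards via BGG), and the relevant weight $t^{0}_{(p^{n}-1)\rho^{\vee}}\cdot(-\rho^{\vee}) = (p^{n+1}-p-1)\rho^{\vee}$ is \emph{not} a Steinberg weight, so the simplicity/projectivity of the Steinberg module does not apply directly. You would have to locate or prove an independent character-formula input at this weight, together with the spherical analogues of the positivity statements you invoke. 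All of that is avoided by the paper's route through \cite[Theorem~3.5]{MR4275245}, which recycles the Steinberg input already used in Lemma~\ref{lem:N does not depend on p}.
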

\begin{proof}
By \cite[Theorem~3.5]{MR4275245}, the first one follows from the previous lemma.
By \cite[(3.2)]{MR4275245}, the second one follows from the first one.
\end{proof}

\subsection{Proof of Lemma~\ref{lem:for finiteness}(1)}\label{subsec:Proof of Lemma for finiteness (1)}
For $A,A'\in \mathcal{A}$, let $q_{A,A'}\in \Z[v,v^{-1}]$ be the generic Kazhdan-Lusztig polynomial \cite[Theorem~6.1]{MR1444322}.
Recall that, for $a = \sum_{i\in\Z}a_{i}v^{i},b = \sum_{i\in\Z}b_{i}v^{i}\in\Z[v,v^{-1}]$, we write $a\le b$ if $a_{i}\le b_{i}$ for any $i\in\Z$.

\begin{lem}
We have $\grk\mathcal{B}^{\mathcal{A}}(A_{\lambda}^{+})^{A}\le v^{\ell(A) - \ell(A_{\lambda}^{+})}q_{A,A_{\lambda}^{+}}$ for $A\in \mathcal{A}$.
\end{lem}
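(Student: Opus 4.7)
The strategy is to reduce to the parabolic quotient via Corollary~\ref{cor:Corollary of Lanini} and then bound the resulting $p$-parabolic Kazhdan--Lusztig polynomial by the generic Kazhdan--Lusztig polynomial $q_{A,A_{\lambda}^{+}}$ in a sufficiently dominant (``stable'') regime.

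First, I would observe that both sides of the claimed inequality are invariant under the common translation $(A,\lambda)\mapsto(A+\mu,\lambda+\mu)$: the left-hand side by Lemma~\ref{lem:grk of translation}; the exponent $\ell(A)-\ell(A_{\lambda}^{+})$ by Lemma~\ref{lem:length of translation} (the $2\langle\rho,\mu\rangle$ contribution cancels); and the generic polynomial $q_{A,A_{\lambda}^{+}}$ by the translation invariance built into the periodic Hecke module~\cite[Theorem~6.1]{MR1444322}. Hence one may freely replace $\lambda$ by any sufficiently dominant translate.

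Next, fix $O\in\bbopen$ containing $A$. After translating $\lambda$ sufficiently dominantly, Corollary~\ref{cor:Corollary of Lanini} identifies $\mathcal{B}^{\mathcal{A}}(A_{\lambda}^{+})|_{O\cap\mathcal{A}^{+}}$ as a direct summand of $\varphi_{+}^{*}\mathcal{B}^{W_{\aff,0}\backslash W_{\aff}}(x_{\lambda})|_{O\cap\mathcal{A}^{+}}$, where $A_{\lambda}^{+}=A_{0}^{+}x_{\lambda}$ with $x_{\lambda}\in{}^{0}W_{\aff}^{\ext}$. Writing $y_{A}=\pi_{M}(\varphi(A))\in{}^{0}W_{\aff}^{\ext}$, this yields
\[
\grk\mathcal{B}^{\mathcal{A}}(A_{\lambda}^{+})^{A}\ \le\ \grk\mathcal{B}^{W_{\aff,0}\backslash W_{\aff}}(x_{\lambda})^{y_{A}}\ =\ v^{\ell(y_{A})-\ell(x_{\lambda})}\cdot{}^{p}m_{y_{A},x_{\lambda}},
\]
using the parabolic case of~\eqref{eq:comparison of indecomposables}. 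A length accounting via Lemma~\ref{lem:length of translation} and Lemma~\ref{lem:length of dominant translation of Weyl group elements} rewrites the exponent as $\ell(A)-\ell(A_{\lambda}^{+})$.

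It then remains to establish the coefficient-wise bound ${}^{p}m_{y_{A},x_{\lambda}}\le q_{A,A_{\lambda}^{+}}$ for $\lambda$ sufficiently dominant. The ordinary parabolic polynomial $m_{y_{A},x_{\lambda}}$ stabilises to the generic polynomial $q_{A,A_{\lambda}^{+}}$ under sufficient dominant translation by Soergel's periodic limit~\cite[Theorem~6.1]{MR1444322}, so the assertion reduces to a stable-regime comparison ${}^{p}m_{y_{A},x_{\lambda}}\le m_{y_{A},x_{\lambda}}$; indeed, one expects equality in this regime, analogous to the identity ${}^{p}\underline{N}_{t^{0}_{p^{n}\rho^{\vee}}}=\underline{N}_{t^{0}_{p^{n}\rho^{\vee}}}$ established in Lemma~\ref{lem:N does not depend on p}. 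I expect this last step to be the main obstacle: one must prove a stability statement for the $p$-canonical basis of the spherical module under sufficiently dominant translation, which I would approach either by an inductive descent on the length of $x_{\lambda}$ combined with the left-multiplication recursions used in the proof of Lemma~\ref{lem:N does not depend on p}, or by matching graded ranks on both sides of Lanini's direct-summand identification against the Braden--MacPherson algorithm described in Proposition~\ref{prop:classification of BM sheaves}.
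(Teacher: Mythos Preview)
Your overall architecture is exactly that of the paper: translate, apply Corollary~\ref{cor:Corollary of Lanini}, bound by the parabolic stalk, and then pass to the limit using Soergel's stabilisation $m_{y_A,x_\lambda}\to q_{A,A_\lambda^+}$. The length bookkeeping you describe is also correct.

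The genuine gap is the ``main obstacle'' you flag, and it is more serious than you suggest. The inequality you propose, ${}^{p}m_{y_A,x_\lambda}\le m_{y_A,x_\lambda}$, goes the \emph{wrong way} in general: one has ${}^{p}\underline{M}_{x}\in\underline{M}_{x}+\sum_{x'<x}\Z_{\ge 0}\,\underline{M}_{x'}$, which forces ${}^{p}m_{y,x}\ge m_{y,x}$ coefficient-wise. So an arbitrary sufficiently dominant translation will only give you $\grk\le v^{\cdots}\,{}^{p}m$, and there is no reason for ${}^{p}m$ to stabilise to $q$; that would amount to a strong statement about eventual agreement of $p$-canonical and canonical bases. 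Neither of your proposed approaches (inductive descent, or matching against the Braden--MacPherson algorithm) will bypass this.

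The paper's resolution is much more specific: do not translate by a generic dominant $\mu$, but translate precisely so that the top alcove becomes $A_{(p^{n}-1)\rho^{\vee}}^{+}$, i.e.\ take $\mu=(p^{n}-1)\rho^{\vee}-\lambda$. Then $x_{\lambda}$ becomes $t^{0}_{(p^{n}-1)\rho^{\vee}}$, and Lemma~\ref{lem:KL basis, steinberg} (the spherical companion to Lemma~\ref{lem:N does not depend on p}, obtained by Koszul-type duality from the Steinberg tilting computation) gives the exact equality ${}^{p}\underline{M}_{t^{0}_{(p^{n}-1)\rho^{\vee}}}=\underline{M}_{t^{0}_{(p^{n}-1)\rho^{\vee}}}$. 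Hence ${}^{p}m_{y_A,x_\lambda}=m_{y_A,x_\lambda}$ on the nose at these special parameters, and letting $n\to\infty$ yields the generic polynomial $q_{A,A_\lambda^+}$ by definition. The point is that the freedom in the translation is used not merely to land in the dominant cone, but to hit the Steinberg point where the $p$-canonical basis is already known to coincide with the canonical one.
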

\begin{proof}
Let $n\in\Z_{>0}$ and put $A_{n} = A + (p^{n} - 1)\rho^{\vee} - \lambda$.
Set $O = \{A'\in \mathcal{A}\mid A'\ge A_{n}\}$.
Take $x_{n}\in W_{\aff}$ such that $A_{n} = A_{0}^{+}x_{n}$.
Define $\varphi_{+}$ by \eqref{eq:maps between alcoves}.
Then by Corollary~\ref{cor:Corollary of Lanini}, for sufficiently large $n$, $\mathcal{B}^{\mathcal{A}}(A^{+}_{(p^{n} - 1)\rho^{\vee}})|_{O\cap \mathcal{A}^{+}}$ is a direct summand of $\varphi_{+}^{*}\mathcal{B}^{W_{\aff,0}\backslash W_{\aff}}(t^{0}_{(p^{n} - 1)\rho^{\vee}})|_{O\cap \mathcal{A}^{+}}$.
Therefore, with Lemma~\ref{lem:grk of translation}, we have
\begin{align*}
\grk\mathcal{B}^{\mathcal{A}}(A_{\lambda}^{+})^{A} &= 
\grk(\mathcal{B}^{\mathcal{A}}(A_{(p^{n} - 1)\rho^{\vee}})^{A_{n}})\\
&\le \grk(\mathcal{B}^{W_{\aff,0}\backslash W_{\aff}}(t^{0}_{(p^{n} - 1)\rho^{\vee}})^{x_{n}})\\
& = v^{\ell(x_{n}) - \ell(t^{0}_{(p^{n} - 1)\rho^{\vee}})}\,{}^{p}m_{x_{n},t^{0}_{(p^{n} - 1)\rho^{\vee}}}.
\end{align*}
The right hand side is $v^{\ell(x_{n}) - \ell(t^{0}_{(p^{n} - 1)\rho^{\vee}})}\,m_{x_{n},t^{0}_{(p^{n} - 1)\rho^{\vee}}}$ by Lemma~\ref{lem:N does not depend on p}.
If $n$ is sufficiently large, then $v^{\ell(x_{n}) - \ell(t^{0}_{(p^{n} - 1)\rho^{\vee}})}\,m_{x_{n},t^{0}_{(p^{n} - 1)\rho^{\vee}}}$ converges to $v^{\ell(A) - \ell(A_{\lambda}^{+})}q_{A,A_{\lambda}^{+}}$ by the definition of $q_{A,A^{+}_{\lambda}}$.
\end{proof}

We are ready to prove Lemma~\ref{lem:for finiteness} (1).
Recall an $\mathcal{H}$-module $\widehat{\mathcal{P}}$ from subsection~\ref{subsec:Some calculation of graded ranks}.
Let $\underline{P}_{A}$ be the element in \cite[Theorem~4.3]{MR1444322}.

\begin{proof}[Proof of Lemma~\ref{lem:for finiteness} (1)]
By Lemma~\ref{lem:reduction of finiteness theorem}, we may assume $\mathcal{F} = \mathcal{B}^{\mathcal{A}}(A_{\lambda}^{+})$.
For each $\lambda\in \Z(\Phi')^{\vee}$, we define $\langle \lambda\rangle \colon \widehat{\mathcal{P}}\to \widehat{\mathcal{P}}$ by $\langle \lambda\rangle \sum_{A\in \mathcal{A}}c_{A}A = \sum_{A\in \mathcal{A}}c_{A}(A + \lambda)$.
For each $\alpha\in\Phi'$, we put $\vartheta_{\alpha} = \sum_{i = 0}^{\infty}v^{2i}\langle -i\alpha^{\vee}\rangle$.
Set $\eta = \prod_{\alpha\in (\Phi')^{+}}\vartheta_{\alpha}$.
By \cite[Theorem~6.3]{MR772611} and $\underline{P}_{A_{\lambda}^{+}} = \sum_{z\in W_{\aff,0}} v^{\ell(z)}(A_{0}^{+}z + \lambda)$, we have $\sum_{A\in \mathcal{A}}q_{A,A_{\lambda}^{+}}A = \eta\sum_{z\in W_{\aff,0}}v^{\ell(z)}(A_{0}^{+}z + \lambda)$.
Hence if the coefficient of $v^{k'}$ in $q_{A,A_{\lambda}^{+}}$ is not zero, then there exists $n_{\alpha}\in\Z_{\ge 0}$ for each $\alpha\in(\Phi')^{+}$ and $z\in W_{\aff,0}$ such that $A = A_{0}^{+}z + \lambda - \sum_{\alpha}n_{\alpha}\alpha^{\vee}$ and $k' = \ell(z) + \sum_{\alpha}2n_{\alpha}$.
We have $\ell(A_{0}^{+}z) = \ell(A_{0}^{+}) - \ell(z)$.
Hence, by Lemma~\ref{lem:length of translation}, we have $\ell(A_{\lambda}^{+}) - \ell(A) = \ell(A_{0}^{+}) - \ell(A) + 2\langle \rho,\lambda\rangle = \ell(z) + 2\sum_{\alpha}n_{\alpha}\langle \rho,\alpha^{\vee}\rangle$.
Set $N_{1} = \max(\max_{\alpha\in(\Phi')^{+}}\langle \rho,\alpha^{\vee}\rangle,1)$.
Then $k' = \ell(z) + 2\sum_{\alpha}n_{\alpha}\ge (\ell(z) + 2\sum_{\alpha}n_{\alpha}\langle \rho,\alpha^{\vee}\rangle)/N_{1} = (\ell(A_{\lambda}^{+}) - \ell(A))/N_{1}$.
Therefore, by the above lemma, if the coefficient of $v^{k}$ in $\grk\mathcal{B}^{\mathcal{A}}(A_{\lambda}^{+})^{A}$ is not zero, then $k - (\ell(A) - \ell(A_{\lambda}^{+})) \ge (\ell(A_{\lambda}^{+}) - \ell(A))/N_{1}$.
Hence $k - \ell(A)\ge (1/N_{1} - 1)\ell(A_{\lambda}^{+}) - \ell(A)/N_{1}$.
\end{proof}

\subsection{Proof of Lemma~\ref{lem:for finiteness} (2)}
Define $\varphi,\varphi_{+}$ as in \eqref{eq:maps between alcoves}.
Consider the full subcategory $\varphi^{*}\BM(W_{\aff})\subset \Sh(\mathcal{A})$.
By Lemma~\ref{lem:easy property of image of phi^*}, any objects in $\varphi^{*}\BM(W_{\aff})$ has a Verma flag.

Let $[\mathcal{S}]$ be the split Grothendieck group of $\mathcal{S}$.
Recall that we have an isomorphism $\ch_{\mathcal{S}}\colon [\mathcal{S}]\xrightarrow{\sim}\mathcal{H}$ characterized by $\ch_{\mathcal{S}}(B_{s}) = H_{s} + v$~\cite[Theorem~4.3]{MR4321542}.
We also let $[\varphi^{*}\BM(W_{\aff})]$ be the split Grothendieck group of $\varphi^{*}\BM(W_{\aff})$ and define $\ch_{\mathcal{A}}\colon [\varphi^{*}\BM(W_{\aff})]\to \widehat{\mathcal{P}}$ by $\ch_{\mathcal{A}}([\mathcal{F}]) = \sum_{A\in \mathcal{A}}v^{-\ell(A)}\grk(\mathcal{F}^{[A]})A$.
By Lemma~\ref{lem:rank of succ filtration of translation}, we have the following.
\begin{prop}
For $\mathcal{F}\in \varphi^{*}\BM(\mathcal{A})$ and $B\in \mathcal{S}$ we have $\ch_{\mathcal{A}}([\mathcal{F}\star B]) = \ch_{\mathcal{A}}([\mathcal{F}])\ch_{\mathcal{S}}([B])$.
\end{prop}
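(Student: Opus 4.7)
The plan is to reduce to the case $B = B_{s}$ for a single simple reflection $s\in S_{\aff}$ and then verify the identity by a direct bookkeeping comparison of coefficients.

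First I would observe that both $\ch_{\mathcal{A}}$ and $\ch_{\mathcal{S}}$ are additive on direct sums, and that every object of $\mathcal{S}$ is, up to direct summand and shift, an iterated Bott--Samelson tensor product $B_{s_{1}}\otimes\cdots\otimes B_{s_{l}}$. By Proposition~\ref{prop:associativity of the action} this translates into iterated applications of $\star B_{s_{i}}$, and the subcategory $\varphi^{*}\BM(W_{\aff})$ is stable under $\star B_{s}$: indeed $\mathcal{F}\star B_{s}\simeq \theta_{s}\mathcal{F}$ by Proposition~\ref{prop:action of BS module is the translation functor}, and $\theta_{s}$ commutes with $\varphi^{*}$ by Lemma~\ref{lem:translation and varphi^*}, while $\theta_{s}$ preserves $\BM(W_{\aff})$ by (the $W_{\aff}$-analogue of) Proposition~\ref{prop:translation preserves BM}. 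Induction on $l$ reduces the proposition to the case $B=B_{s}$.

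For $B=B_{s}$ we have $\ch_{\mathcal{S}}([B_{s}]) = H_{s}+v$ by the characterising property of $[\mathcal{S}]\simeq \mathcal{H}$, and $\mathcal{F}\star B_{s}\simeq \theta_{s}\mathcal{F}$. The identity to be proved is
\[
\sum_{A\in \mathcal{A}} v^{-\ell(A)}\grk\bigl((\theta_{s}\mathcal{F})^{[A]}\bigr)\,A \;=\; \Bigl(\sum_{A\in\mathcal{A}}v^{-\ell(A)}\grk(\mathcal{F}^{[A]})\,A\Bigr)(H_{s}+v).
\]
From the definition of the right action of $\mathcal{H}$ on $\widehat{\mathcal{P}}$ one has $A(H_{s}+v) = As + vA$ if $As<A$ and $A(H_{s}+v) = As + v^{-1}A$ if $As>A$. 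Regrouping the right-hand side by the output alcove $B$, the coefficient of $B$ becomes
\[
v^{-\ell(B)+\varepsilon}\grk(\mathcal{F}^{[B]}) + v^{-\ell(Bs)}\grk(\mathcal{F}^{[Bs]}),
\]
where $\varepsilon = 1$ if $Bs<B$ and $\varepsilon = -1$ if $Bs>B$. Since $\ell(Bs)=\ell(B)-\varepsilon$, this simplifies to $v^{-\ell(B)+\varepsilon}\bigl(\grk\mathcal{F}^{[B]}+\grk\mathcal{F}^{[Bs]}\bigr)$. By Lemma~\ref{lem:rank of succ filtration of translation} the coefficient of $B$ on the left-hand side is $v^{-\ell(B)}\grk((\theta_{s}\mathcal{F})^{[B]}) = v^{-\ell(B)+\varepsilon}\bigl(\grk\mathcal{F}^{[B]}+\grk\mathcal{F}^{[Bs]}\bigr)$. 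The two sides match.

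The argument is essentially a matter of careful accounting; the substantive content is already in Lemma~\ref{lem:rank of succ filtration of translation}, together with Proposition~\ref{prop:action of BS module is the translation functor} and the explicit formula for $A(H_{s}+v)$. The one point worth checking with care is the hypotheses needed to invoke Lemma~\ref{lem:rank of succ filtration of translation}: namely that any $\mathcal{F}\in\varphi^{*}\BM(W_{\aff})$ has a Verma flag (Lemma~\ref{lem:easy property of image of phi^*}), satisfies (BM1) and (BM3), and has $\mathcal{F}^{E}$ free over $\widehat{R}^{\vee}_{\aff}/(\alpha_{E})$ for every edge, all of which follow from the corresponding properties on $W_{\aff}$ and the fact that $\varphi$ is an isomorphism of $\widehat{\Lambda}_{\aff}^{\vee}$-labelled graphs.
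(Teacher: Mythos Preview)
Your argument is correct and follows the same route as the paper, which simply records ``By Lemma~\ref{lem:rank of succ filtration of translation}, we have the following'' before stating the proposition. You have spelled out the standard reduction to $B=B_{s}$ and the coefficient comparison that makes this precise; the verification of the hypotheses of Lemma~\ref{lem:rank of succ filtration of translation} for objects of $\varphi^{*}\BM(W_{\aff})$ (Verma flag via Lemma~\ref{lem:easy property of image of phi^*}, flabbiness as in the proof of Proposition~\ref{prop:result of Lanini}, and freeness of $\mathcal{F}^{E}$ from (BM1)--(BM2) on $W_{\aff}$) is exactly what the paper leaves implicit.
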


Let $\mathcal{F}\in\BM(W_{\aff})$ and $B\in \mathcal{S}$ the corresponding object.
Then $\mathcal{F} = \mathcal{B}^{W_{\aff}}(1)\star B$.
Hence $\ch_{\mathcal{A}}([\varphi^{*}\mathcal{F}]) = \ch_{\mathcal{A}}([\varphi^{*}\mathcal{B}^{W_{\aff}}(1)])\ch_{\mathcal{S}}([B])$.
We have $\ch_{\mathcal{A}}([\varphi^{*}\mathcal{B}^{W_{\aff}}(1)]) = v^{-\ell(A_{0}^{+})}A_{0}^{+}$.
Hence $\ch_{\mathcal{A}}([\varphi^{*}\mathcal{F}]) = v^{-\ell(A_{0}^{+})}A_{0}^{+}\ch_{\mathcal{S}}([B])$.

We use the Bernstein element to calculate the right-hand side.
Let $\lambda\in \mathbb{X}^{\vee}$ and take anti-dominant elements $\lambda_{1},\lambda_{2}\in \mathbb{X}^{\vee}$ such that $\lambda = \lambda_{1} - \lambda_{2}$.
Set $\theta_{\lambda} = H_{t_{\lambda_{1}}^{0}}H_{t_{\lambda_{2}}^{0}}^{-1}$.
It is known that this does not depend on $\lambda_{1},\lambda_{2}$ and $\{\theta_{\lambda}H_{w}\mid \lambda\in\mathbb{X}^{\vee},w\in W_{\aff,0}\}$ is a $\Z[v,v^{-1}]$-basis of $\mathcal{H}^{\ext}$.

\begin{lem}[{\cite[Lemma~1.8]{MR772611}}]\label{lem:lemma by Kato}
For $\lambda\in \Z\Phi'$ and $w\in W_{\aff,0}$, we have $A_{0}^{+}\theta_{\lambda}H_{w} = A_{0}^{+}t^{0}_{\lambda}w$ in $\widehat{\mathcal{P}}$.
\end{lem}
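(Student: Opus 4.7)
The proof rests on a single geometric assertion:
\begin{quote}
\textbf{(*)} For any alcove $A \in \mathcal{A}$ and any anti-dominant $\mu \in \mathbb{X}^{\vee}$, there exists a reduced expression $t_{\mu}^{0} = s_{1}\cdots s_{r}$ in $W_{\aff}$ such that the gallery $A, A s_{1}, A s_{1}s_{2}, \ldots, A t_{\mu}^{0} = A + \mu$ is strictly decreasing in the alcove order.
\end{quote}
Consequently, in $\widehat{\mathcal{P}}$ one has $A H_{t_{\mu}^{0}} = A t_{\mu}^{0}$ (no lower-order correction terms), and, by inverting the action of $H_{t_{\mu}^{0}}$, also $A H_{t_{\mu}^{0}}^{-1} = A (t_{\mu}^{0})^{-1}$.

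The assertion (*) is classical. By Lemma~\ref{lem:length of translation}, $\ell(t_{\mu}^{0}) = -2\langle \rho,\mu\rangle$, which equals the number of affine hyperplanes separating $A$ from $A + \mu$. The key point is that the straight-line gallery from $A$ to $A + \mu$ realises a minimal gallery, and by Lemma~\ref{lem:order of alcoves, reflection} (and its ``$\alpha\downarrow$'' analogue) every wall-crossing in this direction strictly decreases the alcove — so the corresponding word of simple reflections in $S_{\aff}$ is a reduced expression witnessing a monotone gallery. In the same spirit, for any $\nu\in\mathbb{X}^{\vee}$, $w\in W_{\aff,0}$, and a reduced expression $w = s_{1}\cdots s_{r}$ in $S_{\aff,0}$, the gallery $A_{0}^{+} + \nu, (A_{0}^{+} + \nu)s_{1},\ldots$ stays inside the $W_{\finite}$-orbit of $A_{0}^{+} + \nu$ at the vertex $\nu$; the translated versions of Lemma~\ref{lem:finite weyl group orbit of dominant alcove, order} and Lemma~\ref{lem:A is dominant, As > A -> As is dominant} show that this gallery strictly descends, whence $(A_{0}^{+} + \nu)H_{w} = (A_{0}^{+} + \nu)w$.

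Granted these two ingredients, write $\lambda = \lambda_{1} - \lambda_{2}$ with $\lambda_{1},\lambda_{2}\in\mathbb{X}^{\vee}$ anti-dominant, so that $\theta_{\lambda} = H_{t_{\lambda_{1}}^{0}}H_{t_{\lambda_{2}}^{0}}^{-1}$. Then
\[
A_{0}^{+}\theta_{\lambda}H_{w}
= A_{0}^{+}H_{t_{\lambda_{1}}^{0}}H_{t_{\lambda_{2}}^{0}}^{-1}H_{w}
\stackrel{(*)}{=} (A_{0}^{+}t_{\lambda_{1}}^{0})H_{t_{\lambda_{2}}^{0}}^{-1}H_{w}
\stackrel{(*)}{=} A_{0}^{+}t_{\lambda_{1}}^{0}(t_{\lambda_{2}}^{0})^{-1}H_{w}
= A_{0}^{+}t_{\lambda}^{0}H_{w},
\]
using that $W_{\aff}^{\ext\prime}\simeq W_{\aff}^{\ext}$ via $A_{0}^{+}$ is a group isomorphism, so $t_{\lambda_{1}}^{0}(t_{\lambda_{2}}^{0})^{-1} = t_{\lambda}^{0}$ in $W_{\aff}^{\ext}$. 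Finally, apply the second ingredient with $\nu = \lambda$: $A_{0}^{+}t_{\lambda}^{0}H_{w} = (A_{0}^{+} + \lambda)H_{w} = (A_{0}^{+} + \lambda)w = A_{0}^{+}t_{\lambda}^{0}w$. The main obstacle is making (*) rigorous: one must translate carefully between the geometric action of $W_{\aff}'$ on $\mathcal{A}$ and the right action of $W_{\aff}$, via the dictionary set up in Section~2.3, in order to match the Coxeter length $\ell(t_{\mu}^{0})$ with the hyperplane count and to realise the monotone gallery as a reduced word rather than just an arbitrary gallery.
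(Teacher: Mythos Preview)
The paper does not supply its own proof of this lemma; it simply cites Kato's original result. Your argument is essentially a reconstruction of Kato's proof, and the strategy is correct: reduce $\theta_\lambda$ to a product $H_{t_{\lambda_1}^0}H_{t_{\lambda_2}^0}^{-1}$ with $\lambda_i$ anti-dominant, then show that the Hecke action of each factor on the relevant alcove is just the group action with no lower terms.

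Two points deserve tightening. First, the assertion $A t_\mu^0 = A + \mu$ in $(*)$ is false for a general alcove $A$: one has $A t_\mu^0 = A + \bar x(\mu)$ where $A = x A_0^+$ and $\bar x \in W_{\finite}$ is the finite part of $x$. Your application only needs $(*)$ for $A = A_0^+$ and $A = A_0^+ + \lambda_1$, both translates of $A_0^+$, where $\bar x = 1$ and the formula is correct; but $(*)$ should be stated only for such $A$. For these $A$, any reduced expression of $t_\mu^0$ gives a minimal gallery from $A$ to $A+\mu$, and every wall crossed is a separating wall crossed from the $\langle\alpha,\cdot\rangle + n > 0$ side to the $<0$ side (with $\alpha\in(\Phi')^+$), so Lemma~\ref{lem:order of alcoves, reflection} indeed forces each step down. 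Second, for the $H_w$ factor, Lemmas~\ref{lem:finite weyl group orbit of dominant alcove, order} and~\ref{lem:A is dominant, As > A -> As is dominant} do not directly yield that the gallery $A_\lambda^+, A_\lambda^+ s_1, \ldots$ descends for an arbitrary reduced word in $W_{\aff,0}$. By translation-invariance one reduces to $A_0^+$; then under the identification $W_{\aff,0}\simeq W_{\finite}$ via $A_0^+$, the wall between $A_0^+ s_1\cdots s_{i-1}$ and $A_0^+ s_1\cdots s_i$ is $H_{\beta,0}$ with $\beta = \sigma_1\cdots\sigma_{i-1}(\alpha_i)$ (where $\sigma_j$ is the simple reflection in $W_{\finite}$ corresponding to $s_j$ and $\alpha_i$ its simple root), which is positive since the word is reduced, and $A_0^+ s_1\cdots s_{i-1}$ lies on the positive side because $A_0^+$ is dominant; so again Lemma~\ref{lem:order of alcoves, reflection} gives the descent. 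Also, your citation of Lemma~\ref{lem:length of translation} for $\ell(t_\mu^0) = -2\langle\rho,\mu\rangle$ is off: that lemma concerns the alcove length function, whereas here you want the Coxeter length, which follows from the length formula~\eqref{eq:length formula}.
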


\begin{proof}[Proof of Lemma~\ref{lem:for finiteness} (2)]
For $n\in\Z_{>0}$ we put $A_{n} = A + (p^{n} - 1)\rho^{\vee} - \lambda$.
Let $\pi_{M}\colon W_{\aff}\to W_{\aff,0}\backslash W_{\aff}$ be the natural projection.
Set $O = \{A'\in \mathcal{A}\mid A'\ge A_{n}\}$.
Then, by Corollary~\ref{cor:Corollary of Lanini}, if $n$ is sufficiently large, $\mathcal{B}^{\mathcal{A}}(A_{(p^{n} - 1)\rho^{\vee}}^{+})|_{O\cap \mathcal{A}^{+}}$ is a direct summand of $\varphi_{+}^{*}\mathcal{B}^{W_{\aff,0}\backslash W_{\aff}}(t^{0}_{(p^{n} - 1)\rho^{\vee}})|_{O\cap \mathcal{A}^{+}}$.
Let $i\colon \mathcal{A}^{+}\hookrightarrow \mathcal{A}$ be the inclusion map.
Then we have $\varphi_{+} = \pi_{M}\circ\varphi\circ i$.
Hence $\varphi_{+}^{*}\mathcal{B}^{W_{\aff,0}\backslash W_{\aff}}(t^{0}_{(p^{n} - 1)\rho^{\vee}})|_{O\cap \mathcal{A}^{+}}\simeq \varphi^{*}\pi_{M}^{*}\mathcal{B}^{W_{\aff,0}\backslash W_{\aff}}(t^{0}_{(p^{n} - 1)\rho^{\vee}})|_{O\cap \mathcal{A}^{+}}$.
Set $\mathcal{F} = \varphi^{*}\pi_{M}^{*}\mathcal{B}^{W_{\aff,0}\backslash W_{\aff}}(t^{0}_{(p^{n} - 1)\rho^{\vee}})$.
By Lemma~\ref{lem:comparison of F^[A]}, we have $\mathcal{F}^{[A_{n}]} = (\mathcal{F}|_{O\cap \mathcal{A}^{+}})^{[A_{n}]}$.

Therefore the graded rank of $\mathcal{B}^{\mathcal{A}}(A_{\lambda}^{+})^{[A]}$, which is equal to $\grk\mathcal{B}^{\mathcal{A}}(A^{+}_{(p^{n} - 1)\rho^{\vee}})^{[A_{n}]}$ by Lemma~\ref{lem:grk of translation}, is less than or equal to the graded rank of $\mathcal{F}^{[A_{n}]}$.
We have $\mathcal{F}\simeq \varphi^{*}\mathcal{B}^{W_{\aff}}(w_{0}t_{(p^{n} - 1)\rho^{\vee}}^{0})$ (Lemma~\ref{lem:pull-back for parabolic Soergel bimodules}).
Hence $\varphi^{*}\mathcal{F}\simeq (\varphi^{*}\mathcal{B}^{W_{\aff}}(1))\star \Gamma(\mathcal{B}^{W_{\aff}}(w_{0}t_{(p^{n} - 1)\rho^{\vee}}^{0}))$.
Hence $\ch_{\mathcal{A}}(\varphi^{*}\mathcal{F}) = A_{0}^{+}\ch_{\mathcal{S}}(\Gamma(\mathcal{B}^{W_{\aff}}(w_{0}t_{(p^{n} - 1)\rho^{\vee}}^{0})))$.
By \eqref{eq:comparison of indecomposables} and the definition of ${}^{p}\underline{H}_{w_{0}t_{(p^{n} - 1)\rho^{\vee}}^{0}}$, we have
\begin{align*}
\ch_{\mathcal{S}}(\Gamma(\mathcal{B}^{W_{\aff}}(w_{0}t_{(p^{n} - 1)\rho^{\vee}}^{0}))) & = v^{-\ell(w_{0}t_{(p^{n} - 1)\rho^{\vee}})}\ch_{\mathcal{S}}(B(w_{0}t_{(p^{n} - 1)\rho^{\vee}}))\\
& = v^{-\ell(w_{0}t_{(p^{n} - 1)\rho^{\vee}})}\underline{H}_{w_{0}t_{(p^{n} - 1)\rho^{\vee}})}
\end{align*}
Let $c\in\Z[v,v^{-1}]$ be the coefficient of $A_{n}$ in $A_{0}^{+}\underline{H}_{w_{0}t^{0}_{(p^{n} - 1)\rho^{\vee}}}$.
Then, by the above calculation and the definition of $\ch_{\mathcal{A}}$, we have $\grk \mathcal{B}^{\mathcal{A}}(A_{(p^{n} - 1)\rho^{\vee}})^{[A_{n}]}\le v^{\ell(A_{n}) - \ell(w_{0}t^{0}_{(p^{n} - 1)\rho^{\vee}})}c$.
To prove the lemma it is sufficient to prove that there exists $C > 0$ depending only on our root system such that if the coefficient of $v^{k}$ in $v^{-\ell(A) + \ell(A_{n}) - \ell(w_{0}t^{0}_{(p^{n} - 1)\rho^{\vee}})}c$ is not zero then $\lvert k\rvert \le C$.

We calculate $-\ell(A) + \ell(A_{n}) - \ell(w_{0}t^{0}_{(p^{n} - 1)\rho^{\vee}})$.
We have $\ell(A) - \ell(A_{\lambda}^{+}) = \ell(A_{n}) - \ell(A^{+}_{(p^{n} - 1)\rho^{\vee}})$ and $\ell(A^{+}_{(p^{n} - 1)\rho^{\vee}}) = \ell(A_{0}^{+}) + 2\langle\rho,(p^{n} - 1)\rho^{\vee}\rangle$.
We also have $\ell(w_{0}t^{0}_{(p^{n} - 1)\rho^{\vee}}) = \ell(w_{0}) + \ell(t^{0}_{(p^{n} - 1)\rho^{\vee}})$ as $t^{0}_{(p^{n} - 1)\rho^{\vee}}\in {}^{0}W_{\aff}^{\ext}$ (Lemma~\ref{lem:translation by dominant element}).
By the length formula \eqref{eq:length formula}, we have $\ell(t^{0}_{(p^{n} - 1)\rho^{\vee}}) = 2\langle\rho,(p^{n} - 1)\rho^{\vee}\rangle$. 
Hence $-\ell(A) + \ell(A_{n}) - \ell(w_{0}t^{0}_{(p^{n} - 1)\rho^{\vee}}) = \ell(A_{0}^{+}) - \ell(A_{\lambda}^{+}) - \ell(w_{0})$.
Since this does not depend on $A$, it is sufficient to prove the existence of $C$ such that if the coefficient of $v^{k}$ in $c$ is not zero, then $\lvert k\rvert\le C$.

By Lemma~\ref{lem:lemma by Kato}, $c$ is a coefficient in $\underline{H}_{w_{0}t^{0}_{(p^{n} - 1)\rho^{\vee}}}$ with the basis by Bernstein.
Such an expansion is calculated in \cite[Proposition 8.6]{MR737932}.
Namely, let $G$ be a connected reductive group over $\mathbb{C}$ with the root datum $(\mathbb{X}^{\vee},(\Phi')^{\vee},\mathbb{X},\Phi')$, $V$ the irreducible representation of $G$ with the highest weight $(p^{n} - 1)\rho^{\vee}$.
For each $\mu\in\mathbb{X}^{\vee}$, let $V_{\mu}$ be the $\mu$-weight space of $V$.
Then, by \cite[Proposition 8.6]{MR737932}, we have 
\[
\underline{H}_{w_{0}t^{0}_{(p^{n} - 1)\rho^{\vee}}} = \sum_{\mu\in\mathbb{X}^{\vee},w\in W_{\aff,0}}(\dim V_{\mu})v^{\ell(w_{0}) - \ell(w)}\theta(\mu)H_{w}.
\]
Hence we can take $C = \ell(w_{0})$.
\end{proof}


\begin{thebibliography}{BDSW14}
\bibitem[Abe21]{MR4321542}
Noriyuki Abe, \textit{A bimodule description of the {H}ecke category}, Compos. Math. \textbf{157} (2021), no.~10, 2133--2159. 

\bibitem[Abe23]{MR4620135}
Noriyuki Abe, \textit{On one-sided singular {S}oergel bimodules}, J. Algebra \textbf{633} (2023), 722--753. 

\bibitem[Abe25]{MR4846616}
Noriyuki Abe, \textit{Singular {S}oergel {B}imodules for {R}ealizations}, Int. Math. Res. Not. IMRN (2025), no.~1, 1--29. 

\bibitem[BDSW14]{MR3294251}
Barbara Baumeister, Matthew Dyer, Christian Stump, and Patrick Wegener, \textit{A note on the transitive {H}urwitz action on decompositions of              parabolic {C}oxeter elements}, Proc. Amer. Math. Soc. Ser. B \textbf{1} (2014), 149--154. 

\bibitem[BM01]{MR1871967}
Tom Braden and Robert MacPherson, \textit{From moment graphs to intersection cohomology}, Math. Ann. \textbf{321} (2001), no.~3, 533--551. 

\bibitem[Fie08a]{MR2395170}
Peter Fiebig, \textit{The combinatorics of {C}oxeter categories}, Trans. Amer. Math. Soc. \textbf{360} (2008), no.~8, 4211--4233. 

\bibitem[Fie08b]{MR2370278}
Peter Fiebig, \textit{Sheaves on moment graphs and a localization of {V}erma flags}, Adv. Math. \textbf{217} (2008), no.~2, 683--712. 

\bibitem[Fie11]{MR2726602}
Peter Fiebig, \textit{Sheaves on affine {S}chubert varieties, modular representations, and {L}usztig's conjecture}, J. Amer. Math. Soc. \textbf{24} (2011), no.~1, 133--181. 

\bibitem[Fie12]{MR2999126}
Peter Fiebig, \textit{An upper bound on the exceptional characteristics for {L}usztig's character formula}, J. Reine Angew. Math. \textbf{673} (2012), 1--31. 

\bibitem[FW14]{MR3330913}
Peter Fiebig and Geordie Williamson, \textit{Parity sheaves, moment graphs and the {$p$}-smooth locus of {S}chubert varieties}, Ann. Inst. Fourier (Grenoble) \textbf{64} (2014), no.~2, 489--536. 

\bibitem[JW17]{MR3611719}
Lars~Thorge Jensen and Geordie Williamson, \textit{The {$p$}-canonical basis for {H}ecke algebras}, Categorification and higher representation theory, Contemp. Math., vol. 683, Amer. Math. Soc., Providence, RI, 2017, pp.~333--361. 

\bibitem[Kat85]{MR772611}
Shin-ichi Kato, \textit{On the {K}azhdan-{L}usztig polynomials for affine {W}eyl              groups}, Adv. in Math. \textbf{55} (1985), no.~2, 103--130. 

\bibitem[Lan15]{MR3324922}
Martina Lanini, \textit{On the stable moment graph of an affine {K}ac-{M}oody algebra}, Trans. Amer. Math. Soc. \textbf{367} (2015), no.~6, 4111--4156. 

\bibitem[Lus80]{MR591724}
George Lusztig, \textit{Hecke algebras and {J}antzen's generic decomposition patterns}, Adv. in Math. \textbf{37} (1980), no.~2, 121--164. 

\bibitem[Lus83]{MR737932}
George Lusztig, \textit{Singularities, character formulas, and a {$q$}-analog of weight multiplicities}, Analysis and topology on singular spaces, {II}, {III} ({L}uminy, 1981), Ast\'erisque, vol. 101, Soc. Math. France, Paris, 1983, pp.~208--229. 

\bibitem[LW22]{MR4437613}
Nicolas Libedinsky and Geordie Williamson, \textit{The anti-spherical category}, Adv. Math. \textbf{405} (2022), Paper No. 108509, 34. 

\bibitem[RW21]{MR4275245}
Simon Riche and Geordie Williamson, \textit{A simple character formula}, Ann. H. Lebesgue \textbf{4} (2021), 503--535. 

\bibitem[RW22]{MR4517647}
Simon Riche and Geordie Williamson, \textit{Smith-{T}reumann theory and the linkage principle}, Publ. Math. Inst. Hautes \'{E}tudes Sci. \textbf{136} (2022), 225--292. 

\bibitem[Soe97]{MR1444322}
Wolfgang Soergel, \textit{Kazhdan-{L}usztig polynomials and a combinatoric for tilting modules}, Represent. Theory \textbf{1} (1997), 83--114 (electronic). 

\end{thebibliography}
\end{document}